\newtheorem{thm}{Theorem}[section]
\newtheorem{prop}[thm]{Proposition}
\newtheorem{lemma}[thm]{Lemma}
\theoremstyle{remark}
\newtheorem{remark}{Remark}
\newcommand{\I}{\mathbb{I}}
\newcommand{\e}{\epsilon}
\newcommand{\va}{\varepsilon}
\newcommand{\p}{\partial}
\newcommand{\A}{\alpha}
\newcommand{\bb}{\beta}
\newcommand{\na}{\nabla}
\newcommand{\la}{\lambda}
\newcommand{\de}{\delta}
\newcommand{\R}{\mathbb{R}}
\newcommand{\Z}{\mathbb{Z}}
\newcommand{\N}{\mathbb{N}}
\newcommand{\LL}{\mathfrak{L}}
\newcommand{\F}{\mathcal{F}}
\newcommand{\m}{\mathfrak{e}}
\newcommand{\Q}{\mathbb{Q}}
\newcommand{\B}{\mathbb{B}}
\newcommand{\les}{\lesssim}
\newcommand{\T}{\mathbb{T}}
\newcommand{\ka}{\kappa}
\newcommand{\K}{\mathfrak{K}}
\newcommand{\V}{\mathbf{V}}
\newcommand{\ind}[1]{\mathds{1}_{{#1}}}
\newcommand{\KK}{\mathscr{K}}
\newcommand{\VV}{\mathscr{V}}
\newcommand{\ma}{m^{(1)}}
\newcommand{\mb}{m^{(2)}}
\newcommand{\mc}{m^{(3)}}
\newcommand{\cc}{\mathbf{c}}
\newcommand{\NE}{\mathfrak{N}}
\def\beq{\begin{equation}}
\def\eeq{\end{equation}}
\numberwithin{equation}{section}
\subjclass[2020]{42B20, 42B15, 47J20, 11P55}
\keywords{Variational estimates, Stein-Wainger type operators, Hardy-Littlewood circle method}
\begin{document}
\title[Almost sharp variational estimates]{Almost sharp variational  estimates \\ for discrete truncated    operators of  Stein-Wainger type}

\author[J. Chen and R. Wan]{Jiecheng Chen$^{\dagger}$ and  Renhui Wan$^{\ddagger}$}
%\author[ R. Wan]{ Renhui Wan}

\address{$^\dagger$ School of Mathematical Sciences, Zhejiang Normal University, Jinhua 321004, People's Republic of China}

\email{jcchen@zjnu.edu.cn}

%\address{$^2$ School of Mathematical Sciences, Zhejiang University, Hangzhou 310027, People's Republic of China}

%\email{mjhy@zju.edu.cn}

\address{$^{\ddagger}$Ministry of Education Key Laboratory of NSLSCS, School of Mathematical Sciences, Nanjing Normal University, Nanjing 210023, People's Republic of China}

\email{wrh@njnu.edu.cn (Corresponding author)}

\vskip .2in
\begin{abstract}
 We establish  $r$-variational estimates for discrete truncated Stein-Wainger type operators on $\ell^p$ for $1<p<\infty$. Notably, these estimates are sharp and  enhance the results obtained  by Krause and Roos (J. Eur. Math. Soc. 2022, J. Funct. Anal. 2023), up to a logarithmic loss related to the scale. On the other hand, as $r$ approaches infinity,   the consequences  align with the estimates proved  by Krause and Roos. Moreover, for the case of quadratic phases, we  remove this logarithmic loss with respect to the scale in two and higher dimensions, 
 at the cost of increasing  $p$ slightly. 
\end{abstract}

\maketitle

%\tableofcontents

\section{Introduction}
\label{intrs1}
\subsection{Motivation and main results}
\label{Mr}
The variational inequality is a fundamental concept that holds significant importance in various mathematical disciplines such as  harmonic analysis, probability theory and ergodic theory. It provides a quantitative measure of how functions or operators fluctuate within a defined range. In harmonic analysis, it assists in delineating the regularity and characteristics of functions (see, e.g., \cite{MSZ203, PP36, PP37}). In probability theory, it is crucial for comprehending stochastic processes and their dynamics (see, e.g., \cite{Le76, FZ23}). In ergodic theory, it plays a key role in the development of algorithms by establishing pointwise convergence and quantifying convergence rates; notably, recent advancements in addressing the Furstenberg-Bergelson-Leibman conjecture rely on the foundation of variational inequalities (see \cite{KMT22, IMMS23}). In this paper, we will establish variational inequalities for discrete truncated    operators of Stein-Wainger type.

Let $n$ and $d$ be positive integers,
 and let $\la(x)$ be an arbitrary  function mapping
 from $\Z^n$ to [0,1].
Define  the    discrete truncated Stein-Wainger type operators $\{\mathscr{C}_{N}\}_{N\in\N}$  by the formula
\beq\label{ope1}
\mathscr{C}_{N} f(x):=\sum_{y\in \B_N\setminus \{0\}}f(x-y)e\big(\la(x)|y|^{2d}\big){K}(y)\ \ \ (x\in\Z^n),
\eeq
where $e(\theta):=e^{2\pi i \theta}$,   $\B_t=\{x\in \Z^n:\ |x|\le t\}$ with $t>0$, and
  $K$ is a homogeneous Calder\'{o}n-Zygmund
kernel, characterized by
\beq\label{v1}
K(y)=\frac{\Omega(y)}{|y|^n}
\eeq
for some function $\Omega \in \mathcal{C}^1(\R^n\setminus\{0\})$, which is homogeneous of degree 0.\footnote{The assumption of homogeneity for $K$ is not strictly essential; its inclusion is intended to simplify the proof of main results and enhance the clarity of this paper.} Additionally, $K$ exhibits the property of mean value zero, implying that $\int_{\mathbb{S}^{n-1}} \Omega(x) d\sigma(x) = 0$, where $\sigma$ represents the surface measure on $\mathbb{S}^{n-1}$. This paper aims to investigate  $\ell^p$ inequalities for $r$-variations of $\{\mathscr{C}_{N}f\}_{N\in\mathbb{N}}$ for all $f\in\ell^p(\mathbb{Z}^n)$, which is related to a variational  seminorm $V^r$.  
See Subsection \ref{semi} below for a general definition of the  variational seminorm $V^r$.  %and the corresponding variational  norm $\V^r$.   
 As described at the beginning of this section, this seminorm plays a pivotal role in addressing pointwise convergence concerns.
Traditionally, tackling pointwise convergence issues involves proving  $L^p(X,\mu)$ boundedness for the associated maximal function, which simplifies the task to proving the pointwise convergence across a dense set of $L^p(X,\mu)$ functions. Nonetheless, achieving the  pointwise convergence over a dense class can pose challenges (as exemplified by Bourgain's averaging operator along the squares in \cite{Bour89}).
In this context, if $\|\big(\mathscr{C}_{N}f(x)\big)_{N\in\mathbb{N}}\|_{V^r}<\infty$ for certain $r\in [1,\infty)$ and $x\in\mathbb{Z}^n$, then the limit $\lim_{N\to \infty}\mathscr{C}_{N}f(x)$ exists. Consequently, there is no necessity to establish the pointwise convergence over a dense class.
%%%
 In addition, the seminorm $V^r$ governs the supremum norm as follows: For any $N_0\in \N$, we can infer  the pointwise estimate
 $$\sup_{N\in\N}|\mathscr{C}_{N}f(x)|\le |\mathscr{C}_{N_0}f(x)|+\|\big(\mathscr{C}_{N}f(x)\big)_{N\in \N}\|_{V^r}.$$

For the case of $\la(x)\equiv0$,  the operator  (\ref{ope1}) simplifies to a specific instance of the discrete truncated singular Radon transform, which has been extensively studied by various mathematicians (see \cite{MST17, MSZ20, MSZ202, Wo22} and references therein),
and is defined by the formula
 $$\mathcal{T}_Nf(x):=\sum_{y\in \B_N\setminus \{0\}}f(x-\mathcal{P}(y))K(y)\ \ \ (x\in\Z^k)$$
with $\mathcal{P}=(\mathcal{P}_1,\dots,\mathcal{P}_k):\Z^n\to \Z^k$ a polynomial
mapping, where for each $j\in \{1,\dots,k\}$, the function  $\mathcal{P}_j:\Z^n\to \Z$ is an integer-valued polynomial of $n$ variables satisfying   $\mathcal{P}_j(0)=0$.
 A really significant  job  is \cite{MST17}, where   Mirek, Stein and Trojan   established  a sharp  $\ell^p$ inequality  for the $r$-variation of this  truncated singular Radon transform.
Specifically, given that  Bourgain's logarithmic lemma (see \cite{NOT10}) is generally very inefficient for $\ell^p$ estimates when $p\neq2$, they developed a new and flexible approach to cover the full range $p\in(1,\infty)$. This approach was  based on 
Rademacher-Menshov-type inequalities (numerical inequalities) and a direct analysis of the associated multiplier.
  In the present work,   since $\la(x)\not\equiv0$, the problem becomes more intricate, making it  difficult to apply the methodology from \cite{MST17}.  Nevertheless, numerical inequalities proven in \cite{MST17} will remain important  in the proofs of our main results. 
  For studies regarding related jump inequalities, we refer \cite{MSZ20, MSZ202}. For a comprehensive examination of the connections between variational inequalities and jump inequalities, we refer \cite{MSZ203}.

For the operator (\ref{ope1}) when $N=\infty$, represented by $\mathscr{C}_{\infty}$, it is closely linked to the discrete version of a maximal operator on $\mathbb{R}^n$, which was studied by Stein and Wainger \cite{SW01}, and  considered  as a generalization of the Carleson operator (see, e.g., \cite{Car66,Fe73,OSTTW12,LT20}).
Through linearization, the $\ell^p(\mathbb{Z}^n)$ estimate of the operator $\mathscr{C}_{\infty}$ is equivalent to that of the maximal operator $\mathscr{C}$ defined by
\beq\label{opp2}
\mathscr{C} f(x)=\sup_{u \in[0,1]}\big|\sum_{y\in \Z^n\setminus \{0\}}f(x-y)e\big(u|y|^{2d}\big){K}(y)\big|\ \ \ (x\in\Z^n).
\eeq
%For the case $d=n=1$, Krause and Lacey \cite{KL17} obtained a quadratic Carleson theorem on $\ell^2(\Z)$ with a restricted supremum,
The $\ell^2$ estimate  of the operator $\mathscr{C}$ in the case where $d=n=1$  was the focus of a question raised by Lillian Pierce during an AIM workshop in 2015.
 Krause and Roos \cite{KR22} proved the $\ell^2$ estimate for the operator $\mathscr{C}$ whenever $d\ge 1$ and $n\ge 1$,
 which resolved the above question;
  we refer    \cite{KL17,CHKL18} for  related works with a restricted supremum on $u$.
 Instead of  using Bourgain's logarithmic lemma,
 they handled  the full supremum on $u$ by combining  number-theoretic  components with a sophisticated multi-frequency analysis inspired by \cite{KL17}, and utilizing the Rademacher-Menshov-type inequality  demonstrated in \cite{MST17}.
   %%%
   Afterwards, through a fusion of the Ionescu-Wainger-type multiplier theorem (see \cite{IW05,MM18,TT21}) with techniques from \cite{KR22}, Krause and Roos \cite{KR23} successfully attained $\ell^p$ estimates for the operator $\mathscr{C}$ across all $p\in(1,\infty)$.  Very recently, Krause \cite{K24} considered
 a multi-parameter version   of
 $\mathscr{C}_N$,  featuring generic polynomials without linear terms in its phase, and established  $\ell^p$ estimates of the associated maximal function; in particular, the operator  $\mathscr{C}_*$, defined by
  $\mathscr{C}_*f:=\sup_{N\in\N}|\mathscr{C}_N f|$, is $\ell^p$ bounded for all $p\in(1,\infty)$.    While the maximal operator $\mathscr{C}_*$ presents a more robust framework, the techniques employed to bound $\mathscr{C}_N$ or $\mathscr{C}$ are equally applicable. However, 
  a distinctive approach is imperative to establish the variational inequality for $\{\mathscr{C}_N\}_{N\in\N}$ 
  since its validation necessitates a desired   multi-frequency analyse and vector-valued inequalities with respect to  the seminorm $V^r$ at this juncture.

 Motivated by the studies  in \cite{MST17} on  variational inequalities for  truncated singular Radon transforms, and the works in \cite{KR22,KR23,K24} on $\ell^p$ estimates for the  operator (\ref{ope1}), we are interesting in establishing     variational inequalities for the operator (\ref{ope1}). 
Our main result of this paper is the following theorem.
\begin{thm}\label{t1}
Let $n$ and $d$ be positive integers,
 and  let $\la(x)$ be an arbitrary  function from $ \Z^n$ to [0,1]. Suppose $r\in (2,\infty)$ and $p\in (1,\infty)$.
Then for any $R\ge 1$ and any $\e>0$, we have
\beq\label{t11}
\|(\mathscr{C}_{N} f)_{N\in\N}\|_{\ell^p(\B_R;V^r)}\les_{\e} R^{\e/r} \|f\|_{\ell^p(\Z^n)}
\eeq
with  the implicit constant  independent of $R$, $f$ and the function $\la(x)$.
\end{thm}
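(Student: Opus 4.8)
\emph{Strategy.} The plan is to combine the variational technology of \cite{MST17} for truncated singular Radon transforms with the circle-method and multi-frequency/time-frequency analysis of \cite{KR22,KR23,K24}, the genuinely new ingredient being a $V^r$-valued multi-frequency inequality; the scale-loss $R^{\e/r}$ will come out of an interpolation against the (scale-free) bounded maximal operator. The crucial use of the localization to $\B_R$ is that $\la$ restricted to $\B_R$ takes at most $|\B_R|\lesssim R^n$ distinct values, so on $\B_R$ the family $\{\mathscr{C}_N\}$ involves a multi-frequency system of size only $\lesssim R^n$. Write $\mathscr{C}_N=\sum_{2^j\le N}A_j$ with $A_jf(x)=\sum_{y\in\B_{2^j}\setminus\B_{2^{j-1}}}f(x-y)e(\la(x)|y|^{2d})K(y)$ (plus an elementary boundary term from $N\in(2^{j_0},2^{j_0+1}]$, absorbed by a trivial maximal/square-function bound). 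Decompose the $V^r$-seminorm into its long-variation part, over dyadic $N=2^J$, and its short-variation part, over $N\in[2^J,2^{J+1}]$; for the former, invoke the Rademacher-Menshov-type numerical inequalities of \cite{MST17} to dominate it by an $\ell^2$-sum over dyadic block-sums $B_{k,i}f=\sum_{k2^i<j\le(k+1)2^i}A_jf$, with a gain in $i$ to be extracted wherever possible. This reduces matters to $\ell^p(\B_R)$-bounds for the square functions $\big(\sum_k|B_{k,i}f|^2\big)^{1/2}$ and for the short-variation analogue.

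\emph{Circle method at a single scale.} Fix $j$. For each $x$, use Dirichlet's approximation theorem to pick a rational $a/q$ near $\la(x)$ at the resolution dictated by scale $2^j$, and split according to whether $q\le 2^{\de j}$ (major arc $\mathfrak{M}_{a/q}$) or not (minor arc $\mathfrak{m}_j$). On $\mathfrak{m}_j$, Weyl/van der Corput estimates for the exponential sum $\sum_y e(\la|y|^{2d})K(y)\psi_j(y)$ attached to the annulus $\{|y|\sim 2^j\}$ yield a single-scale operator bound $O(2^{-\de' j})$, uniform in $x$; the resulting power decay renders the minor-arc part of every short variation, long variation and block-sum summable, hence harmless. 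On $\mathfrak{M}_{a/q}$ one replaces the exponential sum by its main term — a normalized Gauss sum $g(a,q)$ times a slowly varying (essentially Archimedean) multiplier in $\la-a/q$ — which, after the usual further splitting, exhibits the major-arc part of $A_j$ as a multi-frequency operator: a sum over rationals $a/q$ (among the $\lesssim R^n$ relevant ones) of translation-invariant pieces with multipliers localized near the distinct points $a/q$ and twisted by the slowly varying factor.

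\emph{The variational multi-frequency estimate and interpolation.} This is the heart of the matter. What is needed is a $V^{r_0}$-valued, $\ell^p(\Z^n)$ estimate, with a loss only polylogarithmic in the number of frequencies, for the block-sums and short-variation functions assembled above from the major-arc pieces, for some (indeed every) fixed $r_0\in(2,r)$. I would establish it by interlacing three ingredients: the Ionescu-Wainger multiplier theorem, which (the relevant $a/q$ having denominators bounded by $2^{\de j}$) converts the disjoint-frequency-packet structure into a genuine $\ell^p\to\ell^p$ bound at the cost of a factor logarithmic in the number of frequencies; the Rademacher-Menshov and companion numerical inequalities of \cite{MST17} to linearize the $V^{r_0}$-seminorm; and the vector-valued Fefferman-Stein/Littlewood-Paley square functions together with single-scale $\ell^2$-Sobolev estimates to absorb the residual $\ell^2$-sums once the frequencies are decoupled — in effect a $V^{r_0}$-valued reprise of the Krause-Roos analysis. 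Since the number of frequencies is $\lesssim R^n$, this costs $\lesssim_\e R^{\e'}$, giving $\|(\mathscr{C}_Nf)_{N\in\N}\|_{\ell^p(\B_R;V^{r_0})}\lesssim_\e R^{\e'}\|f\|_{\ell^p}$. The theorem then follows by Hölder in $\ell^p(\B_R)$ from the elementary interpolation inequality $\|(a_N)\|_{V^r}\lesssim\|(a_N)\|_{V^{r_0}}^{r_0/r}\,\|(a_N)\|_{V^\infty}^{1-r_0/r}$ and the maximal estimate $\|\sup_N|\mathscr{C}_Nf|\|_{\ell^p}\lesssim\|f\|_{\ell^p}$ of \cite{K24}: one obtains a bound $R^{\e' r_0/r}$, and choosing $r_0=(r+2)/2$ and $\e'$ small enough yields exactly $R^{\e/r}$.

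\emph{Expected main obstacle.} The decisive difficulty — and the reason the arguments of \cite{KR22,KR23,K24} do not transfer verbatim — is carrying out the multi-frequency/time-frequency analysis in genuinely vector-valued ($V^{r_0}$) form while keeping the dependence on the number of frequencies polylogarithmic: the maximal-function version needs only a pointwise supremum, for which the frequency count enters mildly, whereas the $r$-variation forces one to couple the Ionescu-Wainger machinery with variational numerical inequalities and $V^{r_0}$-valued square functions and to control the resulting loss. A secondary technical point is arranging the minor-arc decay together with the single-scale $\ell^2$-Sobolev bounds so that the short-variation and block-sum errors remain summable over all dyadic blocks, despite the phase $e(\la(x)|y|^{2d})$ whose frequency $\la(x)$ cannot be treated as a Fourier multiplier.
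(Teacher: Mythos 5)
Your overall architecture — dyadic long/short splitting, circle-method decomposition into major and minor arcs, minor-arc power decay, reduction of the major arcs to multi-frequency operators indexed by rationals $a/q$, then a $V^{r_0}$-valued multi-frequency estimate followed by interpolation with the $V^\infty$ (maximal) bound to convert $R^\varepsilon$ into $R^{\varepsilon/r}$ — is the same skeleton the paper uses. The final interpolation step in particular matches the paper exactly (see the reduction of (\ref{t11}) to (\ref{t12}) at the start of Section \ref{pth1}). However, two of the paper's decisive mechanisms are not present in your proposal, and the mechanism you substitute for the first does not work.

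First, your explanation of where the scale loss comes from is wrong. You claim the localization to $\B_R$ matters because $\la$ takes $\lesssim R^n$ values there, and that the Ionescu–Wainger theorem then loses a factor polylogarithmic in the number of frequencies. But the Ionescu–Wainger theorem, in the form actually used (Proposition \ref{PIW}, Tao's version), loses nothing; and even the older versions lose in $N$ (the denominator threshold), not in a count of distinct frequencies. The paper's actual mechanism, in Lemmas \ref{ccz2}–\ref{endle}, is a case split on $2^{2^s}$ versus $R$: when $2^{2^s}>R$, one observes that $\A(\cdot)|_{\B_R}$ extends to a $2R$-periodic function and exploits a periodization and translation-averaging trick (over $[V_{s,R}]^n$ with $V_{s,R}=(RQ_s)^{10n}$), combined with the Mirek–Stein–Trojan transference principle (Proposition \ref{PMST}) to import the continuous $V^r$ estimate to $\Z^n$ — there is no $R$-loss in this regime; when $2^{2^s}\le R$, $s\lesssim\log\log R$, and one passes crudely from $\sup_{\A\in\mathcal{A}_s}$ to $\ell^p(\A\in\mathcal{A}_s)$, losing $2^{s/p}\lesssim(\log R)^{1/p}\lesssim_\varepsilon R^\varepsilon$. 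Neither branch uses the fact that $\la$ restricted to $\B_R$ has few values, and nowhere does Ionescu–Wainger supply an $R$-dependent factor. The transference principle is indispensable here and is entirely absent from your proposal.

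Second, for the short variation you propose to use the Rademacher–Menshov numerical inequality on block sums — but the paper explicitly identifies this as the step where the $\la(x)$-dependent, rough kernel blocks the Mirek–Stein–Trojan numerical approach. The actual proof of Proposition \ref{t31} (Section \ref{slong4}) abandons the numerical route and instead controls $\|\cdot\|_{V^2}$ in the continuous truncation parameter via the Plancherel–P\'olya embedding $B^{1/r}_{r,1}\hookrightarrow V^r$, an interpolation inequality for Besov norms, and a shifted square function estimate (Lemma \ref{Ap1}) to handle the translated bumps $h_j(\cdot-\tau\theta 2^{j+k})$, together with the Stein–Wainger-type bound and the new multi-frequency square function estimate (Lemma \ref{ccz}). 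None of this appears in the proposal, which collapses the short variation into the same block-sum/square-function pattern as the long variation. Without the Besov/shifted-maximal machinery your short-variation argument does not close.
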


Note that the variational seminorm is taken in the scale parameter $N$; it would also be interesting to consider variational estimates in the modulation parameter $\lambda$ (in the real-variable setting, such estimates were established by Guo–Roos–Yung \cite{GRY20}).

The   $R^{\e/r}$-loss in the upper bound of  (\ref{t11})  could be  improved to  a logarithmic loss in $R$ (for instance, $(\ln \langle R \rangle)^{C/r}$ for some constant   $C>0$), we choose not to persue this avenue  in order to enhance the clarity and presentation of this paper. For the details, see the reduction of (\ref{t11}) in Section \ref{pth1} and $Remark$ \ref{rr4.1} in Subsection \ref{vi1}. 

In  the special case where $d=1$ and $n\ge2$, we can eliminate this loss related to the scale $R$ on the right-hand side of (\ref{t11}) by  increasing  $p$ slightly. We now present our second  result.
\begin{thm}\label{co1}
Let $n\ge2$ be a positive integer and $d=1$,
and  let $\la(x)$ be an arbitrary  function from $ \Z^n$ to [0,1].
If $r\in (2,\infty)$ and $p\in [1+2/n,\infty)$,   we have
\beq\label{T12}
\|(\mathscr{C}_{N} f)_{N\in\N}\|_{\ell^p(\Z^n;V^r)}\les  \|f\|_{\ell^p(\Z^n)}
\eeq
with  the implicit constant  independent of $f$ and the function $\la(x)$.
\end{thm}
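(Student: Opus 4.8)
The plan is to follow the reduction carried out in Section~\ref{pth1} for Theorem~\ref{t1} and to replace the one step responsible there for the $R^{\e/r}$ loss --- the Ionescu--Wainger multi-frequency passage from $\ell^2$ to $\ell^p$ (cf.\ \cite{IW05}), which for $p\neq 2$ costs a logarithm in the number of frequencies and hence a factor $(\ln\langle R\rangle)^{C/r}$ over $\B_R$ --- by a loss-free argument available once $d=1$ and $p\ge 1+1/n$. As in Section~\ref{pth1} I linearise the phase through $\la(\cdot)$ and, at each dyadic frequency scale $2^j$, apply Dirichlet's principle to write $\la(x)=a/q+\theta$ with $q\le 2^j$ and $|\theta|\le (q2^j)^{-1}$. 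The \emph{minor arcs} $q>2^{j\tau}$ (for a small fixed $\tau>0$) are handled exactly as in the general-$d$ case: a Weyl estimate for $\sum_{|y|\le 2^j}e((a/q+\theta)|y|^2)K(y)e(-y\cdot\xi)$ gives a multiplier bound $\les 2^{-cj}$ with $c=c(\tau)>0$, which combined with the Rademacher--Menshov-type numerical inequalities of \cite{MST17} (valid since $r>2$) contributes to $\|(\mathscr{C}_N f)_N\|_{\ell^p(\Z^n;V^r)}$ a quantity summable in $j$ with no scale loss. On the \emph{major arcs} $q\le 2^{j\tau}$ the discrete operator is approximated --- up to errors that for $d=1$ decay in $2^j$ fast enough to be summed with no scale loss (this is the point where the quadratic phase is genuinely used; cf.\ Remark~\ref{rr4.1}) --- by a superposition over dyadic blocks $q\sim 2^k$, $k\le j\tau$, of operators of the form
\[
f\longmapsto\sum_{q\sim 2^k}\ \sum_{(a,q)=1}\ \ind{E^{(j)}_{a/q}}(\cdot)\ \mathfrak{g}(a,q)\,\big(\mathcal{D}^{\,\theta}_{2^j}f\big)(\cdot),\qquad \theta=\la(\cdot)-a/q,
\]
where $\mathcal{D}^{\,\theta}_{2^j}f=\int f(\cdot-y)e(\theta|y|^2)K(y)\psi_{2^j}(y)\,dy$ is a continuous Carleson-type piece, the sets $\{E^{(j)}_{a/q}\}_{a,q}$ are pairwise disjoint (they are level sets of the scalar $\la(\cdot)$), and $\mathfrak{g}(a,q)$ is the normalised quadratic Gauss sum. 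The decisive feature of $d=1$ is that $|y|^2=y_1^2+\dots+y_n^2$, so $\mathfrak{g}(a,q)$ factors into $n$ one-dimensional Gauss sums of modulus $q^{1/2}$, and therefore $|\mathfrak{g}(a,q)|\les q^{-n/2}$ --- the full square-root cancellation not available for $d\ge 2$.

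For each dyadic block $q\sim 2^k$ I would prove two estimates and interpolate, after first using the numerical inequalities of \cite{MST17} to reduce the $V^r$-seminorm to dyadic square functions and a long-variation piece, each of which interpolates in the scalar exponent. At $p=2$: the disjointness of $\{E^{(j)}_{a/q}\}$, the bound $|\mathfrak{g}(a,q)|\les q^{-n/2}$, and the $L^2(\R^n)$ bound for the $r$-variation of the continuous Stein--Wainger Carleson-type operator $\sup_u\big|\int f(\cdot-y)e(u|y|^2)K(y)\,dy\big|$ (its $L^2$ boundedness is in \cite{SW01}, and the $r$-variational refinement for $r>2$ follows by the same circle of ideas together with \cite{MST17}) give a block bound $\les 2^{-kn/2}\|f\|_{\ell^2}$. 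At an exponent $p_0$ slightly above $1$: using only the disjointness of the supports, the already-available $\ell^{p_0}$ bounds for the individual frequency pieces (from the ingredients of Theorem~\ref{t1}, equivalently \cite{KR22,KR23,K24}), and the count of admissible fractions, one obtains a block bound $\les 2^{k\alpha(p_0)}\|f\|_{\ell^{p_0}}$ with an explicit $\alpha(p_0)$. Interpolating, the block bound becomes $\les 2^{-k\delta(p)}\|f\|_{\ell^p}$; balancing the Gauss-sum gain $2^{-kn/2}$ against $\alpha(p_0)$ makes $\delta(p)>0$ precisely for $p>1+1/n$, and then the geometric series in $k$ (together with the summation in $j$) closes the bound (\ref{T12}); the endpoint $p=1+1/n$ is recovered by a routine restricted-weak-type / Lorentz-space refinement of the same interpolation. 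All constants are independent of $f$ and $\la$ since only worst-case information about the partitions $\{E^{(j)}_{a/q}\}$ is used.

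\textbf{The main obstacle} is the second part. One must produce the crude $\ell^{p_0}$ block estimate with an \emph{honest}, explicit exponent $\alpha(p_0)$ that, balanced against $2^{-kn/2}$, gives break-even exactly at $p=1+1/n$ rather than at a worse value --- this is where the quadratic structure has to be exploited sharply --- and one must carry the interpolation through while preserving the full variational information: $V^r$ does not interpolate as transparently as an $L^p$ norm, so the reduction to square functions and long variation, the propagation of the Gauss-sum gain through that reduction, and the reassembly must all be controlled uniformly over the infinitely many scales $j$, and the major-arc circle-method errors must be summed in $j$ without any residual scale loss, which is exactly the place where the hypothesis $d=1$ is indispensable.
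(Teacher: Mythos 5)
You correctly identify the decisive input: for $d=1$ the exponential sum $S(a/q,b/q)$ factors into $n$ one-dimensional quadratic Gauss sums, giving $|S(a/q,b/q)|\les q^{-n/2}$, and balancing this square-root cancellation against the cardinality of $\mathcal{A}_s$ produces the threshold $p=1+1/n$. This is exactly the mechanism in the paper's Lemma~\ref{L147}, which upgrades the supremum over $\A$ in Lemma~\ref{de1} to an $\ell^p$-vector-valued bound over $\A\in\mathcal{A}_s$: interpolating the $\ell^2$ estimate $2^{-ns/2}$ (Gauss sums plus Plancherel) with the trivial kernel bound, then applying Fubini and (\ref{new1}), yields $\mathcal{W}_{p,s}=2^{s/p-(ns/2)\min\{2/p,2/p'\}}\le 1$ exactly when $p\ge 1+1/n$.

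There are, however, two issues. First, you locate the $R^\e$-loss in (\ref{t11}) in the Ionescu--Wainger multi-frequency passage, but the paper uses Tao's loss-free version (Proposition~\ref{PIW}); the loss actually originates in the shift-averaging argument of the multi-frequency variational Lemmas~\ref{ccz2} and~\ref{endle}, concretely in the regime $2^{2^s}\le R$ of (\ref{Goo2})/(\ref{c81}), where the supremum over $\A\in\mathcal{A}_s$ is replaced by an $\ell^p$-sum contributing $\sim 2^{s/p}$ and $2^s\les\log R$. Second --- and this is the genuine gap --- the ``honest, explicit $\alpha(p_0)$'' that you flag as the main obstacle is precisely the content that Lemma~\ref{L147} supplies; your plan cannot close without it, yet around this missing piece you propose to rebuild the entire major/minor decomposition, the variational control, and a new interpolation scheme, whereas the paper's route is a surgical replacement: reduce Theorem~\ref{co1} to removing the $R^\e$-loss from (\ref{dou12}), prove the $\ell^p(\mathcal{A}_s)$-summed Gauss-sum estimate of Lemma~\ref{L147}, and rerun the proof of Lemma~\ref{ccz2} with the $R$-independent period $V_{s,1}$, with the rest of the Theorem~\ref{t1} machinery carried over verbatim. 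Your appeal to ``the same circle of ideas together with \cite{MST17}'' for the continuous Stein--Wainger $r$-variation also hides most of the work that Sections~\ref{slong2}--\ref{slong4} actually do. Finally, no restricted-weak-type or Lorentz-space refinement is needed at $p=1+1/n$: because Lemma~\ref{L147} only loses $2^{\e s}$, one interpolates (\ref{DDD12}) with the $r=\infty$ maximal estimate (\ref{AZZZ1}), which carries a genuine $2^{-c_p s}$ gain, to recover geometric decay in $s$ for every $r>2$ uniformly down to the endpoint.
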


Comments on Theorems \ref{t1} and \ref{co1}     are given as follows:

\begin{itemize}

\item
The upper bound $R^{\epsilon/r}$ in (\ref{t11}) converges to 1 as $r$ approaches infinity, ensuring that (\ref{t11}) aligns with the estimates derived by \cite{KR22,KR23,K24}.
Indeed, the inequality (\ref{t11}),  allowing for a logarithmic loss with respect to  the scale $R$,  is sharp and strengthens the estimates by Krause-Roos \cite{KR22,KR23}.
% and the corresponding maximal estimate by Krause \cite{K24}. 
 Moreover, the domain $\B_R$ on the left side of (\ref{t11}) can be substituted by any $\B_R(z):=\{x\in \Z^n: |x-z|\le R\}$ with $z\in \Z^n$, which guarantees the convergence of $\lim_{N\to \infty} \mathscr{C}_{N} f(x)$ for $x\in \Z^n$.

\item
The regularity assumption\footnote{While this assumption aligns with that in Krause's recent work \cite{K24}, his approach may not be well-suited for demonstrating the requisite variational inequalities.}  $\Omega \in \mathcal{C}^1(\mathbb{R}^n \setminus {0})$ relaxes the higher regularity requirements for $\Omega$ found in \cite{KR22, KR23} (see the proof of (7.11) in \cite{KR22}). Furthermore, in the one-dimensional case, our approach can be applied to the operator (\ref{ope1}) with the phase $|y|^{2d}$ replaced by  $y^m$, where $m \geq 3$ is any odd integer.

\item
%The range of $p$ for  
   The range of $p$ in Theorem \ref{co1} can be extended to a slightly bigger interval $(1+2/n-\tilde c,\infty)$ with some small $\tilde c>0$ (see $Remark$ \ref{r100} in Section \ref{slong5} for the details).
%By the way, (\ref{T12}) for the case $n=1$ notably enhances the $\ell^2(\mathbb{Z})$ estimate, a central focus in the question posed by Lillian Pierce.
Moreover, the inequality (\ref{T12}) 
extends  to the operator (\ref{ope1}) with the phase $|y|^2$ replaced by a generic phases (including forms like  $y_1^d\pm\cdots\pm y_n^d$, $d\ge 2$). This extension holds under the conditions
 $n\ge C$ and $p\in [1+c,\infty)$ 
 for some positive   $C=C(d)$ and $c=c(n,d)$.

 \item
 We expect the jump inequalities (for $r=2$) associated with (\ref{t11}) and (\ref{T12}) to hold, though we omit the details here. These can be derived by combining the techniques from our current work, additional properties of jump inequalities from \cite{MSZ203}, and a variant of the transference principle stated in Proposition \ref{PMST} below (which can be deduced by suitably adapting its proof).

 %\item Add an $\e$-remove arguments??
 \end{itemize}
\subsection{Overview of the proof}
\label{diff}
We first  provide  the  novelties   applied in the proof of our  results. Specifically, the  novelties  primarily arise in establishing the major arcs estimates.

 \begin{itemize}

  \item
The primary innovation in this paper lies in establishing a crucial multi-frequency variational inequality (see Lemma \ref{endle} below), which serves as a key element in proving major arcs estimate II (see Proposition \ref{892} below) and subsequently attaining the desired long variational inequality (\ref{long}). Essentially, this innovative multi-frequency variational inequality can be viewed as an extension of the double maximal estimate presented in Lemma 7.2 of \cite{KR22}. However, since the seminorm $V^r$ does not guarantee that $\|(f_N)_{N\in\mathbb{N}}\|_{V^r}\lesssim \|(g_N)_{N\in\mathbb{N}}\|_{V^r}$ whenever $|f_N|\lesssim |g_N|$ for all $N\in\mathbb{N}$, applying the approach yielding  Lemma 7.2 in \cite{KR22} to achieve this objective becomes challenging.
To overcome this difficulty, we introduce a practical multi-frequency square function estimate (see Lemma \ref{ccz} below) and combine  various techniques such as the classical variational inequality in the continuous setting, the Ionescu-Wainger-type multiplier theorem, a transference principle by Mirek-Stein-Trojan,  and a Rademacher-Menshov-type inequality.
For more details, see Subsection \ref{vi1} below.

   \item
Another novelty is the strategy used  to overcome the difficulty posed by the rough and variable-dependent kernel associated with the operator (\ref{ope1}), which hinders the application of numerical inequalities in addressing major arcs estimate III (see Proposition \ref{t31} below) concerning the short variation. 
To tackle this issue, we will combine the previously mentioned multi-frequency square function estimate, the shifted square function estimate on $\mathbb{R}^n$ (see   Appendix \ref{app}), the Plancherel-P\'{o}lya inequality.
%Remarkably, the Plancherel-P\'{o}lya inequality,  typically used for establishing variational inequalities in continuous settings, prove to be unexpectedly useful in this context.

 \item
Apart from the aforementioned difficulties, two tricks are pivotal in the forthcoming proof. Firstly, directly achieving  $\ell^p$ ($1<p<2$)   major arcs estimates I and II  poses a significant challenge. This scenario is a common occurrence in studies based on  interpolation. To address this issue, we shall  revisit the original operator and approach this problem from a fresh perspective. This strategic maneuver constitutes the first trick that will be employed.
Secondly,  we leverage the Gauss sum bounds to establish an inequality (see (\ref{L149}) below)  that is more suitable for bounding the seminorm $V^r$ of the target operator compared to the maximal estimate (see Lemma \ref{de1} below). This is the second trick, which aids in eliminating the loss related to the scale $R$ in the upper bound of (\ref{T12}).

 \end{itemize}

We will now outline the proofs of our  results. The first  step in establishing both (\ref{t11}) in Theorem \ref{t1} and (\ref{T12}) in Theorem \ref{co1} is to reduce  the focus to the long and short variational estimates.

 \begin{itemize}

  \item

  Sketch of the proof of Theorem \ref{t1}:  The long and short variational inequalities  are formulated in (\ref{long}) and (\ref{short8}), respectively.
  Due to the presence of $\lambda(x)$,   it is hard to employ 
    the approach in \cite{MST17}   to bound the $r$-variation for the operator (\ref{ope1}).
    To address this, we will initially adopt the strategy from \cite{KR22}, dividing the multiplier into a number-theoretic approximation and an error term (this procedure goes back to  Bourgain \cite{Bour89}, and is an application of the Hardy-Littlewood circle method). 
    By combining   minor arcs estimates from \cite{KR22} and a numerical inequality (see  (\ref{k01}) below) from \cite{MST17}, we can establish desired minor arcs estimates (see (\ref{minor1}), (\ref{minor2}), and Proposition \ref{addpp1} below) in this paper. As a result,
    we reduce  the matter   to proving  major arcs estimates I, II and III (see  Propositions \ref{t21}, \ref{892}, and \ref{t31} below).
%While major arcs estimates are pivotal in the proof, the reduction facilitated by  minor arcs estimates from \cite{KR22} holds significant importance.

    To prove   major arcs estimates I and II with respect  to the long variation, we conduct a direct analysis of three associated  multipliers. This involves establishing a multi-frequency square function estimate (see Lemma \ref{ccz} below) and two multi-frequency variational inequalities (see Lemmas \ref{ccz2} and \ref{endle} below). The scale loss in the upper bound of (\ref{t11}) arises from these variational inequalities. Additionally, we utilize  a transference principle (see (\ref{cv1})) proven in \cite{MST17} and a Rademacher-Menshov-type inequality to support our analysis.
For major arcs estimate III with respect  to the short variation, we rely on the above mentioned  multi-frequency square function estimate and the shifted square function estimate detailed in the Appendix. Furthermore, the proof benefits from two maximal estimates  established by Krause and Roos in \cite{KR22, KR23} (see  Lemma \ref{de1} below), along with the application of the Stein-Wainger-type theorem. 

    \item

 Sketch of the proof of Theorem \ref{co1}: The proof mirrors the arguments leading to Theorem \ref{t1}, with  (\ref{dou12}) replaced by a new estimate (\ref{DDou12}), which removes the loss related to the scale  $R$. This inequality (\ref{DDou12}) is established by amalgamating Lemma \ref{L147}, a more robust rendition of Lemma \ref{de1}, with the Gauss sum bounds.

  \end{itemize}

\subsection{Organization}
In Section \ref{pres2}, we introduce some important theorems, inequalities  and  related notations used in the following proofs of our main results.   In Section \ref{pth1}, we give the proof of  Theorem \ref{t1} and make a crucial  reduction of  (\ref{t11}); we shall use the minor arcs estimate obtained by Krause and Roos \cite{KR22} as a black box, and reduce the proof of Theorem \ref{t1} to proving three major arcs estimates given by Propositions \ref{t21}, \ref{892} and \ref{t31}. In Section \ref{Fpre}, we provide crucial auxiliary results for establishing these  major arcs estimates.   In   Section \ref{slong2},  Section \ref{slong3} and  Section \ref{slong4},
we prove Proposition \ref{t21}, Proposition \ref{892} and Proposition \ref{t31}  in order.
In Section \ref{slong5}, we prove
 Theorem  \ref{co1}. In Appendix \ref{app}, we provide a shifted square estimate used in the proof of Lemma \ref{l763} (Section \ref{slong4}). Finally, Appendix \ref{appendixB} contains the proof of the second minor arcs estimate \eqref{azq40}.

\subsection{Notation.} \label{Not}
We use the Japanese bracket notation $ \langle x \rangle := (1 + |x|^2)^{1/2}$ for any real or complex $x$.
   For any two quantities $x,y$ we will write $x\lesssim y$  to denote   $x\le Cy$ for some absolute constant $C$. The notation $A=B+\mathcal{O}(X)$ means
   $|A-B|\les X$.
   If we need the implied constant $C$
to depend on additional parameters,  we will denote this by subscripts. 
 If both $x\lesssim y$ and $y\les x$ hold, we use $x\sim y$. To abbreviate the notation we will sometimes permit the implied constant to depend on certain fixed parameters when the issue of uniformity with respect to such parameters is not of relevance. The constant $C$ may vary at each appearance in this paper.
 
 We denote the positive integers by $\N:=\{1,2,\dots\}$ and the natural numbers by  $\N_0:=\N\cup\{0\}$.  The set of dyadic numbers is defined as
 $\mathcal{D}=\{2^n:n\in\N_0\}$.
 For any $a> 0$, $\lfloor a\rfloor$ denotes the largest integer not larger than $a$.
 For any $N > 0$, we use $[N]$ or $\N_N$ to denote the discrete interval $\{ n \in \N: n \leq N \}$.   If $a,q \in \N$, we let $(a,q)$ denote the greatest common divisor of $a$ and $q$.
  Moreover, $\ind{E}$ denotes the indicator function of a set $E$, that is, $\ind{E}(x):= \ind{x \in E}$. 

 We use $f*g$ and $f*_{\R^n}g$ to represent the convolution  on $\Z^n$ and $\R^n$, respectively, that is,
$$f*g(x):=\sum_{y\in\Z^n}f(x-y)g(y)\ \ (x\in\Z^n)\ \ {\rm and}\ \  f*_{\R^n}g(z):=\int_{\R^n}f(z-y)g(y)dy\ \ (z\in\R^n).$$
 We denote by  $M_{HL}$   the classical  Hardy-Littlewood maximal operator on $\R^n$, and by   $M_{DHL}$ the discrete Hardy-Littlewood maximal operator on $\Z^n$. For each $S\subset \Z$, we utilize
$\|(a_k)_{k\in S}\|_{\ell^r}$ or $\|a_k\|_{\ell^r(k\in S)}$ to denote $(\sum_{k\in S}|a_k|^r)^{1/r}$  if $r<\infty$, and use $\|(a_k)_{k\in S}\|_{\ell^\infty}$ or  $\|a_k\|_{\ell^\infty(k\in S)}$ to denote $\sup_{k\in S}|a_k|$.
%  And we use $f*_{\R^n}g$ to represent the convolution between $f$ and $g$ on $\R^n$, which is defined  by  $(f*_{\R^n}g)(x):=\int_{\R^n}f(x-y)g(y)dy$.
 %and   $|S|$   represents the  Lebesgue measure of the set $S$.
%apply
%$\|\cdot\|_p$ to stand for $\|\cdot\|_{L^p(\R^n)}$ in some places of this paper.
Throughout   this paper, we fix a cutoff function $\psi:\R^n\to [0,1]$,
%and $\psi_\circ:\R\to [0,1]$,
which   is supported  in $\{\xi\in \R^n:\ 1/2\le |\xi|\le 2\}$,
%,  and $\psi_\circ$ equals one on supp$\psi$.
and set $\psi_l(\xi):=\psi(2^{-l}\xi)$ for any $l\in\Z$ such that  the partition of unity $\sum_{l\in\Z}\psi_l(\xi)=1$ holds for all $\xi\in \R^n\setminus \{0\}$.  Moreover, we also need another   partition of unity
 $\chi(\xi)+\sum_{l\ge 1}\psi_l(\xi)=1$  for all $\xi\in \R^n$, which implies that
 $\chi(\xi)=\sum_{l\le 0}\psi_l(\xi)$ whenever $\xi\in \R^n\setminus \{0\}$.
For each $j\in\Z$, we denote by $P_j$ the Littlewood-Paley projection on $\R^n$, which is  defined by
%Littlewood-Paley decomposition $f=\sum_{j\in\Z}P_j f$, where
 %$\{P_j\}_{j\in\Z}$ are the usual  Littlewood-Paley projections on $\R^n$ with
$\widehat{P_jf}(\xi):=\psi_j(\xi)\widehat{f}(\xi)$.% we use $\tilde{P}_{j}$ which may vary line by line  to denote the  variant of the Littlewood-Paley operator ${P}_{j}$.
% whenever $j\in\Z$.
 %%
 % \subsection{\bf Acknowledgements.}
% \section*{Acknowledgements}
 %The author would like to thank  Prof. Jiecheng Chen and Prof. Meng Wang for discussions related to the topic of this paper.
 %This work was supported by the NSF of China 11901301.
 %%
 \vskip.1in
\section{Preliminaries}
\label{pres2}
\subsection{Fourier transforms and Fourier multipliers}
For Fourier transform of functions $f:\Z^n\to \mathbb{C}$, $g:\T^n\to \mathbb{C}$, we use the notations
 $$\widehat{f}(\xi)=\F_{\Z^n}f(\xi):=\sum_{x\in\Z^n} e(-\xi\cdot x)f(x),\ \ \ \
 \  \F_{\Z^n}^{-1}(g)(x):=\int_{\T^n} e(\xi\cdot x) g(\xi) d\xi,
 $$
 where $\T^n=(\R/ \Z)^n$. For Fourier transform of  function $h:\R^n\to \mathbb{C}$,  we write
  $$\widehat{h}(\xi)=\F_{\R^n}h(\xi):=\int_{\R^n}e(-\xi\cdot x)h(x)dx,\ \ \ \
 \  \check{h}(x)=\F_{\R^n}^{-1}(h)(x):=\widehat{h}(-x).
 $$
 In particular,  we will denote by $\widehat{f}$ the Fourier transform of $f$ on $\Z^n$ or $\R^n$ unless
   the distinction is not  clear from the context or is emphasized for other reasons.

For a bounded  function $m:\R^n\to \mathbb{C}$,  we define   
%denote by   $m(D)$ the associated   Fourier
%multiplier  on $\R^n$, which is %given by
\beq\label{multi2}
m(D)g(x):=\ \F^{-1}_{\R^n}(m~\F_{\R^n}g)(x)\ \ \ \ \ \  (x\in\R^n).
\eeq
In addition, if $m$ is 1-periodic, we also let 
%denote by   $m(D)$ the corresponding  Fourier
%multiplier  on $\Z^n$, which is defined as
\beq\label{multi1}
m(D)f(x):=\ \F^{-1}_{\Z^n}(m~\F_{\Z^n}f)(x)\ \ \ \ \ \ (x\in\Z^n).
\eeq
It will always be clear from the context which one is meant.

\subsection{$V^r$, $\V^r$  and related inequalities}
\label{semi}
Let $1 \leq r < \infty$.
For  any sequence
 $(\mathfrak{a}_t)_{t\in \I}$ of complex number with $\I\subset \Z$, the
 $r$-variation  seminorm is defined by the formula
 \begin{equation}\label{var-seminorm}
  \| (\mathfrak {a}_t)_{t \in \I} \|_{V^r}:=
 \sup_{J\in\N} \sup_{\substack{t_{0} <  \dotsb < t_{J}\\ \{t_{j}\}\subset\I}}
\big(\sum_{j=0}^{J-1}  |\mathfrak {a}_{t_{j+1}}-\mathfrak {a}_{t_{j}}|^{r} \big)^{1/r},
 \end{equation}
where the  supremum is taken over all finite increasing sequences in $\mathbb I$, and is set by convention to equal zero if $\I$ is empty.   This seminorm $V^r$ governs the supremum  norm as follows: For  any $t_0\in \I$,
\beq\label{Ad1}
\sup_{t\in\I}|\mathfrak {a}_t|\le |\mathfrak {a}_{t_0}|+\| (\mathfrak {a}_t)_{t \in \I} \|_{V^r}.
\eeq
%Taking limits as $r \to \infty$ we also adopt the convention
 %\begin{align*}
 %\| (\mathfrak{a}_t)_{t \in \I} \|_{V^\infty(\I;B)}:= \sup_{t \leq t' \in \I} \|\mathfrak{a}(t') - \mathfrak{a}(t)\|_B.
 %\end{align*}
 Let $\mathcal{B}\subset \N$.
 The long variation seminorm $V^r_L$ of a sequence $\big(\mathfrak {a}_j: j\in \mathcal{B}\big)$ is defined  by
$$\|(\mathfrak {a}_j)_{j\in \mathcal{B}}\|_{V^r_L}:=\|(\mathfrak {a}_j)_{j\in \mathcal{B}\cap \mathcal{D}}\|_{V^r},$$
while
the associated  short variation  seminorm $V^r_S$  is given by
$$\|(\mathfrak {a}_j)_{j\in \mathcal{B}}\|_{V^r_S}:=\big(\sum_{n\in\N_0}\|(\mathfrak {a}_j)_{j\in \mathcal{B}_n}\|_{V^r}^r\big)^{1/r},\ \ {\rm where}\ \    \mathcal{B}_n:=\mathcal{B}\cap [2^n,2^{n+1}).$$
We can reduce the $r$-variation seminorm estimate to bounding the long and short variation seminorm estimates by the following inequality:
\beq\label{redu21}
\|(\mathfrak {a}_j)_{j\in \N}\|_{V^r}
\les \|(\mathfrak {a}_j)_{j\in \N}\|_{V^r_L}+\|(\mathfrak {a}_j)_{j\in \N}\|_{V^r_S}.
\eeq
For the proof of (\ref{redu21}), we refer \cite{JSW08}.
Next, we introduce two  numerical   inequalities, which play an important role in proving  our main results.
\begin{prop}\label{number}
(i) (Rademacher-Menshov inequality) Let $\mathfrak{s}\in\N$ and  $2\le r<\infty$. For any sequence $(\mathfrak {a}_j:0\le j\le 2^\mathfrak{s})$ of complex numbers, we
have
\beq\label{num1}
\|(\mathfrak {a}_j)_{j\in[0,2^\mathfrak{s}]}\|_{V^r}\le \sqrt{2}\sum_{i=0}^\mathfrak{s}\big( \sum_{j=0}^{2^{\mathfrak{s}-i}-1}|\mathfrak {a}_{(j+1)2^i}-\mathfrak {a}_{j2^i}|^2\big)^{1/2}.
\eeq
(ii) Let $1\le r\le p<\infty$ and $v-u\ge 2$ with $u,v\in\N$. If  $\{f_j:j\in\N\}$ is a sequence of functions in $\ell^p(\Z^n)$,  then we have 
\beq\label{k01}
\big\|\|(f_j)_{ j\in[u,v]}\|_{V^r}\big\|_{\ell^p(\Z^n)}
\les \max\big\{U_p,~(v-u)^{1/r}U_p^{1-1/r}V_p^{1/r}\big\},
\eeq
where
$U_p:=\max_{u\le j\le v}\|f_j\|_{\ell^p(\Z^n)}$ and $V_p:=\max_{u\le j< v}\|f_{j+1}-f_j\|_{\ell^p(\Z^n)}$.
\end{prop}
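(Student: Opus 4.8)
\textbf{Proof proposal for Proposition \ref{number}.}

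Part (i) is the classical Rademacher--Menshov inequality, and the cleanest route is a dyadic-block induction. The plan is to fix an increasing finite subsequence $t_0 < \dots < t_J$ in $[0,2^{\mathfrak s}]$, and for each consecutive pair $(t_j, t_{j+1})$ decompose the increment $\mathfrak a_{t_{j+1}} - \mathfrak a_{t_j}$ telescopically through the endpoints of the canonical dyadic intervals of each scale $2^i$ that are nested between $t_j$ and $t_{j+1}$. Since $[0,2^{\mathfrak s}]$ admits at most one dyadic interval of each length $2^i$ strictly inside any $(t_j,t_{j+1})$ that is ``maximal'' at that scale, the total number of difference terms of scale $2^i$ used across all $j$ is at most $2^{\mathfrak s-i}$, and they are supported on disjoint dyadic pairs $\{j2^i,(j+1)2^i\}$. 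Summing the $\ell^r$-norm of the original increments, using $\ell^r \hookrightarrow \ell^2$ (as $r \ge 2$) on each fixed scale and then the triangle inequality across the $\mathfrak s+1$ scales, produces the right-hand side; the $\sqrt 2$ accounts for the two ``halves'' (left/right of the split point) in the standard argument. I would cite the standard reference (e.g. \cite{JSW08} or \cite{MST17}) rather than reproduce the full combinatorial bookkeeping, since this is a textbook numerical lemma.

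Part (ii) follows from (i) by taking $\ell^p(\Z^n)$ norms and interpolating the two extreme estimates that the dyadic decomposition provides. Concretely, apply (\ref{num1}) pointwise to the sequence $(f_j(x))_{u \le j \le v}$ after padding it to a dyadic length $2^{\mathfrak s}$ with $2^{\mathfrak s} \sim v-u$; this bounds $\|(f_j)_{j\in[u,v]}\|_{V^r}(x)$ by $\sqrt 2$ times a sum over scales $0 \le i \le \mathfrak s$ of square functions $\big(\sum_{j}|f_{(j+1)2^i}(x) - f_{j2^i}(x)|^2\big)^{1/2}$. Taking $\ell^p(\Z^n)$ norms, for $p \ge 2$ one controls each square function by $\ell^r$-in-$j$ (cheaper since $r \le p$) or by the triangle inequality; the point is that each term $f_{(j+1)2^i} - f_{j2^i}$ is itself a telescoping sum of at most $2^i$ consecutive differences, hence has $\ell^p$ norm $\lesssim 2^i V_p$, and there are $\lesssim (v-u)2^{-i}$ of them, while also each such term has $\ell^p$ norm $\lesssim 2 U_p$. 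Interpolating these two bounds with exponents $1/r$ and $1-1/r$ inside the count-versus-size trade-off gives, for the scale-$i$ contribution, a bound $\lesssim \min\{2^i V_p,\,U_p\}\cdot ((v-u)2^{-i})^{1/r}$-type expression; summing a geometric-type series over $i$ yields $\max\{U_p, (v-u)^{1/r}U_p^{1-1/r}V_p^{1/r}\}$ up to the implicit constant.

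The main obstacle, such as it is, is organizing the interpolation in part (ii) so that the sum over dyadic scales converges to the stated maximum rather than picking up an extra logarithmic factor in $v-u$: one must split the scales at the threshold $2^{i_0} \sim U_p/V_p$, bounding small scales by the ``size'' estimate $U_p$ and large scales by the ``telescoping'' estimate $2^i V_p$, so that in each regime the geometric series in $i$ (after the $((v-u)2^{-i})^{1/r}$ weight) sums cleanly. This is the same mechanism as in the proof of the corresponding estimate in \cite{MST17}, and I would follow that template. Everything else is routine: the padding to dyadic length changes $v-u$ by at most a factor of $2$, and the passage from the pointwise bound to the $\ell^p$ bound is just Minkowski's and the triangle inequalities since $1 \le r \le p$.
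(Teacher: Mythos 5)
Your sketch of part (i) is broadly right (modulo the direction of the embedding: for $r\ge 2$ you want $\|\cdot\|_{\ell^r}\le\|\cdot\|_{\ell^2}$, so (\ref{num1}) is really the $r=2$ Rademacher--Menshov inequality combined with the monotonicity $V^r\le V^2$). Your derivation of (ii) from (i), however, has a genuine gap. After applying (\ref{num1}) pointwise and then Minkowski's inequality in $\ell^p(\Z^n)$ (which already forces $p\ge 2$), the $\ell^2$ structure of the inner sum in (\ref{num1}) produces a factor $\big((v-u)2^{-i}\big)^{1/2}$ for the scale-$i$ contribution, not $\big((v-u)2^{-i}\big)^{1/r}$; no interpolation improves this, because the square function in (\ref{num1}) is fixed at $\ell^2$ independently of $r$. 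Carrying out the scale split with the correct exponent $1/2$ gives $\max\{U_p,(v-u)^{1/2}U_p^{1/2}V_p^{1/2}\}$, which agrees with (\ref{k01}) only at $r=2$ and is strictly weaker for $r>2$ (test $f_j=(-1)^{j}g$, for which (\ref{k01}) is sharp with exponent $1/r$). In addition, (\ref{num1}) is only valid for $r\ge 2$, while (\ref{k01}) is asserted for $1\le r\le p$ and is in fact invoked in this paper with $r=r_p=\min\{2,p\}$ in the proof of Proposition \ref{addpp1}; for $r<2$ one has $V^r\ge V^2$, so there is no way to enter (\ref{num1}) at all.

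The argument in \cite[Lemma 2.2]{MST17}, to which the paper actually defers, does not pass through (\ref{num1}). Fix a block length $1\le h\le v-u$, set $n_k=u+kh$, and use the elementary decomposition
\begin{equation*}
\|(f_j(x))_{j\in[u,v]}\|_{V^r}^r\les\|(f_{n_k}(x))_k\|_{V^r}^r+\sum_k\|(f_j(x))_{j\in[n_k,n_{k+1}]}\|_{V^r}^r,
\end{equation*}
obtained by routing each increment of a given partition through the nearest block endpoints. For the coarse term, $V^r\hookrightarrow\ell^r$ (see (\ref{varsum})) together with Minkowski's inequality for $r\le p$ gives $\les\big((v-u)/h\big)^{1/r}U_p$ in $\ell^p(\Z^n)$. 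For the block terms, $\|\cdot\|_{V^r}\le\|\cdot\|_{V^1}$ plus a power-mean estimate and Minkowski give $\les h^{1-1/r}(v-u)^{1/r}V_p$. Optimizing $h\sim\min\{U_p/V_p,\,v-u\}$ yields (\ref{k01}) for the whole range $1\le r\le p$, with no use of a dyadic martingale structure; that is the template you should be following.
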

%To handle the short variation,
%we also need the following inequality (see  %Lemma 2.2 or (2.8) in  \cite{MST17}):
%%
(\ref{num1}) originates in \cite{LL12}.
For the  proofs of (\ref{num1}) and (\ref{k01}), we refer the arguments yielding  \cite[Lemma 2.1]{MST17} and   \cite[Lemma 2.2]{MST17}, respectively.  
In addition,  we refer \cite{BMSW18,BMSW19,MST199,JKRW98,ZK15} for some applications of these numerical inequalities and  other related numerical inequalities.

The above two numerical inequalities are efficient in many works dealing with discrete operators,
however, it is insufficient  for bounding  the operator (\ref{ope1}) in the present paper. As we shall see later in controlling   the short variation,
we  will also require the utilization of  the Besov norm, commonly employed  in  establishing the variational inequalities  on $\R^n$.
%This is a little beyond our expectations. 
More precisely,
from the Plancherel-P\'{o}lya inequality \cite{PP36,PP37}, it can be observed that for all $r\in [1,\infty)$, $B_{r,1}^{1/r}\hookrightarrow V^r\hookrightarrow B_{r,\infty}^{1/r},$ where the notation $B_{p,q}^s$ represents the inhomogeneous Besov space (see  \cite{Gra14,BCD}).
By utilizing the first embedding and recognizing the convenience of working with Besov space, it is sufficient to manage the $B_{r,1}^{1/r}$ norm to control the seminorm $V^r$ sometimes. %However, as we will later see, some crucial modifications, derived from the definition of the variational seminorm, are needed before bounding this norm.
Furthermore, by  the fundamental theorem of calculus, we deduce   that for all $r\in [1,\infty)$,
\beq\label{rou1}
\|(\mathfrak {a}_u )_{u\in \mathcal{K}}\|_{V^r}
\le \|\p_u (\mathfrak {a}_u)\|_{L^1(u\in \mathcal{K})}
\eeq
 whenever  $\mathcal{K}$  is  an interval; this inequality (\ref{rou1}) is  used in  bounding the  short variation  as well.
For convenience, we also introduce the $r$-variation norm for $1 \leq r \leq \infty$ defined by
\begin{equation}\label{vardef}
  \| (\mathfrak {a}_t)_{t \in \I} \|_{\V^r}:=\sup_{t\in\I}|\mathfrak a_t|+
\| (\mathfrak {a}_t)_{t \in \I} \|_{V^r}.
\end{equation}
%This clearly defines a norm on the space of functions from $\I$ to $B$.
%If $B=\C$, then we will abbreviate $V^r(\I;X)$ to $V^r(\I)$ or $V^r$, and $\V^r(\I;X)$ to $ \V^r(\I)$ or $\V^r$.
%If $(X,\mu)$ is a measure space, then  using \eqref{vardef}, one can explicitly write
%\[
%L^p(X;\V^r)=\left\{F\in L^0(X;\V^r):\|F\|_{L^{p}(X;\V^r)}:=\left\|\|F\|_{\V^r}\right\|_{L^{p}(X)}<\infty\right\}.
%\]
%Note that the $\V^r$ norm is non-decreasing in $r$, and comparable to the $\ell^\infty$ norm when $r=\infty$.
Observe that the simple triangle inequality
\begin{equation}\label{simple}
\| (\mathfrak {a}_t)_{t \in \I} \|_{\V^r} \lesssim \| (\mathfrak {a}_t)_{t \in \I_1} \|_{\V^r} + \| (\mathfrak {a}_t)_{t \in \I_2} \|_{\V^r}
\end{equation}
holds whenever $\I = \I_1 \uplus \I_2$ is an ordered partition of $\I$,
% (thus $t_1<t_2$ for all $t_1 \in \I_1, t_2 \in \I_2$).
and 
\begin{equation}\label{varsum}
 \| (\mathfrak {a}_t)_{t \in \I} \|_{\V^r} \lesssim \| (\mathfrak {a}_t)_{t \in \I} \|_{\ell^r}
 \leq \| (\mathfrak {a}_t)_{t \in \I} \|_{\ell^1}.
 \end{equation}
From H\"older's inequality one easily establishes the algebra property
\begin{equation}\label{var1}
\| (\mathfrak {a}_t \mathfrak {b}_t)_{t \in \I} \|_{\V^r} \lesssim \| (\mathfrak {a}_t)_{t \in \I} \|_{\V^r} \| (\mathfrak {b}_t)_{t \in \I} \|_{\V^r}
\end{equation}
for any scalar sequences $(\mathfrak {a}_t)_{t \in \I}$ and $(\mathfrak {b}_t)_{t \in \I}$.
For  any sequence
 $(\mathfrak{f}_t(x))_{t\in \I}$ of complex-valued function defined on $X$, where
  $X$ denotes $\Z^n$ or $\R^n$,
  we will frequently use  the following notations:
$$\|(\mathfrak{f}_t)_{t\in \I}\|_{L^{p}(X;\V^r)}:=\left\|\|(\mathfrak{f}_t)_{t\in \I}\|_{\V^r}\right\|_{L^{p}(X)},
\ \|(\mathfrak{f}_t)_{t\in \I}\|_{L^{p}(X;V^r)}:=\left\|\|(\mathfrak{f}_t)_{t\in \I}\|_{V^r}\right\|_{L^{p}(X)},$$
where   $L^p(\Z^n)$ represents $\ell^p(\Z^n)$.

\subsection{Ionescu-Wainger-type multiplier theorem}
We call a set $\Theta\subset \R^n$ periodic if $z+\Theta=\Theta$ for all $z\in\Z^n$, where
$z+\Theta=\{x\in \R^n:\ x=z+x'\ {\rm for\ some} \ x'\in \Theta\}$.
For any bounded function $m$ on $\R^n$ and any  periodic set $\Theta \subset \Q^n$, we define the associated  multi-frequency multiplier
$$\Delta_{\Theta}[m](\xi):=\sum_{\theta\in \Theta}m(\xi-\theta).$$
 For any set $S\subset \N$, we define
$$\mathcal{R}(S)=\{a/q\in \Q^n:\ (a,q)=1,\ q\in S\}.$$
Let $\eta$ be a compactly supported and smooth function, which equals 1 on $\{\xi\in\R^n:
|\xi|\le 1/2\}$. Denote $\eta_v(\xi)=\eta(\xi/v)$ with $0\neq v\in\R$.
\begin{prop}\label{PIW}
%Let $m$ be a bounded function on $\R^n$.
Suppose that for every $p\in (1,\infty)$, there exists a positive constant  $A_p$ such that
$$\|m(D)f\|_{L^p(\R^n)}\le A_p \|f\|_{L^p(\R^n)}.$$
 For each  $\kappa>0$ and  every $N\in \N$,  there exists a periodic set $\mathcal{U}_{N,\kappa}\subset \Q^n$ satisfying
$$\mathcal{R}(\N_N)\subset \mathcal{U}_{N,\ka} \subset \mathcal{R}(\N_{e^{N^\kappa}})$$
such that  for every $p\in(1,\infty)$,
\beq\label{aaB1}
\|\Delta_{\mathcal{U}_{N,\ka}}[m ~\eta_{e^{-N^{2\kappa}}}](D)f\|_{\ell^p(\Z^n)}
\les_{\kappa,p} A_p\|f\|_{\ell^p(\Z^n)}.
\eeq
\end{prop}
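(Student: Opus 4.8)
\textbf{Proof proposal for Proposition \ref{PIW} (Ionescu--Wainger-type multiplier theorem).}

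The plan is to \emph{not} reprove this from scratch but to derive it from the standard Ionescu--Wainger multiplier theorem as developed in \cite{IW05,MM18,TT21}, with the mild smooth-cutoff modification $\eta_{e^{-N^{2\kappa}}}$ playing the role of restricting $m$ to a tiny neighborhood of the origin so that the periodized pieces at distinct fractions $a/q \in \mathcal{U}_{N,\kappa}$ have essentially disjoint supports on $\T^n$. First I would fix $\kappa > 0$ and invoke the construction of the Ionescu--Wainger family of denominators: for each $N$ one builds a set of ``admissible'' denominators whose least common multiples, after a careful divisor-counting argument, can be grouped so that the periodic set $\mathcal{U}_{N,\kappa} := \mathcal{R}(D_{N,\kappa})$ for a suitable $D_{N,\kappa} \subset \N$ satisfies the sandwich $\mathcal{R}(\N_N) \subset \mathcal{U}_{N,\kappa} \subset \mathcal{R}(\N_{e^{N^\kappa}})$ — the lower inclusion is immediate since all denominators up to $N$ are admissible, and the upper inclusion follows from the standard bound $\operatorname{lcm}(1,\dots,N) \le e^{CN}$ refined through the $\kappa$-dependent grouping so that every admissible lcm is at most $e^{N^\kappa}$.

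Next I would record the continuous input: by hypothesis $m(D)$ is bounded on every $L^p(\R^n)$, $1<p<\infty$, with constant $A_p$, and this is preserved (with a harmless $\kappa$-dependent constant, uniform in $N$) under multiplication by the fixed-profile smooth bump $\eta_{e^{-N^{2\kappa}}}$, because dilates of a Schwartz bump are uniformly $L^p$-Fourier multipliers on $\R^n$ (their $L^1$-normalized inverse Fourier transforms form an approximate identity). So $m\,\eta_{e^{-N^{2\kappa}}}$ is an $L^p(\R^n)$ multiplier with norm $\lesssim_{\kappa,p} A_p$, uniformly in $N$. Then I would feed this into the Ionescu--Wainger theorem in the ``single-scale multiplier'' formulation (as in \cite[Theorem 2.1]{MM18} or \cite[Section 2]{TT21}): if $\Phi$ is an $L^p(\R^n)$-multiplier and is supported in a ball of radius comparable to the reciprocal of the largest denominator occurring in $\mathcal{U}_{N,\kappa}$ — which here is $\gtrsim e^{-N^\kappa} \gg e^{-N^{2\kappa}}$, so our support condition is more than enough — then the periodized multiplier $\Delta_{\mathcal{U}_{N,\kappa}}[\Phi](D)$ is bounded on $\ell^p(\Z^n)$ with norm $\lesssim_{\kappa,p} \|\Phi\|_{M^p(\R^n)}$. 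Applying this with $\Phi = m\,\eta_{e^{-N^{2\kappa}}}$ gives exactly \eqref{aaB1}.

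The technical heart of the matter — and the step I expect to be the genuine obstacle — is the Ionescu--Wainger theorem itself, i.e.\ showing that the $\ell^p(\Z^n)$ operator norm of $\Delta_{\mathcal{U}_{N,\kappa}}[\Phi](D)$ does not blow up with the (superpolynomially large) cardinality of $\mathcal{U}_{N,\kappa}$. The key ideas there, which I would cite rather than reproduce, are: (a) the arithmetic decomposition of $\mathcal{U}_{N,\kappa}$ into $O_\kappa(1)$-many ``levels'' indexed by dyadic blocks of prime factors, each of which is handled separately; (b) for each level, a transference to $\R^n$ combined with a vector-valued Marcinkiewicz--Zygmund / square-function argument reducing the multi-frequency estimate to a single-frequency one at the cost of a factor polynomial in $\log(\text{number of fractions at that level})$, which is only $\mathrm{poly}(N)$ and hence absorbed; and (c) an interpolation between a trivial $\ell^2$ bound (from disjointness of supports, exploiting that $\eta_{e^{-N^{2\kappa}}}$ shrinks the support enough that the $\theta$-translates are orthogonal) and an $\ell^p$ bound with a tiny $\epsilon$-loss, arranged so the losses telescope. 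Since the statement here is precisely the form proved in the cited references, for the paper I would simply invoke it; the only thing needing verification in situ is the support/separation bookkeeping, namely that $e^{-N^{2\kappa}}$ is small compared to the minimal gap $\min_{a/q \ne a'/q' \in \mathcal{U}_{N,\kappa}} |a/q - a'/q'|_{\T^n} \gtrsim e^{-2N^\kappa}$, which is clear for $N$ large and can be arranged for all $N$ by adjusting constants.
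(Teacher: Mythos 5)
Your proposal matches the paper's treatment: the paper itself does not prove Proposition \ref{PIW} but simply points to \cite{IW05} for the original result (with a logarithmic loss), \cite{MM18} for a weakening of that loss, \cite{TT21} for its removal, and Section 3.4 of \cite{MST17} for the construction of $\mathcal{U}_{N,\kappa}$. Your additional bookkeeping — that $m\,\eta_{e^{-N^{2\kappa}}}$ remains a uniformly bounded $L^p(\R^n)$ multiplier, and that $e^{-N^{2\kappa}}$ is well below the minimal gap $\gtrsim e^{-2N^\kappa}$ between distinct fractions in $\mathcal{U}_{N,\kappa}$ — is a correct and sensible sanity check on the applicability of the cited theorem, though not something the paper spells out.
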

For the construction of  $\mathcal{U}_{N,\kappa}$, we refer  Section 3.4 in \cite{MST17}.
 In 2005, Ionescu and Wainger  \cite{IW05} initially  proved (\ref{aaB1}) with a logarithmic loss in $N$. Mirek \cite{MM18} weakened this logarithmic loss in $N$ later, and   Tao
\cite{TT21}
finally
removed this logarithmic loss in $N$, and
established (\ref{aaB1}) with the upper bound independent of  $N$.  In fact,  this logarithmic loss in $N$ is not crucial in the proofs of our results.
%Following the proof of Proposition \ref{PIW} line by line, one may obtain a more efficient variant for $p\ge 2$.  However, we do not know what happens for $p\in (1,2)$ without any additional conditions.  The situation for $p\in (1,2)$  is quite different from that in  Proposition \ref{PIW}, because   naive dual arguments  do not work  at this moment.
%We omit the details
%\begin{prop}\label{PIW2}
%Let   $\kappa>0$,  $N\in \Z_+$,
 %and let $\mathcal{U}_{N,\kappa}$ be given as in Proposition \ref{PIW}.
%Suppose that the functions $\{m_t\}_{t\in [0,1]}$    satisfy that, for each $p\in [2,\infty)$,
%$$\|m_t(D)f(x)\|_{L^p_{x,t}(\R^n\times [0,1])}\le A_p \|f\|_{L^p(\R^n)}$$
%for some $A_p>0$.  Then for every $p\in[2,\infty)$,
%$$\|\Delta_{\mathcal{U}_{N,\ka}}[m _t\ \chi(2^{N^{2\kappa}}\cdot)](D)f\|_{\ell^p_{x,t}(\R^n\times [0,1])}
%\les_{\kappa,p} A_p\|f\|_{\ell^p(\Z^n)}.$$
%\end{prop}
%\vskip.4in
\subsection{Transference principle by Mirek, Stein and Trojan}
\label{TMST2}
 Let $\eta_\circ:\R^n\to \R$ be a smooth function  such that $\eta_\circ\in [0,1]$ is supported in $\{|x|\le 1/(8n)\}$,
 and $\eta_\circ(x)=1$ on $|x|\le 1/(16 n)$. Let $\{\Theta_N:N\in\N\}$ be a sequence of multipliers on $\R^n$ satisfying  that, for each $p\in(1,\infty)$ and each $r\in (2,\infty)$, there is a positive constant $B_{p,r}$ such that
 \beq\label{cv1}
\|\big(\Theta_N(D)f\big)_{N\in\N}\|_{L^p(\R^n;V^r)}
\le B_{p,r}\|f\|_{L^p(\R^n)}.
\eeq
Assume that $\mathfrak{R}$ is a diagonal $n\times n$ matrix with positive entries $(r_\gamma:\gamma\in \Gamma)$ such that $\inf_{\gamma\in\Gamma}r_\gamma\ge \mathfrak{h}$ for  $\mathfrak{h}>0$. We list the following version of the transference principle provided  by Mirek-Stein-Trojan \cite[Proposition 3.1]{MST17} (see  \cite{MT16,MST199} for its proof).
\begin{prop} \label{PMST}
Let $ p\in  (1,\infty)$,  $r\in  (2,\infty)$, and
suppose that  (\ref{cv1}) holds. Then for each
 $Q\in\N$ and $\mathfrak{h}\ge 2^{2n+2}Q^{n+1}$ and any $\mathfrak{m}\in \N_Q^n$,
 $$\|\Big(\F^{-1}_{\Z^n}\big(\Theta_N ~\eta_\circ(\mathfrak{R}\cdot) \hat{f}\big)(Qx+\mathfrak{m})\Big)_{N\in\N}\|_{\ell^p(x\in \Z^n;V^r)}
\les B_{p,r}\|\F^{-1}_{\Z^n}\big(\eta_\circ(\mathfrak{R}\cdot)
\hat{f}\big)(Qx+\mathfrak{m})\|_{\ell^p(x\in \Z^n)}$$
with $B_{p,r} $ given as in (\ref{cv1}).
\end{prop}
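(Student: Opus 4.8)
\textbf{Proof proposal for Proposition \ref{PMST}.}

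The plan is to realize the discrete operator on the left as a sampled version of a suitably modified continuous operator and then invoke the continuous estimate \eqref{cv1} together with a standard sampling/transference lemma. First I would set up the sampling framework: given $Q\in\N$ and a sequence $(b_{\mathfrak{m}})_{\mathfrak{m}\in\Z^n}$ on the residue classes, one associates the step function on $\R^n$ that is constant on the cubes $Qx+\mathfrak{m}+[0,1)^n$ (scaled appropriately), and the basic comparison principle says that sampling commutes with the multiplier $\Theta_N(D)$ up to a harmless error provided the symbol $\Theta_N$ is concentrated near the origin at a scale coarser than the sampling period $Q$. This is precisely the role of the cutoff $\eta_\circ(\mathfrak{R}\cdot)$: the hypothesis $\mathfrak{h}\ge 2^{2n+2}Q^{d+1}$ forces the Fourier support of $\eta_\circ(\mathfrak{R}\cdot)\hat f$ to lie in a box of side $\ll Q^{-(d+1)}$, far inside a single fundamental domain $\T^n=(-1/2,1/2)^n$ (after rescaling by $Q$), so that no aliasing occurs and the periodization of $\Theta_N\,\eta_\circ(\mathfrak{R}\cdot)$ agrees with a single translate.

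The key steps, in order, would be: (1) Reduce to $\mathfrak{m}=0$; the general $\mathfrak{m}$ only introduces a unimodular modulation $e(\mathfrak{m}\cdot\xi/Q)$ which does not affect any $L^p$ or $V^r$ norm — here one uses that the $V^r$-seminorm is insensitive to multiplication by an $N$-independent modulation, which follows from \eqref{var1} since the modulation is constant in $N$. (2) Rescale: write $x\mapsto Qx$ and pass to the torus $Q^{-1}\Z^n/\Z^n$, so that the relevant continuous symbol becomes $\Theta_N(\cdot/Q)\,\eta_\circ(\mathfrak{R}\cdot/Q)$, whose support (in the rescaled variable) is contained in $\{|\xi|\le 1/(8n)\cdot Q/\mathfrak{h}\}\subset\{|\xi|\le 1/(16n)\}$ by the size hypothesis on $\mathfrak{h}$. (3) Apply the standard sampling principle for vector-valued (here $V^r$-valued) Fourier multipliers — this is the content of \cite{MST17, MT16, MST199}, and it transfers \eqref{cv1} for the family $(\Theta_N(\cdot/Q))_N$ to the discrete setting with the same constant $B_{p,r}$ (up to an absolute factor), because the continuous $L^p$-boundedness constant of $\Theta_N(D)$ equals that of $\Theta_N(\cdot/Q)(D)$ by dilation invariance of $L^p(\R^n)$. (4) Undo the rescaling and the modulation to recover the claimed inequality, noting that the right-hand side is exactly the $\ell^p$ norm of the sampled input $\F_{\Z^n}^{-1}(\eta_\circ(\mathfrak{R}\cdot)\hat f)(Qx+\mathfrak{m})$, so the estimate is genuinely of ``transference'' type: continuous $V^r$-bound $\Rightarrow$ discrete $V^r$-bound on the sampled lattice.

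I expect the main obstacle to be step (3): carefully verifying that the sampling/transference principle of Mirek-Stein-Trojan applies verbatim to the $V^r$-valued (rather than scalar or $\ell^r$-valued) setting, including checking that the frequency-localization hypothesis ``$\Theta_N\eta_\circ(\mathfrak{R}\cdot)$ supported in $\{|\xi|\le 1/(16n)\}$ after rescaling by $Q$'' is exactly what the quantitative condition $\mathfrak{h}\ge 2^{2n+2}Q^{d+1}$ guarantees, and that the error terms from the non-exact sampling (the difference between genuine sampling and the idealized periodization) vanish under this support condition. Since \cite[Proposition 3.1]{MST17} states essentially this result and we are only quoting a particular instance of it, the ``proof'' here is really a matter of matching notation and confirming the hypotheses; in the write-up I would simply cite \cite{MST17, MT16, MST199} for the sampling lemma and supply the short computations in steps (1), (2), (4).
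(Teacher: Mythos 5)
The paper itself does not prove Proposition \ref{PMST}: it is quoted verbatim from \cite[Proposition 3.1]{MST17}, with the proof attributed to \cite{MT16,MST199}, so there is no in-paper argument to compare your sketch against. Your outline — reduce to $\mathfrak{m}=0$ by absorbing the shift into the data (replacing $f$ by $f(\cdot+\mathfrak{m})$, which turns the evaluation at $Qx+\mathfrak{m}$ on both sides into evaluation at $Qx$), rescale by $Q$, and then invoke the Mirek--Stein--Trojan $V^r$-valued sampling/transference lemma under the Fourier-support restriction that $\eta_\circ(\mathfrak{R}\cdot)$ enforces — is a correct account of what those cited sources do, and you rightly identify that the substance resides in the transference lemma itself rather than in the bookkeeping reductions.
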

Obviously,  we can  infer from the case $Q=1$ and $\mathfrak{m}\in \N_1^n$ that Proposition \ref{PMST}  also  holds for the case $Q=1$ and $\mathfrak{m}=0$, which will be used
 in the following context.

\section{Proof of Theorem \ref{t1} and reduction of  (\ref{t11})}
\label{pth1}
In this section, we prove Theorem \ref{t1}  by assuming that    the desired  associated long and short variational inequalities hold, and then  give reductions of these assumed inequalities.

Let  $[p_1,p_2]$ denote    an arbitrary closed interval with $1<p_1<2<p_2<\infty$.\footnote {In this paper,  
 $p\in [p_1,p_2]$ means that  $p$ belongs  to  an arbitrary closed interval $ [p_1,p_2]$, where   $1<p_1<2<p_2<\infty$.}
To prove (\ref{t11}), by interpolation,
 it suffices  to show that for  each $p\in[p_1,p_2]$ and every $r\in (2,\infty)$,
\beq\label{t12}
\|(\mathscr{C}_{N} f)_{N\in\N}\|_{\ell^p(\B_R;V^r)}\les_{\e} R^{\e} \|f\|_{\ell^p(\Z^n)}
\eeq
for all $R\ge 1$.
Indeed, by interpolating (\ref{t12}) with
the case $r=\infty$ (namely,  the  maximal estimate obtained   by Krause-Roos \cite{KR23}, which is independent of  $R$),   we achieve   (\ref{t11}) immediately.  As a consequence,  we reduce the matter to proving the above   (\ref{t12}).
By a standard process  (\ref{redu21}),   we can achieve  (\ref{t12})  from the following inequalities: for  each $p\in[p_1,p_2]$ and every $r\in (2,\infty)$,
% the following   inequalities: %For every $(p,r)\in (1,\infty)\times (2,\infty)$,
 \begin{align}
    \|(\mathscr{C}_{2^j} f)_{j\in\N}\|_{\ell^p(\B_R;V^r)}&\les_{\e}  R^{\e}  \|f\|_{\ell^p(\Z^n)}\ \ {\rm and}  \label{long}\\
  \|\big( \sum_{j\ge 0}\|(\mathscr{C}_{N} f-\mathscr{C}_{2^j} f)_{N\in [2^j,2^{j+1})}\|_{V^2}^2\big)^{1/2}\|_{\ell^p(\Z^n)}&\les\  \|f\|_{\ell^p(\Z^n)},\label{short8}
 \end{align}
 where (\ref{long})   and (\ref{short8}) are the long variational inequality and  the short variational inequality, respectively. In other words,  we can prove Theorem \ref{t1}  under the assumptions that
 (\ref{long})  and (\ref{short8}) hold. Thus, it remains to prove (\ref{long})  and (\ref{short8}).
 In the followed subsections, we will reduce the proofs of  (\ref{long})  and (\ref{short8})
 to showing three major arcs estimates given by  Propositions \ref{t21}, \ref{892} and \ref{t31} below.
\subsection{General operators and minor arcs estimates}
\label{ss4.1}
Let
$N_\circ$ and $\Pi$ be two large \footnote{In the following context, we only need   $ \Pi\ge C_0$ with $C_0$ given by (\ref{df1}).} positive integers
with $cN_\circ\le \Pi\le N_\circ$, where  $0<c<1$. Let $\la(x)$ be an arbitrary  function from $ \Z^n$ to [0,1],   let
$$\bar{\KK}_{\Pi,N_\circ}(y)=\KK(y){\ind {\Pi\le |y|\le N_\circ}}$$
 with $\KK:\R^n\to \R$
 %the function ${\KK}:\R^n\to \R$
 satisfying
%with $\Om_{M,N}:=\{y:M\le |y|\le N\}$ which satisfies
\beq\label{s1}
|{\KK}(y)|+N_\circ |\na {\KK}(y)|\les N_\circ^{-n}\ \ \ {\rm for\ all}\ \Pi\le |y|\le N_\circ,
\eeq
and define a family of periodic multipliers
\beq\label{az11}
m_{\Pi,N_\circ,v}(\xi)=\sum_{y\in\Z^n}e(v|y|^{2d}+y\cdot\xi)\bar{\KK}_{\Pi,N_\circ}(y),
\ \ \ \ v\in\R,\ \ \xi\in\R^n,
\eeq
where  the function $\bar{\KK}_{\Pi,N_\circ}$ satisfies that for every $q\in [1,\infty]$,
\beq\label{kere}
\||\bar{\KK}_{\Pi,N_\circ}|*|f|\|_{\ell^q(\Z^n)}%\les \|M_{DHL}f\|_{\ell^q(\Z^n)}
\les  \|\bar{\KK}_{\Pi,N_\circ}\|_{\ell^1(\Z^n)}\|f\|_{\ell^q(\Z^n)}
\les  \|f\|_{\ell^q(\Z^n)}.
\eeq
%where $M_{DHL}$ is the discrete Hardy-Littlewood maximal operator.
We shall
consider   the function
\beq\label{aa12}
\big(m_{\Pi,N_\circ,\la(x)}(D)f\big)(x):=\F^{-1}_{\Z^n}\big(m_{\Pi,N_\circ,\la(x)}~\F_{\Z^n} f \big)(x),
\eeq
%where the multiplier $m_{\Pi,N_\circ,\la(x)}$ is given by
where the notation (\ref{multi1}) is used, and
the multiplier   $m_{\Pi,N_\circ,\la(x)}$ is defined as  (\ref{az11}) with ($v=\la(x)$).
As the multiplier depends  on the variable $x$ in this instance, the scenario becomes  more complex than situations where it remains independent of $x$.
To show  the desired   result, we introduce first  the associated exponential sums  of the above multiplier:
$$S(\frac{a}{q},\frac{b}{q})=\frac{1}{q^n}\sum_{r\in [q]^n}e(\frac{a}{q}|r|^{2d}+\frac{b}{q}\cdot r),$$
where $a/q\in \Q $ and $b/q\in\Q^n$ satisfy  $(a,b,q)=1$ (otherwise the notation  $S(a/q,b/q)$ is not well-defined).
Let  $\Phi_{\Pi,N_\circ,v}$ be the  real-variable version of  the multiplier  (\ref{az11}) defined by
\beq\label{con1}
\Phi_{\Pi,N_\circ,v}(\xi)=\int_{\R^n} e(v |y|^{2d}+y\cdot\xi)\bar{\KK}_{\Pi,N_\circ}(y) dy.
%\ \ \ \ v\in\R,\ \ \xi\in\R^n.
\eeq
Below
we list a basic approximation result  for the multiplier  $m_{\Pi,N_\circ,\la(x)}(\xi)$.% defined by  (\ref{az11}). %by  $\Phi_{\Pi,N_\circ,u(x)}$.
\begin{prop}\label{p12}
Let $0<c<1$ and $q\in\N$. Let   $N_\circ$ and $\Pi$  be two large positive constants satisfying $cN_\circ\le \Pi\le N_\circ$   and  $q\le c \sqrt{N_\circ}/8$.  Let  $a\in\Z$ and $b\in \Z^n$ with $(a,b,q)=1$.  Denote
$$\mathscr{J}_{\Pi,N_\circ,a,b,q}:=\big\{(x,\xi)\in (\Z^n,\T^n):\ |\la(x)-a/q|\le \de N_\circ^{-(2d-1)},\  |\xi-b/q|\le \de\big\},$$
with $\de\in (N_\circ^{-1},1)$. Then  for each $(x,\xi)\in \mathscr{J}_{\Pi,N_\circ,a,b,q}$, 
$$m_{\Pi,N_\circ,\la(x)}(\xi)=
S(a/q,b/q)~\Phi_{\Pi,N_\circ,\la(x)-a/q}(\xi-b/q)+\mathcal{O}(\de q) $$
with the implicit constant independent of  $\Pi,N_\circ,a,b,q$ and $\la(x)$.
\end{prop}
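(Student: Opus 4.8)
The plan is to establish the approximation via the Hardy--Littlewood circle method, splitting the lattice sum $m_{\Pi,N_\circ,\la(x)}(\xi)$ over residue classes modulo $q$ and comparing the resulting Riemann sum with the corresponding integral $\Phi_{\Pi,N_\circ,\la(x)-a/q}(\xi-b/q)$. Fix $(x,\xi)\in\mathscr{J}_{\Pi,N_\circ,a,b,q}$ and write $v=\la(x)$, $\bb=v-a/q$ with $|\bb|\le\de N_\circ^{-(2d-1)}$, and $\eta=\xi-b/q$ with $|\eta|\le\de$. First I would decompose $y\in\Z^n$ as $y=qk+r$ with $r\in[q]^n$ and $k\in\Z^n$, and observe that
\beq\label{splitplan}
v|y|^{2d}+y\cdot\xi=\tfrac{a}{q}|r|^{2d}+\tfrac{b}{q}\cdot r+\big(\bb|qk+r|^{2d}+(qk+r)\cdot\eta\big)+(\text{integer}),
\eeq
so that $e(v|y|^{2d}+y\cdot\xi)=e(\tfrac{a}{q}|r|^{2d}+\tfrac{b}{q}\cdot r)\,e(\bb|qk+r|^{2d}+(qk+r)\cdot\eta)$. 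This factors the sum into the product of the Gauss-type sum $S(a/q,b/q)$ (after inserting $q^n$) and an inner sum over $k$ of the smooth factor $e(\bb|qk+r|^{2d}+(qk+r)\cdot\eta)\,\bar{\KK}_{\Pi,N_\circ}(qk+r)$, up to errors coming from the fact that the kernel $\bar\KK$ depends on $qk+r$ rather than being constant on each residue block.

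The core estimate is then a quantitative Riemann-sum comparison: for each fixed $r$, replacing the sum $\sum_k q^n\,e(\bb|qk+r|^{2d}+(qk+r)\cdot\eta)\bar{\KK}_{\Pi,N_\circ}(qk+r)$ by the integral $\int_{\R^n}e(\bb|y|^{2d}+y\cdot\eta)\bar{\KK}_{\Pi,N_\circ}(y)\,dy$ costs, by the one-dimensional mean value theorem applied coordinatewise, an error controlled by $q$ times the total variation of the integrand over the support $\{\Pi\le|y|\le N_\circ\}$. The integrand's gradient has three contributions: the derivative of the phase from the $|y|^{2d}$ term is $O(|\bb| N_\circ^{2d-1})=O(\de)$, the derivative from the linear term $y\cdot\eta$ is $O(|\eta|)=O(\de)$, and the derivative of $\bar{\KK}_{\Pi,N_\circ}$ is $O(N_\circ^{-n-1})$ by (\ref{s1}); multiplying by the volume $O(N_\circ^n)$ of the shell and by the sampling scale factor $q$, and summing over the at most $O(N_\circ^n/q^n)$ relevant blocks $k$ for each $r$ together with the $q^n$ values of $r$, yields a total error of $O(\de q)$ (here the hypotheses $cN_\circ\le\Pi\le N_\circ$ and $q\le c\sqrt{N_\circ}/8$, together with $\de>N_\circ^{-1}$, guarantee that boundary blocks partially outside the shell and the condition $q\ll N_\circ$ are harmless). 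Assembling, $m_{\Pi,N_\circ,\la(x)}(\xi)=S(a/q,b/q)\,\Phi_{\Pi,N_\circ,\bb}(\eta)+\mathcal{O}(\de q)$, which is exactly the claim; one also uses $|S(a/q,b/q)|\le 1$ trivially and the fact (citing Lemma 2.3 of \cite{KR22}) that the exponential sum vanishes unless $(a,b,q)=1$, so the normalization is consistent.

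The main obstacle I expect is the bookkeeping of the boundary blocks in the Riemann-sum comparison: the cube $qk+[0,q)^n$ may straddle the dyadic-type shell $\{\Pi\le|y|\le N_\circ\}$, so one cannot literally pair each lattice block with a clean sub-integral. The clean way around this is to not restrict block-by-block at all but instead estimate $\big|\sum_{y\in\Z^n}g(y)-\tfrac1{q^n}\int_{\R^n}g(y)\,dy\big|$ for $g(y)=q^n e(\bb|y|^{2d}+y\cdot\eta)\bar{\KK}_{\Pi,N_\circ}(y)$ directly via $\sum_{k}\int_{qk+[0,q)^n}|g(y)-g(qk)|\,dy\le \sum_k \int_{qk+[0,q)^n}\sqrt{n}\,q\,\|\nabla g\|_{L^\infty(qk+[0,q)^n)}\,dy$, and then bound $\|\nabla g\|_{\infty}$ uniformly using the product rule together with (\ref{s1}) and the phase derivative bounds above — here the cutoff $\ind{\Pi\le|y|\le N_\circ}$ implicit in $\bar{\KK}_{\Pi,N_\circ}$ is not differentiable, so one must either smooth it or, more simply, absorb the contribution of the $O(q N_\circ^{n-1})$ lattice points within distance $q$ of the boundary sphere directly into the error, each contributing $O(N_\circ^{-n})$ by (\ref{s1}), for a total of $O(q N_\circ^{-1})\le O(\de q)$ since $\de>N_\circ^{-1}$. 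A secondary, purely technical point is to confirm that the "integer" term in (\ref{splitplan}) is genuinely an integer, which follows because $2d$ is even so $|qk+r|^{2d}-|r|^{2d}$ is an integer multiple of $q$ when expanded, and $q\cdot k\cdot(b/q)=k\cdot b\in\Z$; this is routine and I would only remark on it.
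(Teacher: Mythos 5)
Your proposal follows the paper's proof essentially line for line: split $y=qk+r$ modulo $q$ to factor off $S(a/q,b/q)$, compare the remaining $q$-spaced lattice sum with $\Phi$ by the mean value theorem on each block, and absorb the boundary blocks and annuli near $|y|=\Pi,N_\circ$ into the error since their total contribution is $\mathcal{O}(qN_\circ^{-1})\le\mathcal{O}(\de q)$. One minor bookkeeping slip: your displayed inequality $\big|\sum_{y\in\Z^n}g(y)-q^{-n}\int g\big|\le\sum_k\int_{qk+[0,q)^n}|g(y)-g(qk)|\,dy$ should have $\sum_{k\in\Z^n}g(qk+r)$ on the left (for fixed residue $r$) and a factor $q^{-n}$ on the right --- as written it controls $\big|q^n\sum_k g(qk)-\int g\big|$ --- but your final bound $\mathcal{O}(\de q)$ is correct and the argument is sound.
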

\begin{proof}
It suffices to show that
$$m_{\Pi,N_\circ,u}(\xi)=S(a/q,b/q)\Phi_{\Pi,N_\circ,u-a/q}(\xi-b/q)+\mathcal{O}(\de q) $$
whenever $|u-a/q|\le \de N_\circ^{-(2d-1)}$ and  $|\xi-b/q|\le \de$.
%%%%%%%
%Since $\bar{\KK}_{\Pi,N_\circ}$ is supported in $\{y\in \Z^n: \Pi\le |y|\le N_\circ\}$,
We may
rewrite  $m_{\Pi,N_\circ,u}(\xi)$  as follows:
$$
\begin{aligned}
&\sum_{r\in [q]^n}\sum_{y\in W_{\Pi,N_\circ,q,r}}e\big(u|qy+r|^{2d}+(qy+r)\cdot\xi\big){\KK}(qy+r)\\\
=&\ q^{-n}\sum_{r\in [q]^n}\Big( e(a |r|^{2d}/q+r\cdot b/q)\  I_{\Pi,N_\circ,q,r}(u-a/q,\xi-b/q)\Big),
\end{aligned}$$
where   $W_{\Pi,N_\circ,q,r}$ and  $I_{\Pi,N_\circ,q,r}$ are defined by
\beq\label{DEP1}
\begin{aligned}
W_{\Pi,N_\circ,q,r}:=&\ \{y\in\Z^n:\ \Pi\le |qy+r|\le N_\circ\}\  \ \ \ \ \ {\rm and}\\
I_{\Pi,N_\circ,q,r}(\eta,\nu):=&\ q^n
\sum_{y\in W_{\Pi,N_\circ,q,r}}\mathscr{B}_{\eta,\nu,q,r}(y) {\KK}(qy+r)
\end{aligned}
\eeq
with
$\mathscr{B}_{\eta,\nu,q,r}(y):=e\big(\eta|qy+r|^{2d}+(qy+r)\cdot \nu \big)$.
Then we further reduce the matter to proving  that, for each $r\in [q]^n$,
\beq\label{101}
|I_{\Pi,N_\circ,q,r}(\eta,\nu)-\Phi_{\Pi,N_\circ,\eta}(\nu)|\le q\de
\eeq
whenever $|\eta|\le \de N_\circ^{-(2d-1)}$ and $|\nu|\le \de$.  Changing variables $y\to qy+r$, we write $\Phi_{\Pi,N_\circ,\eta}(\nu)$ as
\beq\label{vm1}
\Phi_{\Pi,N_\circ,\eta}(\nu)=q^n\int_{\Pi\le |qy+r|\le N_\circ}
\mathscr{B}_{\eta,\nu,q,r}(y)  \KK(qy+r)dy.
\eeq
Claim  that the right-hand side of (\ref{vm1}) equals
\beq\label{DEP2}
q^n\sum_{y\in W_{\Pi,N_\circ,q,r}}\int_{y+[-{1}/{2},{1}/{2}]^n}
\mathscr{B}_{\eta,\nu,q,r}(y') \KK(qy'+r)dy'+ \mathcal{O}\big(q^n\int_{\mathcal{Y}_{\Pi,N_\circ,q,r}}|\KK(qy+r)|dy\big),
\eeq
where the set  $\mathcal{Y}_{\Pi,N_\circ,q,r}$ is given by
$$\mathcal{Y}_{\Pi,N_\circ,q,r}:= \{y\in\R^n:  \big||qy+r|-N_\circ\big|\le 2q\ \ {\rm or}\ \big||qy+r|-\Pi\big|\le 2q\}.$$
Let $\mathfrak{S}_{1},\mathfrak{S}_{2}$ be two sets given by
$$
\begin{aligned}
\mathfrak{S}_{1}:=&\ \mathfrak{S}_{1,\Pi,N_\circ}^{q,r}=\{y\in\R^n:\ \Pi\le |qy+r|\le N_\circ\},\\\mathfrak{S}_{2}:=&\ \mathfrak{S}_{2,\Pi,N_\circ}^{q,r}=\bigcup_{y\in W_{\Pi,N_\circ,q,r}}\{y+[-{1}/{2},{1}/{2}]^n\}.
\end{aligned}$$
Since $q\le c \sqrt{N_\circ}/8$,
%this inequality for enlarge the domain \mathfrak{S}_1 below
the above  claim follows  from the observation    that  the sets
$\mathfrak{S}_1\setminus \mathfrak{S}_2$  and $\mathfrak{S}_2\setminus \mathfrak{S}_1$ contained in two narrow annuli  $\mathcal{Y}_{\Pi,N_\circ,q,r}$
near  two spheres
$|qy+r|=N_\circ$ and $|qy+r|=\Pi$.
 Moreover,
simple  computation gives  that
  the measure of $\mathcal{Y}_{\Pi,N_\circ,q,r}$  is  $\les (N_\circ/q)^{n-1}$,
which with  (\ref{s1}) and $N_\circ^{-1}<\de$ leads to
$$q^n\int_{\mathcal{Y}_{\Pi,N_\circ,q,r}}|\KK(qy+r)|dy\les q^n(N_\circ/q)^{n-1} N_\circ^{-n}\les q/N_\circ \les  \de q.  $$
By combining (\ref{DEP1}) and (\ref{DEP2}),
to prove  (\ref{101}),   it suffices  to establish  that for all $y\in W_{\Pi,N_\circ,q,r}$,
\beq\label{102}
\begin{aligned}
&\ \ \Big|\mathscr{B}_{\eta,\nu,q,r}(y){\KK}(qy+r)-\int_{y+[-{1}/{2},{1}/{2}]^n}
\mathscr{B}_{\eta,\nu,q,r}(y') \KK(qy'+r)dy'\Big|\les q\de N_\circ^{-n},
\end{aligned}
\eeq
%\les \de q N^{-n},$$
where  $|qy+r|\sim |qy'+r|\sim N_\circ$ (since $q\le c \sqrt{N_\circ}/8$ and $|y-y'|\le 1/2$).  Note that
the left-hand side of (\ref{102}) is bounded  by the sum of
\begin{align}
&\ \Big|\int_{y+[-{1}/{2},{1}/{2}]^n}
\big\{\mathscr{B}_{\eta,\nu,q,r}(y')-\mathscr{B}_{\eta,\nu,q,r}(y)\big\} \KK(qy+r)dy'\Big|\ \  {\rm and}\label{71}\\
&\ \Big|\int_{y+[-{1}/{2},{1}/{2}]^n}
\mathscr{B}_{\eta,\nu,q,r}(y') \big\{ \KK(qy'+r)- \KK(qy+r)\big\}dy'\Big|.\label{72}
\end{align}
Since  $|\eta|\le \de N_\circ^{-(2d-1)}$, $|\nu|\le \de$ and $|qy+r|\sim |qy'+r|\sim N_\circ$,  the mean value theorem gives
\beq\label{po1}
|\mathscr{B}_{\eta,\nu,q,r}(y')-\mathscr{B}_{\eta,\nu,q,r}(y)|
\les q\de.
\eeq
In addition,  by the mean value theorem and (\ref{s1}), we also have
\beq\label{po2}
|\KK(qy'+r)- \KK(qy+r)|\les q N_\circ^{-n-1}\ \ \ {\rm and} \ \ \ |\KK(qy+r)|\les N_\circ^{-n}.
\eeq
Combining (\ref{po1}),  (\ref{po2}) and $N_\circ^{-1}<\de$  yields
$$(\ref{71})+ (\ref{72})\les q\de N_\circ^{-n},$$
which completes the proof of (\ref{102}).
\end{proof}
%We next  investigate (\ref{aa12}) by discussing its multiplier $m_{\Pi,N_\circ,\la(x)}$.
 %Let $d\ge 1$,
% Let $\e_\circ>0$ be a sufficiently small   number
 % (which will be determined later).
 Let   $j_\circ$ be a positive  integer such that $2^{j_\circ}\sim N_\circ$.
 %For
 %$\mathfrak{d}\in \Z_+$,  we d
We use the following notations:
\beq\label{notm1}
\begin{aligned}
\mathscr{S}_{j_\circ,\e_\circ}:=&\ \{a/q\in \Q:\ (a,q)=1,\ q\in [j_\circ^{\lfloor 1/\e_\circ \rfloor}]\},\\
X_{j_\circ,\e_\circ}:=&\ \bigcup_{\A\in \mathscr{S}_{j_\circ,\e_\circ}}\{u\in [0,1]:\ |u-\A|\le 2^{
-2dj_\circ}j_\circ^{\lfloor 1/\e_\circ \rfloor}\}\ \  {\rm and}\\
\Lambda_{j_\circ,\e_\circ,\la}:=&\ \{x\in\Z^n:~~\la(x)\in X_{j_\circ,\e_\circ}\},\ \ {\rm where}
\  \ 0<\e_\circ<1.
\end{aligned}
\eeq
In what follows, $x\notin \Lambda_{j_\circ,\e_\circ,\la}$  means $x\in \Z^n\setminus \Lambda_{j_\circ,\e_\circ,\la}$.
Repeating   the arguments yielding  \cite[Proposition 3.1]{KR23} (exponential sum estimates by Mirek, Stein and Trojan \cite{MST199} were used there,  see   \cite[Proposition 2.2]{KR23} for the details),   we  can   deduce that   for every $p\in [p_1,p_2]$ and  for  large enough $\mathcal{C} >0$ (which will be specified later), 
there is a sufficiently small constant $\e_\circ=\e_\circ(p_1,p_2,\mathcal{C})\in (0,1)$ such that 
% \footnote{The dependency on $d$ and $n$ for any such constants is omitted here and in what follows.} $\gamma=\gamma(\e_\circ)$ such that % for all $j_\circ\ge 1$ and  all  $N_\circ\sim 2^{j_\circ}$,
%for every sufficiently large constant  $\kappa>1$,
\beq\label{azq1}
\begin{aligned}
&\  \big\|{\ind {x\notin \Lambda_{j_\circ,\e_\circ,\la}}} |\big(m_{\Pi,N_\circ,\la(x)}(D)f\big)(x)| \big\|_{\ell^p(x\in\Z^n)}\\
\le&\  \big\|\sup_{\la\notin X_{j_\circ,\e_\circ}}|m_{\Pi,N_\circ,\la}(D)f|\big\|_{\ell^p(\Z^n)}\  \les~ j_\circ^{-\mathcal{C}} \|f\|_{\ell^p(\Z^n)}.
\end{aligned}
\eeq 
We call (\ref{azq1})
  the first minor arcs estimate for  (\ref{aa12}).
%For convenience, we will set
%$$\kappa=10\max\{p,p'\}.$$

Next, we show the second minor arcs estimate
 for  (\ref{aa12}).
 % In fact,  this so called  minor arcs estimate is an estimate for
%the associated error  of  the main part related to   major  arcs.  Here we also call it "minor arcs" as the statement in  \cite{KR22}.
For $s\in \N$, we define
\beq\label{Noo1}
\mathfrak{A}_s:=\{a/q\in \Q:\ (a,q)=1,\ \  q\in [2^{s-1},2^s)\cap \Z\}.
\eeq
For  each $\A=a/q\in \mathfrak{A}_s$, each bounded function  $m_\circ$ on $\R^n$,  and every $\kappa_1>0$,  we define
\beq\label{Noo2}
\mathscr{L}_{s,\A,\ka_1}[m_\circ](\xi):=\sum_{\bb\in \frac{1}{q}\Z^n}S(\A,\bb) m_\circ(\xi-\bb)\chi_{s,\kappa_1}(\xi-\bb)~~{\rm with }~~~\ \chi_{s,\kappa_1}(\xi):=\chi(2^{4s2^{2\kappa_1 s}}\xi),
\eeq
 %(the choice of $\chi_{s, \kappa}$  is due to  Proposition \ref{PIW}),
  %(see  Ionescu-Wainger \cite{IW05} and recent works)
 and
 let
\beq\label{Noo3}
\mathscr{L}^\#_{s,\ka_1}[m_\circ](\xi):=\sum_{\bb\in \mathcal{U}_{2^s,\ka_1}}m_\circ(\xi-\bb)\tilde{\chi}_{s,\kappa_1}(\xi-\bb)=\Delta_{\mathcal{U}_{2^s,\ka_1}}[m_\circ \tilde{\chi}_{s,\ka_1}](\xi),\eeq
with  the set ${\mathcal{U}_{2^s,\ka_1}}$  given as in Proposition \ref{PIW}, where
$\tilde{\chi}_{s,\ka_1}$ is a compactly supported and smooth function satisfying   $\tilde{\chi}_{s,\ka_1}=1$ on supp${\chi}_{s,\ka_1}$. Since the above notations (\ref{Noo1})-(\ref{Noo3}) initially  introduced  by Krause and Roos \cite{KR22,KR23} are convenient,  here we keep  these  unchanged; moreover, these unchanged notation can help   readers compare the details in this paper  with those in \cite{KR22,KR23}. 
Then  we have the following important  factorization
\beq\label{CC2}
\mathscr{L}_{s,\A,\ka_1}[m_\circ](\xi)=\mathscr{L}_{s,\A,\ka_1}[1](\xi)~\mathscr{L}^\#_{s,\kappa_1}[m_\circ](\xi).
\eeq
Moreover, for each $y\in\Z^n$, simple computations give
\beq\label{MI10}
\F^{-1}_{\Z^n}(\mathscr{L}_{s,\A,\ka_1}[m_\circ])(y)=
\sum_{\bb\in \frac{1}{q}\Z^n\cap [0,1)^n}S(\A,\bb) e(\bb\cdot y)\F^{-1}_{\R^n}(m_\circ \chi_{s,\kappa_1})(y),
\eeq
which will play an important role in proving our main results.
Let us   define
$$\Phi_{\Pi,N_\circ,\nu,\e_\circ}^*(\xi):=\Phi_{\Pi,N_\circ,\nu}(\xi)\ {\ind  {|\nu|\le 2^{-2dj_\circ}j_\circ^{\lfloor 1/\e_\circ \rfloor} }},$$
where $\Phi_{\Pi,N_\circ,\nu}$ is defined by  (\ref{con1}).
%with $N_\circ\sim 2^{j_\circ}$.
Let 
\beq\label{zhong}
\e_\circ(j_\circ):={\lfloor 1/\e_\circ \rfloor}\log_2 j_\circ.\footnote{We will also  frequently use this notation  with $j_\circ$ replaced by $j$ or $l$, and $\e_\circ$ replaced by   $\e_\circ'$,
$\e_\circ''$, $\bar{\e}_\circ$,  $\tilde{\e}_\circ$ and so on.}
\eeq
 For each pair  $(s,\ka)$ with $1\le s\le \e_\circ(j_\circ)$ and $0<\kappa<\e_\circ$ (say $\kappa=\e_\circ/8$ ), and for $x\in\Z^n$,
 we
 define
\beq\label{exp1}
L_{\Pi,N_\circ,\la(x),\e_\circ,\ka}^s(\xi):=\mathscr{L}_{s,\A,\ka}[\Phi_{\Pi,N_\circ,\mu(x),\e_\circ}^*](\xi),
\eeq
where $\mu(x)$ is given as
\beq\label{not1}
\mu(x)=\la(x)-\A,
\eeq
and
$\A$ is the unique element satisfying $\A\in \mathfrak{A}_s$ so that
$|\mu(x)|\le 2^{-10s}$ or an arbitrary element of the complement of $ \mathfrak{A}_s$ if no such $\A$ exists (this case will yield $(\ref{exp1})=0$). As a result, $\A$ may depend on the variable $x$, and we shall keep this fact in mind.
Moreover, these restrictions  $\la(x)\in [0,1]$ and $|\mu(x)|\le 2^{-10s}$
yield that,
$ \A\in  \mathfrak{A}_s$   shall be   $ \A\in \mathcal{A}_{s}:= \mathfrak{A}_s\cap [0,1]$ satisfying 
%but we always   use the notation
%$ \mathcal{A}_s$ to make the notation clearer and only require to    keep in mind that
\beq\label{new1}
\# \mathcal{A}_{s}\le 2^{2s}.
\eeq
This rough  bound will be used  in the proof of  Theorem \ref{co1}.
%This yields that  (\ref{exp1}) is supported in   $\Lambda_{j_\circ,\e_\circ,\la}$.
%Note that since $u$ depends on the variable $x$,  so does $\A$.
 %return  to the analysis of (\ref{aa12}), and
 
Decompose
%{\ind {x\in \Lambda_{j_\circ,\e_\circ,\la}}}
$m_{\Pi,N_\circ,\la(x)}(\xi)$  as
\beq\label{azq2}
{\ind {x\in \Lambda_{j_\circ,\e_\circ,\la}}}~~ m_{\Pi,N_\circ,\la(x)}(\xi)
=\sum_{1\le s\le \e_\circ(j_\circ)}L_{\Pi,N_\circ,\la(x),\e_\circ,\ka}^s(\xi)+  E_{\Pi,N_\circ,\la(x),\e_\circ,\ka}(\xi).
\eeq
Then, by following the proof of \cite[Proposition 3.2]{KR23} (Proposition \ref{p12} in the present paper and exponential sum estimates in Stein and Wainger \cite{SW99}  shall be used in this process), we can infer that for every $p\in(1,\infty)$,   there is   $\gamma_{1,p}>0$ such that for each $\e_\circ>0$,
\beq\label{azq40}
\|{\ind {x\in \Lambda_{j_\circ,\e_\circ,\la}}} \big(E_{\Pi,N_\circ,\la(x),\e_\circ,\ka}(D)f\big)(x)\|_{\ell^p(x\in\Z^n)}
%\les \|\sup_{u\in X_{j_\circ,\e_\circ}}|E_{j_\circ,u,\e_\circ}(D)f|\|_{\ell^2(x\in\Z^n)}
\les 2^{-\gamma_{1,p} j_\circ}\|f\|_{\ell^p(\Z^n)}.
\eeq
To streamline the main text, 
we postpone  its proof to Appendix \ref{appendixB}.
%(We sketch the proof of \eqref{azq40} in the Appendix.) 
This is the second minor arcs estimate.  
While the major arcs estimate remains the most challenging aspect in estimating numerous discrete operators through the Hardy-Littlewood circle method, the minor arcs estimate, which draws upon number theory techniques, holds significant importance.
 With  the above minor arcs estimates  %($\e_\circ=\e_\circ(p,\mathcal{C})$ is sufficiently small and is fixed  by (\ref{azq1})) 
 in hand,  to  estimate (\ref{aa12}), it suffices  to give the desired bound for the first term on the right hand-side of (\ref{azq2}), which is called
 the
 major arcs estimate in the following context.

 In what follows, we will use the above arguments multiple times. Particularly,
 we denote 
 %since the value of $\kappa$  is not important, we omit it from the above mentioned  notations except the functions $\chi_{s,\kappa}$ and $\tilde{\chi}_{s,\kappa}$, that is,   
$$
 \begin{aligned}
%\big(\mathscr{L}_{s,\A},~\mathscr{L}^\#_{s},~\mathcal{U}_{2^s}\big):=&\ \big(\mathscr{L}_{s,\A,\ka},~\mathscr{L}^\#_{s,\ka},~\mathcal{U}_{2^s,\ka}\big)\ \ \ \ \ \ \ \ {\rm and }\\
 \big(L_{\Pi,N_\circ,\la(x),\e_\circ}^s, ~E_{\Pi,N_\circ,\la(x),\e_\circ}\big):=&\ \big( L_{\Pi,N_\circ,\la(x),\e_\circ,\ka}^s,~E_{\Pi,N_\circ,\la(x),\e_\circ,\ka}\big)
 \end{aligned}
$$
 since $\kappa$ only depends on $\e_\circ$.
   In the followed two subsections, we  will show further  reductions of (\ref{long}) and (\ref{short8}). Keep two minor arcs  estimates (\ref{azq1}) and (\ref{azq40}) in mind.
 \subsection{Reduction of   (\ref{long}) and major arcs estimates I and II}
\label{ss2long}
%First, we decompose the operator $\mathscr{C}_{2^j}$ for each $j\in\N$.
Define
\beq\label{df1}
\N^{B}:=\N\cap [C_0,\infty)
\eeq
with $C_0$ sufficiently large.
For all $0\le   j\les1$ and  every $p\in(1,\infty)$, we have  $\|\mathscr{C}_{2^j}f\|_{\ell^p(\Z^n)}
\les \|f\|_{\ell^p(\Z^n)}$, which implies that 
\beq\label{AZZ21}
\|(\mathscr{C}_{2^j}f)_{j\in \N\setminus \N^{B}}\|_{\ell^p(\Z^n;\V^r)}
\les \|f\|_{\ell^p(\Z^n)}.
\eeq
%with the implicit constant independent of $f$.
By (\ref{simple}), (\ref{vardef}), (\ref{Ad1}) and (\ref{AZZ21}),
to show  (\ref{long}), it suffices to prove  that for each $(r,p)\in (2,\infty)\times [p_1,p_2]$,
\begin{align}
\|(\mathscr{C}_{2^j}f)_{j\in \N^{B}}\|_{\ell^p(\B_R;V^r)}&\les_\e R^\e \|f\|_{\ell^p(\Z^n)}
\ \ (R\ge 1).
\label{long900}
\end{align}
%where the set  $\N^{B}$
%is given by
%%%%\beq\label{df1}
%%%\N^{B}:=\N\cap [C_0,\infty)
%%\eeq
%with $C_0$ sufficiently large.
For each $l\in\Z$, we denote
   \beq\label{Noo12}
K_l:=K~\psi_l.
   \eeq
%Let $\B_{2^l}^*=\{x\in\Z:\ |x|\le 2^l\}$ for $l\in\Z$,
Using the partition of  unity $\sum_{l\in\Z}\psi_l=1$ and (\ref{Noo12}),
 we decompose the operator $\mathscr{C}_{2^j} $ as
$$
\mathscr{C}_{2^j}   f(x)=M_j  f(x)+T'_j f(x) \hskip.2in (j\in\N),
$$
where  $M_j$ and $T_j'$ are  defined by
$$
\begin{aligned}
M_j  f(x):=&\ \sum_{y\in \B_{2^j}}f(x-y)e\big(\la(x)|y|^{2d}\big) K_j(y)\ \ \ \ \ {\rm and}\\
T'_j f(x):=&\ \sum_{0\le l< j}\sum_{y\in \Z^n}f(x-y)e\big(\la(x)|y|^{2d}\big) K_l(y).
\end{aligned}
$$
Then we reduce the proof of
 (\ref{long900}) to demonstrating that for each $(r,p)\in (2,\infty)\times [p_1,p_2]$,
%that for each $(r,p)\in (2,\infty)\times (1,\infty)$,  and for all $R\ge 1$,
%\begin{align}
%\|(M_j  f)_{j\in \N^{B}}\|_{\ell^p(\Z^n;\V^r)}&\les\  \|f\|_{\ell^p(\Z^n)},\label{long222}\\
%\|(T'_j f)_{j\in \N^{B}}\|_{\ell^p(\B_R;\V^r)}&\les_\e R^\e \|f\|_{\ell^p(\Z^n)}.\label{long221}
%\end{align}
%where  $ \N^{B}$, a subset of $\N$, is given by
%\beq\label{df1}
%%\N^{B}:=\N\cap [C_0,\infty)
%\eeq
%with $C_0$ sufficiently large.
%By   (\ref{Ad1}),(\ref{vardef}) and (\ref{AZA1}), to show (\ref{long222}) and (\ref{long221}),
 \begin{align}
\|(M_j  f)_{j\in \N^{B}}\|_{\ell^p(\Z^n;V^r)}&\les\  \|f\|_{\ell^p(\Z^n)}\ \ \ \ {\rm and}\label{long2}\\
\|(T_j f)_{j\in \N^{B}}\|_{\ell^p(\B_R;V^r)}&\les_\e R^\e \|f\|_{\ell^p(\Z^n)},\label{long11}
\end{align}
where  $T_j f$ is given by
$$T_j f(x)=T'_jf(x)-T'_{C_0}f(x)=\sum_{C_0\le  l< j}\sum_{y\in \Z^n}f(x-y)e\big(\la(x)|y|^{2d}\big) K_l(y).$$
Here we have shifted our attention from bounding $T'_jf$ to estimating $T_j f$ by invoking  the definition of the  seminorm $V^r$. In the remainder of this subsection, the arguments in Subsection \ref{ss4.1} are used to further provide the reductions of
(\ref{long2}) and (\ref{long11}). %Maintain the notation $\e_\circ$ as is. 
Let $\e_\circ=\e_\circ(p_1,p_2,\mathcal{C})$ ($\mathcal{C}$ large enough) be the constant   given as in 
 Subsection \ref{ss4.1} (see (\ref{azq1}) above).

 \subsubsection{Reduction of  long variational inequality  (\ref{long2})}
\label{sslong1}
 Consider $M_j  f$.
By repeating the arguments presented in Subsection \ref{ss4.1}
%, we can derive two minor arcs estimates along with the primary component related to the major arcs for $M_jf$. More precisely,
    %using the arguments in Subsection  \ref{ss4.1} 
     with
$$j_\circ=j,\ N_\circ=2^{j},\ \Pi=2^{j-1},\  \KK={K}_j,$$
%and applying (\ref{azq1}), (\ref{azq2}) and (\ref{azq3}) with
%$$L^{s}_{j_\circ,u(x),M_\circ} =L^{(1),s}_{j,\la(x),M} ,\ E_{j_\circ,u(x),M_\circ} =E^{(1)}_{j,\la(x),M},\ \Phi=\Phi^{(1)},$$
(since ${K}_j(y){\ind {|y|\le 2^j}}=K_j(y){\ind {2^{j-1}\le |y|\le 2^j}}$),
and using the notations
$$
\begin{aligned}
&\ \big(m_{2^{j-1},2^j,\la(x)},~\Phi_{2^{j-1},2^j,\mu(x),\e_\circ}^*, ~L_{2^{j-1},2^j,\la(x),\e_\circ}^s, ~E_{2^{j-1},2^j,\la(x),\e_\circ}\big)\\
=:&\ \big(m^{(1)}_{j,\la(x)},~\phi^{(1),*}_{j,\mu(x),\e_\circ}, ~L^{(1),s}_{j,\la(x),\e_\circ}, ~E^{(1)}_{j,\la(x),\e_\circ}\big),
\end{aligned}
$$
%$$L =L^{(1)} ,\ E =E^{(1)},\ \Phi=\phi^{(1)},$$
we write $M_j  f$ as
$$M_j  f(x)=:\big(\ma_{j,\la(x)}(D)f\big)(x),$$
and 
obtain  that  for $\e_\circ=\e_\circ(p_1,p_2,\mathcal{C})$, %with $\mathcal{C}$ large enough,
%that for all $\e_\circ\in (0,\gamma')$ with   $\gamma'$ given by (\ref{azq3}),
\begin{align}
\|{\ind {x\notin\Lambda_{j,\e_\circ,\la}}}~ \big(\ma_{j,\la(x)}(D)f\big)(x)\|_{\ell^p(x\in\Z^n)}
\les&\  j^{-\mathcal{C}}\|f\|_{\ell^p(\Z^n)}\ \ \ \ \ (p_1\le p\le p_2),\label{0123}\\
{\ind {x\in \Lambda_{j,\e_\circ,\la}}}~ \ma_{j,\la(x)}(\xi)
=&\ \sum_{1\le s\le \e_\circ (j)} L^{(1),s}_{j,\la(x),\e_\circ} (\xi)+  E^{(1)}_{j,\la(x),\e_\circ}(\xi)\ {\rm and}\  \label{p87}\\
\|{\ind {x\in \Lambda_{j,\e_\circ,\la}}}~\big(E^{(1)}_{j,\la (x),\e_\circ}(D)f\big)(x)\|_{\ell^p(x\in\Z^n)}
%\les \|\|\sup_{\la \in X_{j,M}}|E_{j,\la,M}(D)f|\|_{{\ell^p}}
\les&\  2^{-\gamma_{1,p} j}\|f\|_{\ell^p(\Z^n)}\ \ \  (1<p<\infty),\ \label{00}
\end{align}
where
\begin{equation}\label{f1}
\begin{aligned}
L^{(1),s}_{j,\la(x),\e_\circ} (\xi):=&\ \mathscr{L}_{s,\A,\ka}[\phi_{j,\mu(x),\e_\circ}^{(1),*}](\xi)\ \hskip.6in \hskip.6in{\rm with}\\
\phi_{j,\mu(x),\e_\circ}^{(1),*}(\xi):=&\ \phi_{j,\mu(x)}^{(1)}(\xi)~{\ind {|\mu(x)|\le 2^{-2dj}j^{\lfloor 1/\e_\circ \rfloor}}}\  \hskip.6in {\rm and}\\
\phi_{j,\mu(x)}^{(1)}(\xi):=&\ \int_{2^{j-1}\le |y|\le 2^j} e\big(\mu(x) |y|^{2d}+y\cdot\xi\big)~{K}_j(y) dy.
\end{aligned}
\end{equation}
We will also write $L^{(1),s}_{j,\la(x),\e_\circ} (\xi)=L^{(1),s}_{j,\A+\mu(x),\e_\circ} (\xi)$.
Notice that
the major part is  the first term on the right-hand side of (\ref{p87}). Remember that   $\la(x)$ is an arbitrary  function from $ \Z^n$ to [0,1], and keep the notation (\ref{multi1}) in mind.
%Here and in what follows we replace the notation $\Phi$ by $\phi$.
By  a routine computation,   (\ref{0123}) and (\ref{00}),  we obtain that   for each $p\in [p_1,p_2]$,
\beq\label{minor1}
\begin{aligned}
\|\big({\ind {x\notin \Lambda_{j,\e_\circ,\la}}}  \big(\ma_{j,\la(x)}(D)f\big)(x)\big)_{j\in\N^B}\|_{\ell^p(x\in\Z^n; V^r)}\les&\  \sum_{j\in\N^B} j^{-\mathcal{C}} \|f\|_{\ell^p(\Z^n)}
\les \|f\|_{\ell^p(\Z^n)},\\
\|\big({\ind {x\in \Lambda_{j,\e_\circ,\la}}}  \big(E^{(1)}_{j,\la (x),\e_\circ}(D)f\big)(x) \big)_{j\in\N^B}\|_{\ell^p(x\in\Z^n; V^r)}\les&\  \sum_{j\in\N^B} 2^{-\gamma_{1,p} j}\|f\|_{\ell^p(\Z^n)}
\les \|f\|_{\ell^p(\Z^n)},
\end{aligned}
\eeq
where $\e_\circ=\e_\circ(p_1,p_2,\mathcal{C})$.
Consequently, in order to achieve (\ref{long2}), it suffices to show  
 the proposition below, which is deferred until Section \ref{slong2}.  
 \begin{prop}\label{t21}{\rm (Major arcs estimate I)}
%Let $L_{l,\la(x)}^{(1),s}$ be the operator   defined by (\ref{f2}).
%There is a positive constant $\e_1$ (smaller than $\gamma'$) such that  
 For each $r\in(2,\infty)$ and each $p\in [p_1,p_2]$, we have
$$
\big\|\big(\sum_{1\le s\le \e_\circ (j)}[L_{j,\la(x),\e_\circ}^{(1),s}(D)f](x)\big)_{j\in\N^B}\big\|_{\ell^p(x\in\Z^n; V^r)}
\les~\|f\|_{\ell^p(\Z^n)},
$$
where $\e_\circ=\e_\circ(p_1,p_2,\mathcal{C})$ and $\e_\circ (j)$ is defined by (\ref{zhong}) with $j_\circ=j$.
\end{prop}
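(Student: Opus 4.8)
The plan is to analyze the three multipliers $L^{(1),s}_{j,\la(x),\e_\circ}$ built from the exponential sum factorization and exploit the factorization structure. Recall from (\ref{CC2}) that $L^{(1),s}_{j,\la(x),\e_\circ}(\xi)=\mathscr{L}_{s,\A,\ka}[\phi^{(1),*}_{j,\mu(x),\e_\circ}](\xi)=\mathscr{L}_{s,\A,\ka}[1](\xi)\cdot\mathscr{L}^\#_{s,\ka}[\phi^{(1),*}_{j,\mu(x),\e_\circ}](\xi)$, so there are genuinely two pieces: a number-theoretic ``Gauss sum'' multiplier $\mathscr{L}_{s,\A,\ka}[1]$ that is independent of $\xi$-smoothness (its inverse transform is a weighted sum of characters $e(\bb\cdot y)$ by (\ref{MI10})) and the analytic Ionescu--Wainger piece $\mathscr{L}^\#_{s,\ka}[\phi^{(1),*}_{j,\mu(x),\e_\circ}]$. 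First I would dispose of the outer sum over $1\le s\le\e_\circ(j)$: since $\e_\circ(j)={\lfloor 1/\e_\circ\rfloor}\log_2 j\lesssim_\e \ln\langle j\rangle$, by the triangle inequality (\ref{simple}) for $\V^r$ it suffices to prove, for each fixed $s$, a bound of the form $\|(\,[L^{(1),s}_{j,\la(x),\e_\circ}(D)f](x)\,)_{j\in\N^B}\|_{\ell^p(\Z^n;V^r)}\lesssim C(s)\|f\|_{\ell^p}$ with $\sum_{s} C(s)2^{-\text{(something)}\cdot s}$ convergent after one exploits that $\chi_{s,\ka}$ is supported on $|\xi|\lesssim 2^{-4s2^{2\ka s}}$, which is extremely narrow; this narrowness is what produces the decay in $s$ (e.g. via the $\ell^2$ mass of the multi-frequency pieces and a counting bound $\#\mathcal{A}_s\lesssim_\e 2^{(1+\e)s}$ from (\ref{new1})). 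One must be slightly careful here: the sum over $s$ is what the logarithmic-in-$R$ loss is ``hiding in'' in Theorem \ref{t1}, so for Proposition \ref{t21} (which is an honest $\lesssim\|f\|$ bound with no $R$) the $s$-decay must beat $\e_\circ(j)$ uniformly; I expect this comes from the fact that for $M_j$ the relevant frequency window is dyadic of scale $2^j$, and the interaction of that scale with $\chi_{s,\ka}$ forces $s\lesssim \e_\circ(j)$ with exponential gain.

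The core of the argument is then the per-$s$ estimate, and here I would split $L^{(1),s}$ further using the factorization. For the Gauss-sum factor one invokes the transference principle of Mirek--Stein--Trojan (Proposition \ref{PMST}), applied block-by-block over residue classes $Qx+\mathfrak{m}$ with $Q$ the common denominator $q$ of the relevant $\A=a/q\in\mathcal{A}_s$ (so $q\sim 2^s$), to reduce the $\ell^p(\Z^n;V^r)$ bound for the full multiplier to the $L^p(\R^n;V^r)$ bound (\ref{cv1}) for the continuous model multipliers $\Phi^{(1),*}_{j,\mu,\e_\circ}(\xi)=\int_{2^{j-1}\le|y|\le 2^j}e(\mu|y|^{2d}+y\cdot\xi)K_j(y)\,dy$; the key point is that the $V^r$ norm of the family $(\Phi^{(1)}_{j,\mu}(D)g)_j$ over $\R^n$ is controlled uniformly in $\mu\in[0,1]$ (indeed in $\mu\in\R$) by the classical continuous variational theory — these are exactly the truncated singular-integral-with-oscillation multipliers of Stein--Wainger type, and one can use the $B^{1/r}_{r,1}\hookrightarrow V^r$ embedding plus a stationary-phase / size-and-smoothness estimate on $\partial_j\Phi^{(1)}_{j,\mu}$ to get $\|(\Phi^{(1)}_{j,\mu}(D)g)_j\|_{L^p(\R^n;V^r)}\lesssim_{p,r}\|g\|_{L^p(\R^n)}$ with constant independent of $\mu$. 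The dependence on $x$ of $\mu(x)=\la(x)-\A$ is handled precisely because the transference bound, and the continuous bound it feeds on, are uniform over the parameter $\mu$; this is why the problem survives the variable-coefficient phase. After transference, the remaining multi-frequency projection $\Delta_{\mathcal{U}_{2^s,\ka}}$ is absorbed by the Ionescu--Wainger multiplier theorem (Proposition \ref{PIW}) applied to the (uniformly-in-$\mu$) $L^p(\R^n)$-bounded symbol $\Phi^{(1),*}_{j,\mu}\tilde\chi_{s,\ka}$, giving the $\ell^p(\Z^n)$ bound needed; combining with a Rademacher--Menshov-type passage (\ref{num1}) to convert between the $V^r$ norm and dyadic-block square functions where necessary, one closes the per-$s$ bound.

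The main obstacle, as the authors themselves flag in Subsection \ref{diff}, is the $1<p<2$ range: the Ionescu--Wainger theorem and the multi-frequency square function machinery give $\ell^2$-type control cleanly, and interpolation with $\ell^p$ for large $p$ works, but directly running the argument at $\ell^p$ for $p<2$ after the multi-frequency splitting is problematic because $V^r$ is not monotone under pointwise domination (the warning $|f_N|\lesssim|g_N|$ does \emph{not} imply $\|(f_N)\|_{V^r}\lesssim\|(g_N)\|_{V^r}$ quoted in the overview). My plan to circumvent this mirrors the first ``trick'' announced in the overview: rather than estimate the abstract multiplier pieces at $p<2$, one goes back to the original operator $M_j f(x)=\sum_{y\in\B_{2^j}}f(x-y)e(\la(x)|y|^{2d})K_j(y)$ and runs a positive/maximal majorization there — the convolution kernel has $\ell^1$-mass $\lesssim 1$ by (\ref{kere}), so one gets a cheap $\ell^1\to\ell^{1,\infty}$-flavored bound and a trivial $\ell^\infty$ bound, interpolates to get $\ell^p$, $1<p<2$, for the \emph{size} $\sup_j|M_j f|$, and then couples this with the $\ell^2$-level variational control from the multiplier analysis via a careful use of (\ref{num1})/(\ref{k01}) and the short-long splitting to upgrade to the full $V^r$ bound in the stated range $p\in[p_1,p_2]$. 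Checking that this positive-kernel bypass interfaces correctly with the multi-frequency decomposition — in particular that no genuine cancellation between the $s$-pieces is lost — is the delicate point, and I would devote the bulk of Section \ref{slong2} to it.
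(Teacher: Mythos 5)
Your plan has the right high-level ingredients (factorization, Ionescu--Wainger, transference, the ``go back to the original operator'' trick for $p<2$), but it is missing the decomposition that actually makes the paper's proof close, and one of your central reductions does not go through as stated. The key structural step you have skipped is the partition of $\Z^n$ by the size of $|\mu(x)|$ against the natural frequency scale $2^{-2dj}$, i.e. the sets $\mathcal{S}_j^m$ of (\ref{dc1}). For $m=0$ one Taylor-expands $\phi^{(1)}_{j,\mu(x)}$ in the parameter $\mu(x)2^{2dj}$, reducing to the multi-frequency square function estimate of Lemma \ref{ccz} (this is Lemma \ref{l5.2}). For $m\ge 1$ the phase genuinely oscillates and one needs the triple maximal estimate (\ref{hee0}) obtained by interpolating a Stein--Wainger/Van der Corput bound with decay $2^{-cm}$ against an estimate with decay $2^{-cs}$; crucially, the disjointness $\sum_{j}\ind{\mathcal{S}_j^m}(x)\le 1$ for $m\ge 1$ is what lets one pass from a maximal bound to an $\ell^1$-in-$j$ (hence trivially $V^r$) bound. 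Without this decomposition you have no mechanism to sum over $j$, and the decay in $s$ you attribute to ``narrowness of $\chi_{s,\ka}$ plus the count $\#\mathcal{A}_s\lesssim 2^{(1+\epsilon)s}$'' is in fact wrong in sign — that count grows in $s$, and the actual saving $2^{-\gamma_p s}$ comes from the Gauss-sum estimate underlying (\ref{de11}), via the factorization (\ref{CC1}).

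The second genuine gap is the transference step. You propose applying Proposition \ref{PMST} to the continuous family $\Phi^{(1)}_{j,\mu(x)}$ ``uniformly in $\mu$,'' but $\mu(x)$ and the residue $\A=\A(x)$ both depend on $x$, and the transference principle transfers a fixed continuous multiplier family, not a variable-coefficient one. This is exactly the obstruction the authors flag in Subsection \ref{diff} (and it is why the paper uses transference only inside Lemma \ref{ccz2}/\ref{endle}, after first linearizing and averaging over translates so that the $x$-dependence of $\A$ can be made periodic, and even there it is for Major Arcs Estimate II, not I). Your per-$s$ estimate therefore does not reduce cleanly to a continuous $L^p(\R^n;V^r)$ bound. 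Finally, a smaller point: Proposition \ref{t21} carries no $R$-loss at all, so there is nothing for the $s$-sum to ``hide in''; the $R^\epsilon$ loss in Theorem \ref{t1} comes entirely from the multi-frequency variational inequalities used in the proof of Major Arcs Estimate II, not from this proposition. Your instinct to revisit the original operator for $1<p<2$ does match the paper's first trick (see (\ref{bc1})--(\ref{bc2})), but it is deployed there only to obtain the cheap $\ell^1$-in-$j$ bound at $p\ne 2$ to interpolate against the sharp $\ell^2$ bound from the triple maximal estimate — without that $\ell^2$ input the interpolation has nothing to close against.
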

%%%%
\subsubsection{Reduction of long variational inequality  (\ref{long11})}
\label{sslong}
Consider $T_j  f$.
By reiterating the arguments provided in Subsection \ref{ss4.1} with 
$$j_\circ=l,\ \ N_\circ=2^{l+1},\ \ \Pi=2^{l-1},\ \ {\KK}={K}_l,$$
(since ${K}_l(y)=K(y) \psi_l(y)=K(y) \psi_l(y){\ind {2^{l-1}\le |y|\le 2^{l+1}}}$), 
and applying the notations  
$$
\begin{aligned}
&\ \big(m_{2^{l-1},2^{l+1},\la(x)},\Phi_{2^{l-1},2^{l+1},\mu(x),\e_\circ}^*, ~L_{2^{l-1},2^{l+1},\la(x),\e_\circ}^s, ~E_{2^{l-1},2^{l+1},\la(x),\e_\circ}\big)\\
=:&\ \big(\mb_{l,\la(x)},\phi^{(2),*}_{l,\mu(x),\e_\circ}, ~L^{(2),s}_{l,\la(x),\e_\circ}, ~E^{(2)}_{l,\la(x),\e_\circ}\big),
\end{aligned}
$$
we can write $T_j f$ as
$$T_j  f(x)=:\sum_{C_0 \le l< j} \big(\mb_{l,\la(x)}(D)f\big)(x)$$
and get that for  $\e_\circ=\e_\circ(p_1,p_2,\mathcal{C})$,
\begin{align}
\|{\ind {x\notin \Lambda_{l,\e_\circ,\la}}} ~\big(\mb_{l,\la(x)}(D)f\big)(x)\|_{\ell^p(x\in\Z^n)}
%\le \|\sup_{\la \in X_{j,M}^c}|m_{2^{j-1},2^{j+3},\la}(D)f|\|_{\ell^p}
\les&\  l^{-\mathcal{C}}\|f\|_{\ell^p(\Z^n)}\ \ \ \ \ (p_1\le p\le p_2), \label{10}\\
{\ind {x\in \Lambda_{l,\e_\circ,\la}}} \mb_{l,\la(x)}(\xi)
=&\ \sum_{1\le s\le \e_\circ (l)}L_{l,\la(x),\e_\circ}^{(2),s}(\xi)+  E^{(2)}_{l,\la(x),\e_\circ}(\xi) \ {\rm and}\label{ds1}\\
\|{\ind {x\in \Lambda_{l,\e_\circ,\la}}}~ \big(E_{l,\la (x),\e_\circ}(D)f\big)(x)\|_{\ell^p(x\in\Z^n)}
%\les \|\|\sup_{\la \in X_{j,M}}|E_{j,\la,M}(D)f|\|_{{\ell^p}}
\les&\  2^{-\gamma_{1,p} l}\|f\|_{\ell^p(\Z^n)}\ \ (1<p<\infty),\label{100}
\end{align}
where
\begin{equation}\label{f2}
\begin{aligned}
L^{(2),s}_{l,\la(x),\e_\circ} (\xi):=&\ \mathscr{L}_{s,\A,\ka}[\phi_{l,\mu(x),\e_\circ}^{(2),*}](\xi)\  \ \ \ \ \ \ \ \ \ \ \ \ \ \ \ \ {\rm with}\\
\phi_{l,\mu(x),\e_\circ}^{(2),*}(\xi):=&\ \phi_{l,\mu(x)}^{(2)}(\xi)~~{\ind {|\mu(x)|\le 2^{-2dl}l^{\lfloor 1/\e_\circ \rfloor}}}\  \ \ \ {\rm and}\\
\phi_{l,\mu(x)}^{(2)}(\xi):=&\ \int_{\R^n} e\big(\mu(x) |y|^{2d}+y\cdot\xi\big)~{K}_l(y) dy.
\end{aligned}
\end{equation}
We may also  write $L^{(2),s}_{l,\la(x),\e_\circ} (\xi)=L^{(2),s}_{l,\A+\mu(x),\e_\circ} (\xi)$ in the following context.
Note that the main part  is  the first term on the right hand side of (\ref{ds1}).
%Here and in what follows we replace the notation $\Phi$ by $\phi$.
By a simple computation, we can  obtain from (\ref{10}) and (\ref{100}) that for each $p\in[p_1,p_2]$,
\beq\label{minor2}
\begin{aligned}
\|\Big(\sum_{C_0 \le l< j} {\ind {x\notin \Lambda_{l,\e_\circ,\la}}} ~\big(\mb_{l,\la(x)}(D)f\big)(x)\Big)_{j\in\N^B}\|_{\ell^p(x\in\Z^n; V^r)}\les&\  \sum_{l\in\N^B} l^{-\mathcal{C}}\|f\|_{\ell^p(\Z^n)}
\les \|f\|_{\ell^p(\Z^n)},\\
\|\Big(\sum_{C_0 \le l< j} {\ind {x\in \Lambda_{l,\e_\circ,\la}}} \big(E^{(2)}_{l,\la (x),\e_\circ}(D)f\big)(x) \Big)_{j\in\N^B}\|_{\ell^p(x\in\Z^n; V^r)}\les&\  \sum_{l\in\N^B} 2^{-\gamma_{1,p}l} \|f\|_{\ell^p(\Z^n)}
\les \|f\|_{\ell^p(\Z^n)},
\end{aligned}
\eeq
where $\e_\circ=\e_\circ(p_1,p_2,\mathcal{C})$.
Hence, once the proposition below %establishing the major arcs estimate 
is affirmed, we can derive (\ref{long11}) from (\ref{ds1}) and (\ref{minor2}) immediately.
\begin{prop}\label{892}{\rm (Major arcs estimate II)}
%Let $L_{l,\la(x)}^{(2),s}$ be the operator   defined by (\ref{f2}).
%Let $\la:\Z^n\to [0,1]$ be an arbitrary  function.
Let
 $(R,r,p)\in [1,\infty)\times (2,\infty)\times [p_1,p_2]$. For  any $\e>0$,  we have
$$
\big\|\Big(\sum_{C_0 \le l< j}  \sum_{1\le s\le \e_\circ (l)}[L_{l,\la(x),\e_\circ}^{(2),s}(D)f](x)\Big)_{j\in\N^B}\big\|_{\ell^p(x\in \B_R; V^r)}
\les_\e  R^\e\|f\|_{\ell^p(\Z^n)},
$$
where $\e_\circ=\e_\circ(p_1,p_2,\mathcal{C})$ and $\e_\circ (l)$ is defined by (\ref{zhong}) with $j_\circ=l$.
\end{prop}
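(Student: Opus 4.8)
The plan is to use the factorization \eqref{CC2} to split each multiplier $L^{(2),s}_{l,\la(x),\e_\circ}$ into an arithmetic Gauss-sum factor $\mathscr{L}_{s,\A,\ka}[1]$ and a smooth Ionescu--Wainger-type factor $\mathscr{L}^\#_{s,\ka}[\phi^{(2),*}_{l,\mu(x),\e_\circ}]$. First I would freeze the arithmetic part: since $|S(\A,\bb)|$ enjoys the Gauss-sum decay in $q$ and since for each fixed $x$ only one $\A=\A(x)\in\mathcal{A}_s$ is active, the inner sum over $s$ produces, after summing the geometric-type bound in $s$, only a constant (or at worst a factor polynomial in $\e_\circ(l)$, which is logarithmic in $l$ and hence absorbable into $R^\e$ after the reduction in Section \ref{pth1}). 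The serious content is therefore the long $V^r$ estimate in $j$ for the partial sums $\sum_{C_0\le l<j}\mathscr{L}^\#_{s,\ka}[\phi^{(2),*}_{l,\mu(x),\e_\circ}](D)f$, uniformly in $s$ and in the choice of $\A(x)$.

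For this I would invoke the multi-frequency variational inequality (Lemma \ref{endle}), whose role here is precisely to replace the double maximal estimate of \cite[Lemma 7.2]{KR22} in the $V^r$ setting; the point advertised in Subsection \ref{diff} is that one cannot simply dominate $V^r$ pointwise, so one instead passes through the multi-frequency square function estimate (Lemma \ref{ccz}) together with the classical \emph{continuous} variational inequality \eqref{cv1} for the real-variable Littlewood--Paley-type multipliers $\phi^{(2)}_{l,\nu}$. Concretely: (i) use the Ionescu--Wainger theorem (Proposition \ref{PIW}) to transfer the continuous $V^r$-bound for $(\sum_{l<j}\phi^{(2),*}_{l,\nu}(D))_j$ on $\R^n$ into an $\ell^p(\Z^n)$-bound for the Ionescu--Wainger-projected operator, with constant independent of $\mathcal{U}_{2^s,\ka}$; (ii) apply the transference principle of Mirek--Stein--Trojan (Proposition \ref{PMST}, case $Q=1$, $\mathfrak m=0$) to handle the $x$-dependence of $\nu=\mu(x)$, which is legitimate because on $\Lambda_{l,\e_\circ,\la}$ the frequency $\mu(x)$ lies in a window of the correct width $2^{-2dj}j^{\lfloor1/\e_\circ\rfloor}$ matched to the cutoff $\eta_\circ(\mathfrak R\cdot)$; (iii) run the Rademacher--Menshov inequality \eqref{num1} to pass from the dyadic-block estimates to the full $V^r$-variation over $j\in\N^B$, at the cost of a factor linear in the number of scales, i.e. $\les \log\langle R\rangle \les_\e R^\e$. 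Finally I would re-sum over $s$, using the decay coming from the arithmetic factor and $\chi_{s,\ka}$ (its support shrinks like $2^{-4s2^{2\ka s}}$, which kills the overlap between distinct $s$), to close the estimate.

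The main obstacle, as the authors themselves flag, is step (ii)--(iii) intertwined with the $x$-dependent phase: the seminorm $V^r$ is not monotone under pointwise domination, so one genuinely must arrange the square-function comparison (Lemma \ref{ccz}) \emph{before} invoking the transference principle, and verify that the continuous-side input \eqref{cv1} for $(\sum_{l<j}\phi^{(2),*}_{l,\nu})_j$ holds with constant uniform in the frequency parameter $\nu$ — this last uniformity is what lets Proposition \ref{PMST} be applied with a single $B_{p,r}$ regardless of $\mu(x)$. A secondary technical point is tracking where the $R^\e$ loss is actually incurred: it should enter only through the Rademacher--Menshov step (number of dyadic scales in $[C_0,\log_2 R]$) and possibly the crude sum over $1\le s\le\e_\circ(l)$, both of which are at most logarithmic in $R$; everything arithmetic and everything from Ionescu--Wainger/transference should be genuinely $R$-independent. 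Once Lemma \ref{endle} and Lemma \ref{ccz} are in hand, the remaining bookkeeping is routine, so I would state those two lemmas first (in Section \ref{Fpre}) and then give the short deduction of Proposition \ref{892} in Section \ref{slong3}.
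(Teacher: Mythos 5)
Your proposal correctly names the key auxiliary tools --- Lemma~\ref{endle}, Lemma~\ref{ccz}, the factorization~\eqref{CC2}, Proposition~\ref{PIW}, Proposition~\ref{PMST}, and the ``$V^r$ is not monotone under pointwise domination'' obstruction --- but the route you describe skips several structural steps without which the argument does not close.

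The most serious gap is the jump from the $x$-dependent oscillatory multiplier $\phi^{(2),*}_{l,\mu(x),\e_\circ}$ to something Lemma~\ref{endle} can digest. That lemma is stated only for the specific modulation-free kernel $\K^{0,j}=\sum_{0\le l<j}K\psi_l$; it does not apply to multipliers carrying the internal phase $e(\mu(x)|y|^{2d})$, and Proposition~\ref{PMST} does not ``handle the $x$-dependence of $\nu=\mu(x)$'' either --- transference turns a uniform continuous $V^r$ bound into a discrete one, it does not absorb an $x$-dependent oscillation inside the kernel. What is actually required is the decomposition by the sets $\mathcal{S}_l^m$ from~\eqref{dc1} (i.e.\ by the size of $\mu(x)2^{2dl}$), which you omit entirely. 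For $m\ge1$ (large oscillation) one gets geometric decay in $m$ from a triple maximal / Stein--Wainger type estimate and interpolates against an $\ell^p$ bound obtained by the ``first trick'' of returning to $\mb_{l,\la(x)}$ and $E^{(2)}_{l,\la(x),\e_\circ}$; Lemma~\ref{endle} plays no role there. For $m=0$ (small oscillation) one Taylor-expands $\phi^{(2)}_{l,\mu(x)}$, pulls out the scalar factors $(\mu(x)2^{2dl})^k$, and reduces the $k=0$ term to exactly the multiplier $\widehat{K_0}(2^l\cdot)$ of Lemma~\ref{endle}; the $x$-dependent indicator $\ind{(x,s)\in\mathscr{U}_l}$ must then be peeled off by Abel summation, using the $V^r$ algebra property~\eqref{var1}, before Lemma~\ref{endle} can be invoked. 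None of this bridge material appears in your plan, and without it the application of Lemma~\ref{endle} is not legitimate.

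A secondary issue is your accounting of the $R^\e$ loss. You attribute it to a Rademacher--Menshov pass over ``dyadic scales in $[C_0,\log_2 R]$.'' That is not where it arises. The $R^\e$ loss in Proposition~\ref{892} is imported wholesale from the $R^\e$ on the right-hand side of~\eqref{dou1}, which is itself traced to~\eqref{Goo2} inside Lemma~\ref{ccz2}: it is the cost of summing over the $\sim 2^s$ elements of $\mathcal{A}_s$ when $2^{2^s}\le R$. The Rademacher--Menshov step~\eqref{num1} is used internally in Lemma~\ref{endle} only for the low-scale range $1\le j\le 2^{2C_1 s}$ and produces no $R$-dependence. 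Finally, your remark that the sum over $s$ ``produces only a constant'' because of Gauss-sum decay is not the mechanism here: the $2^{-\gamma_p s}$ decay is already built into Lemma~\ref{endle} via~\eqref{de11}, and for general $d\ge1$ one does not have the strong $2^{-s/2}$-per-coordinate Gauss bound (that is the special $d=1$ improvement used only for Theorem~\ref{co1}).
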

The proof of Proposition \ref{892} is delayed until Section \ref{slong3}.

\subsection{Reduction of   (\ref{short8}) and major arcs estimate III}
%maximal operator
For all  $0 \le j< C_0$,
we  may deduce that for each $p\in(1,\infty)$,
$$\sup_{N\in [2^j,2^{j+2}]}\|\mathscr{C}_{N} f-\mathscr{C}_{2^j} f\|_{\ell^p(\Z^n)}
\les \|f\|_{\ell^p(\Z^n)}\ \ {\rm and} \ \sup_{N\in [2^j,2^{j+2}]}\|\mathscr{C}_{N+1} f-\mathscr{C}_{N} f\|_{\ell^p(\Z^n)}
\les \|f\|_{\ell^p(\Z^n)},$$
which with (\ref{k01}) yields that  the estimate 
\beq\label{hk1}
\|(\mathscr{C}_{N} f-\mathscr{C}_{2^j} f)_{N\in [2^j,2^{j+2}]}\|_{\ell^p(\Z^n;V^2)}\les \|f\|_{\ell^p(\Z^n)}\ \ (1<p<\infty)
\eeq
holds for all  $0 \le j< C_0$. Then, it follows from (\ref{hk1}) that
%for every $p\in (1,\infty)$,
$$
 \|\big( \sum_{j\in \N\setminus\N^B}\|(\mathscr{C}_{N} f-\mathscr{C}_{2^j} f)_{N\in [2^j,2^{j+1})}\|_{V^2}^2\big)^{1/2}\|_{\ell^p(\Z^n)}\les \|f\|_{\ell^p(\Z^n)}.
$$
As a result, we reduce the proof of
 (\ref{short8}) to showing that  for each $p\in[p_1,p_2]$,
\beq\label{shorttt}
 \|\big( \sum_{j\in \N^B}\|(\mathscr{C}_{N} f-\mathscr{C}_{2^j} f)_{N\in [2^j,2^{j+1})}\|_{V^2}^2\big)^{1/2}\|_{\ell^p(\Z^n)}\les \|f\|_{\ell^p(\Z^n)}.
\eeq

We next use the arguments presented in Subsection \ref{ss4.1} to 
 $\mathscr{C}_{N} f-\mathscr{C}_{2^j} f$.   Let $\e_\circ=\e_\circ(p_1,p_2,\mathcal{C})$  be given as in  Subsection \ref{ss4.1}  (see (\ref{azq1}) above).  Precisely, 
as in the previous subsections,
by revisiting the arguments presented in Subsection \ref{ss4.1} with 
$$j_\circ=j,\ \ N_\circ=N\in [2^j,2^{j+1}),\ \ \Pi=2^{j},\ \ {\KK}(y)=K(y),$$
and using the notations
$$
\big(m_{2^j,N,\la(x)},\Phi_{2^j,N,\mu(x),\e_\circ}^*, ~L_{2^j,N,\la(x),\e_\circ}^s, ~E_{2^j,N,\la(x),\e_\circ}\big)
=:\big(\mc_{2^j,N,\la(x)},\phi^{(3),*}_{2^j,N,\mu(x),\e_\circ}, ~L^{(3),s}_{2^j,N,\la(x),\e_\circ}, ~E^{(3)}_{2^j,N,\la(x),\e_\circ}\big),
$$
we can write  the operator
$\mathscr{C}_{N} -\mathscr{C}_{2^j}$ as
$$
\mathscr{C}_{N} f(x)-\mathscr{C}_{2^j} f(x)
=:\big(\mc_{2^j,N,\la(x)}(D)f\big)(x),
%\sum_{y\in \B_N\setminus \B_{2^j}}f(x-y)e(\la(x)|y|^{2d}) K(y).
$$
and obtain   that for $\e_\circ=\e_\circ(p_1,p_2,\mathcal{C})$,
\begin{align}
\|{\ind {x\notin\Lambda_{j,\e_\circ,\la}}} ~\big(\mc_{2^{j},N,\la(x)}(D)f\big)(x)\|_{\ell^p(x\in\Z^n)}
%\le \|\sup_{\la \in X_{j,M}^c}|m_{2^{j-1},2^{j+3},\la}(D)f|\|_{\ell^p}
\les&\  j^{-\mathcal{C}}\|f\|_{\ell^p(\Z^n)}\ \ \ \ \ \ (p_1\le p\le p_2),\label{0}\\
{\ind {x\in \Lambda_{j,\e_\circ,\la}}} ~\mc_{2^{j},N,\la(x)}(\xi)
=&\ \sum_{1\le s\le \e_\circ (j)}L_{2^j,N,\la(x),\e_\circ}^{(3),s}(\xi)+  E^{(3)}_{2^j,N,\la(x),\e_\circ}(\xi)\ {\rm and}\label{cx3}\\
\|{\ind {x\in\Lambda_{j,\e_\circ,\la}}} ~\big(E^{(3)}_{2^j,N,\la (x),\e_\circ}(D)f\big)(x)\|_{\ell^p(x\in\Z^n)}
%\les \|\|\sup_{\la \in X_{j,M}}|E_{j,\la,M}(D)f|\|_{{\ell^p}}
\les&\  2^{-\gamma_{1,p} j}\|f\|_{\ell^p(\Z^n)}\ \ (1<p<\infty),\label{009}
\end{align}
where
\begin{equation}\label{f3}
\begin{aligned}
L_{2^j,N,\la(x),\e_\circ}^{(3),s} (\xi):=&\ \mathscr{L}_{s,\A,\ka}[\phi_{2^j,N,\mu(x),\e_\circ}^{(3),*}](\xi)\  \ \ \ \ \ \ \ \ \ \ \ \ \ {\rm with}\\
\phi_{2^j,N,\mu(x),\e_\circ}^{(3),*}(\xi):=&\ \phi_{2^j,N,\mu(x)}^{(3)}(\xi)~{\ind {|\mu(x)|\le 2^{-2dj}j^{\lfloor 1/\e_\circ \rfloor}}}\  \ \ \ \ \  {\rm and}\\
\phi_{2^j,N,\mu(x)}^{(3)}(\xi):=&\ \int_{2^j\le |y|\le N} e\big(\mu(x) |y|^{2d}+y\cdot\xi\big)~K(y) dy.
\end{aligned}
\end{equation}
Note that the primary component related to the major arcs is the first term on the right-hand side of (\ref{cx3}). In order to establish (\ref{shorttt}),  routine  calculations indicate that it suffices to demonstrate  the following propositions:
 \begin{prop}\label{addpp1}
Suppose that $j\in\N^B$, $p\in[p_1,p_2]$ and $\e_\circ=\e_\circ(p_1,p_2,\mathcal{C})$. There exists  
large enough $C_{p_1}>0$ such that for all $\mathcal{C}\ge C_{p_1} $,
\begin{align}
\|\Big({\ind {x\notin\Lambda_{j,\e_\circ,\la}}}~\big(\mc_{2^j,N,\la(x)}(D)f\big)(x)\Big)_{N\in [2^j,2^{j+1})}\|_{\ell^p(x\in\Z^n;V^2)}\les&\  j^{-2} \|f\|_{\ell^p(\Z^n)}\ \ \  {\rm and}\label{i0}\\
\|\Big({\ind {x\in\Lambda_{j,\e_\circ,\la}}}~\big(E^{(3)}_{2^j,N,\la(x),\e_\circ}(D)f\big)(x)\Big)_{N\in [2^j,2^{j+1})}\|_{\ell^p(x\in\Z^n;V^2)}\les&\  j^{-2}\|f\|_{\ell^p(\Z^n)}\label{i1}.
\end{align}
%hold for all $\e_\circ\in (0,\gamma')$ with   $\gamma'$ given by (\ref{azq3}).
\end{prop}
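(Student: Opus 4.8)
The plan is to derive both \eqref{i0} and \eqref{i1} from the already-established minor arcs estimates \eqref{0} and \eqref{009} by interpolating with trivial $\ell^2$-bounds for the full $V^2$-seminorm. First I would record the crude bound obtained by summing increments: since the $V^2$-seminorm of a sequence is dominated by the $\ell^2$-norm of its consecutive differences over the dyadic block $[2^j,2^{j+1})$ (see \eqref{varsum}), and since for $N,N+1\in[2^j,2^{j+1})$ the difference $\mathscr{C}_{N+1}f-\mathscr{C}_Nf$ is a single-annulus sum with kernel supported on $|y|\sim N$, one has the cheap pointwise estimate controlling $|\mathscr{C}_{N+1}f(x)-\mathscr{C}_Nf(x)|$ by $M_{DHL}f(x)$ up to a factor $N^{-n}\cdot\#\{|y|=N\}\lesssim 1$. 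Consequently, for every $p\in[p_1,p_2]$,
\[
\big\|\|(\mc_{2^j,N,\la(x)}(D)f)(x)\|_{V^2}\big\|_{\ell^p(\Z^n)}
\lesssim (2^j)^{1/2}\,\|f\|_{\ell^p(\Z^n)},
\]
and similarly, using \eqref{kere} together with the uniform $\ell^p$-boundedness of the multipliers $\mc_{2^j,N,\la(x)}$ and of the error multipliers $E^{(3)}$ from the circle-method decomposition, one gets the analogous polynomial-in-$2^j$ bound with the indicator $\ind{x\in\Lambda}$ and $E^{(3)}$ in place of the full operator. These are wasteful in $2^j$ but hold on the nose; the point is that the minor-arcs pieces decay.

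Next I would combine these $\ell^2$-type bounds with the minor arcs estimates. Fix $p_0$ close to $p_1$ (and $p_0<p$) and exploit \eqref{0}: applying the numerical inequality \eqref{k01} with $r=2$, $v-u\sim 2^j$, $U_{p_0}\lesssim j^{-\mathcal{C}}\|f\|_{\ell^{p_0}}$ coming from the maximal form of \eqref{0} (via \eqref{Ad1}), and $V_{p_0}\lesssim \|f\|_{\ell^{p_0}}$ from the crude increment bound, yields
\[
\big\|\|({\ind {x\notin\Lambda_{j,\e_\circ,\la}}}\mc_{2^j,N,\la(x)}(D)f)(x)\|_{V^2}\big\|_{\ell^{p_0}(\Z^n)}
\lesssim 2^{j/2}\,j^{-\mathcal{C}/2}\,\|f\|_{\ell^{p_0}(\Z^n)}.
\]
I then interpolate this (in the vector-valued sense, treating $\|\cdot\|_{V^2}$ as a fixed Banach-space norm so Riesz–Thorin/Marcinkiewicz for $\ell^p(\Z^n;V^2)$ applies) against the analogous bound at the endpoint $p_2$; choosing $\mathcal{C}$ large enough relative to $p_1$ (this is the source of the constant $C_{p_1}$ in the statement), the gain $j^{-\mathcal{C}/2}$ outweighs the loss $2^{j/2}$ after interpolation, producing the desired $j^{-2}\|f\|_{\ell^p}$. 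The treatment of \eqref{i1} is identical, with \eqref{009} (which already has an exponential gain $2^{-\gamma_{1,p}j}$, hence is even more favorable) replacing \eqref{0}; here one only needs the crude $2^{j/2}$ bound for $E^{(3)}$, which follows from \eqref{kere} and the $\ell^p$-bound on the full multiplier via the decomposition \eqref{azq2}.

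The main obstacle I anticipate is purely bookkeeping rather than conceptual: one must verify that \eqref{0} and \eqref{009} indeed hold with a maximal (supremum-over-$N$) left-hand side, not merely a fixed-$N$ left-hand side, so that \eqref{Ad1} converts them into inputs $U_{p_0},U_p$ for the Rademacher–Menshov-type inequality \eqref{k01}; inspecting the derivation of \eqref{0} (it runs as in \cite[Proposition 3.1]{KR23}, whose statement is about $\sup_\la$) shows the supremum is available for free. A secondary point is to make the dependence of $\mathcal{C}$ on $p_1$ explicit: since interpolation between exponents $p_0$ and $p_2$ with $p$ in between produces an exponent $\theta\in(0,1)$ that degenerates as $p\to p_1$, one needs $\mathcal{C}\theta/2 - (1-\theta)/2 \cdot(\text{something}) \ge 2$ uniformly for $p\in[p_1,p_2]$, which forces $\mathcal{C}\ge C_{p_1}$; tracking this inequality carefully is the only delicate part.
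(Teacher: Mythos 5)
The framework you propose—feed a maximal estimate $U$ and a consecutive-increment estimate $V$ into the Rademacher--Menshov-type inequality \eqref{k01}—is indeed the paper's framework, but your value of $V$ is off by a full factor of $2^{j}$, and this is fatal. You claim the single-step increment $\mathscr{C}_{N+1}f-\mathscr{C}_{N}f$ is controlled by $M_{DHL}f$ ``up to a factor $N^{-n}\cdot\#\{|y|= N\}\lesssim 1$.'' But the number of lattice points in the annulus $N<|y|\le N+1$ is $\sim N^{n-1}$, so the relevant factor is $N^{-n}\cdot N^{n-1}= N^{-1}\sim 2^{-j}$, not $O(1)$. Correctly, by Young's inequality with $\|K\,\ind{N<|\cdot|\le N+1}\|_{\ell^1}\lesssim 2^{-j}$ (this is exactly \eqref{cx1} in the paper), one gets $V_{p}\lesssim 2^{-j}\|f\|_{\ell^p}$. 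This is the whole point: plugging $U_p\lesssim j^{-\mathcal{C}}\|f\|_{\ell^p}$ and $V_p\lesssim 2^{-j}\|f\|_{\ell^p}$ into \eqref{k01} gives $U_p+2^{j/r}U_p^{1-1/r}V_p^{1/r}\lesssim j^{-\mathcal{C}}+j^{-\mathcal{C}(1-1/r)}$; the $2^{j/r}$ and $2^{-j/r}$ \emph{cancel exactly}, leaving only a polynomial gain in $j$, which can then be beaten by taking $\mathcal{C}=\mathcal{C}(p_1)$ large. With your weaker $V_p\lesssim\|f\|_{\ell^p}$, you are left with $2^{j/r}j^{-\mathcal{C}(1-1/r)}$, which diverges exponentially in $j$ for every fixed $\mathcal{C}$, and interpolating in $p$ between two estimates that both carry a factor $2^{j/2}$ cannot remove it. The subsequent interpolation-in-$p$ step in your write-up is therefore a red herring: once $V$ is bounded correctly, \eqref{k01} applied directly at each $p\in[p_1,p_2]$ already closes the argument.

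Two smaller points. First, \eqref{k01} requires $1\le r\le p$, so you cannot take $r=2$ when $p<2$; the paper works with $r_p=\min\{2,p\}$ and observes that $\|\cdot\|_{V^2}\le\|\cdot\|_{V^{r_p}}$ when $r_p\le 2$, which is what you want. The source of the constant $C_{p_1}$ is exactly that $1-1/r_p$ degenerates as $p\to p_1$, so $\mathcal{C}(1-1/p_1)\ge 10$ pins down $C_{p_1}$; no $p$-interpolation is involved. Second, for \eqref{i1} the increment of $E^{(3)}$ is not bounded by \eqref{kere} alone: one uses the decomposition \eqref{cx3} to write the increment of $E^{(3)}$ as the increment of the full multiplier minus the increments of the $L^{(3),s}$ pieces, and the latter must be bounded separately (yielding the extra harmless factor $\e_\circ(j)$); this is handled in \eqref{B2}--\eqref{B3} of the paper and is worth spelling out.
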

%Meanwhile, as the previous subsections, (\ref{zhong5}) is a direct consequence of the following proposition.
\begin{prop}\label{t31}{\rm (Major arcs estimate III)}
For each $(r,p)\in(2,\infty)\times (1,\infty)$ and every $\tilde{\e}_\circ\in (0,1)$, 
%There is a positive constant $\e_2$ such that for any $\e_\circ\in (0,\e_2)$, we have
$$
\Big\|\Big( \sum_{j\in \N^B}\big\|\big(\sum_{1\le s \le \tilde{\e}_\circ (j)}[L_{2^j, N,\la(x),\tilde{\e}_\circ}^{(3),s}(D)f](x)\big)_{N\in [2^j,2^{j+1})}\big\|_{V^2}^2\Big)^{1/2}\Big\|_{\ell^p(\Z^n)}\les \|f\|_{\ell^p(\Z^n)}.
$$
\end{prop}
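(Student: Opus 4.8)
The plan is to combine the arithmetic factorization $(\ref{CC2})$ with a transference to $\R^n$, where the rough, variable-dependent kernel is handled by the shifted square function estimate of the Appendix together with the Plancherel--P\'{o}lya inequality --- the latter taking over the role that the numerical inequalities $(\ref{num1})$--$(\ref{k01})$ cannot play here. First, applying $(\ref{CC2})$ pointwise in $x$ and partitioning $\Z^n$ according to the finitely many values of $\A=\A(x)\in\mathcal{A}_s$ (recall $(\ref{new1})$), I would split off from $L_{2^j,N,\la(x),\tilde{\e}_\circ}^{(3),s}$ the $N$-independent arithmetic factor $\mathscr{L}_{s,\A,\ka}[1]$, which carries the Gauss sums $S(\A,\bb)$, leaving the Ionescu--Wainger-localized factor $\mathscr{L}^{\#}_{s,\ka}[\phi_{2^j,N,\mu(x),\tilde{\e}_\circ}^{(3),*}]$, which carries the $N$-truncation and all the $V^2$-fluctuation. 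Estimating the arithmetic factor by the Gauss sum bounds of \cite{KR22} combined with the multi-frequency square function estimate of Lemma \ref{ccz} produces a gain $\lesssim 2^{-\delta s}$ for some $\delta>0$; since this is summable over the logarithmically long range $1\le s\le\tilde{\e}_\circ(j)$, we may fix $s$ and are reduced to controlling the $\ell^p(\Z^n;V^2)$-norm of $(\mathscr{L}^{\#}_{s,\ka}[\phi_{2^j,N,\mu(x),\tilde{\e}_\circ}^{(3),*}](D)f)_{N\in[2^j,2^{j+1})}$, in a form that survives the outer square function over $j$.

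Next I would transfer to the real line. By Proposition \ref{PIW} and the transference principle of Proposition \ref{PMST} --- the passage being legitimized by the approximation Proposition \ref{p12}, which near each rational replaces the discrete symbol by the real-variable model of $(\ref{con1})$, $(\ref{f3})$ --- the matter reduces to a continuous short-variation estimate: for each $j$, uniformly in the admissible parameter $v$,
\[
\big\|\big(\phi^{(3)}_{2^j,N,v}(D)g\big)_{N\in[2^j,2^{j+1})}\big\|_{L^p(\R^n;V^2)}\lesssim a_j\,\|g\|_{L^p(\R^n)},
\]
where the constants $a_j$ are arranged so that, after taking the square function over $j$, Littlewood--Paley orthogonality --- valid since $\phi^{(3)}_{2^j,N,v}(D)$ is essentially a projection to the annulus $|\xi|\sim2^{-j}$ --- recovers a bound by $\|g\|_{L^p(\R^n)}$.

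This continuous estimate is the crux and the expected main obstacle, because the singularity and the merely $\mathcal{C}^1$ regularity of $K$ obstruct any $L^p$-control of the consecutive differences $\phi^{(3)}_{2^j,N+1,v}(D)g-\phi^{(3)}_{2^j,N,v}(D)g$, so $(\ref{num1})$--$(\ref{k01})$ are useless. Instead, using the fundamental theorem of calculus $(\ref{rou1})$ together with the Plancherel--P\'{o}lya embedding $B^{1/2}_{2,1}\hookrightarrow V^2$, I would bound $\|(\phi^{(3)}_{2^j,N,v}(D)g)_N\|_{V^2}$ by a Besov norm in the truncation variable --- essentially $\sum_{k\ge0}2^{k/2}\,\|\p_N P^{(N)}_k(\phi^{(3)}_{2^j,\cdot,v}(D)g)\|_{L^1(dN)}$ up to harmless terms, with $P^{(N)}_k$ a Littlewood--Paley piece in $N$. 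Since $\p_N\phi^{(3)}_{2^j,N,v}(\xi)$ equals, up to the modulation $e(vN^{2d})$, the Fourier transform of $\Omega\,d\sigma$ at $-N\xi$, the mean-zero hypothesis $\int_{\mathbb{S}^{n-1}}\Omega\,d\sigma=0$ and stationary phase yield decay in $|N\xi|$; each Littlewood--Paley piece is then controlled, after taking $L^p(\R^n)$, by the shifted square function estimate of the Appendix and Stein--Wainger-type estimates, with the maximal bounds of Krause--Roos \cite{KR22,KR23} handling the underlying building blocks, and with a gain $2^{-\delta' k}$ that sums the $k$-series and produces the desired $a_j$. The decisive point is that the Plancherel--P\'{o}lya detour converts the variational quantity into an $L^2$-based square function which the Appendix estimate can absorb, even though no numerical inequality is available.

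Finally, reassembling --- summing over $j$ via Littlewood--Paley orthogonality, over $s$ via the geometric series of the first step, and disposing of the minor-arcs remainders through $(\ref{0})$ and $(\ref{009})$ of Proposition \ref{addpp1} and the decomposition $(\ref{cx3})$ --- one arrives at $(\ref{shorttt})$, hence at Proposition \ref{t31}.
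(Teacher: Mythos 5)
Your intuition about the right tools — the Plancherel--P\'{o}lya embedding $B^{1/2}_{2,1}\hookrightarrow V^2$, the fundamental-theorem bound (\ref{rou1}), the shifted square function estimate of the Appendix and the Stein--Wainger bounds — is correct, but the route you sketch to deploy them has a gap at the very first step, and it is precisely the one the paper flags in Subsection \ref{diff} as the central obstacle. You propose to use (\ref{CC2}) to ``split off'' the $N$-independent arithmetic factor $\mathscr{L}_{s,\A,\ka}[1]$, harvest the $2^{-\delta s}$ gain from the Gauss sums, and then be ``reduced to controlling the $\ell^p(\Z^n;V^2)$-norm'' of the remaining Ionescu--Wainger piece $\mathscr{L}^\#_{s,\ka}[\phi^{(3),*}_{2^j,N,\mu(x)}]$. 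But (\ref{CC2}) is a symbol identity; at the operator level it reads $\mathscr{L}_{s,\A}[m_N](D)f=\mathscr{L}_{s,\A}[1](D)\bigl(\mathscr{L}^\#_s[m_N](D)f\bigr)$, and the $V^2$-seminorm over $N$ does not commute with the outer $N$-independent operator: there is no pointwise inequality of the form $\|(\mathscr{L}_{s,\A}[1](D)g_N)_N\|_{V^2}(x)\lesssim \bigl(\text{something involving }\mathscr{L}_{s,\A}[1]\bigr)\bigl(\|(g_N)_N\|_{V^2}\bigr)(x)$, the scalar maximal bound (\ref{de11}) does not supply the $x$-dependent vector-valued estimate one would need, and in general ``$|f_N|\lesssim|g_N|$ for all $N$'' does not imply ``$\|(f_N)_N\|_{V^r}\lesssim\|(g_N)_N\|_{V^r}$.'' The paper never makes this reduction. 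Instead it first splits $\phi^{(3)}_{2^j,N,\mu(x)}(\xi)=\chi(2^j\xi)\phi^{(5)}_{2^j,N,\mu(x)}+\phi^{(4)}_{2^j,N,\mu(x)}(\xi)\sum_{k\le 0}\psi_{j,k}(\xi)+\phi^{(3)}_{2^j,N,\mu(x)}(\xi)\chi^{\cc}(2^j\xi)$. In the first two pieces, a Taylor expansion isolates the $N$-dependence into a \emph{scalar} factor ($\phi^{(5)}_{2^j,N,\mu(x)}$, resp.\ $\bar I^l_{j,N,k,\mu(x)}$) whose $V^2$ in $N$ is uniformly bounded, so the $V^2$-seminorm factorizes \emph{before} any operator is applied, and the leftover multi-frequency operator is fed into Lemma \ref{ccz}, which already carries the $2^{-\gamma_p s}$ gain as a genuine $\ell^p$ square-function estimate. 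Only the third, high-frequency piece needs the Plancherel--P\'{o}lya/derivative route (Lemma \ref{l763}), and there too the $s$-gain enters via Lemma \ref{ccz} applied to the frequency-localized pieces $\mathfrak{M}^k_{\tau,j}$, $\mathfrak{M}^{l,k}_{1,\tau,j}$, not by peeling off $\mathscr{L}_{s,\A}[1]$.

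The second step — transferring the $V^2$-over-$N$ to a continuous estimate via Proposition \ref{PMST} — also does not go through as stated. That proposition transfers a continuous $V^r$-inequality for a sequence of multipliers supported in a small cube near the origin (cut off by $\eta_\circ(\mathfrak{R}\cdot)$); it is the device used in Lemma \ref{ccz2} for the long variation over $j$, after a shift-averaging has isolated a single rational frequency. It is not set up to handle the full Ionescu--Wainger symbol $\mathscr{L}^\#_s[\,\cdot\,]$ with its cloud of rational frequencies, and the paper does not transfer the short variation to $\R^n$ at all; Section \ref{slong4} works on $\Z^n$ directly, converting the $V^2$-in-$\tau$ into $L^1$-in-$\tau$ of $\partial_\tau$ via (\ref{rou1}) and into $B^{1/2}_{2,1}$-in-$\tau$ via Plancherel--P\'{o}lya, and then bounding those derivative multipliers through Lemma \ref{ccz}, the shifted square-function estimate of Lemma \ref{Ap1}, and the Stein--Wainger-type bound. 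Two further details you rely on are also off: $\phi^{(3)}_{2^j,N,v}(D)$ is not a Littlewood--Paley projection to $|\xi|\sim 2^{-j}$, so ``orthogonality over $j$'' must be supplied by the explicit cutoffs $\psi_{j,k}$ rather than by $\phi^{(3)}$; and for $n=1$ the surface-measure Fourier transform has no decay at infinity, so the gain in $k$ cannot come from stationary phase on the sphere — it comes from the Stein--Wainger estimate $|\phi^{(3)}_{2^j,2^j\tau,\mu}(\xi)|\lesssim(2^j|\xi|)^{-1/(2d)}$ used in the proof of (\ref{HH34}).
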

The rest of this subsection is dedicated to proving Proposition \ref{addpp1}, while the proof of Proposition \ref{t31} is deferred to Section \ref{slong5}.
\begin{proof}[Proof of Proposition \ref{addpp1}]
Since the value of $\e_\circ$  is not important for estimating  $E^{(3)}_{2^j,N,\la(x),\e_\circ}$, we will omit it from this notation, that is, 
$$E^{(3)}_{2^j,N,\la(x)}:=E^{(3)}_{2^j,N,\la(x),\e_\circ}.$$
We shall use the numerical inequality (\ref{k01}) to achieve the goal.
Denote  $(Y_{j,\e_\circ,\la}^{(1)},\m^{(1)}):=(\Z^n\setminus\Lambda_{j,\e_\circ,\la},\mc)$, $(Y_{j,\e_\circ,\la}^{(2)},\m^{(2)}):=(\Lambda_{j,\e_\circ,\la},E^{(3)})$,    and define
$$
\begin{aligned}
U_{p,\e_\circ}(i,j):=&\ \sup_{2^j\le N\le 2^{j+1}}\|{\ind {x\in Y_{j,\e_\circ,\la}^{(i)}}}\big(\m^{(i)}_{2^j,N,\la(x)}(D)f\big)(x)\|_{\ell^p(x\in\Z^n)},\\
 V_{p,\e_\circ}(i,j):=&\ \sup_{2^j\le N< 2^{j+1}}\|{\ind {x\in Y_{j,\e_\circ,\la}^{(i)}}}\big(\m^{(i)}_{N,N+1,\la(x)}(D)f\big)(x)\|_{\ell^p(x\in\Z^n)},\ \ i=1,2,
 \end{aligned}
%\les j^{-2}\|f\|_{\ell^p}
$$
where  $\m^{(i)}_{N,N+1,\la(x)}$ is given by
$\m^{(i)}_{N,N+1,\la(x)}:=\m^{(i)}_{2^j,N+1,\la(x)}-\m^{(i)}_{2^j,N,\la(x)}.$
% Here $\m^{(i)}_{N,N+1,\la(x)}$ is defined as $\m^{(i)}_{2^j,N,\la(x)}$ with $(2^j,N)$ replaced by $(N,N+1)$.
Let $r_p=\min\{2,p\}$. To prove (\ref{i0}) and (\ref{i1}),   it suffices to show that for each $j\in\N^B$ and each $p\in[p_1,p_2]$,
\beq\label{ep1}
\|\Big({\ind {x\in Y_{j,\e_\circ,\la}^{(i)}}}\big(\m^{(i)}_{2^j,N,\la(x)}(D)f\big)(x)\Big)_{N\in [2^j,2^{j+1})}\|_{\ell^p(x\in\Z^n;V^{r_p})}\les j^{-2}\|f\|_{\ell^p(\Z^n)},\ \ i=1,2.
\eeq
%where $r_p=\min\{2,p\}$.
Invoking (\ref{k01}),   we can bound the left-hand side of (\ref{ep1}) by a constant times
\beq\label{pp1}
\begin{aligned}
U_{p,\e_\circ}(i,j)+2^{j/r_p}U_{p,\e_\circ}(i,j)^{1-1/r_p}V_{p,\e_\circ}(i,j)^{1/r_p}.
\end{aligned}
\eeq
Thus we reduce the matter  to proving
\beq\label{pp0}
\begin{aligned}
(\ref{pp1})\les j^{-2}\|f\|_{\ell^p(\Z^n)},\ \ i=1,2.
\end{aligned}
\eeq
Using (\ref{0}) and (\ref{009}), we first  have
\beq\label{vv1}
U_{p,\e_\circ}(i,j)\les (2^{-\gamma_{1,p} j}+j^{-\mathcal{C}})\|f\|_{\ell^p(\Z^n)},\ \ i=1,2.
\eeq
Notice  $ \|{\ind {N\le |y|\le N+1}} K(y) \|_{\ell^1_y(\Z^n)}\les 2^{-j}$ whenever  $N\in[2^j,2^{j+1})$. Then, by Young's convolution inequality, we deduce
%$\Phi_{N,N+1}$ is supported in $\{|y|\sim 2^j\}$ and   $|\Phi_{N,N+1}(y)|\les 2^{-j}$ whenever  $N\in[2^j,2^{j+1}]$.  Then
\beq\label{cx1}
\|\big(\m^{(1)}_{N,N+1,\la(x)}(D)f\big)(x)\|_{\ell^p(x\in\Z^n)}
%\les
%\|\sup_{\la\in [0,1]}|\mc_{N,N+1,\la}(D)f|\|_{\ell^p}\\
\les
 \|{\ind {N\le |y|\le N+1}} K(y) \|_{\ell^1_y(\Z^n)}\|f\|_{\ell^p(\Z^n)}
\les 2^{-j}\|f\|_{\ell^p(\Z^n)},
\eeq
which implies    $V_{p,\e_\circ}(1,j)\les 2^{-j}\|f\|_{\ell^p(\Z^n)}$.
 This  estimate with (\ref{vv1}) yields (\ref{pp0}) for the case ${i=1}$ by setting $ C_{p_1}$    large enough such that $C_{p_1}(1-1/p_1)\ge 10$.
 
Next,  we  consider  (\ref{pp0}) for the case  $i=2$.
Since    (\ref{vv1}) (with $i=2$) holds and $C_{p_1}(1-1/p_1)\ge 10$,  it  suffices to show
$V_{p,\e_\circ}(2,j)\les \e_\circ (j)~2^{-j} \|f\|_{\ell^p(\Z^n)}$. Using (\ref{cx1}) and (\ref{cx3}), we may reduce the matter  to   proving  that for all $1\le s\le \e_\circ (j)$ and  all   $N\in [2^j,2^{j+1})$,
\beq\label{B2}
\|\big(L_{N,N+1,\la(x),\e_\circ}^{(3),s}(D)f\big)(x)\|_{\ell^p(x\in\Z^n)}\les 2^{-j}\|f\|_{\ell^p(\Z^n)},
\eeq
where  $L_{N,N+1,\la(x),\e_\circ}^{(3),s}$ is given by
$L_{N,N+1,\la(x),\e_\circ}^{(3),s}=L_{2^j,N+1,\la(x),\e_\circ}^{(3),s}-L_{2^j,N,\la(x),\e_\circ}^{(3),s}.$
Using an equality like   (\ref{zhu2}) below and
$
\sup_{z\in\R^n}\|\F^{-1}_{\R^n} (\chi_{s,\ka})(\cdot-z)\|_{\ell^1(\Z^n)}\les 1,
$
we have
\beq\label{B3}
\begin{aligned}
\|L_{N,N+1,\la(x),\e_\circ}^{(3),s}(D)f\|_{\ell^p(\Z^n)}\les&\  \||\F^{-1}_{\R^n} (\chi_{s,\ka})|*_{\R^n} |{\ind {N\le |\cdot|\le N+1}}  K(\cdot)|\|_{\ell^1(\Z^n)}\|f\|_{\ell^p(\Z^n)}\\
\les&\ \int_{\R^n} \|\F^{-1}_{\R^n} (\chi_{s,\ka})(\cdot-y)\|_{\ell^1(\Z^n)}|{\ind {N\le |y|\le N+1}}  K(y)|dy~ \|f\|_{\ell^p(\Z^n)}\\
\les&\ N^{-1}\|f\|_{\ell^p(\Z^n)},
\end{aligned}
\eeq
which yields (\ref{B2}) since $N\in[2^j,2^{j+1})$.  This completes the proof of Proposition \ref{addpp1}.
 \end{proof}
Hence,  to finish  the proof of  the short variational estimate (\ref{short8}), it remains to prove the above  Proposition \ref{t31}.
%Accepting Theorem \ref{t31} and this claim, we now prove (\ref{short}). In fact,  the above claim  yields
%the desired $\ell^p$ short variation-norm  estimates for
%$${\ind {\la(x)\in X_{j,M}^c}}\big(m_{2^j,N,\la(x)}(D)f\big)(x),\ {\rm and}\  {\ind {\la(x)\in X_{j,M}}}\big(E_{2^j,N,\la(x)}(D)f\big)(x),$$
 %which with Theorem \ref{t31} yields  (\ref{short}) immediately.

\section{Crucial auxiliary results for proving   major arcs estimates}
\label{Fpre}
In this section, we  gather some significant results obtained in   \cite{KR22, KR23},   establish  a novel multi-frequency square function estimate and ultimately verify the key  multi-frequency variational inequalities.
Keep notations (\ref{Noo1}), (\ref{Noo2}) and (\ref{Noo3})  in mind.   This section is to give the  crucial estimates with respect to   $\mathscr{L}_{s,\A,\ka}$ and $\mathscr{L}_{s,\ka}^\#$.
Since the value of $\ka$
is not important for obtaining these estimates, we use the notation 
\beq\label{not32}
(\mathscr{L}_{s,\A},\ \mathscr{L}_{s}^\#,\ \mathcal{U}_{2^s}):=(\mathscr{L}_{s,\A,\ka},\ \mathscr{L}_{s,\ka}^\#,\ \mathcal{U}_{2^s,\kappa}).
\eeq
Then    (\ref{Noo3})-(\ref{MI10}) give
\begin{align}
\mathscr{L}_{s,\A}[m](\xi)=&\ \mathscr{L}_{s,\A}[1](\xi)~\mathscr{L}^\#_{s}[m](\xi), \label{CC1}\\
\mathscr{L}^\#_{s}[m](\xi)=&\ \Delta_{\mathcal{U}_{2^s}}[m \tilde{\chi}_{s,\ka}](\xi) \ \ \ \ \ \hskip.3in {\rm and }\label{MO1}\\
\mathscr{L}_{s,\A}[m](D)f(x)=&\
\sum_{\bb\in \frac{1}{q}\Z^n\cap [0,1)^n}S(\A,\bb) e(\bb\cdot x)\big\{\F^{-1}_{\R^n}(m  \chi_{s,\ka})*\NE_{-\bb}f\big\} (x),\label{MI11}
\end{align}
where  $m$ is a bounded function on $\R^n$, 
and $
\mathfrak{N}_uf(y):=e(u\cdot y)f(y)$ denotes modulation by $u$.
%These equalities will be frequently used in the proof of our main result.
\subsection{Maximal estimates  by Krause and Roos}
\label{EKR1}
Let $s\in\N$.
For each $y\in\Z^n$,
\beq\label{zhu2}
\F_{\Z^n}^{-1}\big(\mathscr{L}_{s,\A}[m]\big)(y)
=e(\A|y|^{2d})\F_{\R^n}^{-1}\big(m \chi_{s,\kappa}\big)(y),
\eeq
we refer  Lemma 4.1 in \cite{KR23} for the details. Below we  state two  important maximal estimates proved by Krause and Roos  \cite{KR22,KR23}. Keep  the notations
(\ref{multi1}), (\ref{multi2}), (\ref{Noo1}) and  (\ref{new1}) in mind.
\begin{lemma}\label{de1}
(i) Let $s\in\N$. For every $p\in (1,\infty)$, there is a constant   $\gamma_p\in (0,1)$ such that
\beq\label{de11}
\| \sup_{\A\in \mathcal{A}_{s}}|\mathscr{L}_{s,\A}[1](D)f|\|_{\ell^p(\Z^n)}
\les 2^{-\gamma_p s}\|f\|_{\ell^p(\Z^n)}.
\eeq
(ii) Let $s\in\N$. Let $\theta$ denote a smooth and nonnegative  function
on $\R^n$ with compactly support and $\int \theta=1$, and let $\theta_l(y)=
 2^{-ln}\theta(2^{-l}y)$ with $l\in\Z$. Then  for every $p\in (1,\infty)$, we have
 \beq\label{yiny}
 \|\sup_{j\ge 1}\sup_{\A\in\mathcal{A}_{s}}|\mathscr{L}_{s,\A}[\widehat{\theta_j}](D)f|\|_{\ell^p( \Z^n)}\les   2^{-\gamma_ps} \|f\|_{\ell^p( \Z^n)}
 \eeq
 with $\gamma_p$ given as in (\ref{de11}).
\end{lemma}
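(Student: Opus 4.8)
\textbf{Proof proposal for Lemma \ref{de1}.}

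The plan is to treat the two maximal estimates as essentially known inputs imported from \cite{KR22,KR23}, and explain how each reduces to results already in those papers via the factorization \eqref{CC1} and the pointwise identity \eqref{zhu2}. For part (i), the key observation is the factorization $\mathscr{L}_{s,\A}[1](\xi)=\mathscr{L}_{s,\A}[1](\xi)\,\mathscr{L}^\#_{s}[1](\xi)$, so that $\mathscr{L}_{s,\A}[1](D)f$ splits into a purely number-theoretic piece carrying the Gauss sums $S(\A,\bb)$ and a smooth truncation piece. First I would invoke the Gauss sum bound $|S(a/q,b/q)|\lesssim q^{-\delta}$ (a consequence of the Weil-type bounds used throughout \cite{KR22,MST199}), which supplies the gain $2^{-\delta s}$ when $q\sim 2^s$; the remaining supremum over $\A\in\mathcal{A}_s$ is controlled by $(\#\mathcal{A}_s)^{1/p}\lesssim 2^{(1+\e)s/p}$ via \eqref{new1} together with an $\ell^p$-boundedness bound for the fixed smooth multiplier $\mathscr{L}^\#_{s}[1]$ coming from the Ionescu-Wainger theorem (Proposition \ref{PIW}), whose constant is independent of $s$. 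Choosing $\delta$ versus $1/p$ (i.e. taking the Gauss sum gain to beat the counting loss after using $\ell^p\hookrightarrow\ell^\infty$ crudely, or more efficiently interpolating an $\ell^2$ estimate where orthogonality makes the counting loss disappear) yields $2^{-\gamma_p s}$ for some $\gamma_p\in(0,1)$; this is exactly the content of the relevant lemma in \cite{KR22}, so I would cite it and sketch only this structure.

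For part (ii), the difference from (i) is that we must handle a supremum over the dyadic scale $j\ge 1$ in addition to the supremum over $\A\in\mathcal{A}_s$, acting on the multiplier $\mathscr{L}_{s,\A}[\widehat{\theta_j}]$. Using \eqref{zhu2}, $\F^{-1}_{\Z^n}(\mathscr{L}_{s,\A}[\widehat{\theta_j}])(y)=e(\A|y|^{2d})\,\theta_j(y)\chi_{s,\kappa}$-type kernel, so after the same factorization the $j$-supremum is a supremum of (smoothly truncated, modulated) averaging kernels $\theta_j$. The plan is: (a) peel off the Gauss-sum / counting gain $2^{-\gamma_p s}$ exactly as in (i), uniformly in $j$; and (b) bound the resulting $j$-maximal operator by the (continuous or discrete) Hardy-Littlewood maximal function $M_{DHL}$ composed with the $\ell^p$-bounded Ionescu-Wainger projection, using that $\sup_{j}|\theta_j * g|\lesssim M_{HL}g$ pointwise and that the extra smooth cutoffs $\chi_{s,\kappa}$ are harmless $\ell^1$-normalized kernels (their $\ell^1$ norms are $O(1)$ uniformly in $s$, as already used around \eqref{B3}). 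Again this is precisely the maximal estimate established by Krause and Roos in \cite{KR23}, so I would present the reduction and cite it rather than reprove the circle-method input.

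The main obstacle — or rather the main point requiring care — is making sure the Gauss-sum gain $2^{-\gamma_p s}$ genuinely survives after we pay the $(\#\mathcal{A}_s)^{1/p}\lesssim 2^{(1+\e)s/p}$ cost for passing the supremum over $\A$ through an $\ell^p$ norm. The clean way is not to do this crudely in $\ell^p$ but to run the argument at $\ell^2$ first, where one exploits almost-orthogonality of the frequency pieces indexed by distinct $\A=a/q$ (their spectra live near disjoint rational points $\bb\in\frac1q\Z^n$, separated on the relevant scale by the cutoff $\chi_{s,\kappa}$), so that the counting loss is replaced by a bounded overlap constant and one gets $2^{-\gamma_2 s}$ outright; then interpolate with a trivial $\ell^p$ bound (or bootstrap via Ionescu-Wainger) to reach all $p\in(1,\infty)$ with a possibly smaller $\gamma_p$. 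Since both statements are verbatim the maximal estimates in \cite[Lemma 7.2 and §7]{KR22} and \cite[§4]{KR23}, I would keep this section short: state the factorization \eqref{CC1}, the kernel identity \eqref{zhu2}, note the uniform $\ell^1$ bounds on the auxiliary cutoffs, and cite the two papers for the number-theoretic heart of the argument.
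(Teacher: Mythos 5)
Your overall route — import both estimates from Krause–Roos \cite{KR22,KR23} and sketch the underlying structure — is precisely what the paper does: it points to Section~6 of \cite{KR22} for \eqref{de11}, and for \eqref{yiny} it records that the bound ``emerges within the course of the proof'' of \cite{KR23} once one expands $\theta_j$ as a telescoping sum and applies Lemma~4.4 there. Your account of part (i) (Gauss-sum gain versus counting loss, with $\ell^2$ almost-orthogonality removing the loss and interpolation to general $p$) is a fair summary of the input.

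The reduction you describe for part (ii), however, has a real gap. By \eqref{CC1} one has $\mathscr{L}_{s,\A}[\widehat{\theta_j}](D)=\mathscr{L}_{s,\A}[1](D)\circ\mathscr{L}_s^\#[\widehat{\theta_j}](D)$, a \emph{composition} of multiplier operators. The estimate \eqref{de11} reads $\|\sup_{\A}|\mathscr{L}_{s,\A}[1](D)g|\|_{\ell^p}\lesssim 2^{-\gamma_p s}\|g\|_{\ell^p}$ for a single $g$; applying it with $g_j=\mathscr{L}_s^\#[\widehat{\theta_j}](D)f$ gives the gain for each fixed $j$, but the outer $\sup_j$ cannot then be pulled through — there is no vector-valued version of \eqref{de11} available that would let you ``peel off'' $2^{-\gamma_p s}$ uniformly in $j$. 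Worse, the alternative you offer — bound the $j$-supremum pointwise via $\sup_j|\theta_j*g|\lesssim M_{HL}g$ — throws away the Gauss-sum cancellation entirely: applied through \eqref{zhu2} to the kernel $e(\A|y|^{2d})(\theta_j*_{\R^n}\check{\chi}_{s,\kappa})(y)$ it yields only $\|M_{DHL}f\|_{\ell^p}\lesssim\|f\|_{\ell^p}$ with no gain in $s$ at all, since $|e(\A|y|^{2d})|=1$. The paper's reduction avoids this by telescoping, $\theta_j=\theta_0+\sum_{l=1}^{j-1}(\theta_{l+1}-\theta_l)$, which dominates $\sup_j$ by an absolutely convergent sum of pieces whose multipliers vanish at the origin; each piece is then covered by Lemma~4.4 of \cite{KR23}, which supplies the $2^{-\gamma_p s}$ factor together with geometric decay in $l$. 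If you defer the hard work to \cite{KR23} in any case, you should at least state this telescoping step as the reduction, since the one you propose would not reproduce the gain.
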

We refer the arguments  in \cite[Section 6]{KR22} for the details of the proof of (\ref{de11}). As for  (\ref{yiny}),
 it emerges not as a theorem or lemma but rather within the course of the proof. Precisely,
 it follows by expanding $\theta_j$ as a  telescoping sum $\theta_0+\sum_{l=1}^{j-1} (\theta_{l+1}-\theta_{l})$ and applying   Lemma 4.4 in \cite{KR23}.
%Obviously, since $\mathscr{L}_{s,\A}[1]=\mathscr{L}_{s,\A+1}[1]$,
%the above restriction $\A\in \mathcal{A}_{s}$  can also be replaced by $\A\in \mathfrak{A}_{s}$.
%The constant  $c_p$ will vary at each appearance.
\subsection{Multi-frequency square function estimate}
\label{SFE1}
Below we provide a new and practical multi-frequency  square function estimate, which plays a crucial role in proving our main results, and will be frequently used.
\begin{lemma}\label{ccz}
Let  $s\in\N$, $A>0$ and $B>0$. Let $\{\mathfrak{M}_j\}_{j\in\Z}$ be a sequence of  bounded  functions satisfying
\beq\label{X2}
|\mathfrak{M}_j(\xi)|\les A\min\big\{|2^j\xi|^\gamma,|2^j\xi|^{-\gamma}\big\}\ \ \ (\xi\in\R^n)
\eeq
for some $\gamma>0$. Suppose that for every $p\in(1,\infty)$,   we have   the vector-valued inequality
\beq\label{X3}
\|\big(\sum_{j\in\Z}|\mathfrak{M}_j(D)f_j|^2\big)^{1/2}\|_{L^p(\R^n)}\les B \|\big(\sum_{j\in\Z}|f_j|^2\big)^{1/2}\|_{L^p(\R^n)}.
\eeq
Then for each $p\in(1,\infty)$, there is \footnote{The constants $c,c_p,C$ may vary at each appearance.}   a  constant  $c_p\in (0,1)$  such that
\beq\label{cou1}
\|\big( \sum_{j\in\Z}\sup_{\A\in \mathcal{A}_s}|\mathscr{L}_{s,\A}[\mathfrak{M}_j](D)f|^2\big)^{1/2}\|_{\ell^p(\Z^n)}
\les A^{c_p}  B^{1-c_p} 2^{-\gamma_p s}\|f\|_{\ell^p(\Z^n)},
\eeq
%with $c_p=2\min\{1/p,1/p'\}$.
with $\gamma_p$  given as in  (\ref{de11}).
\end{lemma}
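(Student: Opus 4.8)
The plan is to interpolate between an $\ell^2$ estimate with the best possible $s$-decay and a crude $\ell^p$ estimate for $p$ away from $2$, exploiting that the number of frequencies in $\mathcal{A}_s$ is only $\lesssim 2^{(1+\varepsilon)s}$. First I would handle the $\ell^2$ case. By the factorization (\ref{CC1}), $\mathscr{L}_{s,\A}[\mathfrak{M}_j] = \mathscr{L}_{s,\A}[1]\,\mathscr{L}^\#_s[\mathfrak{M}_j]$, and $\mathscr{L}^\#_s[\mathfrak{M}_j] = \Delta_{\mathcal{U}_{2^s}}[\mathfrak{M}_j\tilde\chi_{s,\ka}]$ is a genuine ($x$-independent) Fourier multiplier on $\ell^2(\Z^n)$. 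Crucially, on the Fourier side the supports of the translates $\mathfrak{M}_j(\cdot-\bb)\tilde\chi_{s,\ka}(\cdot-\bb)$, $\bb\in\mathcal{U}_{2^s}$, are disjoint (the cutoff $\chi_{s,\ka}$ localizes to a ball of radius $\sim 2^{-4s 2^{2\ka s}}$, far smaller than the spacing of $\mathcal{R}(\N_{e^{(2^s)^\ka}})$), so Plancherel gives $\|\mathscr{L}^\#_s[\mathfrak{M}_j](D)g\|_{\ell^2}^2 \lesssim \sum_{\bb}\int |\mathfrak{M}_j(\xi-\bb)|^2|\hat g(\xi)|^2\,d\xi \lesssim A^2\|g\|_{\ell^2}^2$ uniformly in $j$, using (\ref{X2}) with the trivial bound $\min\{t^\gamma,t^{-\gamma}\}\le 1$. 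Then, crudely bounding $\sup_{\A}$ by $\sum_{\A\in\mathcal{A}_s}$ and using $\#\mathcal{A}_s\lesssim_\varepsilon 2^{(1+\varepsilon)s}$ together with the single-frequency bound (\ref{de11}) (whose $\ell^2$ constant is $2^{-\gamma_2 s}$), one gets
\begin{equation*}
\Big\|\Big(\sum_j \sup_{\A}|\mathscr{L}_{s,\A}[\mathfrak{M}_j](D)f|^2\Big)^{1/2}\Big\|_{\ell^2}
\lesssim_\varepsilon 2^{(1+\varepsilon)s/2}\,A\,\Big\|\Big(\sum_j|\mathscr{L}^\#_s[\mathfrak{M}_j](D)f|^2\Big)^{1/2}\Big\|_{\ell^2},
\end{equation*}
but a cleaner route for the $\ell^2$ endpoint — avoiding the $\sup$-to-sum loss — is to use orthogonality of the $\mathscr{L}_{s,\A}[1]$-pieces over distinct $\A$ (their outputs live at disjoint frequency clusters around $\mathcal{R}(\N_{2^s})$) to get $\sum_{\A}\|\mathscr{L}_{s,\A}[\mathfrak{M}_j](D)f\|_{\ell^2}^2\lesssim A^2\|f\|_{\ell^2}^2$ with no $2^s$ loss; summing in $j$ uses the $\ell^2(\Z;\cdot)$-version of (\ref{X2})'s Littlewood--Paley-type decay $\min\{|2^j\xi|^\gamma,|2^j\xi|^{-\gamma}\}$, which is square-summable in $j$ uniformly in $\xi$, giving the clean bound $\lesssim A\|f\|_{\ell^2}$.

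Next I would prove the $\ell^p$ bound for $p$ near $1$ and near $\infty$, where we can afford exponential loss in $s$ but need the hypothesis (\ref{X3}) and the single-frequency maximal bound (\ref{de11}). Here the strategy is: bound $\sup_{\A}$ by the $\ell^2_\A$-norm (hence by a single maximal-over-$\A$ object is not available, so instead), use (\ref{MI11}) to write $\mathscr{L}_{s,\A}[\mathfrak{M}_j](D)f = \sum_{\bb}S(\A,\bb)e(\bb\cdot x)\{\F^{-1}_{\R^n}(\mathfrak{M}_j\chi_{s,\ka})*\NE_{-\bb}f\}$, observe that for fixed $\A$ this is a modulated version of a continuous multi-frequency operator, transfer to $\R^n$ via Proposition \ref{PMST} (the case $Q=1$), and there apply the vector-valued square function hypothesis (\ref{X3}) to the family $\{\mathfrak{M}_j\tilde\chi_{s,\ka}\}_j$ (after summing the modulations $\bb\in\mathcal{U}_{2^s}$ using the Ionescu--Wainger theorem, Proposition \ref{PIW}, with $m=$ the square-function-bounded operator, which costs only $A_p$). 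This yields $\|(\sum_j\sup_{\A}|\cdots|^2)^{1/2}\|_{\ell^p}\lesssim_p 2^{Cs}B\|f\|_{\ell^p}$ for a constant $C=C(p)$, using $\#\mathcal{A}_s\lesssim 2^{2s}$ to pass from $\sup_\A$ to $(\sum_\A)$. The precise bookkeeping of how the Gauss-sum factors $S(\A,\bb)$, the cutoffs $\tilde\chi_{s,\ka}$, and the Ionescu--Wainger periodic set $\mathcal{U}_{2^s}$ interlock is exactly the content of \cite[Sections 4--5]{KR23}, which I would cite and adapt rather than redo.

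Finally I would interpolate. Fix $p\in(1,\infty)$; choose $p_0\in\{$near $1$, near $\infty\}$ on the opposite side of $2$ from $p$ so that $1/p = (1-\vartheta)/2 + \vartheta/p_0$ for some $\vartheta=\vartheta(p)\in(0,1)$. Vector-valued ($\ell^2_j$) Riesz--Thoren interpolation between the $\ell^2$ bound $\lesssim A\cdot 2^{-\gamma_2 s}\|f\|_{\ell^2}$ (or $\lesssim 2^{\varepsilon s}A\cdot 2^{-\gamma_2 s}$ with the cruder route; either works) and the $\ell^{p_0}$ bound $\lesssim B\cdot 2^{Cs}\|f\|_{\ell^{p_0}}$ gives $\lesssim_p A^{1-\vartheta}B^{\vartheta}\,2^{((1-\vartheta)(-\gamma_2)+\vartheta C)s + \varepsilon s}\|f\|_{\ell^p}$. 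Setting $c_p:=\vartheta$, this is of the claimed form $A^{c_p}B^{1-c_p}2^{-\gamma_p s}$ \emph{provided} the exponent of $2^s$ is negative; since $\gamma_2>0$ is fixed and $C=C(p_0)$ is finite, one secures this by first choosing $p_0$ close enough to $2$ (equivalently $\vartheta$ close enough to $0$), at the cost of a smaller $c_p$ and a smaller $\gamma_p$, exactly as in the statement's footnote that the constants may vary. The main obstacle is the $\ell^p$ ($p\neq 2$) estimate with only exponential loss: assembling the transference (Proposition \ref{PMST}), the Ionescu--Wainger multiplier theorem (Proposition \ref{PIW}), the square-function hypothesis (\ref{X3}), and the single-frequency maximal bound (\ref{de11}) into one chain, while keeping the Gauss-sum and cutoff factors under control, is the technically delicate part; the $\ell^2$ endpoint and the interpolation are comparatively routine. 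One subtlety to flag: because we invoke (\ref{de11}) and the Stein--Wainger / Ionescu--Wainger machinery, the final $\gamma_p$ is the one from (\ref{de11}) only up to the interpolation weight $1-\vartheta$, which is why the statement carries "$\gamma_p$ given as in (\ref{de11})" with the understanding (per the footnote) that it may shrink.
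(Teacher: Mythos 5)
There is a genuine gap in your interpolation scheme, and it prevents the argument from closing for $p$ away from $2$.

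Your plan is to prove an $\ell^2$ bound $\lesssim A\,2^{-\gamma_2 s}\|f\|_{\ell^2}$ and a (near-endpoint) $\ell^{p_0}$ bound $\lesssim B\,2^{Cs}\|f\|_{\ell^{p_0}}$, then interpolate. Writing $1/p=(1-\vartheta)/2+\vartheta/p_0$ with $p_0$ on the same side of $2$ as $p$ and farther from $2$, the interpolated exponent of $2^s$ is $-\gamma_2(1-\vartheta)+C\vartheta$, which is negative only if $\vartheta<\gamma_2/(\gamma_2+C)$. But $\vartheta=(1/p-1/2)/(1/p_0-1/2)$ is a \emph{decreasing} function of $|1/p_0-1/2|$, so it is minimized by sending $p_0\to 1$ or $p_0\to\infty$, not by sending $p_0\to 2$; your remark that ``choosing $p_0$ close enough to $2$'' makes ``$\vartheta$ close to $0$'' is exactly backwards (as $p_0\to2$, $\vartheta\to\infty$ or $1$ depending on sign, not $0$). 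The infimal value $\vartheta$ can attain is $|2/p-1|$, so the interpolation closes only for $p$ in a window around $2$ determined by the ratio $\gamma_2/(\gamma_2+C)$, and fails near the endpoints $p\to1^+$ and $p\to\infty$. Nothing in your $\ell^{p_0}$ step produces compensating $s$-decay: you pass $\sup_\A$ to $\sum_\A$ at a cost of $2^{Cs}$, and (\ref{de11}) is invoked in passing but not actually exploited there, so $C>0$ is genuine. (Your $\ell^2$ discussion is also internally inconsistent — first ``clean bound $\lesssim A\|f\|_{\ell^2}$'' with no $s$-decay, then ``$\lesssim A\,2^{-\gamma_2 s}$'' in the interpolation step — though the $\ell^2$ bound with decay can in fact be obtained via the factorization (\ref{CC1}) plus (\ref{de11}) at $p=2$ and the disjoint-support orthogonality, so that part is repairable.)

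The paper's route sidesteps this entirely by \emph{randomizing in $j$ first}: using Khintchine it reduces (\ref{cou1}) to the scalar estimate (\ref{cou21}) for the single multiplier $\sum_j\va_j(t)\mathfrak{M}_j$, uniformly in $t$. The $A^{c_p}B^{1-c_p}$ factor is then produced on $\R^n$ (claim (\ref{chen1})), by decomposing $f=\sum_v P_{v-j}f$ and interpolating the $L^2$-bound (\ref{X4}) (which uses (\ref{X2}) and gives $A\,2^{-\gamma|v|}$) with the $L^p$-bound (\ref{X5}) (which uses (\ref{X3}) and Littlewood--Paley and gives $B$); this interpolation is \emph{in $v$} and is geometrically summable, with no $s$ in sight. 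Only afterwards do the discrete ingredients enter: Ionescu--Wainger (Proposition \ref{PIW}) transfers (\ref{chen1}) to $\ell^p$ with no loss, the factorization (\ref{CC1}) splits off the Gauss-sum part, and (\ref{de11}) is applied exactly once, at the working exponent $p$, supplying the entire $2^{-\gamma_p s}$ decay. Because the $s$-decay is generated directly at the exponent $p$ rather than interpolated from the endpoints, there is no competition between $\gamma_2$ and a $2^{Cs}$ loss, and the argument covers all $p\in(1,\infty)$ uniformly. To repair your proposal you would need the $\ell^{p_0}$ estimate to carry its own $2^{-cs}$ decay (e.g.\ via a vector-valued, Fefferman--Stein-type strengthening of (\ref{de11}) combined with an $\ell^2_j$-valued Ionescu--Wainger theorem), but that is essentially a reproof of the present lemma; the Khintchine reduction is the cleaner way.
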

\begin{proof}
We denote by  $\{\va_i(t)\}_{i=0}^\infty$   the sequence of Rademacher functions (see e.g., \cite{Gra14}) on $[0,1]$  satisfying
\beq\label{xinqing}
\|\sum_{i=0}^\infty z_i \va_i(t)\|_{L^q_t([0,1])}\sim \big(\sum_{i=0}^\infty |z_i|^2\big)^{1/2}.
\eeq
Claim that for each $p\in (1,\infty)$,  there exists a constant  $c_p\in (0,1)$ such that
\beq\label{chen1}
\|\sum_{j\in\Z}\va_j(t)\mathfrak{M}_j(D)f\|_{L^p(\R^n)}\les A^{c_p}  B^{1-c_p}  \|f\|_{L^p(\R^n)}.
\eeq
 By accepting this claim and utilizing Proposition \ref{PIW} along with the notation in (\ref{MO1}), we can infer
$$
\|\mathscr{L}_{s}^\#[\sum_{j\in\Z}\va_j(t)\mathfrak{M}_j](D)f\|_{\ell^p(\Z^n)}\les A^{c_p}  B^{1-c_p}  \|f\|_{\ell^p(\Z^n)}.
$$
which with (\ref{CC1}) and (\ref{de11}) gives   that for all $t\in [0,1]$
\beq\label{cou21}
\|\sup_{\A\in \mathcal{A}_s}|\mathscr{L}_{s,\A}[\sum_{j\in\Z}\va_j(t)\mathfrak{M}_j](D)f|\|_{\ell^p(\Z^n)}
\les A^{c_p}  B^{1-c_p} 2^{-\gamma_p s} \|f\|_{\ell^p(\Z^n)}.
\eeq
By  employing linearization  and  (\ref{xinqing}), the desired (\ref{cou1}) directly follows from (\ref{cou21}).

Next, we shall prove the claim (\ref{chen1}).  For  $j\in\Z$, the Littlewood-Paley decomposition $\sum_{v\in \Z}P_{v-j}f=f$ will be used.  From (\ref{X2}) and Plancherel's identity we have
\beq\label{X4}
\begin{aligned}
\|\sum_{j\in\Z}\va_j(t)\mathfrak{M}_j(D)P_{v-j}f\|_{L^2(\R^n)}\les&\
\|\sum_{j\in\Z}\va_j(t)\mathfrak{M}_j(\xi)\psi(2^{j-v}\xi)\|_{L^\infty_\xi}\|f\|_{L^2(\R^n)}\\
\les&\  A 2^{-\gamma|v|}\|f\|_{L^2(\R^n)}.
\end{aligned}
\eeq
On the other hand,  by (\ref{X3}) and the Littlewood-Paley theory, we obtain that for each $p\in(1,\infty)$,
\beq\label{X5}
\begin{aligned}
\|\sum_{j\in\Z}\va_j(t)\mathfrak{M}_j(D)P_{v-j}f\|_{L^p(\R^n)}\les&\
\|\big(\sum_{j\in\Z}|\mathfrak{M}_j(D)P_{v-j}f|^2\big)^{1/2}\|_{L^p(\R^n)}\\
\les&\  B \|\big(\sum_{j\in\Z}|P_{v-j}f|^2\big)^{1/2}\|_{L^p(\R^n)}
\les B\|f\|_{L^p(\R^n)}.
\end{aligned}
\eeq
Interpolating (\ref{X4}) with (\ref{X5}) gives that there is a constant  $c_p'\in (0,1)$ such that
$$\|\sum_{j\in\Z}\va_j(t)\mathfrak{M}_j(D)P_{v-j}f\|_{L^p(\R^n)}\les A^{c_p'}B^{1-c_p'}2^{-\gamma c_p'|v|}\|f\|_{L^p(\R^n)},$$
which with the Littlewood-Paley decomposition $\sum_{v\in \Z}P_{v-j}f=f$ and the triangle inequality yields the above claim (\ref{chen1}) (with $c_p=c_p'$). This ends the proof of Lemma \ref{ccz}.
\end{proof}
\subsection{Multi-frequency  variational inequalities}
\label{vi1}
In this subsection, we derive two crucial multi-frequency variational inequalities, which  play the key role in proving Theorem \ref{t1}. Their proofs are based on various techniques such as the classical variational inequality, the Ionescu-Wainger-type multiplier theorem, a transference principle by Mirek-Stein-Trojan, and a Rademacher-Menshov-type inequality.

Let $s\in\N$, and  let $Q_s$ denote the least common multiple of all integers in the range $[1,2^s]$.
  Let  $C_1$  be a  large integer    such that
  \beq\label{L1}
  2^{2^{C_1s}}\ge (2^{2^{2s}}Q_s)^{100n}.
  \eeq
 Below we provide  a variational inequality which is used to prove  Lemma \ref{endle} below.
\begin{lemma}\label{ccz2}
Let $s\in\N$, and let $\A(x)$ denote an arbitrary function from $\Z^n$ to  $\mathcal{A}_s$.
% and let $C_1$ be the constant given by (\ref{L1}).
   Let $\mathscr{V}$ be a smooth function  on $\R^n$ with supp$\mathscr{V}\subset\{\xi\in\R^n:\ |\xi|\le 2^{-2(n+4)}\}$  and $\mathscr{V}(0)=1$,
  let  $\mathscr{V}_j(\cdot)=\mathscr{V}(2^{j}\cdot)$ for $j\in\N$. Suppose that
  $\mathscr{B}$ is a bounded function on $\R^n$ satisfying  $\|\mathscr{B}(D)f\|_{L^p(\R^n)}\les \|f\|_{L^p(\R^n)}$ for each $p\in(1,\infty)$. Then  for every $(r,p)\in (2,\infty)\times (1,\infty)$ and  each $R\ge 1$, we have
  \beq\label{dou12}
\| \big(\mathscr{L}_{s,\A(x)}[\mathscr{V}_j~ \mathscr{B}](D)f(x)\big)_{j> 2^{C_1s}}\|_{\ell^p(x\in\B_R;V^r)}
\les_\e R^\e 2^{-\gamma_p s}\|f\|_{\ell^p(\Z^n)}
\eeq
with $\gamma_p$ given as in (\ref{de11}).
\end{lemma}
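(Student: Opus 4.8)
The plan is to deduce the multi-frequency variational inequality \eqref{dou12} by transferring the corresponding estimate from $\R^n$ to $\Z^n$ via Proposition \ref{PMST}, after first exploiting the factorization \eqref{CC1} to separate the arithmetic part $\mathscr{L}_{s,\A(x)}[1]$ (which gives the gain $2^{-\gamma_p s}$) from the analytic part $\mathscr{L}^\#_{s}[\mathscr{V}_j\,\mathscr{B}]$. First I would recall from \eqref{CC1} and \eqref{zhu2} that $\mathscr{L}_{s,\A(x)}[\mathscr{V}_j\,\mathscr{B}](D)f(x) = \mathscr{L}_{s,\A(x)}[1](D)\big(\mathscr{L}^\#_{s}[\mathscr{V}_j\,\mathscr{B}](D)f\big)(x)$, and note that $\mathscr{L}_{s,\A(x)}[1]$ is an $x$-dependent multiplier whose kernel is supported, frequency-wise, near the rationals in $\mathcal{A}_s$, all with common denominator dividing $Q_s$. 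Writing $\mathscr{L}_{s,\A(x)}[1](D)$ via \eqref{MI11} as a modulation-conjugated rescaling of $\F^{-1}_{\R^n}(\chi_{s,\ka})$, I would split $\Z^n$ into residue classes modulo $Q_s$; on each class the phase $e(\A(x)|y|^{2d})$ becomes constant in the relevant sense, so the $V^r$-norm of the composition reduces (using the algebra property \eqref{var1} and the triangle inequality \eqref{simple}, summing over the $\les Q_s^{\,C}$ residue classes) to controlling the $V^r$-norm of the frequency-localized family $\big(\mathscr{L}^\#_{s}[\mathscr{V}_j\,\mathscr{B}](D)g\big)_{j>2^{C_1 s}}$.

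Next I would handle the core family $\big(\Delta_{\mathcal{U}_{2^s}}[\mathscr{V}_j\,\mathscr{B}\,\tilde\chi_{s,\ka}](D)g\big)_{j}$ using the Ionescu-Wainger theorem (Proposition \ref{PIW}). The key point is that the continuous multiplier $m_j(\xi) = \mathscr{V}(2^j\xi)\mathscr{B}(\xi)$ satisfies the hypothesis \eqref{cv1}: the continuous variational estimate $\|(\mathscr{V}_j(D)h)_{j\in\Z}\|_{L^p(\R^n;V^r)} \les \|h\|_{L^p(\R^n)}$ for $r>2$ is classical (it follows from the Lépingle inequality / a Rademacher–Menshov argument together with the Littlewood–Paley square function estimate, since $\mathscr{V}(0)=1$ and $\mathscr{V}$ is a nice bump), and $\mathscr{B}(D)$ is bounded on every $L^p$; so $\mathscr{V}_j\,\mathscr{B}$ is a sequence of multipliers satisfying a continuous $V^r$-estimate with constant independent of the cutoff. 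I would then invoke Proposition \ref{PMST} (in the form $Q=1$, $\mathfrak{m}=0$) with $\Theta_N$ taken to be the $\mathcal{U}_{2^s,\ka}$-periodization of $\mathscr{V}_j\,\mathscr{B}$ truncated by $\eta_\circ(\mathfrak{R}\cdot)$ — the restriction $j>2^{C_1 s}$ together with \eqref{L1} guarantees that the spatial scale $\mathfrak{h}$ is large enough compared with $Q_s^{\,d+1}$ for the hypothesis of Proposition \ref{PMST} to apply, and the Ionescu–Wainger bound \eqref{aaB1} controls the periodization. This yields a clean $\ell^p(\Z^n;V^r)$ bound for the core family with constant $\les_{p,r} 1$ (independent of $s$ and of $R$).

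Assembling the pieces: the residue-class decomposition contributes a factor $\les Q_s^{\,C}$ but each residue class then gets multiplied by the $\ell^p$-gain $2^{-\gamma_p s}$ coming from Lemma \ref{de1}(i) applied to $\sup_{\A\in\mathcal{A}_s}|\mathscr{L}_{s,\A}[1](D)\cdot|$ — and here is where the $R^\e$ loss enters: because $\A(x)$ genuinely varies with $x$, one cannot directly apply the maximal bound \eqref{de11} to the composed $V^r$-norm; instead one bounds the $V^r$-seminorm of the composition crudely (on $\B_R$) by passing to an $\ell^r$ sum over a sparse set of scales of size $\les \log\langle R\rangle$ (using the Rademacher–Menshov inequality \eqref{num1} together with \eqref{vardef}), picks up the maximal operator over $\A\in\mathcal{A}_s$ on each such scale, and then sums. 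Since $\#\mathcal{A}_s \les_\e 2^{(1+\e)s}$ by \eqref{new1}, one can afford to pay $Q_s^{\,C} 2^{Cs}$ against $2^{-\gamma_p s}$ at the cost of shrinking $\gamma_p$ (this forces the implicit constant $C_1$ in the threshold $j>2^{C_1 s}$ to be large), while the $\log\langle R\rangle$ factor — which after summing the sparse scales in $\ell^r$ becomes $(\log\langle R\rangle)^{1/r} \les_\e R^{\e}$ — is absorbed into $R^\e$. The main obstacle is precisely this last step: making the $x$-dependence of $\A(x)$ compatible with the scalar maximal estimate \eqref{de11} in a way that loses only $R^\e$ rather than a power of $R$; as the authors note in Remark \ref{rr4.1}, a more careful argument replaces $R^\e$ by $(\ln\langle R\rangle)^{C/r}$, but the crude version via \eqref{num1} suffices for Theorem \ref{t1}.
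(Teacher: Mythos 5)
Your high‑level decomposition — factor $\mathscr{L}_{s,\A(x)}[\mathscr{V}_j\mathscr{B}] = \mathscr{L}_{s,\A(x)}[1]\cdot\mathscr{L}^\#_s[\mathscr{V}_j\mathscr{B}]$ via \eqref{CC1}, use the continuous $V^r$ estimate for $(\mathscr{V}_j(D)h)_j$, and transfer with Proposition \ref{PMST} — identifies the right ingredients, but the way you propose to assemble them does not survive scrutiny. There are three genuine gaps.

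\textbf{The residue–class reduction fails.} You propose to split $\Z^n$ into residue classes modulo $Q_s$ and say that ``on each class the phase $e(\A(x)|y|^{2d})$ becomes constant in the relevant sense.'' This is not true: $\A(x)$ is an arbitrary function of $x$, not a periodic one, so fixing a residue class does nothing to $\A(x)$. The modulations $e(\bb\cdot x)$ in \eqref{MI11} \emph{are} $Q_s$‑periodic, but the Gauss sums $S(\A(x),\bb)$ are not. Moreover, even if the reduction made sense, a loss of $Q_s^C$ is catastrophic: $Q_s$ is the lcm of $[1,2^s]$, so $Q_s\sim e^{c2^s}$ and $Q_s^{C}2^{-\gamma_p s}\to\infty$; the gain from Lemma \ref{de1} cannot absorb it. The paper's way around the non‑periodicity of $\A(x)$ is quite different: restrict $x$ to $\B_R$, \emph{extend} $\A|_{\B_R}$ to a $2R$‑periodic function (which is harmless because the conclusion only concerns $x\in\B_R$), and work with the period $V_{s,R}=(RQ_s)^{10n}$. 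That extension step is essential and absent from your proposal.

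\textbf{The composition does not commute with $V^r$.} To invoke \eqref{de11} or \eqref{var1} you would need to pass the $V^r$‑seminorm in $j$ through the (fixed‑$\A$) convolution $\mathscr{L}_{s,\A}[1](D)$, which is exactly the obstruction the authors highlight: $V^r$ is not monotone under $|f_j|\le|g_j|$, and convolution does not commute with $V^r$. If one applies Minkowski to the convolution kernel $e(\A|y|^{2d})\F^{-1}_{\R^n}(\chi_{s,\ka})(y)$ instead, the Gauss‑sum cancellation is destroyed and one is left with a bound that \emph{grows} like $(\#\mathcal{A}_s)^{1/p}\sim 2^{(1+\e)s/p}$ rather than decaying as $2^{-\gamma_p s}$. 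The paper avoids this by a shift‑averaging trick (writing $\mathscr{V}_j$ as an average of $\NE_u\mathscr{V}_j$ over $u\in[V_{s,R}]^n$, with the error controlled by \eqref{c93}; the crucial point is that for $j>2^{C_1s}$ the difference $\mathscr{V}_j-\NE_u\mathscr{V}_j$ is exponentially small). After the change of variables this reduces the problem to applying the transference principle to the \emph{inner} family $(h_j * \cdot)_j$ alone (giving \eqref{Key2}), with the entire arithmetic content pushed into the input $F^s(v,\cdot)$ of \eqref{NO71}, to which \eqref{de11} can be applied scalarly via \eqref{ll99}. Your proposal does not perform this reduction and hence never gets to combine the two gains correctly.

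\textbf{The source of the $R^\e$ loss is misidentified.} You attribute the $R^\e$ to a Rademacher–Menshov argument over a ``sparse set of scales of size $\lesssim\log\langle R\rangle$.'' But the variation index $j$ ranges over the infinite set $j>2^{C_1s}$, and there is no natural truncation of $j$ at $\log R$; \eqref{num1} produces a factor depending on the number of terms, not on $R$. The actual mechanism in the paper is a case distinction: if $2^{2^s}>R$ the above averaging argument yields a bound with \emph{no} $R$‑loss (\eqref{Goo1}); if $2^{2^s}\le R$, i.e.\ $2^s\lesssim\log R$, one crudely replaces the supremum over $\A(x)$ by an $\ell^p$ sum over $\A\in\mathcal{A}_s$ and pays the factor $(\#\mathcal{A}_s)^{1/p}\lesssim 2^{(1+\e)s/p}\lesssim(\log R)^{(1+\e)/p}\lesssim_\e R^\e$ (\eqref{Goo2}, \eqref{c81}). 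This is why the $R^\e$ (really $\mathrm{polylog}\,R$) loss appears, and why it appears only in the large‑$R$ regime.

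In short: the proposal's main idea (factor, transfer, sum) is on the right track, but the mechanism connecting the arithmetic decay with the variational estimate — the $2R$‑periodic extension of $\A$, the shift‑averaging in $j$, and the $2^{2^s}\gtrless R$ dichotomy — is missing, and the substitutes you offer (residue classes mod $Q_s$, Rademacher–Menshov over $\log R$ scales) do not work.
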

\begin{remark}\label{rr4.1}
As we will observe in the proof of (\ref{Goo2}) below,  the $R^\e$-loss on the right-hand side of (\ref{dou12}) can be refined to a logarithmic loss in terms of the scale $R$ (say $\ln \langle R \rangle$).
Likewise,  such an improvement is also applicable to (\ref{dou1}) in Lemma \ref{endle} below. However, for the sake of clarity in the exposition, we will not explore this direction further.
\end{remark}
The choice of supp$\mathscr{V}$  is based on the arguments in Subsection \ref{TMST2} as we rely on Proposition \ref{PMST}.
\begin{proof}
We  split the goal into two cases:  $2^{2^s}> R$ and $2^{2^s}\le  R$,
and claim  that for each $p\in (1,\infty)$,
\begin{align}
\| \big(\mathscr{L}_{s,\A(x)}[\mathscr{V}_j~ \mathscr{B}](D)f(x) \big)_{j> 2^{C_1s}}\|_{\ell^p(x\in\B_R;V^r)}
&\les\   2^{-\gamma_p s}\|f\|_{\ell^p(\Z^n)}\ \  \ \ \  \ {\rm if}\  2^{2^s}> R\ \ {\rm and}\label{Goo1}\\
\| \big(\mathscr{L}_{s,\A(x)}[\mathscr{V}_j~ \mathscr{B}](D)f (x)\big)_{j> 2^{C_1s}}\|_{\ell^p(x\in\B_R;V^r)}
&\les_\e R^\e  2^{-\gamma_p s}\|f\|_{\ell^p(\Z^n)}\ \  {\rm if}\  2^{2^s}\le R.\label{Goo2}
\end{align}
%where $\gamma_p$ is  given as in (\ref{de11}).
Accepting this claim, we obtain (\ref{dou12}) immediately. Thus, it remains to prove    (\ref{Goo1}) and (\ref{Goo2}).

We first consider (\ref{Goo1}).
%\underline{Proof of (\ref{Goo1})}\
Since  $2^{2^s}> R$,  we have
\beq\label{AABB1}
2^{2^{C_1s-1}}>2^{2^s} V_{s,R}\ \ \ {\rm with}\ \ \ V_{s,R}:=(R~Q_s)^{10n}.
  \eeq
  Denote by $h$ the Fourier inverse transform on $\R^n$ of $\VV$,   and let
  $$h_j(y):=2^{-jn}h(2^{-j}y)=2^{-jn}\check{\VV}(2^{-j}y)\ \hskip.2in (y\in\R^n).$$
  %with $(u,y)\in \R^n\times \R^n$.
 %Let $\mathfrak{N}_ug(x):=e(u\cdot x)g(x)$.
 % Setting  $V_{s,R}:=(RQ_s)^{10n}$,
  We  first prove that  for any $u\in [V_{s,R}]^n$,
\beq\label{c93}
\|\big((\mathscr{L}_{s,\A(x)} [(\mathscr{V}_j-\NE_u\VV_j)~ \mathscr{B}](D)f)(x)\big)_{j>2^{C_1s}}\|_{\ell^p(x\in \Z^n;V^1)}\les 2^{-s}\|f\|_{\ell^p}.
\eeq
Since $j>2^{C_1s}$ and $u\in [V_{s,R}]^n$, we infer from (\ref{AABB1}) that $2^{-j}u\le 2^{-j} V_{s,R}\le 2^{-2^s}2^{-j/2}$, which with (\ref{MI11}) and (\ref{new1}) yields that   the left-hand side of (\ref{c93}) is bounded by
  \beq\label{oom}
  \begin{aligned}
&\  \|\big(\sum_{\A\in \mathcal{A}_s}|\mathscr{L}_{s,\A} [(\mathscr{V}_j-\NE_u\VV_j)~ \mathscr{B}](D)f|^p\big)^{1/p}\|_{\ell^p(\Z^n)}\\
  \les&\  2^{Cs} \sup_{\bb\in [0,1)^n}\|
 (h_j-h_j(\cdot-u)) *\bar{B}_s*(\NE_{-\bb}f)\|_{\ell^p(\Z^n)}\\
 \les&\ 2^{Cs} 2^{-j}u \sup_{\bb\in [0,1)^n}\|M_{DHL}\big(\bar{B}_s*(\NE_{-\bb}f)\big)\|_{\ell^p(\Z^n)}\\
 \les&\ 2^{Cs-2^s}2^{-j/2} \sup_{\bb\in [0,1)^n}\|M_{DHL}\big(\bar{B}_s*(\NE_{-\bb}f)\big)\|_{\ell^p(\Z^n)}
 \end{aligned}
  \eeq
  for some $C>0$,
  where $\bar{B}_s:=\F^{-1}_{\R^n}(\mathscr{B}\chi_{s,\kappa})$, and $M_{DHL}$ is the discrete Hardy-Littlewood maximal operator.   Since the operator associated to the  multiplier $\mathscr{B}$ is $L^p(\R^n)$ bounded, and $\chi_{s,\kappa}$ is  supported in a small neighborhood of the original,  we deduce by transference principle
  \beq\label{lp1}
  \|\bar{B}_s(D)f\|_{\ell^p(\Z^n)}\les \|f\|_{\ell^p(\Z^n)}.
  \eeq
  Hence,  the left-hand side of (\ref{oom})  is
  $$\les 2^{-j/2}2^{-s}\sup_{\bb\in [0,1)^n}\|\bar{B}_s*(\NE_{-\bb}f)\|_{\ell^p(\Z^n)}
  \les 2^{-j/2}2^{-s} \|f\|_{\ell^p(\Z^n)}.$$
  This with  (\ref{varsum}) leads to that  the left-hand side of (\ref{c93}) is
  $$\les \sum_{j> 2^{C_1s}}2^{-j/2}2^{-s} \|f\|_{\ell^p(\Z^n)}\les 2^{-s} \|f\|_{\ell^p(\Z^n)},$$
  which  completes the proof of (\ref{c93}).  As a consequence,
  to complete the proof of (\ref{Goo1}),
 it suffices to show that for each $p\in (1,\infty)$,
\beq\label{c96}
\Big(V_{s,R}^{-n}\sum_{u\in [V_{s,R}]^n}\|\big((\mathscr{L}_{s,\A(x)} [(\NE_u\VV_j) ~\mathscr{B}](D)f)(x)\big)_{j>2^{C_1s}}\|_{\ell^p(x\in \B_R;V^r)}^p\Big)^{1/p}\les 2^{-\gamma_p s}\|f\|_{\ell^p(\Z^n)}
\eeq
with $\gamma_p$ given as in (\ref{de11}).
Note that the function $\A(x)$, when restricted to $x$ in $\B_R$, can be extended to a function that is $2R$-periodic in each coordinate. Thus,
to achieve (\ref{c96}),  it suffices to show that %for each $p\in (1,\infty)$,   the estimate
\beq\label{c9494}
\Big(V_{s,R}^{-n}\sum_{u\in [V_{s,R}]^n}\|\big((\mathscr{L}_{s,\A(x)} [(\NE_u\VV_j)~\mathscr{B}](D)f)(x)\big)_{j>2^{C_1s}}\|_{\ell^p(x\in \Z^n;V^r)}^p\Big)^{1/p}\les 2^{-\gamma_p s}\|f\|_{\ell^p(\Z^n)}
\eeq
 for any function $\A(x)=\frac{a(x)}{q(x)}$ that is $2R$-periodic in each coordinate and belongs to $\mathcal{A}_s$.
By using (\ref{MI11}) to expand the operator $\mathscr{L}_{s,\A(x)}$,  we reduce the proof of  (\ref{c9494}) to showing
%that  for all $2R$-periodic (in every coordinate)  $\A(x)=\frac{a(x)}{q(x)}\in \mathcal{A}_s$,
\beq\label{c94}
\begin{aligned}
&\  V_{s,R}^{-n}\sum_{u\in [V_{s,R}]^n}\sum_{x\in\Z^n}\|\big(
\sum_{\bb\in \frac{1}{q(x)}[q(x)]^n}S(\A(x),\bb)e(x\cdot \bb)(h_j*\bar{B}_s*\NE_{-\bb}f)(x-u)
\big)_{j>2^{C_1s}}\|_{V^r}^p\\
\les&\  2^{-\gamma_p s p}\|f\|_{\ell^p(\Z^n)}^p.
\end{aligned}
\eeq
By changing variables $x\to x+u$ and  $u\to v-x$ in order,   we rewrite the left-hand side of (\ref{c94}) as
\beq\label{Key1}
\begin{aligned}
 V_{s,R}^{-n}\sum_{x\in\Z^n}\sum_{v\in [V_{s,R}]^n+x} \|\big(\mathcal{B}^s_{j}(v,x)\big)_{j>2^{C_1s}}\|_{V^r}^p
\end{aligned}
\eeq
with $\mathcal{B}^s_{j}(v,x):=\sum_{\bb\in \frac{1}{q(v)}[q(v)]^n}S(\A(v),\bb)e(v\cdot \bb)(h_j*\bar{B}_s*\NE_{-\bb}f)(x)$.
Since  $\A(\cdot)$ is  $2R$-periodic in each coordinate  and $V_{s,R}$ is divisible by $2R$,   $\A(\cdot)$ is also  $V_{s,R}$-periodic in each coordinate. Moreover, since $V_{s,R} \ \bb\in \Z^n$ (by the definitions of $Q_s$ and $V_{s,R}$), the function  $\mathcal{B}^s_{j}(\cdot,x)$ is   $V_{s,R}$-periodic in every coordinate. So
(\ref{Key1}) equals
\beq\label{KEY6}
V_{s,R}^{-n}\sum_{v\in [V_{s,R}]^n}\sum_{x\in\Z^n} \|\big(\mathcal{B}^s_{j}(v,x)\big)_{j>2^{C_1s}}\|_{V^r}^p=:V_{s,R}^{-n}\sum_{v\in [V_{s,R}]^n}\sum_{x\in\Z^n} \|\big((h_j*F^s(v,\cdot))(x)\big)_{j>2^{C_1s}}\|_{V^r}^p
\eeq
%Write $\mathcal{B}_{j,v}(x)$ as  $(h_j*F^s(v,\cdot))(x)$
with $F^s(v,y)$ given by
\beq\label{NO71}
F^s(v,y):=
 \sum_{\bb\in \frac{1}{q(v)}[q(v)]^n}S(\A(v),\bb)e(v\cdot \bb)(\bar{B}_s*\NE_{-\bb}f)(y).
\eeq
By combining (\ref{Key1}),  (\ref{KEY6}) and  the left-hand side of (\ref{c94}),  to show (\ref{c94}), it suffices to prove
\beq\label{c951}
V_{s,R}^{-n}\sum_{v\in [V_{s,R}]^n}\sum_{x\in\Z^n} \|\big((h_j*F^s(v,\cdot))(x)\big)_{j>2^{C_1s}}\|_{V^r}^p\les \ 2^{-\gamma_p  s p}\|f\|_{\ell^p(\Z^n)}^p.
\eeq
Let $\theta$ be the function as in Lemma \ref{de1}. Since
 $|\hat{h}(2^j\xi)-\hat{\theta}(2^j\xi)|\les \min\{2^j|\xi|,(2^j|\xi|)^{-1}\}$ for $\xi\in\R^n$, we deduce by the classical Calder\'{o}n-Zygmund  and Littlewood-Paley
 theories that
 \beq\label{ENC1}
 \|\big(\sum_{j\in\Z}|(\theta_j-h_j)*g|^2\big)^{1/2}\|_{L^p(\R^n)}
\les\ \|g\|_{L^p(\R^n)}.
 \eeq
By Theorem 1.1 in \cite{JSW08} and (\ref{ENC1}), we further obtain that for every $(p,r)\in (1,\infty)\times (2,\infty)$,
\beq\label{ENC2}
\begin{aligned}
\|\big(h_j*_{\R^n}g\big)_{j\in\Z}\|_{L^p(\R^n;V^r)}\les&\
\|\big(\theta_j*_{\R^n}g\big)_{j\in\Z}\|_{L^p(\R^n;V^r)}
+\|\big(\sum_{j\in\Z}|(\theta_j-h_j)*g|^2\big)^{1/2}\|_{L^p(\R^n)}\\
\les&\ \|g\|_{L^p(\R^n)}.
\end{aligned}
\eeq
Furthermore, invoking that  $\VV=\hat{h}$ and $C_1$ is sufficiently large,
using Proposition \ref{PMST} (with $Q=1$ and $\mathfrak{m}=0$) as well as (\ref{ENC2}), we can  infer
\beq\label{Key2}
\sum_{x\in\Z^n} \|\big((h_j*F^s(v,\cdot))(x)\big)_{j>2^{C_1s}}\|_{V^r}^p
\les \|F^s(v,\cdot)\|_{\ell^p(\Z^n)}^p.
\eeq
Specifically, the inequality (\ref{Key2}) remains valid when replacing ${j>2^{C_1s}}$ with $j\in\N$.
By combining   (\ref{NO71}) and  (\ref{Key2}), to prove (\ref{c951}), it suffices to show
%Then we can reduce the goal to the estimate
\beq\label{go1}%V_{s,R}^{-n}\sum_{v\in [V_{s,R}]^n}\|F_v\|_{\ell^p}^p=
V_{s,R}^{-n}\sum_{v\in [V_{s,R}]^n}\sum_{x\in\Z^n}\sup_{\A=\frac{a}{q}\in\mathcal{A}_s}|\sum_{\bb\in \frac{1}{q}[q]^n}S(\A,\bb)e(v\cdot \bb) \big(\bar{B}_s*\NE_{-\bb}f\big)(x)|^p\les 2^{-\gamma_p s p}\|f\|_{\ell^p(\Z^n)}^p.
\eeq
Subsequently changing variables back, $v\to u+x$ and  $x\to x-u$ in order, and using   $V_{s,R}~\bb\in\Z$ again,  we further streamline the proof of (\ref{go1}) to demonstrating
\beq\label{ll98}
V_{s,R}^{-n}\sum_{u\in [V_{s,R}]^n}\|\sup_{\A=\frac{a}{q}\in\mathcal{A}_s}|\sum_{\bb\in \frac{1}{q}[q]^n}S(\A,\bb)e(x\cdot \bb)[\bar{B}_s(\cdot-u)*\NE_{-\bb}f](x)\|_{\ell^p(x\in\Z^n)}^p\les 2^{-\gamma_p sp}\|f\|_{\ell^p(\Z^n)}^p.
\eeq
Notice that for each $u\in [V_{s,R}]^n$,
$$\sup_{\A=\frac{a}{q}\in\mathcal{A}_s}\big|\sum_{\bb\in \frac{1}{q}[q]^n}S(\A,\bb)e(x\cdot \bb)\big[\bar{B}_s(\cdot-u)*\NE_{-\bb}f\big]\big|(x)
=\sup_{\A\in\mathcal{A}_s}|\mathscr{L}_{s,\A}[\NE_{-u}\mathscr{B}](D)f|(x).$$
Hence, to obtain (\ref{ll98}), it suffices to show that for any $u\in [V_{s,R}]^n$,
\beq\label{ll99}
\|\sup_{\A\in\mathcal{A}_s}|\mathscr{L}_{s,\A}[\NE_{-u} \mathscr{B}](D)f |\|_{\ell^p(\Z^n)}\les 2^{-\gamma_p s}\|f\|_{\ell^p(\Z^n)}
\eeq
with the implicit constant  independent of $u$.
By (\ref{de11}) and Proposition \ref{PIW} with $m=\NE_{-u}\mathscr{B}$,
%which, with the estimate below independent of $u$
$$\|\sup_{\A\in\mathcal{A}_s}|\mathscr{L}_{s,\A}[\NE_{-u} \mathscr{B}](D)f |\|_{\ell^p(\Z^n)}\les
2^{-\gamma_p s}\|\mathscr{L}_{s}^\#[\NE_{-u} \mathscr{B}](D)f \|_{\ell^p(\Z^n)}
\les
2^{-\gamma_p s}\|f\|_{\ell^p(\Z^n)},$$
as desired. This ends the proof of (\ref{Goo1}).

We next prove (\ref{Goo2}).
Note the the $R^\e$-loss will be needed in this case (In fact, it is easy to check that this  loss can be mitigated to  a logarithmic loss with respect to  the scale $R$). Since $2^{2^s}\le R$,
to prove (\ref{Goo2}),
it suffices to show that for every $p\in(1,\infty)$,   %there is $c_p>0$ such that
\beq\label{c81}
(\sum_{\A\in \mathcal{A}_s}\big\|(\mathscr{L}_{s,\A} [{\VV_j}~ \mathscr{B}](D)f)_{j>2^{C_1s}}\big\|_{\ell^p(\Z^n;V^r)}^p)^{1/p}\les_\e 2^{(1/p+\e-\gamma_p)s} \|f\|_{\ell^p(\Z^n)}
\eeq
holds for any sufficiently small $\e>0$.
Let    $V_{s,1}$ be a constant defined  by
\beq\label{KL2}
V_{s,1}=V_{s,R}\big|_{R=1}.
\eeq
Repeating the previous arguments yielding  (\ref{c93}), we also obtain
 for any $u\in [V_{s,1}]^n$ and  any $\A\in \mathcal{A}_s$,
\beq\label{c939}
\Big(V_{s,1}^{-n}\sum_{u\in [V_{s,1}]^n}\|\big(\mathscr{L}_{s,\A} [(\mathscr{V}_j-\NE_u\VV_j)~ \mathscr{B}](D)f\big)_{j>2^{C_1s}}\|_{\ell^p( \Z^n;V^1)}^p\Big)^{1/p}\les 2^{-s}\|f\|_{\ell^p(\Z^n)}.
\eeq
Keep  (\ref{new1}) in mind.   To prove (\ref{c81}),  by
(\ref{c939}) and
the triangle inequality,
 it suffices to show that  for any $\A\in \mathcal{A}_s$,
\beq\label{c898}
\Big(V_{s,1}^{-n}\sum_{u\in [V_{s,1}]^n}\big\|\big(\mathscr{L}_{s,\A} [({\NE_u\VV_j})~\mathscr{B}](D)f\big)_{j>2^{C_1s}}\big\|_{\ell^p(\Z^n;V^r)}^p\Big)^{1/p}\les 2^{-\gamma_p s} \|f\|_{\ell^p(\Z^n)}.
\eeq
By preforming similar  arguments as  yielding (\ref{c9494}),
we can achieve (\ref{c898}) as well.  In fact, the proof at this moment   is easier.
This ends the proof of (\ref{Goo2}).
\end{proof}
Let  $\psi_j$ be the function  defined as in  Subsection \ref{Not}, and let $K$ be the kernel function  given by (\ref{v1}).
\begin{lemma}\label{endle}
Let $s\in\N$, $R\ge 1$ and let $\A(x)$ denote an arbitrary function from $\Z^n$ to  $\mathcal{A}_s$.
Let
$$\K_j=K\psi_j$$ with $j\in\N_0$, %with $K$ given by (\ref{v1}).
%let $\K_j$ be a mean zero $C^1$ function supported on $\{|x|\sim 2^j\}$
%such that
%%\beq\label{CO1}
%2^{jn}|\K_j|+2^{j(n+1)}|\na \K_j|\les1
%\eeq
%for all $j\ge 1$ and all  $y\in \R^n$.
and let  $\K^{a,b}=\sum_{a\le j<b}\K_j$ whenever $0\le a<b$. Then for each $p\in(1,\infty)$,
\beq\label{dou1}
\| \big(\mathscr{L}_{s,\A(x)}[\widehat{\K^{0,j}}](D)f(x)\big)_{j\in\N}\|_{\ell^p(x\in\B_R;\V^r)}
\les_\e R^\e 2^{-\gamma_p s}\|f\|_{\ell^p(\Z^n)}
\eeq
with $\gamma_p$ given as in (\ref{de11}), where $\widehat{\K^{0,j}}=\F_{\R^n}\K^{0,j}$.
\end{lemma}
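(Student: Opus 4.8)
\textbf{Proof plan for Lemma \ref{endle}.} The plan is to transfer the problem to the continuous setting via the factorization \eqref{CC1}--\eqref{MI11} and then run a Rademacher--Menshov decomposition in the dyadic scale $j$, reducing everything to the square-function-type estimate of Lemma \ref{ccz} together with the telescoped estimate of Lemma \ref{ccz2}. First I would dispose of the $\V^r$ versus $V^r$ distinction: by \eqref{vardef} and \eqref{Ad1} it suffices to control the $V^r$ seminorm (the single-term $\sup$ being absorbed by $\|\mathscr{L}_{s,\A(x)}[\widehat{\K^{0,C_0}}](D)f\|_{\ell^p}\lesssim 2^{-\gamma_p s}\|f\|_{\ell^p}$, which follows from \eqref{de11}, \eqref{CC1} and Proposition \ref{PIW} exactly as in the proof of \eqref{ll99}). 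Next I would split the kernel sum at scale $2^{C_1 s}$: write $\widehat{\K^{0,j}}=\widehat{\K^{0,2^{C_1 s}}}+\widehat{\K^{2^{C_1 s},j}}$ for $j>2^{C_1 s}$. The first piece is $j$-independent, hence contributes nothing to the variation, and in $\sup$-norm it is again bounded by $2^{-\gamma_p s}\|f\|_{\ell^p}$ by the same argument. So the real content is to bound $\|(\mathscr{L}_{s,\A(x)}[\widehat{\K^{2^{C_1 s},j}}](D)f)_{j>2^{C_1 s}}\|_{\ell^p(\B_R;V^r)}$, where $\K^{2^{C_1 s},j}=\sum_{2^{C_1 s}\le l<j}K\psi_l$.

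The main step is a Rademacher--Menshov splitting. Since the variation in \eqref{dou1} runs over a telescoping family $\widehat{\K^{2^{C_1 s},j}}$ whose increments are exactly $\widehat{\K_l}=\widehat{K\psi_l}$, I would apply the numerical inequality \eqref{num1}: for a dyadic block of length $2^{\mathfrak s}$ of consecutive scales $l$, the $V^r$-norm of the partial sums is dominated by $\sqrt2\sum_{i=0}^{\mathfrak s}\big(\sum_{m}|\sum_{l\in I_{m,i}}\mathscr{L}_{s,\A(x)}[\widehat{K\psi_l}](D)f|^2\big)^{1/2}$, where $I_{m,i}$ are dyadic subintervals of scales of length $2^i$. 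For fixed $i$, the inner square-function is of the form treated by Lemma \ref{ccz}: set $\mathfrak M_m=\widehat{\sum_{l\in I_{m,i}}K\psi_l}=\widehat{\K^{l_m,l_m+2^i}}$, a smooth truncation of the Calder\'on--Zygmund multiplier $\widehat K$, which obeys the decay bound \eqref{X2} (with the $j$ in \eqref{X2} replaced by an appropriate frequency localization $2^{l_m}$), using that $\Omega\in\mathcal C^1$, $\Omega$ is homogeneous of degree $0$, and $\int_{\mathbb S^{n-1}}\Omega\,d\sigma=0$ — this mean-zero and $\mathcal C^1$ regularity give the $|2^{l_m}\xi|^{\gamma}$ bound at low frequency and the $|2^{l_m}\xi|^{-\gamma}$ bound at high frequency. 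The vector-valued hypothesis \eqref{X3} for these truncated kernels follows from the standard Littlewood--Paley/Calder\'on--Zygmund theory on $\R^n$ (the operators $\mathfrak M_m(D)$ are uniformly bounded Calder\'on--Zygmund operators, and the Fefferman--Stein vector-valued inequality applies). Lemma \ref{ccz} then yields, for each $i$ and each dyadic block,
\[
\Big\|\big(\textstyle\sum_m \sup_{\A\in\mathcal A_s}|\mathscr{L}_{s,\A}[\mathfrak M_m](D)f|^2\big)^{1/2}\Big\|_{\ell^p(\Z^n)}\lesssim 2^{-\gamma_p s}\|f\|_{\ell^p(\Z^n)},
\]
with an implicit constant independent of $i$ and the block. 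Summing over the $O(\mathfrak s)=O(\log\langle \text{scale}\rangle)$ values of $i$ produces at worst a logarithmic-in-scale loss; summing over the $O(\log R)$ (really $O(s)$ when the total number of relevant scales $j\le 2^{C_1 s}$-type range is involved — here the number of dyadic blocks up to scale $\sim R$ is $O(\log R)$) dyadic blocks via \eqref{simple} gives the factor $R^\e$ (or, being more careful, $(\ln\langle R\rangle)^{C}$, cf. Remark \ref{rr4.1}). This is where the $R^\e$-loss is introduced and why it cannot be avoided by this method.

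One subtlety is that $\A(x)$ depends on $x$, so $\mathscr{L}_{s,\A(x)}$ is not a genuine Fourier multiplier; this is exactly why Lemma \ref{ccz} is phrased with the $\sup_{\A\in\mathcal A_s}$ inside the square function, which dominates the $x$-dependent version pointwise, i.e. $|\mathscr{L}_{s,\A(x)}[\mathfrak M_m](D)f(x)|\le \sup_{\A\in\mathcal A_s}|\mathscr{L}_{s,\A}[\mathfrak M_m](D)f|(x)$. The only other point requiring care is the passage from the Rademacher--Menshov bound (which is stated for sequences indexed by $[0,2^{\mathfrak s}]$) to the half-infinite range $j>2^{C_1 s}$: one organizes the scales $l\in[2^{C_1 s},\infty)$ into consecutive dyadic blocks $[2^{C_1 s}\cdot 2^k, 2^{C_1 s}\cdot 2^{k+1})$, applies \eqref{num1} on each block, uses \eqref{simple} to glue, and controls the tail (scales $l$ with $2^l\gtrsim R$) by a cheap argument since on $\B_R$ the kernels $K\psi_l$ with $2^l\gg R$ do not interact with the domain in a way that contributes new variation beyond what \eqref{de11}-type bounds give. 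The main obstacle is really the verification that the truncated multipliers $\mathfrak M_m=\widehat{\K^{l_m,l_m+2^i}}$ satisfy \emph{both} hypotheses of Lemma \ref{ccz} \emph{uniformly} in the block and in $i$ — the low-frequency gain $|2^{l_m}\xi|^\gamma$ is where the mean-value-zero condition $\int\Omega\,d\sigma=0$ is essential, and the $\mathcal C^1$ regularity of $\Omega$ (rather than the higher regularity used in \cite{KR22,KR23}) is exactly what is needed and no more. Once Lemmas \ref{ccz} and \ref{ccz2} are in hand, the rest is bookkeeping.
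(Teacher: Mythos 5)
There is a genuine gap in the Rademacher--Menshov step. Your plan is to decompose the variation over dyadic blocks of scales, apply \eqref{num1} within each block, and then hand the level-$i$ square function $\big(\sum_m \sup_{\A\in\mathcal A_s}|\mathscr{L}_{s,\A}[\widehat{\K^{l_m,l_m+2^i}}](D)f|^2\big)^{1/2}$ to Lemma \ref{ccz}. But the hypothesis \eqref{X2} of Lemma \ref{ccz} requires the multipliers $\mathfrak M_j$ to live at a single octave: one needs $|\mathfrak M_j(\xi)|\lesssim A\min\{|2^j\xi|^\gamma,|2^j\xi|^{-\gamma}\}$, and the whole proof of Lemma \ref{ccz} (the interpolation in \eqref{X4}--\eqref{X5}) turns on the geometric decay $2^{-\gamma|v|}$ that comes from pairing $\mathfrak M_j$ with $P_{v-j}$ at the matching frequency scale. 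The aggregated multiplier $\widehat{\K^{l_m,l_m+2^i}}$ is spread over $2^i$ octaves, so one cannot satisfy \eqref{X2} for fixed $\gamma$ without taking $A\sim 2^{2^i\gamma}$, and shrinking $\gamma$ to $\sim 2^{-i}$ degrades the $v$-summation in the proof of \eqref{chen1} by a factor $\sim 2^i$. Either way, summing over the $O(\mathfrak s)$ Rademacher--Menshov levels $i$ then blows up. The paper avoids this entirely: its Rademacher--Menshov step (proof of \eqref{dou11}) linearizes with Rademacher functions and then proves the \emph{uniform} $L^p(\R^n)$ bound \eqref{pw1} for the averaged multiplier by doing the Littlewood--Paley decomposition at the \emph{finest} scale $k$ rather than the block scale $j$ (see \eqref{L2}--\eqref{L3}, plus duality and interpolation), and only afterwards invokes \eqref{de11}, \eqref{CC1} and Proposition \ref{PIW}. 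Lemma \ref{ccz} is reserved for the genuinely single-octave difference $\mathfrak M_j^{(1)}=\widehat{\K^{j,\infty}}-\VV_j\widehat{\K^{0,\infty}}$ in the regime $j>2^{C_1 s}$.

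Two further points. First, you have misidentified the source of the $R^\e$ loss: in the paper it comes exclusively from Lemma \ref{ccz2} (specifically the case $2^{2^s}\le R$ in \eqref{Goo2}, where the supremum over $\A\in\mathcal A_s$ is traded for an $\ell^p$ sum at a cost of $2^{(1/p+\e)s}\lesssim_\e R^\e$), not from summing Rademacher--Menshov blocks; the paper applies Rademacher--Menshov only on a range of length $O(2^{2C_1 s})$, so that sum costs a polynomial in $s$ which is absorbed by $2^{-\gamma_p s}$. Your plan does mention Lemma \ref{ccz2} in passing, but the main argument you sketch never actually uses it for the $j\to\infty$ tail, relying instead on the vague claim that kernels with $2^l\gg R$ ``do not interact with the domain'' on $\B_R$ -- this is not correct, since $\mathscr{L}_{s,\A(x)}[\widehat{\K_l}](D)f$ at $x\in\B_R$ depends on values of $f$ at distance $\sim 2^l$ from $x$. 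Second, your kernel split $\widehat{\K^{0,j}}=\widehat{\K^{0,2^{C_1 s}}}+\widehat{\K^{2^{C_1 s},j}}$ does not address the variation contributed by $j\le 2^{C_1 s}$, where neither summand is the whole thing; the paper handles this via the overlapping decomposition into the ranges $1\le j\le 2^{2C_1 s}$ (estimate \eqref{dou11}) and $j>2^{C_1 s}$ (estimate \eqref{dou133}), glued with \eqref{simple} and the single-point bound \eqref{dou111}.
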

We expect that this result  will also apply to more general functions $\K_j$, but we opt not to pursue this direction since Lemma \ref{endle} is sufficient for our proof. 
Remember that  the $\V^r$ norm is defined in (\ref{vardef}). Considering that we will use (\ref{simple}) and (\ref{var1}) in proving our main results, it is more convenient to use the $\V^r$ norm instead of the $V^r$ seminorm.
\begin{proof}
%Split $j\in\N$ in (\ref{dou1}) into $1\le j\le 2^{C_1s}$ and $j> 2^{C_1s}$, where $C_1$ is  given as (\ref{L1}).
We may reduce  the proof of    (\ref{dou1}) to proving
 \begin{align}
\| \big(\mathscr{L}_{s,\A(x)}[\widehat{\K^{0,j}}](D)f (x)\big)_{1\le j\le 2^{2C_1s}}\|_{\ell^p(x\in\Z^n;V^r)}
&\les\  2^{-\gamma_p s}\|f\|_{\ell^p(\Z^n)}\ \ \ {\rm and}\label{dou11}\\
\| \big(\mathscr{L}_{s,\A(x)}[\widehat{\K^{0,j}}](D)f (x)\big)_{j> 2^{C_1s}}\|_{\ell^p(x\in\B_R;V^r)}
&\les_\e R^\e 2^{-\gamma_p s}\|f\|_{\ell^p(\Z^n)}.\label{dou133}
\end{align}
In fact, by
using (\ref{CC1}), (\ref{de11}) and Proposition \ref{PIW},  $\| \mathscr{L}_{s,\A(x)}[\widehat{\K^{0,1}}](D)f \|_{\ell^p(x\in\Z^n)}$ is
$$
 \begin{aligned}
\les&\  \|\sup_{\A\in\mathcal{A}_s}| \mathscr{L}_{s,\A}[\widehat{\K_0}](D)f |\|_{\ell^p(\Z^n)}
\les\  2^{-\gamma_p s}\| \mathscr{L}_{s}^\#[\widehat{\K_0}](D)f \|_{\ell^p(\Z^n)}
\les\  2^{-\gamma_p s} \|f\|_{\ell^p(\Z^n)},
\end{aligned}
$$
which with (\ref{dou11}) and (\ref{Ad1}) gives that
 \beq\label{dou111}
\| \sup_{1\le j\le 2^{2C_1s}}\sup_{\A\in \mathcal{A}_s}|\mathscr{L}_{s,\A}[\widehat{\K^{0,j}}](D)f| \|_{\ell^p(\Z^n)}
\les 2^{-\gamma_p s} \|f\|_{\ell^p(\Z^n)}.
\eeq
Then, invoking  the definitions (\ref{simple}) and (\ref{vardef}), we achieve  (\ref{dou1})  by
combining (\ref{dou11}), (\ref{dou133}) and (\ref{dou111}).
Next, we prove (\ref{dou11}) and  (\ref{dou133}) in order.

We first prove (\ref{dou11}).  By the numerical inequality (\ref{num1}),
we have
\beq\label{R01}
\| \big(\mathscr{L}_{s,\A(x)}[\widehat{\K^{0,j}}](D)f(x) \big)_{1\le j\le 2^{2C_1s}}\|_{V^r}
\les
\sum_{l=0}^{2C_1s}\big(\sum_{j=0}^{2^{2C_1s-l}}|\mathscr{L}_{s,\A(x)} [\F_{\R^n}\{{\K^{j2^l,(j+1)2^l}}\}](D)f(x)|^2\big)^{1/2}.
\eeq
Let $\{\va_i(t)\}_{i=0}^\infty$ be the sequence of Rademacher functions on $[0,1]$ satisfying  (\ref{xinqing}).
By (\ref{R01}), to prove (\ref{dou11}), it suffices to show that for all $t\in [0,1]$ and $0\le l\le 2C_1 s$,
 \beq\label{dou211}
\|\sup_{\A\in\mathcal{A}_s} |\mathscr{L}_{s,\A} [\sum_{j=0}^{2^{2C_1s-l}} \va_j(t)\F_{\R^n}\{{\K^{j2^l,(j+1)2^l}}\}](D)f|\|_{\ell^p(\Z^n)}
\les 2^{-\gamma_p s} \|f\|_{\ell^p(\Z^n)}.
\eeq
Claim that for all $t\in [0,1]$,
\beq\label{pw1}
\|\sum_{j=0}^{2^{2C_1s-l}} \va_j(t) {\K^{j2^l,(j+1)2^l}(D)}{f}\|_{L^p(\R^n)}
%=\\|(\sum_{j=1}^{2^{Cs-l}}|\sum_{j2^l\le v<(j+1)2^l}\tilde{P}_{v}f|^2)^{1/2}\|_{p}
\les \|f\|_{L^p(\R^n)}
\eeq
with the  implicit constant independent of $t$, $s$ and $l$.
Using (\ref{pw1}) and Proposition \ref{PIW}, we deduce
$$
\|\mathscr{L}_s^\#[\sum_{j=0}^{2^{2C_1s-l}} \va_j(t)\F_{\R^n}\{{\K^{j2^l,(j+1)2^l}}\}](D)f\|_{\ell^p(\Z^n;L^p_t([0,1]))}
\les \|f\|_{\ell^p(\Z^n)},
$$
which with (\ref{de11}) and (\ref{CC1})  gives (\ref{dou211}).  Thus, to finish the proof of (\ref{dou11}), it remains to prove the above claim (\ref{pw1}).
 By the Littlewood-Paley decomposition $\sum_{v\in\Z} P_{v}f=f$ and $\int \K_k=0$  for all $k\in\Z$,
 we reduce the proof of   (\ref{pw1}) to showing
 that for each $p\in(1,\infty)$,
\beq\label{L2}
\|\sum_{j=0}^{2^{2C_1s-l}}  \sum_{k=j2^l}^{(j+1)2^l-1} \va_j(t) \K_k*_{\R^n} P_{v-k}f\|_{L^p(\R^n)}
%=\\|(\sum_{j=1}^{2^{Cs-l}}|\sum_{j2^l\le v<(j+1)2^l}\tilde{P}_{v}f|^2)^{1/2}\|_{p}
\les 2^{-\gamma_p |v|}\|f\|_{L^p(\R^n)}.
\eeq
 %Expanding the notation $\K^{j2^l,(j+1)2^l}=\sum_{k=j2^l}^{(j+1)2^l-1} \K_k$, we write the LHS of (\ref{L2}) as
%$\|\sum_{j=0}^{2^{Cs-l}}\sum_{k=j2^l}^{(j+1)2^l-1} \va_j(t) {\K_k(D)}{f}\|_{L^p(\R^n)}$
Then,
by the dual arguments, the Littlewood-Paley theory and interpolation, to prove (\ref{L2}), it suffices to show
\beq\label{L3}
\| \big(\sum_{k\in \Z}|\K_k*_{\R^n} P_{v-k}f|^2\big)^{1/2}\|_{L^p(\R^n)}
%=\\|(\sum_{j=1}^{2^{Cs-l}}|\sum_{j2^l\le v<(j+1)2^l}\tilde{P}_{v}f|^2)^{1/2}\|_{p}
\les 2^{-|v|{\ind {p=2}}}\|f\|_{L^p(\R^n)}
\eeq
Since
$|\K_k*_{\R^n}P_{v-k}f|\les M_{HL}(P_{v-k}f)$, where $M_{HL}$ denotes the   Hardy-Littlewood  maximal  operator on $\R^n$,   (\ref{L3}) for the cases  $p\neq2$ is a result of  the Fefferman-Stein inequality and the Littlewood-Paley  inequality. Hence, it remains to prove  (\ref{L3}) for the case $p=2$.
Noting
\beq\label{ine1}
|\widehat{\K_k}(\xi)|\les \min\{2^k|\xi|,|2^k\xi|^{-1}\},
\eeq
 we have
$$
\big(\sum_{k\in\Z}  | \widehat{\K_k}(\xi)|^2| \psi_{v-k}(\xi)|^2\big)^{1/2}
%\les    \sum_{k\in\Z}  | \widehat{\K_k}(\xi)|| \psi_{v-k}(\xi) |
\les  \sum_{k\in\Z} |\psi_{v-k}(\xi)| \min\{2^k|\xi|,|2^k\xi|^{-1}\}
\les 2^{-|v|},
$$
which with Plancherel's identity yields (\ref{L3}) for the case $p=2$.

Next, we  consider (\ref{dou133}).
By using the definition of the semi-norm $V^r$,  it suffices to show
\beq\label{c91}
\|\big((\mathscr{L}_{s,\A(x)} [\widehat{\K^{j,\infty}}](D)f)(x)\big)_{j>2^{C_1s}}\|_{\ell^p(x\in \B_R;V^r)}\les_\e R^\e 2^{-\gamma_p s}\|f\|_{\ell^p}.
\eeq
Let $\VV$ be  the function  as in Lemma \ref{ccz2}, %and let $\VV_j(\xi)=\VV(2^j\xi).$
and let
$$\mathfrak{M}_j^{(1)}(\xi):=\widehat{\K^{j,\infty}}(\xi)-{\VV_j}(\xi)~\widehat{\K^{0,\infty}}(\xi)\ \ \ \ (\xi\in \R^n),$$
which satisfies  by a  routine  computation that
\beq\label{ine3}
|\mathfrak{M}_j^{(1)}(\xi)|\les \min\{2^j|\xi|,|2^j\xi|^{-1}\}.
\eeq
We can reduce the proof of  (\ref{c91}) to proving
\begin{align}
\|\big(\sum_{j>2^{C_1s}}\sup_{\A\in\mathcal{A}_s}|\mathscr{L}_{s,\A} [\mathfrak{M}_j^{(1)} ](D)f|^2\big)^{1/2}\|_{\ell^p(\Z^n)}&\les\  2^{-\gamma_p s}\|f\|_{\ell^p(\Z^n)}\ \ \ {\rm and}\label{c51}\\
\|\big((\mathscr{L}_{s,\A(x)} [{\VV_j}\widehat{\K^{0,\infty}}](D)f)(x)\big)_{j>2^{C_1s}}\|_{\ell^p(x\in \B_R;V^r)}&\les_\e R^\e 2^{-\gamma_p s}\|f\|_{\ell^p}. \label{c52}
\end{align}
We first  use Lemma \ref{ccz2} to prove (\ref{c52}).
By  similar arguments as yielding (\ref{L3}),  we obtain
$$\|{\K^{0,\infty}}*_{\R^n} f \|_{L^p(\R^n)}=\|\sum_{k=0}^\infty\K_k*_{\R^n}f \|_{L^p(\R^n)}\les \|f\|_{L^p(\R^n)}$$ 
for each $p\in(1,\infty)$. This  with  (\ref{ine1}) and Lemma \ref{ccz2} ($\gamma=1$ and $\mathscr{B}={\K^{0,\infty}}$) leads to
 (\ref{c52}).
Thus,
to finish the proof of (\ref{c91}), it remains to  prove  (\ref{c51}). We will use Lemma  \ref{ccz} to achieve this goal.
By invoking  $\K_j=K \psi_j$,  we can bound  $\mathfrak{M}_j^{(1)}(D)f$ as
$$
\begin{aligned}
|\mathfrak{M}_j^{(1)}(D)f|\les&\  |{\K^{j,\infty}}*_{\R^n}f|+|{h_j}*_{\R^n}{
\K^{0,\infty}}*_{\R^n}f|\\
\les&\  M_{HL}(\mathcal{T}_0f)
+M_{HL}(\mathcal{T}f)+M_{HL}(M_{HL}f)+M_{HL}(\mathcal{T}_jf),
\end{aligned}$$
where
$$
\mathcal{T}_jf(x):={\rm p.v.}\int_{|y|\le 2^j}f(x-y)K(y)dy,\ \  \mathcal{T}f(x):={\rm p.v.}\int_{\R^n}f(x-y)K(y)dy.
$$
This with  the Fefferman-Stein inequality and the vector-valued inequalities of $\mathcal{T}_j$ and $\mathcal{T}$ yields
\beq\label{ine4}
\|\big(\sum_{j\in\Z}|\mathfrak{M}_j^{(1)}(D)f_j|^2\big)^{1/2}\|_{L^p(\R^n)}
\les \|\big(\sum_{j\in\Z}|f_j|^2\big)^{1/2}\|_{L^p(\R^n)}.
\eeq
Applying Lemma \ref{ccz} with (\ref{ine3}) and (\ref{ine4}), we finally  achieve  (\ref{c51}).
 This ends the proof of (\ref{dou133}).
\end{proof}
\section{Major arcs estimate I: Proof of Proposition \ref{t21}}
\label{slong2}
In this section, we  obtain major arcs estimate I in Proposition \ref{t21}.
 The proof is based on Proposition \ref{PIW}, Lemmas \ref{de1}, \ref{ccz}, the Stein-wainger-type estimate and the first trick mentioned  in Subsection \ref{diff}.
 In particular, we shall establish  a triple  maximal estimate (see (\ref{hee0}) below), which will also be  employed in the next section.
\subsection{Reduction of Proposition \ref{t21}}
Keep the notation (\ref{not1}) in mind.
For each $j\ge 1$,   we define
\beq\label{dc1}
\begin{aligned}
\mathcal{S}_j^m:=& ~\{x\in \Z^n:\ |\mu(x)|\in I_{j,m}\},\ \ I_{j,m}:= [2^{m-2dj},2^{m+1-2dj}),~\ \ m\ge 1,\\
\mathcal{S}_j^0:=&~\{x\in \Z^n:\ |\mu(x)|\in I_{j,0}\},\ \ \ \ I_{j,0}:=  (-\infty, 2^{1-2dj}).
\end{aligned}
\eeq
Obviously, for each $x\in\Z^n$, we have
\beq\label{le1}
{\ind {\mathcal{S}_j^0}}(x)+ \sum_{m\ge 1} {\ind {\mathcal{S}_j^m}}(x)=1
\ \ \ {\rm and}\ \  \ \sum_{j\in \Z}{\ind {\mathcal{S}_j^m}}(x)\le 1\ \ {\rm whenever}\ m\ge 1.
\eeq
%We can confine the parameter $m$ to $m \lesssim \epsilon_\circ (j)$ with $\e_\circ=\e_\circ(p_1,p_2,\mathcal{C})$ ($\epsilon_\circ (j)$ is given by  (\ref{zhong}) with $j_\circ=j$), although this restriction does not affect the proof.
 We provide first  two lemmas. % Remember  that  $L^{(1),s}_{j,\la(x)}$ is given by (\ref{f1}) and
 Let $\la(x)$  denotes an arbitrary function from $\Z^n$ to  $[0,1]$.
\begin{lemma}\label{l5.2}
Let  $s\in\N$ and  $p\in (1,\infty)$. Then for every $\e_\circ'\in (0,1)$,
 $$
 \|\big({\ind {\mathcal{S}_j^0}(x)}\ [L_{j,\la(x),\e_\circ'}^{(1),s}(D)f](x)\big)_{j\in\N^B}\|_{\ell^p(x\in \Z^n;\ell^2)}\les\  2^{-\gamma_p s} \|f\|_{{\ell^p}(\Z^n)}
 $$
 with $\gamma_p$ given as in (\ref{de11}) and $L^{(1),s}_{j,\la(x),\e_\circ'}$  given by (\ref{f1}) with $\e_\circ=\e_\circ'$.
  %where $L^{(1),s}_{j,\la(x)}$ is given by (\ref{f1}).
 \end{lemma}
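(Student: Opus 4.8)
The plan is to exploit that, on $\mathcal S_j^0$, the frequency–modulation $\mu(x)$ is effectively negligible, so that $L^{(1),s}_{j,\la(x),\e_\circ'}$ behaves like a truncated Calder\'on--Zygmund multiplier twisted by the Gauss sums. First I would record that on $\mathcal S_j^0$ one has $|\mu(x)|<2^{1-2dj}$, hence for $j\in\N^B$ the cutoff $\ind{|\mu(x)|\le 2^{-2dj}j^{\lfloor 1/\e_\circ'\rfloor}}$ occurring in \eqref{f1} is identically $1$, so $\ind{\mathcal S_j^0}(x)\,L^{(1),s}_{j,\la(x),\e_\circ'}(\xi)=\ind{\mathcal S_j^0}(x)\,\mathscr L_{s,\A(x)}[\phi^{(1)}_{j,\mu(x)}](\xi)$; in particular the quantity to be estimated does not depend on $\e_\circ'$, which matches the statement. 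Rescaling $y\mapsto 2^jy$ in the integral defining $\phi^{(1)}_{j,\mu}$ in \eqref{f1} gives $\phi^{(1)}_{j,\mu}(\xi)=\Psi_{\mu 2^{2dj}}(2^j\xi)$, where $\Psi_\tau(\eta):=\int_{1/2\le|z|\le 1}e(\tau|z|^{2d}+z\cdot\eta)\,|z|^{-n}\Omega(z)\psi(z)\,dz$, and on $\mathcal S_j^0$ the parameter $\tau=\mu(x)2^{2dj}$ runs through $[0,2)$. Taking $\psi$ radial and using $\int_{\mathbb S^{n-1}}\Omega=0$, every $\Psi_\tau$ and every $\partial_\tau^k\Psi_\tau$ has vanishing integral, so one integration by parts gives, uniformly for $\tau\in[0,2]$, the Calder\'on--Zygmund bounds $|\partial_\tau^k\Psi_\tau(\eta)|\lesssim_k\min\{|\eta|,|\eta|^{-1}\}$, while the associated $\R^n$-convolution operators are all dominated pointwise by $M_{HL}$ uniformly in $\tau$.

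I would then split $\Psi_\tau=\Psi_0+(\Psi_\tau-\Psi_0)$. For the $\Psi_0$-part the sequence $\mathfrak M_j:=\Psi_0(2^j\cdot)$ is independent of $x$ and satisfies \eqref{X2} with $\gamma=1$ and $A\lesssim 1$; the $M_{HL}$-domination together with the Fefferman--Stein inequality supplies the vector-valued bound \eqref{X3} with $B\lesssim 1$. Since $\ind{\mathcal S_j^0}(x)\,|\mathscr L_{s,\A(x)}[\mathfrak M_j](D)f(x)|\le\sup_{\A\in\mathcal A_s}|\mathscr L_{s,\A}[\mathfrak M_j](D)f(x)|$ (because $\A(x)\in\mathcal A_s$), Lemma~\ref{ccz} applied with this $\{\mathfrak M_j\}$ yields directly $\big\|\big(\ind{\mathcal S_j^0}(x)\,\mathscr L_{s,\A(x)}[\Psi_0(2^j\cdot)](D)f(x)\big)_{j\in\N^B}\big\|_{\ell^p(\Z^n;\ell^2)}\lesssim 2^{-\gamma_p s}\|f\|_{\ell^p(\Z^n)}$, which is the asserted bound for this piece.

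The substance is the difference part $\mathfrak D_{j,\tau}:=\Psi_\tau(2^j\cdot)-\Psi_0(2^j\cdot)=\int_0^\tau(\partial_{\tau'}\Psi_{\tau'})(2^j\cdot)\,d\tau'$. Here I would decompose $\mathcal S_j^0=\{\mu(x)=0\}\sqcup\bigsqcup_{m\le 1}E_{j,m}$ with $E_{j,m}:=\{x:|\mu(x)|2^{2dj}\in[2^{m-1},2^m)\}$ (the set $\{\mu=0\}$ contributes nothing to $\mathfrak D$). The combinatorial point is that for each fixed $m$ the sets $\{E_{j,m}\}_j$ are pairwise disjoint (since $2d\ge 2$, the defining condition pins $j$ down to at most one value), so on $E_{j,m}$ the $\ell^2$-sum over $j$ collapses to a single term; combined with the crude bound $|\mathscr L_{s,\A}[\mathfrak D_{j,\tau}](D)f|\le\tau\sup_{\tau'\in[0,2]}|\mathscr L_{s,\A}[(\partial_{\tau'}\Psi_{\tau'})(2^j\cdot)](D)f|$ and $\tau<2^m$, the $m$-th block of $\|(\ind{\mathcal S_j^0}(x)\mathscr L_{s,\A(x)}[\mathfrak D_{j,\mu(x)2^{2dj}}](D)f(x))_j\|_{\ell^p(\Z^n;\ell^2)}$ is bounded by $2^m$ times $\big\|\sup_{j\in\N^B}\sup_{\A\in\mathcal A_s}\sup_{\tau\in[0,2]}|\mathscr L_{s,\A}[(\partial_\tau\Psi_\tau)(2^j\cdot)](D)f|\big\|_{\ell^p(\Z^n)}$. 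Granting that this triple maximal quantity is $\lesssim 2^{-\gamma_p s}\|f\|_{\ell^p}$, summing the geometric series $\sum_{m\le 1}2^m$ together with \eqref{simple} and the subadditivity of $\|\cdot\|_{\ell^p(\ell^2)}$ finishes the proof.

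The hard part, and the main obstacle, is exactly that triple maximal estimate — a supremum over the continuous modulation parameter $\tau$ taken jointly with the dyadic scale $j$ and the Gauss-sum phase $\A$ (this is the inequality \eqref{hee0} the section is organized around and that is reused for Proposition~\ref{892}). I would dispose of the $\sup_\tau$ first by the fundamental theorem of calculus: $\sup_{\tau\in[0,2]}|\mathscr L_{s,\A}[(\partial_\tau\Psi_\tau)(2^j\cdot)](D)f|$ is at most its value at $\tau=0$ plus $\int_0^2|\mathscr L_{s,\A}[(\partial_\tau^2\Psi_\tau)(2^j\cdot)](D)f|\,d\tau$, and since $\partial_\tau^2\Psi_\tau$ obeys the same Calder\'on--Zygmund bounds uniformly in $\tau$, a Minkowski step (moving $\int_0^2$ outside the $\ell^p$-norm) reduces everything to a double maximal estimate $\|\sup_{j\ge 1}\sup_{\A\in\mathcal A_s}|\mathscr L_{s,\A}[\widehat{H_j}](D)f|\|_{\ell^p}\lesssim 2^{-\gamma_p s}\|f\|_{\ell^p}$ for mean-zero Calder\'on--Zygmund pieces $H_j$ localized at scale $2^j$. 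This last estimate follows by the same telescoping argument that proves Lemma~\ref{de1}(ii), now combined with the Stein--Wainger-type maximal bound on $\R^n$ for oscillatory truncated singular integrals, the $2^{-\gamma_p s}$ gain coming from Lemma~\ref{de1}(i) and Proposition~\ref{PIW}; the ``first trick'' of Subsection~\ref{diff} (revisiting the original operator) is what makes the argument reach $p\in(1,2)$.
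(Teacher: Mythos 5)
Your overall strategy is sound and parallels the paper's: rescale to a unit-scale bump $\Psi_\tau$ with parameter $\tau=\mu(x)2^{2dj}\in(-2,2)$, peel off the $\tau=0$ contribution with Lemma~\ref{ccz}, and handle the remainder by converting the $\ell^2_j$-sum into a maximal quantity via the smallness of $\tau$. The mechanisms differ slightly: the paper uses the full Taylor expansion $\phi^{(1)}_{j,\mu}=\sum_{l\ge0}\frac{(2\pi i)^l}{l!}(\mu 2^{2dj})^l\rho_l(2^j\cdot)$ and then, for $l\ge1$, the geometric summability $\sum_{j}\ind{\mathcal S_j^0}(x)(|\mu(x)|2^{2dj})^{2l}\lesssim C^{2l}$ reduces $\ell^2_j$ to $\ell^\infty_j$; you instead stop at the first-order remainder and win the $\ell^2_j\to\ell^\infty_j$ reduction from pairwise disjointness of the level sets $E_{j,m}$. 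Both routes close correctly, and the split $\rho_l(2^j\cdot)=\mathfrak M^{(2)}_j+\rho_l(0)\widehat{\theta_j}$ in the paper plays the same role as your split into $\Psi_0$ plus $\int_0^\tau\partial_{\tau'}\Psi_{\tau'}\,d\tau'$.

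Your final paragraph, however, reaches for tools you do not actually need here, and misattributes the difficulty. Once you have disposed of $\sup_\tau$ by the fundamental theorem of calculus and Minkowski, the resulting double-maximal quantity $\|\sup_{j}\sup_{\A\in\mathcal A_s}|\mathscr L_{s,\A}[(\partial_\tau^{k}\Psi_\tau)(2^j\cdot)](D)f|\|_{\ell^p}$ involves symbols that are mean-zero (since $\psi$ is radial and $\int_{\mathbb S^{n-1}}\Omega=0$) and localized, so $\sup_j\le(\sum_j|\cdot|^2)^{1/2}$ and Lemma~\ref{ccz} alone yields the bound $2^{-\gamma_p s}\|f\|_{\ell^p}$ for all $p\in(1,\infty)$, uniformly in $\tau$. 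There is no need for the telescoping argument behind Lemma~\ref{de1}(ii), for the Stein--Wainger-type maximal estimate, or for the ``first trick'' to reach $p\in(1,2)$. You appear to be conflating Lemma~\ref{l5.2} (the small-modulation regime $\mathcal S_j^0$, where the phase is essentially a perturbation of a Calder\'on--Zygmund symbol) with the harder Lemma~\ref{l5.1}/(\ref{hee0}) (the $\mathcal S_j^m$ regime with $m\ge1$), where the Stein--Wainger input and the interpolation-with-original-operator trick are genuinely used. This is a presentation flaw rather than a gap — the estimate you need is available by the more elementary route and you have already used Lemma~\ref{ccz} correctly for the $\Psi_0$ piece — but as written the last paragraph gives a misleading picture of what the argument rests on.
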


 \begin{lemma}\label{l5.1}
Let  $m\ge 1$ and $s\in\N$.
Then for every $\e_\circ''\in (0,1)$, the inequality
\begin{align}
\big\| \sup_{j\in\N^B }
|{\ind {\mathcal{S}_j^m}(x)}~\big[L_{j,\la(x),\e_\circ''}^{(1),s}(D)f\big](x)|\big\|_{\ell^2(x\in \Z^n)}\les&\  2^{-c (s+m)}\|f\|_{{\ell^2}(\Z^n)}\label{hee90}
\end{align}
holds for some $c>0$, where $L^{(1),s}_{j,\la(x),\e_\circ''}$  given by (\ref{f1}) with $\e_\circ=\e_\circ''$.
%for some $c>0$.
%where $L^{(1),s}_{j,\la(x)}$ is given by (\ref{f1}).
\end{lemma}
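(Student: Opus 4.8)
\emph{Reduction to a triple maximal estimate.} The starting point is the disjointness built into \eqref{dc1}--\eqref{le1}: for $m\ge1$ one has $\sum_{j\in\Z}\ind{\mathcal{S}_j^m}\le1$, so for each $x$ there is at most one index $j_x\in\N^{B}$ with $x\in\mathcal{S}_{j_x}^m$, and consequently
$$\sup_{j\in\N^{B}}\big|\ind{\mathcal{S}_j^m}(x)\,[L^{(1),s}_{j,\la(x),\e_\circ''}(D)f](x)\big|=\big|[\mathscr{L}_{s,\A(x)}[\phi^{(1),*}_{j_x,\mu(x),\e_\circ''}](D)f](x)\big|,$$
interpreted as $0$ when no such $j_x$ exists. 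When the right-hand side is nonzero we have $\A(x)\in\mathcal{A}_s$, and since $x\in\mathcal{S}_{j_x}^m$ also $|\mu(x)|\in I_{j_x,m}$; hence this quantity is dominated pointwise by $\sup_{j\ge1}\sup_{\A\in\mathcal{A}_s}\sup_{\mu:\,|\mu|\in I_{j,m}}|[\mathscr{L}_{s,\A}[\phi^{(1),*}_{j,\mu,\e_\circ''}](D)f](x)|$. Thus \eqref{hee90} reduces to the triple maximal estimate (which is exactly \eqref{hee0}, specialized to $p=2$ and to the kernel $K_j$)
$$\big\|\sup_{j\ge1}\sup_{\A\in\mathcal{A}_s}\sup_{\mu:\,|\mu|\in I_{j,m}}\big|\mathscr{L}_{s,\A}[\phi^{(1),*}_{j,\mu,\e_\circ''}](D)f\big|\big\|_{\ell^2(\Z^n)}\les 2^{-c(s+m)}\,\|f\|_{\ell^2(\Z^n)}$$
for some $c>0$; this estimate is reused for the long variation in Section \ref{slong3}.

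\emph{Proof of the triple maximal estimate.} I would run the factorization \eqref{CC1}, $\mathscr{L}_{s,\A}[\phi^{(1),*}_{j,\mu,\e_\circ''}]=\mathscr{L}_{s,\A}[1]\cdot\mathscr{L}^{\#}_{s}[\phi^{(1),*}_{j,\mu,\e_\circ''}]$. The left factor $\mathscr{L}_{s,\A}[1]$ carries the decay $2^{-\gamma_2 s}$; rather than the bare Lemma \ref{de1}(i) one uses Lemma \ref{de1}(ii), which already contains a supremum over the dilation parameter $j$, after dominating $\phi^{(1),*}_{j,\mu,\e_\circ''}$ by a controlled superposition of dilated mollifiers $\widehat{\theta_j}$. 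The input making this possible is the oscillatory analysis of $\Psi(\rho,\zeta):=\int_{1/2\le|z|\le1}e(\rho|z|^{2d}+\zeta\cdot z)K(z)\psi(z)\,dz$, for which $\phi^{(1)}_{j,\mu}(\xi)=\Psi(2^{2dj}\mu,2^j\xi)$: by the Stein--Wainger-type estimates (van der Corput combined with the mean-value-zero of $K$) one has $|\Psi(\rho,\zeta)|\les\min\{2^{-\delta m},|\zeta|,\langle\zeta\rangle^{-(n-1)/2}\}$ whenever $|\rho|\sim2^m$, and moreover the associated $L^2(\R^n)$ maximal operator (over the dilations and over $\rho$) is bounded with norm $\les2^{-cm}$; these are transferred to $\ell^2(\Z^n)$ for the factor $\mathscr{L}^{\#}_{s}[\cdot]$ by the Ionescu--Wainger theorem (Proposition \ref{PIW}), and the interaction with $\mathscr{L}_{s,\A}[1]$ is organized through the multi-frequency square function estimate of Lemma \ref{ccz} applied to $\mathfrak{M}_j=\phi^{(1),*}_{j,\mu,\e_\circ''}$. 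For the innermost supremum over $\mu$ the crude route --- discretizing $I_{j,m}$ --- is unavailable, since $|\partial_\mu\phi^{(1)}_{j,\mu}|\les2^{2dj}$ would require $\sim2^{2dj}$ sample points; instead I would exploit that $\rho\mapsto\Psi(\rho,\zeta)$ is band-limited, with Fourier support in the bounded set $\{|z|^{2d}:1/2\le|z|\le1\}$, so that $\sup_\mu$ over $I_{j,m}$ may be replaced by an $L^2_\rho$-average via a local Plancherel--Pólya inequality, after which the bounds on $\Psi$ above (together with the coarea identity $\int_{\R}\|\Psi(\rho,2^jD)g\|_{L^2}^2\,d\rho\les\|g\|_{L^2}^2$) close the estimate. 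Collecting the gain $2^{-\gamma_2 s}$ from $\mathscr{L}_{s,\A}[1]$ and the gain $2^{-c_0 m}$ from the oscillation of $\Psi$, and shrinking $c$ to $\min\{\gamma_2,c_0\}$, gives the claim; the complementary range $\mathcal{S}_j^0$ (which enjoys no $m$-gain, only smallness near the origin) is treated separately in Lemma \ref{l5.2}.

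\emph{Main obstacle.} The genuine difficulty is that the three suprema are coupled and cannot be peeled off one at a time. Extracting $2^{-\gamma_2 s}$ from $\sup_{\A\in\mathcal{A}_s}$ forces one through the Ionescu--Wainger/Lemma \ref{de1} machinery: a union bound over the $\#\mathcal{A}_s\sim2^{s}$ frequencies $\A$ is hopeless, since the Gauss-sum bounds $|S(\A,\bb)|\les q^{-\delta_0}$ decay too slowly to compensate for their number in $\ell^2$. Yet this $s$-gain must be obtained \emph{while} retaining the maximal supremum over the scale $j$ (which demands a frequency-localization/square-function argument, as the $\phi^{(1)}_{j,\mu}$ are morally low-pass projections at the distinct scales $2^{m-j}$) \emph{and} the oscillatory gain $2^{-cm}$ from $\sup_{\mu}$ over $I_{j,m}$ (for which brute-force discretization is blocked by the size of $\partial_\mu\phi^{(1)}_{j,\mu}$). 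This is precisely the point at which a direct appeal to the double maximal estimate of Krause--Roos (Lemma 7.2 of \cite{KR22}) fails, and the new ingredient is the upgrade to a \emph{triple} maximal estimate that additionally absorbs the supremum over $j$ through Lemma \ref{ccz}.
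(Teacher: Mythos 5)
Your first reduction is exactly what the paper does: by the disjointness in \eqref{le1} for $m\ge 1$, Lemma \ref{l5.1} reduces to the triple maximal estimate \eqref{hee0}. The gap is in how you then propose to prove \eqref{hee0}.

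You attempt a single direct chain of reductions that would simultaneously deliver the $2^{-cs}$ gain (from $\mathscr{L}_{s,\A}[1]$ via the factorization \eqref{CC1} and Lemma \ref{de1}) and the $2^{-cm}$ gain (from the oscillation of $\Psi(\rho,\zeta)$ via Stein--Wainger). This does not close. The source of the trouble is the supremum over $\mu\in I_{j,m}$: after rescaling $\rho=2^{2dj}\mu$, this becomes a supremum over an interval of length $\sim 2^m$. Any Sobolev/Bernstein/Plancherel--P\'{o}lya argument for such a supremum incurs a factor $\sim 2^{m/2}$ (the square root of the interval length), which cancels the Van der Corput gain $|\Psi(\rho,\zeta)|\les 2^{-m/2}$ coming from the phase $|y|^{2d}$. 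This is precisely why the paper's estimate \eqref{he0} only asserts a \emph{small loss} $2^{\e m}$ in $m$, not a gain; it retains the good $s$-decay $2^{-cs}$ but the $m$-decay is genuinely unavailable through this route. Your claim to get both gains at once from a single factorization plus Lemma \ref{ccz} overlooks this arithmetic.

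The paper's actual mechanism is a bifurcation followed by interpolation: it proves \eqref{he0} (good in $s$, bounded in $m$) and separately \eqref{he1} (bad in $s$, good in $m$, with upper bound $2^{(n+2+\e)s-cm}$), and then takes a small geometric mean with weight $\eta_0=c/(n+4)$ to beat down the $2^{(n+2+\e)s}$ loss. Crucially, \eqref{he1} is proven by exactly the brute-force union bound over $\A\in\mathcal{A}_s$ and $\bb\in\mathcal{Y}_s$ --- the move you dismiss in your last paragraph as ``hopeless'' --- combined with the Stein--Wainger $L^2(\R^n)$ maximal estimate \eqref{ASW1} and the transference principle, with no Gauss-sum decay used at all. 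The $2^{(n+2+\e)s}$ loss it incurs is not fatal because it is repaired by the interpolation. So the missing idea is not a sharper single estimate but the willingness to prove two deliberately lossy estimates with \emph{complementary} strengths and interpolate; without that, your argument does not produce the joint decay $2^{-c(s+m)}$.

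Two smaller points: Lemma \ref{ccz} bounds the $\ell^2$-sum over $j$ of $\sup_\A|\mathscr{L}_{s,\A}[\mathfrak{M}_j](D)f|$ with $\mathfrak{M}_j$ a \emph{fixed} function of $j$, so the supremum over $\mu$ must be removed \emph{before} Lemma \ref{ccz} can be invoked --- the paper does this with a Rademacher linearization over $j$ and then a one-dimensional Sobolev inequality in $t$ (see \eqref{x1}--\eqref{x2}), not with Plancherel--P\'{o}lya, which appears only later in the short-variation estimate. And the factorization \eqref{CC1} is a pointwise multiplier identity; to use it to peel off $\mathscr{L}_{s,\A}[1]$ one must first get the suprema over $j$ and $\mu$ inside a fixed multiplier, which again forces the linearization step.
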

%%%

\begin{proof}[Proof of Proposition  \ref{t21} accepting Lemmas \ref{l5.2} and \ref{l5.1} ]
By the equality in (\ref{le1}),  to achieve Proposition  \ref{t21},
it suffices to show that for each  $p\in [p_1,p_2]$ and $r\in(2,\infty)$,
there is a constant $c_p>0$ such that
\begin{align}
\|\big(
\sum_{1\le s\le \e_\circ (j)}{\ind {\mathcal{S}_j^m}}(x)\
[L_{j,\la(x),\e_\circ}^{(1),s}(D)f](x)\big)_{j\in\N^B}\|_{\ell^p(x\in \Z^n;V^r)}\les&\  2^{-c_pm}\|f\|_{{\ell^p}(\Z^n)} \label{high1}
\end{align}
for every  $m\ge 1$,  and
\begin{align}
\|\big(
\sum_{1\le s\le \e_\circ (j)}{\ind {\mathcal{S}_j^0}(x)}\ [L_{j,\la(x),\e_\circ}^{(1),s}(D)f](x)\big)_{j\in\N^B}\|_{\ell^p(x\in \Z^n;V^r)}\les&\  \|f\|_{{\ell^p}(\Z^n)}.\label{low}
\end{align}
Here $\e_\circ=\e_\circ(p_1,p_2,\mathcal{C})$ and $\e_\circ(j)$ is given by (\ref{zhong}) with $j_\circ=j$.
Notice that (\ref{low})  is a direct result of Lemma \ref{l5.2} and Minkowski's inequality  since
(\ref{varsum}).
Thus, %to end the proof of  Proposition  \ref{t21},
 it remains to show  (\ref{high1}).
%We split   the matter  into two parts: $p\in [2,\infty)$ and $p\in (1,2)$.
We first prove    (\ref{high1}) for  the case
$p=2$.   Indeed, by
(\ref{varsum})
and  the inequality in (\ref{le1}),
the $V^r$ semi-norm on the  left-hand side of (\ref
{high1})  is
$$\les \sum_{j\in\N^B}
\sum_{s\in\N}  {\ind {\mathcal{S}_j^m}(x)}| [L_{j,\la(x),\e_\circ}^{(1),s}(D)f](x)|
\les \sum_{s\in\N} \sup_{j\in\N^B }
|{\ind {\mathcal{S}_j^m}(x)}~[L_{j,\la(x),\e_\circ}^{(1),s}(D)f](x)|,$$
which with Lemma \ref{l5.1}  and the triangle inequality yields (\ref{high1}) for the case $p=2$.
%\in [2,\infty)$.
To end the proof of  (\ref{high1}), by  interpolation,  it suffices to prove  
% (\ref{high1}) without the factor $2^{-c_p m}$, that is, for  
 that for $\e_\circ=\e_\circ(p_1,p_2,\mathcal{C})$ and every $p\in (1,\infty)$,
%By interpolating, % with (\ref{high1}) ,
%it suffices  to show  that for every $p\in (1,\infty)$,
\beq\label{c1}
\|\big(
\sum_{1\le s\le  \e_\circ (j)}{\ind {\mathcal{S}_j^m}(x)}\ [L_{j,\la(x),\e_\circ}^{(1),s}(D)f](x)\big)_{j\in\N^B}\|_{\ell^p(x\in \Z^n;V^r)}\les\  \|f\|_{{\ell^p}(\Z^n)}.
\eeq
We next apply 
the first trick mentioned  in Subsection \ref{diff}.
Using  the expression  (\ref{p87}) and letting
\beq\label{NNM1}
\Lambda_{j,\e_\circ,\la,m}:=\Lambda_{j,\e_\circ,\la}\cap \mathcal{S}_j^m,
\eeq
with $\Lambda_{j,\e_\circ,\la}$ and $\mathcal{S}_j^m$ given as in Subsection \ref{sslong1} and   (\ref{dc1}), respectively,
 we have
\beq\label{bc98}{
\ind {\Lambda_{j,\e_\circ,\la,m}}}(x) \ma_{j,\la(x)}(\xi)
={\ind {\mathcal{S}_j^m}(x)}\sum_{1\le s\le \e_\circ (j)}L_{j,\la(x),\e_\circ}^{(1),s}(\xi)+ {\ind {\Lambda_{j,\e_\circ,\la,m}}}(x) E^{(1)}_{j,\la(x),\e_\circ}(\xi).
\eeq
By using a routine  computation and the inequality in (\ref{le1}), we can  infer that for each $p\in(1,\infty)$,
\beq\label{bc1}
\begin{aligned}
&\ \ \| {\ind {\mathcal{S}_j^m}(x)} \big (\ma_{{j},\la(x)}(D)f\big)(x)\|_{\ell^p(x\in\Z^n;\ell^1(j\in \N^B))}\\
\les&\  \|\sup_{j\in \N^B} \sup_{\la\in [0,1]}| \ma_{{j},\la}(D)f|\|_{\ell^p(\Z^n)}
\les  \|M_{D HL}f\|_{\ell^p(\Z^n)}\les \|f\|_{\ell^p(\Z^n)};
\end{aligned}
\eeq
moreover,  we can deduce from (\ref{00}) that for every $p\in(1,\infty)$,
\beq\label{bc2}
\begin{aligned}
&\ \|{\ind { \Lambda_{j,\e_\circ,\la,m}}(x)}  \big(E^{(1)}_{j,\la(x),\e_\circ}(D)f\big)(x)\|_{\ell^p(x\in\Z^n;\ell^1(j\in \N^B))}\\
\les&\   \sum_{j\in\N^B}
\|\sup_{\la\in X_{j,\e_\circ}}|E^{(1)}_{j,\la,\e_\circ}(D)f|\|_{{\ell^p(\Z^n)}}
\les \|f\|_{{\ell^p(\Z^n)}}.
\end{aligned}
\eeq
Finally, we can obtain (\ref{c1}) by
combining (\ref{bc98})-(\ref{bc2}).  This ends the proof of  Theorem \ref{t21} under the assumptions that Lemmas \ref{l5.2} and \ref{l5.1} hold.
\end{proof}
\subsection{Proof of Lemma \ref{l5.2}}
 In this subsection, we shall prove Lemma \ref{l5.2}. Since the value of $\ka$
is not important, hereafter we will use the notation (\ref{not32}).
  Since  $x\in {\mathcal{S}_j^0}$, we have  $|\mu(x) 2^{2dj} |\le 2$ at this moment.
 Changing variables $y\to 2^j y$ and using Taylor's expansion, we write
   $$
   \begin{aligned}
   \phi_{j,\mu (x)}^{(1)}(\xi)
   =&\ \int_{1/2\le |y|\le 1} e\big(\mu(x) 2^{2dj}  |y|^{2d}+2^jy\cdot\xi\big){K}_0(y)dy\\
   =&\  \rho_{0}(2^j\xi)+\sum_{l=1}^\infty \frac{(2\pi i )^l}{l!} (\mu(x) 2^{2dj})^l \rho_{l}(2^j\xi),
   \end{aligned}
  $$
  where
  $$\rho_{l}(\xi):=\int_{1/2\le |y|\le 1} e(y\cdot\xi)|y|^{2dl}{K}_0(y)dy\ \ \ \  (l\ge 0).$$
  %Since  $x\in {\mathcal{S}_j^0}$, we have  $|\mu(x) 2^{2dj} |\le 1$, and then
  Then  we reduce the matter to showing
  \begin{align}
 \|\big({\ind {\mathcal{S}_j^0}(x)}~ (\mathscr{L}_{s,\A}[\rho_{0}(2^j\cdot)](D)f)(x) \big)_{j\in\N^B}\|_{\ell^p(x\in \Z^n;\ell^2)}
 &\les\  2^{-c_p s}\|f\|_{\ell^p(\Z^n)}\ \ {\rm and}\label{al1}\\
  \|\big({\ind {\mathcal{S}_j^0}(x)} (\mu(x)2^{2dj})^l (\mathscr{L}_{s,\A}[\rho_{l}(2^j\cdot)](D)f)(x) \big)_{j\in\N^B}\|_{\ell^p(x\in \Z^n;\ell^2)}
& \les\  C^l 2^{-c_p s} \|f\|_{\ell^p(\Z^n)}\ (l\ge 1).\label{al2}
 \end{align}
 A routine   computation  gives $|\rho_{0}(2^j\xi)|\les  \min\{|2^j\xi|,|2^j\xi|^{-1}\}$
 and $|\F^{-1}_{\R^n}(\rho_{0}(2^j\cdot)\hat{f})|\les M_{HL}f$,
 so we can achieve  (\ref{al1}) by Lemma \ref{ccz} (with $\mathfrak{M}_j=\rho_{0}(2^j\cdot)$).   Thus it remains to prove (\ref{al2}).
 Note that
$
 \|{\ind {\mathcal{S}_j^0}(x)} (\mu(x)2^{2dj})^l f_j\|_{\ell^2({j\in\N^B})}\les \|(f_j)_{j\in \N^B}\|_{\ell^\infty}
$
whenever $l\ge 1$.
 To achieve  (\ref{al2}),  it suffices to show
   \begin{align}
   \big\| \sup_{j\in\N^B}\sup_{\A\in \mathcal{A}_s}|\mathscr{L}_{s,\A}[\rho_{l}(2^j\cdot)](D)f|\big\|_{\ell^p(\Z^n)}
& \les\  C^l 2^{-c_p s} \|f\|_{\ell^p(\Z^n)},\ \ \ \ l\ge1.\label{aA1}
 \end{align}
 Let $\theta$ be the  function    as in Lemma \ref{de1}, and
  let  $\mathfrak{M}^{(2)}_j(\xi):=\rho_{l}(2^j\xi)-\rho_l(0)\widehat{\theta_j}(\xi)$.  Since
 $|\rho_l(0)|\les C^l$,
 to achieve  (\ref{aA1}),  it suffices to show
 \begin{align}
  \big\|\big(\sum_{j\in\N}|\sup_{\A\in\mathcal{A}_s}|\mathscr{L}_{s,\A}[\mathfrak{M}^{(2)}_j](D)f|^2\big)^{1/2}\big\|_{\ell^p(\Z^n)}\les&\  C^l 2^{-\gamma_ps} \|f\|_{\ell^p(\Z^n)}\ \ {\rm and}\label{UU0}\\
    \big\| \sup_{j\in\N}\sup_{\A\in \mathcal{A}_s}|\mathscr{L}_{s,\A}[\widehat{\theta_j}](D)f|\big\|_{\ell^p(\Z^n)}
   \les&\  2^{-\gamma_ps} \|f\|_{\ell^p(\Z^n)}\label{UU1}
 \end{align}
 with $\gamma_p$ given as in (\ref{de11}). We shall use  Lemma \ref{ccz} to obtain (\ref{UU0}).
 Simple computation gives
 $|\F^{-1}_{\R^n}(\mathfrak{M}^{(2)}_j)*_{\R^n}f|\les M_{HL}f$, which with the Fefferman-Stein inequality   yields that    $\mathfrak{M}^{(2)}_j$
 satisfies a vector-valued inequality  like (\ref{X3}). This   with
 $|\mathfrak{M}^{(2)}_j(\xi)|\les C^l\min\{2^j|\xi|, (2^j|\xi|)^{-1}\}$   gives  (\ref{UU0}) by Lemma \ref{ccz} (with $\mathfrak{M}_j=\mathfrak{M}^{(2)}_j$).
 In addition,   (\ref{UU1}) is a direct result of (\ref{yiny}). Thus    we complete the proofs of (\ref{UU0}) and (\ref{UU1}).

\subsection{Proof of Lemma \ref{l5.1}}
%\begin{proof}[Proof of Lemma \ref{l5.1}]
%We can not directly obtain an estimate like
%$$
%\| \sup_{j\in\N^b }\sup_{\A\in \mathcal{A}_s} \sup_{\mu \in I_{j,m}}
%|L_{j,\A+\mu, M}^s(D)f|\|_{\ell^p}\les\  \|f\|_{{\ell^p}}.
%$$ since the appearance of $\sup_j$.
%This is different from Krause-Roos's paper since
Since the value of $\e_\circ''$ is not important, we will omit  from the notation $L_{j,\la(x),\e_\circ''}^{(1),s}$.  Moreover, since the value of $\ka$
is not important, we will utilize  the notation (\ref{not32}) in the subsequent text.
To achieve  Lemma \ref{l5.1}, it suffices to prove
\begin{align}
({\rm Triple\  maximal\  estimate})\ \ \ \ \ \| \sup_{j\in\N^B }\sup_{\A\in \mathcal{A}_s} \sup_{\mu \in I_{j,m}}
|L_{j,\A+\mu}^{(1),s}(D)f|\|_{\ell^2(\Z^n)}\les&\  2^{-c(s+m)}\|f\|_{{\ell^2(\Z^n)}},\label{hee0}
\end{align}
for some $c>0$, the proof of which can be reduced  to proving that for every $\e\in(0,1)$,
\begin{align}
\| \sup_{j\in\N^B }\sup_{\A\in \mathcal{A}_s} \sup_{\mu \in I_{j,m}}
|L_{j,\A+\mu}^{(1),s}(D)f|\|_{\ell^2(\Z^n)}&\les\  2^{-c s}2^{\e m}\|f\|_{{\ell^2(\Z^n)}}\ \ \ \ \ \ {\rm and}\label{he0}\\
\| \sup_{j\in\N^B }\sup_{\A\in \mathcal{A}_s} \sup_{\mu \in I_{j,m}}
|L_{j,\A+\mu}^{(1),s}(D)f|\|_{\ell^2(\Z^n)}&\les_\e\  2^{(n+2+\e)s-c m}\|f\|_{{\ell^2(\Z^n)}}.\label{he1}
\end{align}
hold for some constant $c\in(0,1)$.
In fact,   letting  ${\eta_0}=c/(n+4)$, we obtain by (\ref{he0}) and  (\ref{he1}) that
$$\| \sup_{j\in\N^B}\sup_{\A\in \mathcal{A}_s} \sup_{\mu \in I_{j,m}}
|L_{j,\A+\mu}^{(1),s}(D)f|\|_{\ell^2(\Z^n)}\les_\e\  \big\{2^{-cs}2^{\e m}\big\}^{
1-\eta_0}\big\{2^{(n+2+\e)s-cm}\big\}^{\eta_0} \|f\|_{{\ell^2(\Z^n)}},$$
which leads to the desired result by setting $\e$ small enough such that
$\e(1-\eta_0)<c\eta_0$. The specific constant $n+2+\epsilon$ in (\ref{he1}) is not essential for the proof; it can be substituted with any arbitrary constant $C>n+2+\epsilon$.
\subsubsection{Proof of (\ref{he0})}
%f we use  $\int_{\Omega_0} e(t|y|^{2d})\tilde{K}_0(y)dy=0$ (say $t>0$) since the cancelation property of $K_0$ and the radial phase function $|y|^{2d}$,  the following proof can not be extended to some cases (In particular, the 1D case with phase function $|y|^{2d} $ replaced by some odd function like $y^3$).  Moreover,  $\int_{\Omega_0} e( t|y|^{2d})\tilde{K}_0(y)dy=0$ can lead to
%$$|\int_{\Omega_0} e( t |y|^{2d}+y\cdot2^j\xi)K_0(y)dy|\les |2^j\xi|$$
%whenever $|2^j\xi|\ll1$, which
%an important to bound some square function estimates. To avoid this procedure, we need to introduce discrete Littlewood-Paley inequality by using  a frequency decomposition first.
%\vskip.1in
Let us denote
\beq\label{noe1}
\psi_{j,k}(\xi):=\psi_{k-j}(\xi)=\psi(2^{j-k}\xi).
\eeq
Note that   for all $k\le 0$,
$$\|\big( \sum_{j\in \N^B}\sup_{\A\in \mathcal{A}_s}|\mathscr{L}_{s,\A}[\psi_{j,k}](D)f|^2\big)^{1/2}\|_{\ell^p(\Z^n)}
\les\  \|\big( \sum_{j\in \N^B}\sup_{\A\in \mathcal{A}_s}|\mathscr{L}_{s,\A}[\psi_{j}](D)f|^2\big)^{1/2}\|_{\ell^p(\Z^n)},$$
which with Lemma \ref{ccz} ($\mathfrak{M}_j=\psi_j$) gives
that
\beq\label{Da1}
\begin{aligned}
\|\big( \sum_{j\in \N^B}\sup_{\A\in \mathcal{A}_s}|\mathscr{L}_{s,\A}[\psi_{j,k}](D)f|^2\big)^{1/2}\|_{\ell^p(\Z^n)}
\les&\   2^{-\gamma_p s}\|f\|_{\ell^p(\Z^n)}
\end{aligned}
\eeq
with $\gamma_p$ given as in (\ref{de11}).
This estimate will be used  in the following arguments.  Write
\beq\label{da1}
\sup_{\A\in \mathcal{A}_s} \sup_{\mu \in I_{j,m}}
|L_{j,\A+\mu}^{(1),s}(D)f|=\sup_{\A\in \mathcal{A}_s} \sup_{1\le |t|<2}
|L_{j,\A+2^{m-2dj}t}^{(1),s}(D)f|.
\eeq
Without loss of generality, we assume $t\in[1,2)$ in (\ref{da1}) since
$t\in (-2,-1]$ can be handled similarly.
Thus, by
  the partition of unity
$
\chi(2^j\xi)+\sum_{k\ge 1}\psi_{j,k}(\xi)=1$,
%with $C_2$ large enough,
we can bound (\ref{da1}) by
\beq\label{bbb2}
\begin{aligned}
&\sup_{\A\in \mathcal{A}_s} \sup_{t \in  [1,2)}|\mathscr{L}_{s,\A}[\phi^{(1)}_{j,2^{m-2dj}t} ~\chi(2^j\cdot)](D)f|\\
+& \sum_{k\ge 1}\sup_{\A\in \mathcal{A}_s} \sup_{t \in  [1,2)}|\mathscr{L}_{s,\A}[\phi^{(1)}_{j,2^{m-2dj}t} ~\psi_{j,k}](D)f|.
\end{aligned}
\eeq
%where %$C_1$ fixed later is sufficiently large,
 %$\rho_j$ is given by
%\beq\label{g1}
%\rho_j(\xi)=\sum_{k\le -m}\psi_{m,j,k}(\xi)=\sum_{l\le 0}\psi(2^{j-l}\xi).
%\eeq
%Simple computations yield that $\rho_j$ satisfies
%\beq\label{1wq}
%|\rho_j(\xi)|\les \min\{2^j|\xi|,(2^j|\xi|)^{-1}\}.
%\eeq
By
 using (\ref{bbb2}), to prove  (\ref{he0}),
it suffices to show  that for any $\e\in (0,1)$,
\begin{align}
\| \sup_{j\in\N^b }\sup_{\A\in \mathcal{A}_s} \sup_{t \in  [1,2)}|\mathscr{L}_{s,\A}[\phi_{j,2^{m-2dj}t}^{(1)}~\chi(2^j\cdot)](D)f|\|_{\ell^2(\Z^n)}&\les\  2^{-cs}\|f\|_{{\ell^2(\Z^n)}} \ \ \ \ {\rm and}\label{ge1}\\
\|\sup_{j\in\N^b } \sup_{\A\in \mathcal{A}_s} \sup_{t \in  [1,2)}|\mathscr{L}_{s,\A}[\phi_{j,2^{m-2dj}t}^{(1)}~\psi_{j,k}](D)f|\|_{\ell^2(\Z^n)}&\les\  2^{\e m-cs-\e k}\|f\|_{{\ell^2(\Z^n)}}\     (k\ge 1).\label{ge2}
%\|\sup_{j\in\N^b } \sup_{\A\in \mathcal{A}_s} \sup_{t \in  [1,2)}|\mathscr{L}_{s,\A}[\phi_{j,2^{m-2dj}t}^{(1)}~\psi_{m,j,k}](D)f|\|_{\ell^p}\les&\  2^{-cs}\|f\|_{{\ell^p}},\     \ \ -m\le k<C_2.\label{ge3}
\end{align}
%for some $c>0$.

We first show (\ref{ge1}). By writing  $\chi(2^j\xi)$ as $\chi(2^j\xi)=\chi(2^j\xi)-\widehat{\theta_j}(\xi)+\widehat{\theta_j}(\xi)
$
with $\theta_j$ given as in Lemma \ref{de1},
and
repeating the arguments yielding (\ref{aA1}), we have
 %(\ref{UU0}) and (\ref{UU1}),
%there is $c_p>0$ such that
\begin{align}
\|\sup_{j\in\N^B}\sup_{\A\in \mathcal{A}_s}|\mathscr{L}_{s,\A}[\chi(2^j\cdot)](D)f|\|_{\ell^p(\Z^n)}\les&\  2^{-\gamma_p s} \|f\|_{\ell^p(\Z^n)}\  \ \ (1<p<\infty). \label{5d3}
%\|\sup_{j\in\N^B}\sup_{\A\in \mathcal{A}_s} |\mathscr{L}_{s,\A,M}[\widehat{\K_j}](D)f|\|_{\ell^p}\les&\  2^{-c_p s} \|f\|_{\ell^p}.\label{5d4}
\end{align}
Since  $t\in [1,2)$ and $|\na |y|^{2d}|\gtrsim1$ whenever $|y|\sim1$,  we  obtain by  a routine computation that\footnote{To adapt our proof for the one-dimensional case with general phase  $y^m$ for all $m\ge3$, we  don't rely on the condition $\Xi_{m,t}=0$.}
\beq\label{op1}
 | \Xi_{m,t}| \les 2^{-m}
\eeq
where $\Xi_{m,t}$ is given by 
$$\Xi_{m,t}:=\int_{1/2\le|y|\le 1} e( 2^m t |y|^{2d}){K}_0(y)dy.$$
Since   $\Xi_{m,t}$ only depends on $m,t$,  we deduce by   (\ref{5d3})  (with $p=2$) and (\ref{op1}) that
\begin{align}
\|\sup_{j\in\N^B}\sup_{\A\in \mathcal{A}_s}\sup_{t \in  [1,2)}|\mathscr{L}_{s,\A}[\Xi_{m,t}~\chi(2^j\cdot)](D)f|\|_{\ell^2(\Z^n)}\les&\  2^{-m} 2^{-c s} \|f\|_{\ell^2(\Z^n)}.\label{5d4}
\end{align}
To prove (\ref{ge1}),  by (\ref{5d4}), it suffices to show
\beq\label{bg1}
 \|\sup_{j\in\N^B}\sup_{\A\in \mathcal{A}_s} \sup_{t \in  [1,2)}|\mathscr{L}_{s,\A}[h_{m,j,t}~ \chi(2^j\cdot)](D)f|\|_{\ell^2(\Z^n)}
\les\ 2^{-c s}\|f\|_{{\ell^2(\Z^n)}},
\eeq
where $h_{m,j,t}$ is given by
$$h_{m,j,t}(\xi):=\phi_{j,2^{m-2dj}t}^{(1)}(\xi)-\Xi_{m,t}=\int_{1/2\le|y|\le 1} e( 2^m t |y|^{2d})\big(e(2^j\xi\cdot y)-1\big){K}_0(y)dy.$$
  Since  $h_{m,j,t}(0)=0$, we may replace  $\chi(2^j\xi)$ by $\sum_{k\le 0}\psi_{j,k}(\xi)$. Thus, to achieve  (\ref{bg1}), it suffices   to  prove
\begin{align}
\| \sup_{j\in\N^B }\sup_{\A\in \mathcal{A}_s} \sup_{t \in  [1,2)}|\mathscr{L}_{s,\A}[h_{m,j,t}~\psi_{j,k}](D)f|\|_{\ell^2(\Z^n)}\les&\  2^{k}2^{-c s}\|f\|_{{\ell^2(\Z^n)}}\label{we1}
\end{align}
for every $k\le 0$.
Using Taylor's expansion, we have
\beq\label{g2}
\begin{aligned}
h_{m,j,t}(\xi)\psi_{j,k}(\xi)
=\ 2^{k}\sum_{l=1}^\infty  2^{k(l-1)}\frac{(2\pi i)^l}{l!}\psi_{j,k}(\xi)
h_{m,j,t}^{k,l}(\xi),
%=&\ 2^{m+k}\sum_{l=1}^\infty  2^{(m+k)(l-1)}\frac{(2\pi i)^l}{l!}
%\sum_{v\in S_l} \Xi_{m,t,l}~ \psi_{m,j,k,l}(\xi).
\end{aligned}
\eeq
where $h_{m,j,t}^{k,l}(\xi)=\int_{1/2\le|y|\le 1} e(2^m t|y|^{2d})
\big(y\cdot \frac{\xi}{2^{k-j}}\big)^l {K}_0(y)dy$.
Expanding the term $\big(y\cdot \frac{\xi}{2^{k-j}}\big)^l$ in the expression for $h_{m,j,t}^{k,l}$, we can interpret $\psi_{j,k}(\xi) h_{m,j,t}^{k,l}(\xi)$ as a sum of  $\mathcal{O}(n^l)$ terms resembling $\Xi_{m,t,l}~ \psi_{j,k,l}(\xi)$, where $\Xi_{m,t,l}$ and $\psi_{j,k,l}$ represent variations of $\Xi_{m,t}$ and $\psi_{j,k}$, respectively. Precisely,  we have
\beq\label{op2}
 \|\big( \sum_{j\in \N^B}\sup_{\A\in \mathcal{A}_s}|\mathscr{L}_{s,\A}[\psi_{j,k,l}](D)f|^2\big)^{1/2}\|_{\ell^2(\Z^n)}
\les\   C^l2^{-c  s}\|f\|_{\ell^2(\Z^n)}\ \ \ {\rm and}\
\ \ | \Xi_{m,t,l}|\les C^l 2^{-m},
\eeq
which are similar to (\ref{Da1}) and (\ref{op1}), respectively.
 In order to prove (\ref{we1}),  the above arguments imply that  it suffices to show  that, for each $k\le 0$ and each $l\ge 1$,
\beq\label{o1}
\| \sup_{j\in\N^b }\sup_{\A\in \mathcal{A}_s} \sup_{t \in  [1,2)}|\mathscr{L}_{s,\A}[ \Xi_{m,t,l} ~\psi_{j,k,l}](D)f|\|_{\ell^2(\Z^n)}\les 2^{-cs} C^l\|f\|_{{\ell^2(\Z^n)}}.
\eeq
In fact, (\ref{o1})  is  a direct result of   (\ref{op2}).
%by using  that $\Xi_{m,t,l}$ is a constant in $\xi$ and $x$.  
This ends the proof of (\ref{ge1}).
%This is a direct result of Lemma \ref{ccz} by changing variables $j\to j+m+k$.
%Denote by $  J_{s,m,p}$ the left hand side of the above.

   We next show (\ref{ge2}).
   Since  the support of $\psi_{j,k}$ yields that $2^j\xi$ may be large enough,
   %the function $\phi_{j,2^{m-2dj}t}^{(1)}$ depends on $t$,
   %it seems hard to remove the %supreme over $t\in [1,2)$,
     the proof of (\ref{ge2}) is   more involved.
     %Let $\{\va_i(t)\}_{i=0}^\infty$ be the sequence of Rademacher functions on $[0,1]$.
By  linearization, the square of  the left-hand side of (\ref{ge2}) is bounded by
\beq\label{x1}
   \begin{aligned}
   \int_0^1 \|\sup_{\A \in \mathcal{A}_s} \sup_{t \in  [1,2)}|\mathscr{L}_{s,\A}[\Phi_{m,k}^{t,\tau}](D)f|
   \|_{\ell^2(\Z^n)}^2
   d\tau,
   \end{aligned}
   \eeq
   where  $\Phi_{m,k}^{t,\tau}$ is  given by
   \beq\label{k100}
   \Phi_{m,k}^{t,\tau}(\xi):=\sum_{j\in\N^B} \va_j(\tau) ~\phi_{j,2^{m-2dj}t}^{(1)}(\xi)~ \psi_{j,k}(\xi)
   \eeq
   with   $\{\va_j(\tau)\}_{j=0}^\infty$    the sequence of Rademacher functions  on $[0,1]$.
    We will use Sobolev inequality to control the norm involving the supremum on $t$. Let us denote
         $$\tilde{\Phi}_{m,k}^{t,\tau}(\xi):=2^{-m}\frac{\p}{\p_t}\Phi_{m,k}^{t,\tau}(\xi)=\sum_j \va_j(\tau)~ \tilde{\phi}_{j,2^{m-2dj}t}^{(1)}(\xi)~ \psi_{j,k}(\xi)$$
         with   $\tilde{\phi}_{j,2^{m-2dj}t}^{(1)}$ given by
         $$\tilde{\phi}_{j,2^{m-2dj}t}^{(1)}(\xi):=2^{-m} \frac{\p}{\p t}\phi_{j,2^{m-2dj}t}^{(1)}(\xi)=2\pi i \int_{1/2\le |y|\le 1} e(2^mt |y|^{2d}+2^jy\cdot\xi) |y|^{2d}{K}_0(y)dy\ \ $$
   Using  the interpolation inequality,
   we have
   \beq\label{x2}
   \begin{aligned}
 \sup_{t\in [1,2)}|\mathscr{L}_{s,\A}&[\Phi_{m,k}^{t,\tau}](D)f|^2
   \les\  |\mathscr{L}_{s,\A}[\Phi_{m,k}^{1,\tau}](D)f|^2\\
  &\  +2^m \|\mathscr{L}_{s,\A}[\Phi_{m,k}^{t,\tau}](D)f\|_{L^2_t( [1,2))}
\
     \|\mathscr{L}_{s,\A}[\tilde{\Phi}_{m,k}^{t,\tau}](D)f\|_{L^2_t( [1,2))}.
     \end{aligned}
   \eeq
          To prove (\ref{ge2}),  by  (\ref{x2}),
      it suffices to show  that for all $(t,\tau)\in [1,2)\times [0,1]$    and  each $H\in \{\Phi,\tilde{\Phi}\}$,
   \begin{align}
 \|\sup_{\A\in \mathcal{A}_s}     |\mathscr{L}_{s,\A}[H_{m,k}^{t,\tau}](D)f|\|_{\ell^2(\Z^n)}\les&\   \ 2^{-cs} 2^{-\e k/(2d)}2^{-(1-\e)m/2}   \|f\|_{\ell^2(\Z^n)} \ \   \  (k\ge 1). \label{m31}
   \end{align}
   %Indeed, setting $\e<1/p$, we can get the desired result.
   We only show the details for the case $H=\Phi$ since the case $H=\tilde{\Phi}$ can be bounded  similarly.
   Using   (\ref{de11}) and (\ref{CC1}),  to obtain  (\ref{m31}), it suffices to show
    \begin{align}
\|\mathscr{L}_{s}^\#[\Phi_{m,k}^{t,\tau}](D)f\|_{\ell^2(\Z^n)}\les&\  2^{-\e k/(2d)}2^{-(1-\e)m/2}  \|f\|_{\ell^2(\Z^n)}\ \   \  (k\ge 1).\label{m211}
   \end{align}
 By Proposition \ref{PIW} and the Littlewood-Paley theory,   we reduce the proof of
 (\ref{m211}) to showing
   \begin{align}
\|\big(\sum_{j\in\N^B}|(\phi_{j,2^{m-2dj}t}^{(1)}~\psi_{j,k})(D)f|^2\big)^{1/2}\|_{L^2(\R^n)}\les&\  2^{-\e k/(2d)}2^{-(1-\e)m/2} \|f\|_{L^2(\R^n)},\ \   \  k\ge 1. \label{m212}
 %\|\big(\sum_{j\in\N^B}|(\phi_{j,2^{m-2dj}t}^{(1)}~\psi_{m,j,k})(D)f|^2\big)^{1/2}\|_p\les&\   \    \|f\|_p, \ \  \ -m\le k<C.\label{m312}
   \end{align}
   %For all $k\ge -m$,  by
   %using a crude estimate $|(\phi_{j,2^{m-2dj}t}^{(1)}~\psi_{m,j,k})(D)f|
  % \les |M_{HL}P_{m-j+k}f|$,  Fefferman-Stein and Littlewood-Paley inequalities, we have for each $p\in(1,\infty)$, LHS of (\ref{m212}) is bounded by a constant times
 %  $$% \|\big(\sum_{j\in\N^b}|(\phi_{j,2^{m-2dj}t}\psi_{m,j,k})(D)f|^2\big)^{1/2}\|_p
 %   \|\big(\sum_{j\in\N^B} |M_{HL}P_{m-j+k}f|^2\big)^{1/2}\|_p
 %  \les  \|\big(\sum_{j\in\N^B} |P_{m-j+k}f|^2\big)^{1/2}\|_p\les \|f\|_p.$$
 %  To finish the proof of (\ref{m212}) and (\ref{m312}), by interpolation,  it remains to show  (\ref{m212}) for $p=2$.
%  Setting $C$ large enough, and using  $k\ge C$,   we infer by integration by parts that
 %  $|\phi^{(1)}_{j,2^{m-2dj}t}(\xi)|
 %  \les 2^{-(m+k)}.$
Using the polar coordinate and Van der Corput lemma (see \cite{St93}), we can get $|{\phi}_{j,2^{m-2dj}t}^{(1)}(\xi)|\les 2^{-m/2}$; on the other hand,  we can also obtain  $|{\phi}_{j,2^{m-2dj}t}^{(1)}(\xi)|\les (2^j|\xi|)^{-1/{2d}}$ by Proposition 2.1 in \cite{SW01}.  Thus
\begin{align*}
|{\phi}_{j,2^{m-2dj}t}^{(1)}(\xi)|\les \min\{2^{-m/2}, 2^{-k/{2d}}\}\ \ {\rm whenever}\ 2^j|\xi|\sim 2^k.
\end{align*}
By this estimate  and    Plancherel's identity,   the left-hand side of (\ref{m212}) is bounded by
     $$
     \big(\sum_{j\in\N^B}\|\phi_{j,2^{m-2dj}t}^{(1)}(\xi)~\psi_{j,k}(\xi)~\widehat{f}(\xi)\|_{L^2_\xi}^2\big)^{1/2}
     \les 2^{-\e k/(2d)}2^{-(1-\e)m/2} \|f\|_{L^2(\R^n)}
     $$
    for  any $\e\in [0,1]$,
which yields (\ref{m211}) immediately.  This completes the proof of (\ref{ge2}).
     %%%
\subsubsection{Proof of (\ref{he1})}
We show (\ref{he1})  for all $p\in (1,\infty)$.
Denote
$$\mathcal{Y}_s:=\{{b}/{q}:\ b\in \Z^n \cap [0,q]^n, \  q\in [2^{s-1},2^s)\}$$
satisfying  $\# \mathcal{Y}_s\les 2^{(n+1)s}$.
%we have used (\ref{new1}) and $\# \mathcal{Y}_s\les 2^{(n+1)s}$.
Then, by (\ref{MI11}) and (\ref{new1}), the left hand side of (\ref{he1}) is
\beq\label{DD01}
\begin{aligned}
 &\les\ \sum_{\A\in \mathcal{A}_s}\sum_{\bb\in \mathcal{Y}_s}
\|\sup_{j\in\Z}\sup_{\mu\in I_{j,m}}|\mathcal{F}^{-1}_{\R^n}(\phi_{j,\mu}^{(1)}~\chi_{s,\kappa})*_{\Z^n} (\NE_{-\bb} f)|\|_{\ell^2(\Z^n)}\\
&\les_\e  2^{(n+2+\e)s}  \sup_{\bb\in \mathcal{Y}_s}\|\sup_{j\in\Z}\sup_{\mu\in I_{j,m}}|\mathcal{F}^{-1}_{\R^n}(\phi_{j,\mu}^{(1)}~\chi_{s,\kappa})*_{\Z^n} (\NE_{-\bb} f)|\|_{\ell^2(\Z^n)}.
\end{aligned}
\eeq
%with $(\NE_{-\bb} f)(y)=e(-\bb\cdot y)f(y)$,
By the Stein-Wainger-type estimate \footnote{
Since Propositions 2.1 and 2.2 in \cite{SW01}  work as well,
we only need to  repeat   the arguments yielding Theorem 1 in \cite{SW01}
to obtain this estimate (\ref{ASW1}).}, we have
\beq\label{ASW1}
\|\sup_{j\in\Z}\sup_{\mu\in I_{j,m}}|\mathcal{F}^{-1}_{\R^n}(\phi_{j,\mu}^{(1)}~\chi_{s,\kappa})*_{\R^n} f|\|_{L^2(\R^n)}\les 2^{-cm} \|f\|_{L^2(\R^n)},
\eeq
which with the
 transference principle gives
\beq\label{sa1}
\|\sup_{j\in\Z}\sup_{\mu\in I_{j,m}}|\mathcal{F}^{-1}_{\R^n}(\phi_{j,\mu}^{(1)}~\chi_{s,\kappa})*_{\Z^n} f|\|_{\ell^2(\Z^n)}
\les 2^{-cm}  \|f\|_{\ell^2(\Z^n)}.
\eeq
Note $\|\NE_{-\bb} f\|_{\ell^2(\Z^n)}=\|f\|_{\ell^2(\Z^n)}$.
Then (\ref{he1}) follows by inserting (\ref{sa1}) into (\ref{DD01}).
\section{Major arcs estimate II: Proof of Proposition \ref{892}}
\label{slong3}
In this section, we shall prove  major arcs estimate II in Proposition \ref{892} by employing  the crucial  multi-frequency variational inequality in Lemma \ref{endle} and the techniques proving  major arcs estimate I. Keep the notation (\ref{zhong}) in mind.
 %Rememeber
%that  $L_{l,\la(x)}^{(2),s}$ is   defined by (\ref{f2}).
  %and $\la(x)$  denotes an arbitrary function from $\Z^n$ to  $[0,1]$.
  %Let $\e_\circ$ be a sufficiently small constant, and let  $\la$  denote an arbitrary function from $\Z^n$ to  $[0,1]$.
 \begin{lemma}\label{l6.1}
Let $r\in(2,\infty)$,  $p\in [p_1,p_2]$ and $m\in [1,\infty)$. Then there is a constant    $c_p>0$ such that
\beq\label{A11}
\big\|\big(\sum_{C_0 \le l< j} {\ind {\mathcal{S}_l^m}}(x) \sum_{1\le s\le \e_\circ (l)}
[L^{(2),s}_{l,\la(x),\e_\circ}(D)f](x)\big)_{j\in\N^B}\big\|_{\ell^p(x\in\Z^n; V^r)}
\les 2^{-c_p m}\|f\|_{\ell^p(\Z^n)},
\eeq
where $\e_\circ=\e_\circ(p_1,p_2,\mathcal{C})$, and $L_{l,\la(x),{\e}_\circ}^{(2),s}$ is given by (\ref{f2}).
\end{lemma}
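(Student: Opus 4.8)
The plan is to prove Lemma \ref{l6.1} by adapting the triple maximal estimate machinery developed for major arcs estimate I (Lemma \ref{l5.1}), together with the multi-frequency variational inequality of Lemma \ref{endle}. First I would note that since $\sum_{l\in\Z}\ind{\mathcal{S}_l^m}(x)\le 1$ for $m\ge 1$ (the second relation in (\ref{le1})), the long-variation seminorm on the left-hand side of (\ref{A11}) is controlled, via (\ref{varsum}), by
\[
\sum_{l\in\N^B}\ind{\mathcal{S}_l^m}(x)\Big|\sum_{1\le s\le\e_\circ(l)}[L^{(2),s}_{l,\la(x),\e_\circ}(D)f](x)\Big|
\le \sum_{s\ge 1}\sup_{l\in\N^B}\big|\ind{\mathcal{S}_l^m}(x)\,[L^{(2),s}_{l,\la(x),\e_\circ}(D)f](x)\big|,
\]
so by the triangle inequality in $\ell^p$ it suffices to prove a uniform-in-$s$ maximal bound of the form $2^{-c(s+m)}\|f\|_{\ell^p}$ for each summand. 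Here the key structural difference from Section \ref{slong2} is that $\phi^{(2)}_{l,\mu(x)}$ is an integral over all of $\R^n$ (a single Littlewood–Paley piece of the full kernel $K$), rather than over a dyadic annulus; this is precisely what makes Lemma \ref{endle}, with its kernels $\K_j=K\psi_j$, the right tool.

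The core step is therefore to establish the analogue of the triple maximal estimate (\ref{hee0}), namely
\[
\big\|\sup_{l\in\N^B}\sup_{\A\in\mathcal{A}_s}\sup_{\mu\in I_{l,m}}|L^{(2),s}_{l,\A+\mu}(D)f|\big\|_{\ell^2(\Z^n)}\les 2^{-c(s+m)}\|f\|_{\ell^2(\Z^n)},
\]
and then interpolate with a crude $\ell^p$ bound to upgrade to $p\in[p_1,p_2]$ with a possibly worse, but still negative, exponent in $m$ (as in the passage from (\ref{he0})–(\ref{he1}) to (\ref{hee0}), one splits into a small-gain-in-$m$ estimate costing $2^{\e m}$ and a Stein–Wainger-type estimate costing $2^{Cs-cm}$, then balances). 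For the $\ell^2$ triple maximal estimate I would split $\phi^{(2)}_{l,\mu}$ into its value at $\mu$ replaced by $0$ plus a remainder. The $\mu=0$ part is $\widehat{\K^{?,?}}$-type and is handled by Lemma \ref{endle} (more precisely by the square-function estimate (\ref{cou1}) applied to $\mathfrak{M}_l=\widehat{\K_l}$, using (\ref{ine1})), which supplies the $2^{-\gamma_p s}$ gain. For the $\mu$-dependent remainder, since $x\in\mathcal{S}_l^m$ forces $|\mu(x)|\sim 2^{m-2dl}$, I would write $\mu = 2^{m-2dl}t$ with $t\in[1,2)$, expand $e(\mu|y|^{2d})=e(2^m t|y|^{2d})$, and exploit that over $|y|\sim 2^l$ the phase $2^mt|y|^{2d}$ is stationary-phase-controlled: Van der Corput gives decay $2^{-m/2}$ while the Stein–Wainger proposition (Proposition 2.1 in \cite{SW01}) gives decay $(2^l|\xi|)^{-1/(2d)}$, exactly as in the derivation of (\ref{m212}). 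Summing the Littlewood–Paley pieces $\psi_{l,k}$ over $k$ and using Sobolev/interpolation in $t$ (as in (\ref{x2})) then produces the extra $2^{-cm}$. The number-theoretic multiplicity $\#\mathcal{A}_s\cdot\#\mathcal{Y}_s\les 2^{(n+2+\e)s}$ is absorbed by choosing $\mathcal C$ (hence $\e_\circ$) large, just as in (\ref{DD01}); alternatively the $2^{-\gamma_p s}$ from Lemma \ref{de1}(i) absorbs it directly on the square-function side.

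Finally, the sum over $1\le s\le\e_\circ(l)$ and over $m\ge 1$ both converge geometrically because of the gains $2^{-cs}$ and $2^{-cm}$, and the sum over $l$ of $\sum_{C_0\le l<j}$ collapses once we have passed to the supremum over $l$ via (\ref{varsum}); no loss in the scale appears here because $m\ge 1$ gives genuine decay (the $R^\e$-loss in Proposition \ref{892} comes only from the $m=0$ piece, treated separately via Lemma \ref{endle} and handled in the main body of Section \ref{slong3}). I expect the main obstacle to be the bookkeeping in the stationary-phase/Sobolev step for the $\mu$-dependent remainder: one must track the joint decay in $m$ (from the oscillation of $2^mt|y|^{2d}$) and in the frequency parameter $k$ (from $2^j|\xi|\sim 2^k$) while simultaneously carrying the $t$-derivative needed for the Sobolev embedding that removes $\sup_{\mu\in I_{l,m}}$, and to do so uniformly over the $2^{O(s)}$ frequencies $\bb\in\mathcal{Y}_s$ — but all the required ingredients (Van der Corput, Proposition 2.1 of \cite{SW01}, the multi-frequency square function Lemma \ref{ccz}, and Lemma \ref{endle}) are already in place, so this is a matter of reassembling the Section \ref{slong2} argument with $\K_l=K\psi_l$ in the role of the annular kernel.
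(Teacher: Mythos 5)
Your reduction via the disjointness of the $\mathcal{S}_l^m$ ($m\ge 1$) and \eqref{varsum} to a sum over $s$ of suprema over $l$ is correct, and you are right that the $\ell^2$ core is a triple maximal estimate for $L^{(2),s}_{l,\A+\mu}$ that mirrors \eqref{hee0}; this is exactly what the paper does via \eqref{A00}. However, two steps of your plan break down.

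First, the $\ell^p$ endpoint for the $p$-interpolation is not supplied. You say you would ``interpolate with a crude $\ell^p$ bound,'' but the triple maximal estimate \eqref{hee0}/\eqref{A00} is inherently an $\ell^2$ estimate: the frequency-counting step \eqref{DD01} and the Stein--Wainger ingredient \eqref{ASW1} live at $\ell^2$. There is no obvious direct $\ell^p$ maximal bound for $\sup_l\sup_\A\sup_\mu|L^{(2),s}_{l,\A+\mu}(D)f|$ at $p\neq 2$; this is precisely the difficulty that the paper's ``first trick'' (Subsection \ref{diff}) was introduced to address. In the paper, the $p\neq2$ endpoint \eqref{jj2} carries \emph{no} $m$- or $s$-gain and is obtained by rewriting the major-arc piece as $\mb_{l,\la(x)}(D)f - E^{(2)}_{l,\la(x),\e_\circ}(D)f$ via \eqref{jj41}, bounding the full operator through $M_{DHL}$ as in \eqref{jj22}, and invoking the minor-arc estimate \eqref{100} as in \eqref{jj23}. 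Your parenthetical reference to the passage from \eqref{he0}--\eqref{he1} to \eqref{hee0} is a red herring: that is an interpolation in $m$ between two $\ell^2$ estimates, not a $p$-interpolation, so it cannot manufacture the missing $\ell^p$ endpoint.

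Second, the ``$\mu=0$ plus remainder'' Taylor split of $\phi^{(2)}_{l,\mu}$ destroys the $m$-decay you need. For $x\in\mathcal{S}_l^m$ with $m\ge 1$, the amplitude $|\mu(x)|\cdot|y|^{2d}\sim 2^m$ on the support of $K_l$, so the ``remainder'' $\phi^{(2)}_{l,\mu}-\phi^{(2)}_{l,0}$ is $O(1)$, not small; meanwhile $\phi^{(2)}_{l,0}=\widehat{\K_l}$ carries no $m$-dependence at all, and consequently $\sup_l|\ind{\mathcal{S}_l^m}(x)\,\mathscr{L}_{s,\A(x)}[\widehat{\K_l}](D)f(x)|$ gives only $2^{-\gamma_p s}\|f\|_{\ell^p}$ with no $2^{-cm}$ factor, which makes $\sum_{m\ge 1}$ diverge. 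The paper's \eqref{hee0} argument does not Taylor-expand in $\mu$: it partitions the frequency variable $\xi$ into low ($\chi(2^j\xi)$) and high ($\psi_{j,k}$) ranges, applies Van der Corput and Proposition 2.1 of \cite{SW01} to $\phi^{(1)}_{j,\mu}$ (equivalently $\phi^{(2)}_{l,\mu}$) itself to get simultaneous decay in $m$ and $k$, and removes the inner $\sup_\mu$ by a Sobolev/interpolation trick in $t$ as in \eqref{x2}. Relatedly, Lemma \ref{endle} is the wrong tool here: it controls the cumulative partial sums $\widehat{\K^{0,j}}$ and is the key to Lemma \ref{l6.2} (the $m=0$ piece), not to the present $m\ge1$ estimate.
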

\begin{lemma}\label{l6.2}
Let
 $R\in [1,\infty)$,  $r\in(2,\infty)$ and  $p\in (1,\infty)$. For any $\e>0$ and every $\bar{\e}_\circ\in (0,1)$,  we have
\beq\label{s21}
\big\|\big(\sum_{C_0 \le l< j} {\ind {\mathcal{S}_l^0}}(x) \sum_{1\le s\le \bar{\e}_\circ (l)}[L_{l,\la(x),\bar{\e}_\circ}^{(2),s}(D)f](x)\big)_{j\in\N^B}\big\|_{\ell^p(x\in \B_R; V^r)}
\les_\e R^\e \|f\|_{\ell^p(\Z^n)},
\eeq
where $L_{l,\la(x),\bar{\e}_\circ}^{(2),s}$ is given by (\ref{f2}) with ${\e}_\circ=\bar{\e}_\circ$.
\end{lemma}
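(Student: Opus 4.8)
\textbf{Plan for the proof of Lemma \ref{l6.2}.} The target is the "low-frequency" piece of major arcs estimate II, i.e. the contribution of $x\in\mathcal{S}_l^0$ (where $|\mu(x)|\lesssim 2^{-2dl}$, so the oscillatory factor $e(\mu(x)|y|^{2d})$ is essentially harmless on the annulus $|y|\sim 2^l$). The first move is to peel off the $s$-summation: since $\#\mathcal A_s\lesssim 2^{(1+\e)s}$ and the number of relevant $s$ is $\bar\e_\circ(l)=O(\log l)$, if we can prove the estimate for each fixed $s$ with a gain $2^{-\gamma_p s}$ on the right-hand side, summing in $s$ (and crudely in $l$ via the $\log l$ factor absorbed into $R^\e$ or simply into the geometric decay) will close the lemma. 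So I reduce to bounding, for each $s\ge 1$,
$$
\big\|\big(\textstyle\sum_{C_0\le l<j}{\ind{\mathcal S_l^0}}(x)[L^{(2),s}_{l,\la(x),\bar\e_\circ}(D)f](x)\big)_{j\in\N^B}\big\|_{\ell^p(\B_R;V^r)}\lesssim_\e R^\e 2^{-\gamma_p s}\|f\|_{\ell^p},
$$
with constants uniform in the arbitrary $\mathcal A_s$-valued function $\la(x)\mapsto\A(x)$.

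The second move is to use that on $\mathcal S_l^0$ we may Taylor-expand the profile $\phi^{(2)}_{l,\mu(x)}(\xi)=\int_{\R^n}e(\mu(x)|y|^{2d}+y\cdot\xi)K_l(y)\,dy$ exactly as in the proof of Lemma \ref{l5.2}: writing $\phi^{(2)}_{l,\mu(x)}(\xi)=\widehat{\K_l}(\xi)+\sum_{k\ge1}\frac{(2\pi i)^k}{k!}(\mu(x)2^{2dl})^k\varrho_{l,k}(\xi)$ where $\varrho_{l,k}(\xi)=\int e(y\cdot\xi)|y|^{2dk}K_l(y)\,dy$, and noting $|\mu(x)2^{2dl}|\le 2$ on $\mathcal S_l^0$, the tail terms come with a convergent factor $C^k/k!$ and each $\varrho_{l,k}$ obeys the same bounds as $\widehat{\K_l}$ up to $C^k$. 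The leading term is $\mathscr{L}_{s,\A(x)}[\widehat{\K_l}]$, summed (telescoped) over $C_0\le l<j$; since $\sum_{C_0\le l<j}\K_l=\K^{C_0,j}$ and $\K^{0,C_0}$ contributes a single harmless frequency block, $\sum_{C_0\le l<j}\mathscr{L}_{s,\A(x)}[\widehat{\K_l}](D)f=\mathscr{L}_{s,\A(x)}[\widehat{\K^{C_0,j}}](D)f$, which is exactly the object controlled by Lemma \ref{endle}, giving $\lesssim_\e R^\e 2^{-\gamma_p s}\|f\|_{\ell^p}$. For the tail ($k\ge1$), the factor $(\mu(x)2^{2dl})^k$ is $O(1)$ but $l$-dependent, so the telescoping trick is unavailable; instead I bound the $V^r$ norm brutally by the $\ell^1$ norm in $l$ (using \eqref{varsum}) after extracting the $\ell^\infty_l$ bound on $(\mu(x)2^{2dl})^k$, reducing to a single-scale maximal/square-function estimate $\|\sup_{l}\sup_{\A\in\mathcal A_s}|\mathscr{L}_{s,\A}[\varrho_{l,k}](D)f|\|_{\ell^p}\lesssim C^k 2^{-\gamma_p s}\|f\|_{\ell^p}$, which follows from Lemma \ref{ccz} together with \eqref{yiny}, exactly as in \eqref{al2}–\eqref{UU1}; the sum over $k$ converges because of $C^k/k!$.

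\textbf{Main obstacle.} The only genuinely delicate point is the appearance of the scale loss $R^\e$ and making sure it is not compounded: it enters once, through Lemma \ref{endle} (equivalently Lemma \ref{ccz2}, via the use of Proposition \ref{PMST} on the periodization of $\A(x)$ over $\B_R$), and I must be careful that the outer summations in $l$ and $s$ and the inner telescoping do not multiply it. Since the $l$-sum is organized by telescoping (not by triangle inequality) for the main term, and the $s$-sum converges geometrically, the $R^\e$ appears at most once; for the tail terms there is genuine decay in $l$ (from the maximal/square-function estimates producing, e.g., a $2^{-\gamma_p s}$ that is uniform in $l$, combined with the fact that after extracting $\ell^\infty_l$ there is no residual $l$-summation because $\mathcal S_l^0$ overlaps are controlled exactly as in \eqref{le1}). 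A secondary technical chore is checking that $\bar\e_\circ(l)=O(\log l)$ and the passage through the error multipliers $E^{(2)}_{l,\la(x),\bar\e_\circ}$ are already handled by the minor-arcs bound \eqref{minor2}, so that Lemma \ref{l6.2} really is the last missing piece for \eqref{long11}; this is routine given the reductions already set up in Subsection \ref{sslong}.
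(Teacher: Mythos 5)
There is a genuine gap at the leading $k=0$ term of the Taylor expansion, precisely where your telescoping argument is supposed to do the main work.

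After Taylor-expanding $\phi^{(2)}_{l,\mu(x)}$ and restricting to $x\in\mathcal S_l^0$, the $k=0$ piece (for a fixed $s$) is
\[
\sum_{v_s\le l<j}\ind{\mathcal S_l^0}(x)\,\mathscr{L}_{s,\A(x)}[\widehat{K_0}(2^l\cdot)](D)f(x),
\]
not $\sum_{v_s\le l<j}\mathscr{L}_{s,\A(x)}[\widehat{\K_l}](D)f(x)$. The indicator $\ind{\mathcal S_l^0}(x)$ depends on $l$ (it is a $\{0,1\}$-sequence in $l$ that drops from $1$ to $0$ at the scale where $2^{2dl}|\mu(x)|$ crosses $2$), so the Fourier kernels do \emph{not} collapse to $\widehat{\K^{C_0,j}}$, and the partial sums are not the object $S^{\A(x)}_{s,j}f$ of Lemma \ref{endle}. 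You correctly observe that for $k\ge1$ the factor $(\mu(x)2^{2dl})^k\ind{\mathcal S_l^0}(x)$ is $\ell^1$-summable in $l$, which lets you trade the $V^r$ norm for a maximal estimate; but for $k=0$ that coefficient is just $\ind{\mathcal S_l^0}(x)$, which is \emph{not} $\ell^1$ in $l$ (for small $\mu(x)$ it equals $1$ for all $l$ up to a threshold), so the brute $\ell^1$ reduction fails and yet the clean telescoping is also unavailable. Your ``Main obstacle'' paragraph locates the danger in the $R^\e$ bookkeeping, but it is actually here, in the interplay between the $l$-dependent indicator and the $V^r$ seminorm.

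The paper closes this by an Abel transform (summation by parts) on the $l$-sum: with $S^{\A(x)}_{s,l}f=\sum_{0\le l'<l}\mathscr{L}_{s,\A(x)}[\widehat{K_0}(2^{l'}\cdot)](D)f$, one rewrites the $k=0$ sum as a boundary term $\LL^{(1)}_{s,\A(x)}$, a product term $\LL^{(2)}_{s,j,\A(x)}=\ind{(x,s)\in\mathscr U_{j-1}}\,S^{\A(x)}_{s,j}f$, and a term $\LL^{(3)}_{s,j,\A(x)}$ involving the first differences $\ind{(x,s)\in\mathscr U_{l-1}}-\ind{(x,s)\in\mathscr U_{l}}$. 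One then needs the algebra property \eqref{var1} of the $\V^r$ norm to bound $\LL^{(2)}$ (since $\|(\ind{(x,s)\in\mathscr U_{j-1}})_j\|_{\V^r}\lesssim1$ because the indicator sequence has bounded variation in $j$), Lemma \ref{endle} to control the $\V^r$ norm of $(S^{\A(x)}_{s,j}f)_j$, and the fact that $\sum_l|\ind{(x,s)\in\mathscr U_{l-1}}-\ind{(x,s)\in\mathscr U_{l}}|\lesssim1$ to control $\LL^{(3)}$ via the maximal estimate \eqref{edou12}. None of this is in your sketch; without it the step ``which is exactly the object controlled by Lemma \ref{endle}'' is simply not true, and the $k=0$ contribution is uncontrolled.

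Two small secondary remarks. First, the citation of \eqref{le1} for ``$\mathcal S_l^0$ overlaps'' is not applicable: \eqref{le1} gives disjointness in $l$ only for $m\ge1$, while the $\mathcal S_l^0$ are nested (decreasing) in $l$, not disjoint; this does not matter for your $k\ge1$ treatment (which really uses $\sum_l(2^{2dl}|\mu(x)|)^k\ind{\mathcal S_l^0}(x)\lesssim1$), but the attribution is wrong. Second, the statement that the minor-arcs estimate \eqref{minor2} ``handles the passage through the error multipliers $E^{(2)}$'' is correct but irrelevant to the proof of Lemma \ref{l6.2} itself; the lemma's hypotheses already have the $E^{(2)}$ removed, so this is part of the reduction preceding the lemma, not part of its proof.
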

 Keep  (\ref{dc1}) and (\ref{le1}) in mind. By  the equality in (\ref{le1}) with $j$ replaced by $l$,  Proposition \ref{892} is a direct consequence of
  the above two lemmas.
In the remainder of this section, we shall prove  Lemma \ref{l6.1} and Lemma \ref{l6.2} in order.
\subsection{Proof of Lemma \ref{l6.1}}
\label{sgd1}
To prove (\ref{A11}),  by interpolation,  it suffices to show the following: for every $\e_\circ'\in (0,1)$,
\beq\label{jj1}
\big\|\big(\sum_{C_0 \le l< j} {\ind {\mathcal{S}_l^m}}(x) \sum_{1\le s\le \e_\circ' (l)}[L_{l,\la(x),\e_\circ'}^{(2),s}(D)f](x)\big)_{j\in\N^B}\big\|_{\ell^2(x\in\Z^n; V^r)}
\les 2^{-c m}\|f\|_{\ell^2(\Z^n)}
\eeq
for some $c>0$;
and  for every  $p\in(1,\infty)$,
\beq\label{jj2}
\big\|\big(\sum_{C_0 \le l< j} {\ind {\mathcal{S}_l^m}}(x) \sum_{1\le s\le \e_\circ (l)}[L_{l,\la(x),\e_\circ}^{(2),s}(D)f](x)\big)_{j\in\N^B}\big\|_{\ell^p(x\in\Z^n; V^r)}
\les \|f\|_{\ell^p(\Z^n)},
\eeq
where $\e_\circ=\e_\circ(p_1,p_2,\mathcal{C})$.
We first  prove  (\ref{jj1}).
Define
\beq\label{not2}
v_s=v_s(\e_\circ'):=\max\{C_0, 2^{{\lfloor 1/\e_\circ' \rfloor}^{-1}s}\}.
\eeq
By Minkowski's inequality,
it is easy to see that
  (\ref{jj1})  follows from
\beq\label{jk1}
\big\|\big(\sum_{v_s \le l\le j} {\ind {\mathcal{S}_l^m}}(x)
{\ind {\{1\le s\le \e_\circ' (l)\}}}~
 [L_{l,\la(x),\e_\circ'}^{(2),s}(D)f](x)\big)_{j\in\N^B}\big\|_{\ell^2(x\in\Z^n; V^1)}
\les 2^{-c(s+m)}\|f\|_{\ell^2(\Z^n)}.
\eeq
%It thus  remains to prove (\ref{jk1}).
By
 the inequality in (\ref{le1}) (with $j=l$),    the left-hand side of (\ref{jk1}) is
 \beq\label{A01}
\les  \|\sup_{l\ge v_s}\sup_{\A\in\mathcal{A}_s}\sup_{\mu\in I_{l,m}}
|L_{l,\A+\mu,\e_\circ'}^{(2),s}(D)f|\|_{\ell^2(\Z^n)}.
 \eeq
 In addition,
 by performing the arguments yielding (\ref{hee0}),  we may infer
 \beq\label{A00}
\|\sup_{l\ge v_s}\sup_{\A\in\mathcal{A}_s}\sup_{\mu\in I_{l,m}}
|L_{l,\A+\mu,\e_\circ'}^{(2),s}(D)f|\|_{\ell^2(\Z^n)}\les 2^{-c(s+m)}\|f\|_{\ell^2(\Z^n)}.
\eeq
As a result, the desired (\ref{jk1}) follows by combining  (\ref{A00}) with (\ref{A01}).
This finishes the proof of (\ref{jj1}).

For the proof of   (\ref{jj2}),
it suffices to show  that for every $ p\in(1,\infty)$,
\beq\label{jj21}
\big\|{\ind {\mathcal{S}_l^m}}(x) \sum_{1\le s\le \e_\circ (l)}[L_{l,\la(x),\e_\circ}^{(2),s}(D)f](x)\big\|_{\ell^p(x\in\Z^n; \ell^1(l\in\N^B))}
\les \|f\|_{\ell^p(\Z^n)}.
\eeq
We next utilize  the first trick mentioned  in Subsection \ref{diff}. 
Using  (\ref{ds1}), %and denoting $\Lambda_{l,\la,m}:={\Lambda_{l,\la}}\cap {\mathcal{S}_l^m}$,  
we have
\beq\label{jj41}
{\ind {\mathcal{S}_l^m}}(x) \sum_{1\le s\le \e_\circ (l)}L_{l,\la(x),\e_\circ}^{(2),s}(\xi)= {\ind {\Lambda_{l,\e_\circ,\la,m}}}(x) \mb_{l,\la(x)}(\xi)-   {\ind {\Lambda_{l,\e_\circ,\la,m}}}(x)  E^{(2)}_{l,\la(x),\e_\circ}(\xi),
\eeq
where the set $\Lambda_{l,\e_\circ,\la,m}$ is given by (\ref{NNM1}) with $j=l$.
By  using the inequality in (\ref{le1}),  similar   arguments as  yielding (\ref{bc1}), 
and the estimate  (\ref{100}),
we obtain that  for every $p\in(1,\infty)$,
\begin{align}
\big\|{\ind {\mathcal{S}_l^m}}(x) \big(\mb_{l,\la(x)}(D)f\big)(x)\big\|_{\ell^p(x\in\Z^n; \ell^1(l\in\N^B))}
\les&\  \|\sup_{l\in\N^B}\sup_{\la\in [0,1]}|\mb_{l,\la}(D)f|\|_{\ell^p(\Z^n)}\les  \|f\|_{\ell^p(\Z^n)},
\label{jj22}\\
\big\|{\ind {\Lambda_{l,\e_\circ,\la,m}}}(x)  \big(E^{(2)}_{l,\la(x),\e_\circ}(D)f\big)(x)\big\|_{\ell^p(x\in\Z^n; \ell^1(l\in\N^B))}
\les&\   \sum_{l\in\N^B}\|\sup_{\la\in X_{l,\e_\circ}}|E^{(2)}_{l,\la,\e_\circ}(D)f|\|_{\ell^p(\Z^n)}
\les \|f\|_{\ell^p(\Z^n)}.\label{jj23}
\end{align}
Finally, the desired (\ref{jj21}) follows from the combination of
 (\ref{jj41}), (\ref{jj22}) and (\ref{jj23}).

\subsection{Proof of Lemma \ref{l6.2}}
\label{sgd2}
Since the value of $\bar{\e}_\circ$ is not crucial, we omit it from the notation when it doesn’t impact the clarity of the context. In addition, since the value of $\ka$
is not important, we will use the notation (\ref{not32}).
Recall $\phi_{l,\mu(x)}^{(2)}(\xi)=\int_{\R^n} e\big(\mu(x)|y|^{2d}+y\cdot\xi\big)~{K}_l(y) dy.$
%We first have
%$$\LL_{j,\la(x),M}^s (\xi)=\mathscr{L}_{s,\A,M}[V_{j,\la(x)-\A,M}^*](\xi)$$
Taylor expansion gives
$$
\begin{aligned}
\phi^{(2)}_{j,\mu(x)}(\xi)=&\ \sum_{k=0}^\infty  \frac{ (2\pi i )^k}{k!}  (2^{2dl}\mu(x))^k
\int
e(y\cdot 2^l\xi) |y|^{2dk} K_0(y)dy\\
=&\ \sum_{k=0}^\infty  \frac{ (2\pi i )^k}{k!}  (2^{2dl}\mu(x))^k \widehat{K_{0,k}}(2^l\xi),
\end{aligned}
$$
where $K_{0,k}(y)=|y|^{2dk} K_0(y)$. Let  $$\phi^{\circ,k}(\frac{\mu(x)}{2^{-2dl}}):=
{\ind {\mathcal{S}_l^0}}(x)(2^{2dl}\mu(x))^k\ \ \ (k\in\N_0),\ \ \ {\rm and}\  \ \ \mathscr{U}_l':=[1, \bar{\e}_\circ (l)]\cap \Z.$$
To achieve  (\ref{s21}),  it suffices to show that there exists a constant  $c_p>0$ such that for every $k\ge 0$,
\beq\label{aas221}
\|\big(\sum_{v_s \le l< j}
\phi^{\circ,k}(\frac{\mu(x)}{2^{-2dl}}) {\ind {{s\in \mathscr{U}_l'}}}~
\mathscr{L}_{s,\A} [\widehat{K_{0,k}}(2^l\cdot)](D)f
\big)_{j\in\N^B}\|_{\ell^p(x\in \B_R;\V^r)}
\les_\e  C^k 2^{-c_ps } R^\e\|f\|_{{\ell^p}(\Z^n)}.
\eeq
For the case $k\ge 1$,
by  $\sum_{l\in\Z}|\phi^{\circ,k}(\frac{\mu(x)}{2^{-2dl}})|\les1$,  the  $\V^r$ norm on the left-hand side of (\ref{aas221}) is
\beq\label{LP1}
\les \sum_{l\ge v_s}
|\phi^{\circ,k}(\frac{\mu(x)}{2^{-2dl}})|
\sup_{\A\in \mathcal{A}_s}|\mathscr{L}_{s,\A} [\widehat{K_{0,k}}(2^l\cdot)](D)f|\les \sup_{l\ge v_s} \sup_{\A\in \mathcal{A}_s}|\mathscr{L}_{s,\A} [\widehat{K_{0,k}}(2^l\cdot)](D)f|.
\eeq
 Hence,  to show (\ref{aas221}), by (\ref{LP1}) and
 the equality in (\ref{le1}), it suffices to prove
\begin{align}
\|\sup_{ l\in \N^B}\sup_{\A\in\mathcal{A}_s}|
\mathscr{L}_{s,\A} [\widehat{K_{0,k}}(2^l\cdot)](D)f|
\|_{\ell^p(\Z^n)}
&\les\  C^k 2^{-c_p s }\|f\|_{{\ell^p(\Z^n)}}\ \ (k\ge 1)~~{\rm and}\label{A1}\\
\|\big(\sum_{v_s  \le l< j}
{\ind {(x,s)\in \mathscr{U}_l }}
\mathscr{L}_{s,\A} [\widehat{K_{0}}(2^l\cdot)](D)f
\big)_{j\in\N^B}\|_{\ell^p(x\in \B_R;\V^r)}
&\les_\e   2^{-c_ps } R^\e\|f\|_{{\ell^p(\Z^n)}},\label{A2}
\end{align}
where $ \mathscr{U}_l :=\mathcal{S}_l^0\times  \mathscr{U}_l'$.
%where $K_{0}=K_{0,0}$ is used.
We next prove (\ref{A1}) and (\ref{A2}) in order.
%In fact, the proof of (\ref{A2}) is more complex than that of (\ref{A1}).
\subsubsection{Proof of (\ref{A1}) }
Using the partition of unity $\chi(2^l \xi)+\sum_{J\ge1}\psi(2^{l-J}\xi)=1$, we have
$$
\begin{aligned}
\widehat{K_{0,k}}(2^l\xi)
&=\chi(2^{l}\xi)+\m^{(1)}_{l,k}(\xi)+\sum_{J\ge 1}\m^{(2),J}_{l,k}(\xi),
\end{aligned}$$
where
$$
\m^{(1)}_{l,k}(\xi):=\big(\widehat{K_{0,k}}
(2^l\xi)-1\big)\chi(2^{l}\xi),\ \ \
\m^{(2),J}_{l,k}(\xi):=
\widehat{K_{0,k}}(2^l\xi)\psi(2^{l-J}\xi).
$$
By  (\ref{5d3}),  to prove  (\ref{A1}), it suffices to prove that there is a constant  $c_p>0$ such that for each $k\ge 1$,
\begin{align}
\|\big(\sum_{ l\ge v_s}\sup_{\A\in\mathcal{A}_s}|
\mathscr{L}_{s,\A} [\m^{(1)}_{l,k}](D)f|\big)^{1/2}
\|_{\ell^p(\Z^n)}
\les&\  C^k 2^{-c_ps }\|f\|_{{\ell^p(\Z^n)}}\ \ \ {\rm and}\label{F1}\\
\|\big(\sum_{ l\ge v_s}\sup_{\A\in\mathcal{A}_s}|
\mathscr{L}_{s,\A} [\m^{(2),J}_{l,k}](D)f|^2\big)^{1/2}
\|_{\ell^p(\Z^n)}
\les&\  C^k 2^{-c_pJ}2^{-c_ps }\|f\|_{{\ell^p(\Z^n)}}.\label{F2}
\end{align}
We shall use Lemma \ref{ccz} to prove  (\ref{F1}) and (\ref{F2}).
For $x\in\R^n$,  we have
$|\m^{(1)}_{l,k}(D)f|(x)+|\m^{(2),J}_{l,k}(D)f|(x)\les C^k M_{HL}f(x),$ and then
  $\m^{(1)}_{l,k}$ and $\m^{(2),J}_{l,k}$  satisfy some  vector-valued inequalities  like (\ref{X3}) by the Fefferman-Stein inequality. Moreover,  for $\xi\in\R^n$, we can infer
   $|\m^{(1)}_{l,k}(\xi)|\les C^k \min\{2^l|\xi|,(2^l|\xi|)^{-1}\}$  and
  $|\m^{(2),J}_{l,k}(\xi)|\les C^k \min\{1,(2^{l}|\xi|)^{-1}\}\les C^k 2^{-J/2}\min\{(2^{l}|\xi|)^{1/2},(2^{l}|\xi|)^{-1/2}\}$ (since $2^l|\xi|\sim 2^J$ at this moment).
Therefore,    we can achieve  (\ref{F1}) and (\ref{F2}) by Lemma \ref{ccz}.
\subsubsection{Proof of (\ref{A2}) }
%Next, we consider  (\ref{A2}).
Let $\A(x)$ be an arbitrary function from $\Z^n$ to $\mathcal{A}_s$,
%Since $x\in \B_R$,
 %we can assume that
 %we can assume that   $\la(x)$ is   a $2R$-periodic (in every coordinate) function.  Then $\mu(x)$  is  a $2R$-periodic (in every coordinate) function as well (since the left-hand side  of  (\ref{A2}) equals 0 otherwise). Thus $\A=\la(x)-\mu(x)=:\A(x)$ is $2R$-periodic in every coordinate.
 %Without loss the generality,
 %we assume there is a unique $\A=\A(x)\in \mathcal{A}_s$ such that  $\mu(x)$ is also $2R$-periodic
%$|\mu(x)|\le 2^{-2s}$ since the left-hand side  of  (\ref{A2}) equals 0 otherwise. Then
%Since  $\la(x)$  is a $2R$ periodic function,
% we can see
%$\mu(x)$ is also $2R$-periodic (in every coordinate).  Thus $\A=\la(x)+\A-\la(x):=\A(x)\in \mathcal{A}_s$ is $2R$-periodic (in every coordinate).
  and denote
\beq\label{nota2}
S_{s,j}^{\A(x)} f(x):=\sum_{0\le l< j}\mathscr{L}_{s,\A(x)} [\widehat{K_{0}}(2^l\cdot)](D)f(x)\ \ \  (j\in\N,\   x\in\Z^n).
\eeq
Lemma \ref{endle} gives that
for each  $s\in\N$,  every  $R\ge 1$ and any $\e>0$,
\beq\label{edou1}
\| \big(S_{s,j}^{\A(x)} f\big)_{j\in\N}|\|_{\ell^p(x\in\B_R;\V^r)}
\les_\e R^\e 2^{-\gamma_p s}\|f\|_{\ell^p(\Z^n)}\ \  (1<p<\infty),
\eeq
where  $\gamma_p$ is  given as in (\ref{de11}).
This with (\ref{Ad1}) and linearization  gives
\beq\label{edou12}
\| \sup_{j\in\N^B}\sup_{\A\in \mathcal{A}_s} |S_{s,j}^\A f|\|_{\ell^p(\B_R)}
\les_\e R^\e 2^{-\gamma_p s}\|f\|_{\ell^p(\Z^n)}.\footnote{Although Lemma 4.4 in \cite{KR23} can be employed to eliminate the $R^\epsilon$-loss in (\ref{edou12}),  (\ref{edou12}) suffices for our specific application.
}
\eeq
We will use (\ref{edou1}) and (\ref{edou12})  to prove (\ref{A2}).
%We first show (\ref{b12}).
By utilizing the Abel transform and (\ref{nota2}), we write the sum over $l$ on the left-hand side of (\ref{A2}) ($\A=\A(x)$) as
$$
\begin{aligned}
&\ \sum_{v_s+1\le l< j+1} {\ind {(x,s)\in \mathscr{U}_{l-1} }}
 S_{s,l}^{\A(x)} f(x)-\sum_{v_s\le l< j}
{\ind {(x,s)\in \mathscr{U}_{l} }}S_{s,l}^{\A(x)} f(x)\\
=&\ \LL_{s,{\A(x)}}^{(1)}(x)+\LL_{s,j,{\A(x)}}^{(2)}(x)+\LL_{s,j,{\A(x)}}^{(3)}(x),
\end{aligned}
$$
where $\LL_{s,{\A(x)}}^{(1)}(x)$ and $\LL_{s,j,{\A(x)}}^{(v)}(x) (v=2,3)$ are given by
$$
\begin{aligned}
\LL_{s,{\A(x)}}^{(1)}(x):=&\ - {\ind {(x,s)\in \mathscr{U}_{v_s}}} ~S_{s,v_s}^{\A(x)} f(x),\\
\LL_{s,j,{\A(x)}}^{(2)}(x):=&\
{\ind {(x,s)\in \mathscr{U}_{j-1}}} ~S_{s,j}^{\A(x)} f(x),\\
\LL_{s,j,{\A(x)}}^{(3)}(x):=&\ {\ind{j\ge v_s+2}}\sum_{v_s+1\le l<j}
\big({\ind {(x,s)\in \mathscr{U}_{l-1} }}-{\ind {(x,s)\in \mathscr{U}_{l}}}\big)~ S_{s,l}^{\A(x)} f(x).
\end{aligned}
$$
%Note that the first term $\LL_{s,\A}^{(1)}(x)$ for (\ref{A2})    can be neglected
Since $\LL_{s,\A}^{(1)}(x)$ does not depend on $j$,  by  (\ref{edou12}), we have
$$\|\big(\LL_{s,\A(x)}^{(1)}(x)
\big)_{j\in\N^B}\|_{\ell^p(x\in \B_R;\V^r)}\les \| \sup_{j\in\N^B}\sup_{\A\in \mathcal{A}_s} |S_{s,j}^\A f|\|_{\ell^p(\B_R)}\les_\e R^\e 2^{-\gamma_p s}\|f\|_{\ell^p(\Z^n)}.
$$
Thus, to prove (\ref{A2}), it suffices to show  that
\beq\label{Ar1}
\|\big(\LL_{s,j,\A(x)}^{(v)}(x)
\big)_{j\in\N^B}\|_{\ell^p(x\in \B_R;\V^r)}
\les_\e   2^{-c_ps } R^\e\|f\|_{{\ell^p}}\ \ \ (v=2,3).
\eeq
 %for  $2R$-periodic function $\A(x)\in \mathcal{A}_s$.
For the case $v=2$,  using   $\|({\ind {(x,s)\in \mathscr{U}_{j-1} }})_{j\in\N^B}\|_{\V^r} \les1$ and (\ref{var1}),   we deduce
$$\|\big(\LL_{s,j,\A(x)}^{(2)}(x)
\big)_{j\in\N^B}\|_{\V^r}\les \|({\ind {(x,s)\in \mathscr{U}_{j-1} }})_{j\in\N^B}\|_{\V^r} \|  (S_{s,j}^{\A(x)} f(x))_{j\in\N^B} \|_{\V^r}
\les \|  (S_{s,j}^{\A(x)} f(x))_{j\in\N^B} \|_{\V^r},
$$
which with (\ref{edou1}) completes the proof of  (\ref{Ar1})  for the case $v=2$.  As for the case $v=3$, we only need to use (\ref{edou12}). Using  $\sum_{l\in \N^B}
|{\ind {(x,s)\in \mathscr{U}_{l-1} }}-{\ind {(x,s)\in \mathscr{U}_l }}| \les 1$,  we have
$$
\|\big(\LL_{s,j,\A(x)}^{(3)}(x)
\big)_{j\in\N^B}\|_{\ell^p(x\in \B_R;\V^r)}
\les \|\big(\LL_{s,j,\A(x)}^{(3)}(x)
\big)_{j\ge v_s+2}\|_{\ell^p(x\in \B_R;\V^r)}
\les \| \sup_{l\in\N^B}\sup_{\A\in \mathcal{A}_s} |S_{s,l}^\A f|\|_{\ell^p(\B_R)}
$$
which with (\ref{edou12}) yields (\ref{Ar1}) for the case $v=3$.

%Short
\section{Major arcs estimate III: Proof of Proposition  \ref{t31}}
\label{slong4}
In this section, we shall prove major arcs estimate III by using the Plancherel-P\'{o}lya inequality,  the Stein-Wainger-type estimate, the  multi-frequency square function estimate in Lemma \ref{ccz} and  the shifted square  function estimate in  Appendix \ref{app}. In particular, since here the kernel is  rough and variable-dependent,
the method  by  Mirek-Stein-Trojan \cite{MST17} strongly depending   on the numerical inequality (\ref{k01}),  does not work any more.
\subsection{Reduction of Proposition  \ref{t31}}
  Write
$$\phi_{2^j,N,\mu(x)}^{(3)}(\xi)=\chi(2^j\xi)\phi_{2^j,N,\mu(x)}^{(5)}+\phi_{2^j,N,\mu(x)}^{(4)}(\xi)\chi(2^j\xi)+\phi_{2^j,N,\mu(x)}^{(3)}(\xi)\chi^\cc(2^j\xi),$$
where $\chi^\cc:=1-\chi$,
 $$
 \begin{aligned}
 \phi_{2^j,N,\mu(x)}^{(4)}(\xi):=&\ \int_{2^j\le |y|\le N}e(\mu(x)|y|^{2d})\big(e(y\cdot\xi)-1\big)K(y) dy\ \ {\rm and}\\
\phi_{2^j,N,\mu(x)}^{(5)}:=&\ \int_{2^j\le |y|\le N}e(\mu(x)|y|^{2d})K(y) dy.
\end{aligned}
$$
  Note that $\phi_{2^j,N,\mu(x)}^{(5)}$ is independent of  the variable $\xi$. Remember the notation (\ref{noe1}). Since $\phi_{2^j,N,\mu(x)}^{(4)}(0)=0$, we  can use the sum $\phi_{2^j,N,\mu(x)}^{(4)}(\xi)\sum_{k\le 0}\psi_{j,k}(\xi)$ to replace
 $\phi_{2^j,N,\mu(x)}^{(4)}(\xi)\chi(2^j\xi)$. This with
%,  and  $\chi(2^j\xi)=\sum_{k\le 0}\psi(2^{j-k}\xi)$ for all $\xi\neq 0$,
  $\chi^c(\xi)=\sum_{k\ge 1}\psi(2^{-k}\xi)$ yields
$$
\begin{aligned}
\phi_{2^j,N,\mu(x)}^{(3)}(\xi)
=&\ \chi(2^j\xi)\phi_{2^j,N,\mu(x)}^{(5)}+\phi_{2^j,N,\mu(x)}^{(4)}(\xi)\sum_{k\le 0}\psi_{j,k}(\xi)+\phi_{2^j,N,\mu(x)}^{(3)}(\xi)\chi^\cc(2^j\xi).
\end{aligned}
$$
%decompose  $\phi_{2^j,N,\mu(x)}^{(3)}$ as
%$$\phi_{2^j,N,\mu(x)}^{(3)}(\xi)=\phi_{2^j,N,\mu(x)}^{(3)}(\xi) \rho(2^j\xi)+\phi_{2^j,N,\mu(x)}^{(3)}(\xi) \rho^\cc(2^j\xi),$$
%where  $\rho^\cc=1-\rho$.
%We further write  $\phi_{2^j,N,\mu(x)}^{(3)}$ as
%$$
%\begin{aligned}
%\phi_{2^j,N,\mu(x)}^{(3)}(\xi):=&\ \rho(2^j\xi)\phi_{2^j,N,\mu(x)}^{(5)}+\phi_{2^j,N,\mu(x)}^{(4)}(\xi)\rho(2^j\xi)+\phi_{2^j,N,\mu(x)}^{(3)}(\xi)\rho^\cc(2^j\xi)\\
%=&\ \rho(2^j\xi)\phi_{2^j,N,\mu(x)}^{(5)}+\phi_{2^j,N,\mu(x)}^{(4)}(\xi)\sum_{k\le 0}\psi(2^{j-k}\xi)+\phi_{2^j,N,\mu(x)}^{(3)}(\xi)\rho^\cc(2^j\xi),
%\end{aligned}
%$$
%where we have used $\phi_{2^j,N,\mu(x)}^{(4)}(0)=0$ and $\rho(2^j\xi)=\sum_{k\le 0}\psi(2^{j-k}\xi)$ for all $\xi\neq 0$.
%This equality is in the sense of distribution.
By the triangle inequality, we can reduce the proof of
 Proposition  \ref{t31} to showing  the following lemmas.
\begin{lemma}\label{l761}
For every $p\in (1,\infty)$ and every $\tilde{\e}_\circ\in (0,1)$, we have
\beq\label{O0}
\| \sum_{j\in \N^B}\|\big(\sum_{1\le s \le \tilde{\e}_\circ  (j)}[\mathcal{G}_{j,N,\la(x),\tilde{\e}_\circ}^{(5),s}(D)f](x) \big)_{N\in [2^j,2^{j+1})}\|_{V^2}^2\big)^{1/2}\|_{\ell^p(\Z^n)}
\les \|f\|_{\ell^p(\Z^n)}.
\eeq
where  $[\mathcal{G}_{j,N,\la(x),\tilde{\e}_\circ}^{(5),s}(D)f](x):=
\phi_{2^j,N,\mu(x)}^{(5)}\times\mathscr{L}_{s,\A,\kappa}[\chi(2^j\cdot)
](D)f(x).$
%where $\phi(2^j\xi):=\sum_{k\le 0}\psi(2^{j-k}\xi)$.
\end{lemma}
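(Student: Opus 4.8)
\textbf{Proof proposal for Lemma \ref{l761}.}

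The plan is to exploit the decisive structural feature of $\mathcal{G}_{j,N,\la(x),\tilde{\e}_\circ}^{(5),s}$: the operator $\mathscr{L}_{s,\A,\ka}[\chi(2^j\cdot)](D)f(x)$ does \emph{not} depend on $N$, and the scalar $\phi_{2^j,N,\mu(x)}^{(5)}=\int_{2^j\le|y|\le N}e(\mu(x)|y|^{2d})K(y)\,dy$ depends on $N$ only through the radial truncation. Consequently, for fixed $j$ and $x$, the sequence $\big(\mathcal{G}_{j,N,\la(x),\tilde{\e}_\circ}^{(5),s}(D)f(x)\big)_{N\in[2^j,2^{j+1})}$ is a single function of $x$ multiplied by a scalar-valued sequence in $N$, so by the algebra property \eqref{var1} its $V^2$-seminorm (in fact $\V^r$-norm) factors:
$$
\big\|\big(\mathcal{G}_{j,N,\la(x),\tilde{\e}_\circ}^{(5),s}(D)f(x)\big)_{N\in[2^j,2^{j+1})}\big\|_{V^2}
\lesssim \big\|\big(\phi_{2^j,N,\mu(x)}^{(5)}\big)_{N\in[2^j,2^{j+1})}\big\|_{\V^r}\,\big|\mathscr{L}_{s,\A,\ka}[\chi(2^j\cdot)](D)f(x)\big|.
$$
The first step is therefore to obtain a \emph{uniform} bound for the scalar variation $\big\|\big(\phi_{2^j,N,\mu(x)}^{(5)}\big)_{N\in[2^j,2^{j+1})}\big\|_{\V^r}$. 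By \eqref{rou1} this is controlled by $\int_{2^j}^{2^{j+1}}\big|\partial_N\phi_{2^j,N,\mu(x)}^{(5)}\big|\,dN$, and $\partial_N\phi_{2^j,N,\mu(x)}^{(5)}$ is an integral over the sphere $|y|=N$ of $e(\mu(x)|y|^{2d})K(y)$; since $|K(y)|\lesssim N^{-n}$ on $|y|\sim 2^j$ and the sphere has surface measure $\sim N^{n-1}$, this integrand contributes $\lesssim N^{-1}$, giving a uniform bound $O(1)$ independent of $x$, $j$, $s$ (the mean-value-zero of $K$ is not even needed here, only the size bound). Actually one can do slightly better using the oscillation in $\mu(x)|y|^{2d}$ and $\int_{\mathbb{S}^{n-1}}\Omega\,d\sigma=0$, but the crude $O(1)$ bound already suffices.

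With the scalar factor dispatched, the second step reduces the whole estimate to a square-function bound for the $j$-indexed family: it remains to show
$$
\Big\|\Big(\sum_{j\in\N^B}\sup_{\A\in\mathcal{A}_s}\big|\mathscr{L}_{s,\A}[\chi(2^j\cdot)](D)f\big|^2\Big)^{1/2}\Big\|_{\ell^p(\Z^n)}\lesssim 2^{-c_p s}\|f\|_{\ell^p(\Z^n)},
$$
summably in $s$ over $1\le s\le\tilde{\e}_\circ(j)$ — but in fact, since the left side after summing the $V^2$-factorization over $s$ picks up at most $\tilde{\e}_\circ(j)\lesssim (\log\langle j\rangle)^{O(1)}$ copies, and $2^{-c_ps}$ is summable in $s$, the $s$-sum is harmless and it suffices to prove the displayed $2^{-c_ps}$ bound for each fixed $s$. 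This is exactly of the form handled by Lemma \ref{ccz} with $\mathfrak{M}_j(\xi)=\chi(2^j\xi)$ — but $\chi(2^j\xi)$ does not satisfy the decay $|2^j\xi|^{-\gamma}$ required in \eqref{X2} as $|\xi|\to 0$. To fix this I would split $\chi(2^j\xi)=(\chi(2^j\xi)-\widehat{\theta_j}(\xi))+\widehat{\theta_j}(\xi)$ with $\theta$ as in Lemma \ref{de1}, exactly as done in the proof of \eqref{ge1} (see the derivation of \eqref{5d3}): the difference $\chi(2^j\cdot)-\widehat{\theta_j}$ satisfies both the two-sided bound $\min\{2^j|\xi|,(2^j|\xi|)^{-1}\}$ and the Fefferman--Stein vector-valued inequality (its kernel is dominated by $M_{HL}$), so Lemma \ref{ccz} applies and yields the $2^{-\gamma_p s}$ bound; the term $\widehat{\theta_j}$ is handled by \eqref{yiny} of Lemma \ref{de1}, which directly gives the supremum bound $2^{-\gamma_p s}$ (and then the square function over $j$ follows since $\widehat{\theta_j}$ is itself a smooth truncation, or one simply absorbs it into the same Lemma \ref{ccz} framework). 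Combining these two pieces gives \eqref{5d3}-type control and hence \eqref{O0}.

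The main obstacle I anticipate is not any single step but making sure the factorization in Step 1 is legitimate: one must check that $\phi_{2^j,N,\mu(x)}^{(5)}$, as a \emph{multiplier} (constant in $\xi$, hence really just multiplication by a function of $x$), commutes correctly with the Fourier-multiplier operator $\mathscr{L}_{s,\A,\ka}[\chi(2^j\cdot)](D)$ — it does, precisely because $\phi^{(5)}$ is $\xi$-independent, so $[\mathcal{G}^{(5),s}_{j,N,\la(x),\tilde\e_\circ}(D)f](x)$ is literally the pointwise product $\phi^{(5)}_{2^j,N,\mu(x)}\cdot(\mathscr{L}_{s,\A,\ka}[\chi(2^j\cdot)](D)f)(x)$ with $\A=\A(x)$, and the $N$-variation acts only on the first factor. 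Once that is clear, everything else is a routine assembly of \eqref{rou1}, \eqref{var1}, Lemma \ref{ccz}, and \eqref{yiny}, with the $s$-summation absorbed by the exponential gain $2^{-c_ps}$ and the $\tilde\e_\circ(j)$ factor being of polylogarithmic size in $j$, hence dominated.
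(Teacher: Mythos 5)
There is a genuine gap in Step 2, caused by a wrong claim in Step 1. You say the crude $O(1)$ bound on the scalar variation $\|(\phi^{(5)}_{2^j,N,\mu(x)})_{N}\|_{V^2}$ suffices and that the mean-value-zero of $\Omega$ is "not even needed here." But if the scalar factor is only $O(1)$ uniformly in $j$, your factorization reduces the lemma to the square-function-over-$j$ estimate
\[
\Big\|\Big(\sum_{j}\sup_{\A\in\mathcal{A}_s}\big|\mathscr{L}_{s,\A}[\chi(2^j\cdot)](D)f\big|^2\Big)^{1/2}\Big\|_{\ell^p}\lesssim 2^{-c_p s}\|f\|_{\ell^p},
\]
and this estimate is \emph{false}: since $\chi(0)=1$, the multiplier $\chi(2^j\xi)$ is essentially $1$ for all $j$ with $2^j|\xi|\lesssim 1$, so $\sum_j|\chi(2^j\xi)|^2$ diverges pointwise, and the square function of $\mathscr{L}_{s,\A}[\chi(2^j\cdot)](D)$ is unbounded even on $\ell^2$. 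Your proposed fix by splitting $\chi(2^j\cdot)=(\chi(2^j\cdot)-\widehat{\theta_j})+\widehat{\theta_j}$ repairs only the difference term; the leftover piece $\widehat{\theta_j}$ has $\widehat{\theta_j}(\xi)\to 1$ as $\xi\to 0$ and exhibits the same divergence of its $j$-square-function, so \eqref{yiny} — which is a \emph{maximal} bound, not a square-function bound — cannot be "absorbed into the Lemma \ref{ccz} framework" as you assert. Inequality \eqref{5d3} in the paper is likewise a supremum estimate, not a square function estimate.

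The missing idea is precisely what you waved away: the scalar factor must supply the $\ell^2$-summability in $j$, and for that the structure of $\phi^{(5)}$ is essential. Recall the decomposition indexed by $m$ via $\mathcal{S}_j^m$ in \eqref{dc1}. On $\mathcal{S}_j^0$ (small phase $|\mu(x)|2^{2dj}\lesssim 1$), one Taylor-expands $e(\mu(x)|y|^{2d})$ and uses $\int_{\mathbb{S}^{n-1}}\Omega\,d\sigma=0$ to kill the $l=0$ term, producing the factor $(|\mu(x)|2^{2dj})^l$ with $l\ge1$; the sum $\sum_j \mathbf{1}_{\mathcal{S}_j^0}(x)(|\mu(x)|2^{2dj})^l$ is then $O(1)$, which converts the $\ell^2$-sum over $j$ into a \emph{supremum} over $j$ so that \eqref{5d3} applies. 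On $\mathcal{S}_j^m$ with $m\ge1$, integration by parts in the radial variable yields the extra decay $2^{-m}$ in the scalar factor, while the inequality in \eqref{le1} ($\sum_j\mathbf{1}_{\mathcal{S}_j^m}(x)\le 1$) localizes the $j$-sum to a single term; the $m$-decay then sums. Your factorization step is correct and matches the paper's \eqref{7L90}, and your identification of \eqref{5d3} (via the $\chi-\widehat{\theta_j}$ split, Lemma \ref{ccz} and \eqref{yiny}) as the relevant maximal ingredient is also right — but the bridge from the scalar factor to the maximal estimate, which is the heart of the argument, is what your proposal lacks.
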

\begin{lemma}\label{l762}
%Let $J_l=\{k\in\Z:\ (-1)^l ~k\le 0\}$ with  $l=3,4$.
Let $p\in (1,\infty)$, $\tilde{\e}_\circ\in (0,1)$ and $k\le 0$. There is a  constant  $c_p>0$ such that
$$\|\big( \sum_{j\in \N^B}\|\big(\sum_{1\le s \le \tilde{\e}_\circ  (j)}[\mathcal{G}_{j,N,\la(x),k,\tilde{\e}_\circ}^{(4),s}(D)f](x) \big)_{N\in [2^j,2^{j+1})}\|_{V^2}^2\big)^{1/2}\|_{\ell^p(\Z^n)}
\les 2^{c_p k}\|f\|_{\ell^p(\Z^n)},$$
where  $[\mathcal{G}_{j,N,\la(x),k,\tilde{\e}_\circ}^{(4),s}(D)f](x):=\big(\mathscr{L}_{s,\A,\kappa}[\phi_{2^j, N,\mu(x),\tilde{\e}_\circ}^{(4),*}~\psi_{j,k}
](D)f\big)(x)$. %with $\psi_{j,k}(\xi)=\psi(2^{j-k}\xi)$.
\end{lemma}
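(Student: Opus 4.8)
The goal is to prove Lemma \ref{l762}, which concerns the "low-frequency remainder" piece of major arcs estimate III. The plan is to exploit the fact that $\phi_{2^j,N,\mu(x)}^{(4)}(\xi)$ vanishes at $\xi=0$, so the restriction to $\psi_{j,k}$ with $k\le 0$ produces a genuine gain of $2^{ck}$, while the multi-frequency machinery absorbs the sum over $s$. First I would use the Rademacher--Menshov inequality \eqref{num1} to dominate the $V^2$-seminorm in $N\in[2^j,2^{j+1})$ by an $\ell^2$-sum of dyadic differences $\mathcal{G}^{(4),s}_{j,N_{i+1},\cdot}-\mathcal{G}^{(4),s}_{j,N_i,\cdot}$; after squaring and summing over $j$, this reduces the target to a family of multi-frequency square function estimates of the form
\[
\Big\|\Big(\sum_{j\in\N^B}\sup_{\A\in\mathcal{A}_s}\big|\mathscr{L}_{s,\A}[\m^{(4)}_{j,k}~\psi_{j,k}](D)f\big|^2\Big)^{1/2}\Big\|_{\ell^p(\Z^n)}\les C^k\,2^{-c_p k/2}\,2^{-c_p s}\|f\|_{\ell^p(\Z^n)},
\]
uniformly in the relevant parameters, where $\m^{(4)}_{j,k}$ denotes a generic piece of $\phi^{(4)}$ (or its difference in $N$) truncated by $\psi_{j,k}$. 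Summing the resulting geometric series in $s$ and $k$ then yields the claim.

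The main tool for the displayed estimate is Lemma \ref{ccz}. I would verify its two hypotheses for the symbols $\m^{(4)}_{j,k}$: on the one hand, using $e(y\cdot\xi)-1=\mathcal{O}(\min\{|y\cdot\xi|,1\})$ together with the Calder\'on--Zygmund bounds on $K$, one gets $|\m^{(4)}_{j,k}(\xi)|\les C^k\,2^{k}\min\{|2^j\xi|^\gamma,|2^j\xi|^{-\gamma}\}$ on the support of $\psi_{j,k}$ (the extra factor $2^k$ is exactly the source of the $2^{c_pk}$ gain, which survives interpolation in Lemma \ref{ccz}); on the other hand, a pointwise domination $|\F_{\R^n}^{-1}(\m^{(4)}_{j,k})*_{\R^n}g|\les C^k M_{HL}g$ plus the Fefferman--Stein inequality gives the vector-valued bound \eqref{X3} with constant $B\les C^k$. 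Plugging $A\sim C^k 2^k$ and $B\sim C^k$ into \eqref{cou1} produces $A^{c_p}B^{1-c_p}2^{-\gamma_p s}\les C^k 2^{c_p k}2^{-\gamma_p s}$, which after summing over $k\le 0$ and $s\ge 1$ is summable and leaves the asserted $2^{c_p k}$ decay (with a possibly smaller $c_p$). The treatment of the differences in $N$ is the same since $\mathds 1_{N\le|y|\le N+1}K(y)$ contributes an additional harmless factor that only improves the estimate.

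The one structural point that needs care, and the step I expect to be the main obstacle, is the dependence of $\A$ (hence of $q$) on $x$ through $\mu(x)$, and the presence of the indicator $\mathds 1_{|\mu(x)|\le 2^{-2dj}j^{\lfloor 1/\tilde\e_\circ\rfloor}}$ hidden inside $\phi^{(4),*}_{2^j,N,\mu(x),\tilde\e_\circ}$. As in the proof of Proposition \ref{t21} and Lemma \ref{l5.1}, the way to handle this is to first pull the $x$-dependence out by taking a supremum $\sup_{\A\in\mathcal{A}_s}\sup_{\mu\in I_{j,0}}$ (the relevant scale here is $\mathcal{S}_j^0$ since the truncation forces $|\mu(x)|2^{2dj}\les 1$), then Taylor-expand $e(\mu(x)|y|^{2d})$ in powers of $2^{2dj}\mu(x)$ exactly as in the proof of Lemma \ref{l5.2}; each term in the expansion has the form $(2^{2dj}\mu(x))^l$ times an $x$-independent multiplier $\widehat{K_{0,k}}(2^j\cdot)\psi_{j,k}$, and $|(2^{2dj}\mu(x))^l\mathds 1_{\mathcal{S}_j^0}|\les 1$ so the coefficients are harmless (with a $C^l/l!$ factor ensuring convergence of the $l$-series). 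This reduces everything to $x$-independent symbols to which Lemma \ref{ccz} applies directly as above. Once this reduction is in place the rest is the routine estimation sketched in the previous paragraph.
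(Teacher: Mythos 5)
Your proposal has a genuine gap in how you separate the $x$-dependence of the multiplier. You claim that ``the truncation forces $|\mu(x)|2^{2dj}\les 1$'', but the indicator built into $\phi^{(4),*}_{2^j,N,\mu(x),\tilde\e_\circ}$ only enforces $|\mu(x)|\le 2^{-2dj}j^{\lfloor 1/\tilde\e_\circ\rfloor}$, so $|\mu(x)|2^{2dj}$ can be as large as a power of $j$; it is \emph{not} forced into $I_{j,0}$. Consequently, if you Taylor-expand $e(\mu(x)|y|^{2d})$ in powers of $2^{2dj}\mu(x)$ as in Lemma \ref{l5.2}, the $l$-th term contributes $(|\mu(x)|2^{2dj})^l/l!$, and summing the $l$-series produces an overall constant of size $e^{Cj^{\lfloor 1/\tilde\e_\circ\rfloor}}$, which is disastrous for the ensuing $\ell^2$-sum over $j$. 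To salvage the idea you would need to decompose into the scales $\mathcal{S}_j^0$ and $\mathcal{S}_j^m$ for $1\le m\les \log j$ and run a separate Stein--Wainger-type argument on each $m\ge 1$ (as is done in Lemma \ref{l5.1} and in Lemma \ref{l763}); none of this appears in your outline, and the machinery is considerably heavier than what the lemma actually requires.

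The paper's route avoids the issue entirely. Rather than expanding the oscillatory factor $e(\mu(x)|y|^{2d})$, it Taylor-expands $e(y\cdot\xi)-1$ in powers of $y\cdot 2^{-k}\xi$; this is legitimate on the support of $\psi_{j,k}$ because $|y\cdot\xi|\les 2^k\le 1$ there, and the factor $2^{kl}$ is precisely the source of the claimed $2^{c_pk}$ gain. After expanding $(y\cdot 2^{-k}\xi)^l$, each term factors multiplicatively as $\bar\psi^l_{j,k}(\xi)\cdot\bar I^l_{j,N,k,\mu(x)}$, where $\bar\psi^l_{j,k}$ is a smooth, $N$- and $x$-independent Littlewood--Paley-type symbol and $\bar I^l_{j,N,k,\mu(x)}=\int_{1\le|y|\le N/2^j}e(\mu(x)2^{2dj}|y|^{2d})\bar K_l(y)\,dy$ is a \emph{scalar} carrying all of the $N$- and $\mu(x)$-dependence. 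Its $V^2$ seminorm in $N$ is bounded by $\int_{1\le|y|\le 2}|\bar K_l|\les C^l$ for \emph{every} value of $\mu(x)$, with no smallness assumption whatsoever, and Lemma \ref{ccz} applies directly to $\bar\psi^l_{j,k}$. This also renders your Rademacher--Menshov step unnecessary, which is welcome since as written you do not address the growing factor of $j$ (the number of dyadic levels in (\ref{num1})) that it would introduce before the $\ell^2$-sum over $j$.
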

\begin{lemma}\label{l763}
For every $p\in (1,\infty)$ and  every $\tilde{\e}_\circ\in (0,1)$,  we have
$$\|\big( \sum_{j\in \N^B}\|\big(\sum_{1\le s \le \tilde{\e}_\circ  (j)}[\mathcal{G}_{j,N,\la(x),\tilde{\e}_\circ}^{(3),s}(D)f](x) \big)_{N\in [2^j,2^{j+1})}\|_{V^2}^2\big)^{1/2}\|_{\ell^p(\Z^n)}
\les \|f\|_{\ell^p(\Z^n)},$$
where  $[\mathcal{G}_{j,N,\la(x),\tilde{\e}_\circ}^{(3),s}(D)f](x):=\big(\mathscr{L}_{s,\A,\kappa}[\phi_{2^j, N,\mu(x),\tilde{\e}_\circ}^{(3),*}~\chi^\cc(2^{j}\cdot)
](D)f\big)(x).$
\end{lemma}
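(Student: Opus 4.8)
The plan is to bound the relevant contribution by passing from the variational seminorm $V^2$ on $\mathbb{Z}^n$ to a more robust Besov-type quantity in the continuous setting, as advertised in Subsection \ref{semi}. Specifically, using the first embedding $B_{2,1}^{1/2}\hookrightarrow V^2$ together with \eqref{rou1}, I would dominate $\|(\mathcal{G}^{(3),s}_{j,N,\la(x),\tilde\e_\circ}(D)f)_{N\in[2^j,2^{j+1})}\|_{V^2}$ by an $L^1$-integral in $N$ of $\partial_N$ of the operator (or, equivalently after a dyadic splitting of $[2^j,2^{j+1})$ into roughly $j$ blocks and a Rademacher-Menshov step \eqref{num1}, a square function over the sub-blocks). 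The point of this maneuver is that $\partial_N\mathcal{G}^{(3),s}$ involves the kernel $K(y)\ind{|y|=N}$, which has $\ell^1$ mass $\sim 2^{-j}$ on the relevant shell; combined with the frequency localization to $\{2^j|\xi|\gtrsim 1\}$ coming from $\chi^{\mathbf c}(2^j\cdot)$ this yields the necessary $2^{-j}$-type decay summable against the outer $\ell^2_j$.

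The key technical inputs, in the order I would use them, are: (1) the factorization \eqref{CC1}–\eqref{MI11} to split $\mathscr{L}_{s,\A,\kappa}[m]$ into the Gauss-sum multiplier $\mathscr{L}_{s,\A}[1]$, which contributes the gain $2^{-\gamma_p s}$ via \eqref{de11} and Proposition \ref{PIW}, and the $\#$-multiplier $\mathscr{L}^\#_s[m]$, which by the Ionescu-Wainger theorem reduces matters to an $L^p(\mathbb{R}^n)$ estimate for the periodized frequency multiplier $\Delta_{\mathcal U_{2^s}}$ applied to the model operator $\phi^{(3),*}_{2^j,N,\mu}\,\chi^{\mathbf c}(2^j\cdot)$; (2) the Stein–Wainger-type estimate (as used in \eqref{ASW1}–\eqref{sa1}) to control the supremum over $\mu$ in a narrow band and over the truncation $N$; (3) the multi-frequency square function estimate Lemma \ref{ccz} to handle the sum over $j$ with uniform-in-$s$ gain, where the hypotheses \eqref{X2}, \eqref{X3} are verified by pointwise domination by $M_{HL}$ and the vector-valued bound for the singular Radon transform, exactly as in the proof of \eqref{c51}; and (4) the shifted square function estimate from Appendix \ref{app} to absorb the modulation by $-\bb$ appearing in \eqref{MI11} (this is where the shift in the square function becomes essential, since the frequencies $\bb$ run over $\mathcal{O}(2^{(n+1)s})$ points). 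Finally I would sum the resulting geometric series in $s\le\tilde\e_\circ(j)$ and then in $j$, using $\tilde\e_\circ(j)\sim\log_2 j$ so that even a polynomially-small-in-$s$ gain suffices, or simply the exponential gain $2^{-\gamma_p s}$.

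The main obstacle I anticipate is the interplay between the $V^2$ seminorm and the rough, $x$-dependent kernel: because $\|(f_N)\|_{V^2}\lesssim\|(g_N)\|_{V^2}$ does \emph{not} follow from $|f_N|\le|g_N|$, one cannot simply majorize $\mathcal{G}^{(3),s}$ pointwise by a maximal function and then invoke a scalar maximal bound; the variational structure must be tracked through the argument. The resolution is the trick highlighted in Subsection \ref{diff}: replace the attempt to control $V^2$ directly on $\mathbb{Z}^n$ by controlling the continuous $V^2$-norm of $\phi^{(3),*}_{2^j,N,\mu}\,\chi^{\mathbf c}(2^j\cdot)$ in $N$ on $\mathbb{R}^n$ — which, thanks to the smoothness in $N$ and the $|y|^{-n}$ decay of $K$ away from the spheres $|y|=2^j$ and $|y|=N$, can be estimated by \eqref{rou1} and a classical shifted square function argument — and then transfer via the Ionescu-Wainger machinery, the Plancherel–Pólya inequality being what legitimizes working with the Besov-space proxy. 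A secondary technical point is that the band $|\mu(x)|\le 2^{-2dj}j^{\lfloor1/\tilde\e_\circ\rfloor}$ must be split dyadically (as in \eqref{dc1}) and the $\mathcal S_j^m$ ($m\ge1$) and $\mathcal S_j^0$ pieces treated separately, with the oscillatory gain $2^{-cm}$ from Van der Corput / Stein–Wainger handling the former and a Taylor expansion in $\mu$ reducing the latter to the $\la\equiv0$ model, exactly parallel to Sections \ref{slong2}–\ref{slong3}; I expect this bookkeeping to be routine but lengthy.
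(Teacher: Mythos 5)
Your proposal assembles essentially the right toolkit — (\ref{rou1}) plus the Plancherel--P\'olya/Besov embedding as a $V^2$-proxy, the factorization (\ref{CC1})--(\ref{MI11}) with the Gauss-sum gain $2^{-\gamma_p s}$ via (\ref{de11}) and Proposition \ref{PIW}, Lemma \ref{ccz} for the $j$-square function, the Stein--Wainger estimate, the $\mathcal{S}_j^0/\mathcal{S}_j^m$ split, and the shifted square function from the Appendix. That matches the paper's Section \ref{lll763}. However, two of your mechanism claims are misattributed in a way that would derail the execution.

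First, the shifted square function is \emph{not} used to ``absorb the modulation by $-\bb$ appearing in (\ref{MI11})''; the $\mathcal{O}(2^{(n+1)s})$ frequencies $\bb$ are handled entirely by the Ionescu--Wainger/$\Delta_{\mathcal U_{2^s}}$ machinery and (\ref{de11}). The shifted square function enters because $\partial_\tau\phi^{(3)}_{2^j,2^j\tau,\mu(x)}$ concentrates on the sphere $|y|=\tau$; after rescaling and restricting to the frequency annulus $2^j|\xi|\sim 2^k$ (you do need the dyadic split $\chi^{\cc}(2^j\cdot)=\sum_{k\ge1}\psi_{j,k}$, which you never actually write down), the resulting multiplier $e(\tau 2^{j+k}\xi)\psi(2^j\xi)$ produces a \emph{physical-space} shift by $\tau\theta 2^{j+k}$, i.e.\ $2^k$ times the scale of $\psi(2^j\cdot)$. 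It is precisely this shift that Lemma \ref{Ap1} controls, at the cost of the polynomial factor $k^2$ (see (\ref{sqi1}) and (\ref{HH211})--(\ref{HK21})). Second, your claim that $\partial_N$ produces ``$\ell^1$ mass $\sim 2^{-j}$'' which is ``summable against the outer $\ell^2_j$'' does not hold up: $\int_{2^j}^{2^{j+1}} 2^{-j}\,dN = O(1)$, so the $L^1$-in-$N$ norm carries no decay in $j$. The $j$-sum is handled by the square-function structure in Lemma \ref{ccz}, not by geometric decay, and the decay that actually has to be manufactured is in the frequency parameter $k$. That decay comes from interpolating the $L^p$ derivative bound (\ref{HH2}), which \emph{grows} like $k^2$, against the $\ell^2$ Besov bound (\ref{HH3}), which decays like $2^{-c(k+s)}$; without this interpolation step, the $k$-sum does not converge and the argument cannot close. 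Both points need correcting before the plan would produce a complete proof.
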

Since the value of $\tilde{\e}_\circ$ is not critical, we omit  it from  notations like 
$\mathcal{G}_{j,N,\la(x),\tilde{\e}_\circ}^{(i_1),s}$ ($i_1=3,5$), $\mathcal{G}_{j,N,\la(x),k,\tilde{\e}_\circ}^{(4),s}$ and $\phi_{2^j, N,\mu(x),\tilde{\e}_\circ}^{(i_2),*}$ ($i_2=3,4$)  unless clarity demands it or it needs to be emphasized for other reasons; since the value of $\ka$
is not important, we  will apply  the notations (\ref{not32}) in what follows. Furthermore,
we slightly abuse notation $v_s=v_s(\tilde{\e}_\circ)$ given as in  (\ref{not2}) (with $\e_\circ'=\tilde{\e}_\circ$).
%Remember  the notation  $v_s=\max\{C_0, \e_\circ^{-1}s\}$.
\subsection{Proof of Lemma \ref{l761}}
\label{lll761}
It suffices to show that for each $p\in (1,\infty)$ and each $m\ge 1$,
\begin{align}
\| \big(\sum_{j\ge v_s}\|{\ind {\mathcal{S}_j^0}}(x) \big(\mathcal{G}_{j,2^jt,\la(x)}^{(5),s}(D)f(x) \big)_{t\in [1,2)}\|_{V^2}^2\big)^{1/2}\|_{\ell^p(x\in \Z^n)}
\les&\  2^{-\gamma_ps} \|f\|_{\ell^p(\Z^n)}\ \ {\rm and }\label{Y1}\\
\|\big( \sum_{j\ge v_s}\|{\ind {\mathcal{S}_j^m}} (x)\big(\mathcal{G}_{j,2^j t,\la(x)}^{(5),s}(D)f(x) \big)_{t\in [1,2)}\|_{V^2}^2\big)^{1/2}\|_{\ell^p(x\in \Z^n)}
\les&\  2^{-m-\gamma_p s}\|f\|_{\ell^p(\Z^n)}\label{Y2}
\end{align}
with $\gamma_p$ given as in (\ref{de11}). Direct computation gives 
\beq\label{7L90}
\|\big(\mathcal{G}_{j,2^jt,\la(x)}^{(5),s}(D)f(x) \big)_{t\in [1,2)}\|_{V^2}
\les  \|\big(
\phi_{2^j,2^j t,\mu(x)}^{(5)}\big)_{t\in [1,2)}\|_{V^2}~
\sup_{\A\in\mathcal{A}_s}|\mathscr{L}_{s,\A}[\chi(2^j\cdot)
](D)f|(x).
 \eeq
We first prove (\ref{Y1}).
Using $x\in {\mathcal{S}_j^0}$,
Taylor expansion and $\int_{\mathbb{S}^{n-1}}\Omega(\theta)d\sigma=0$, we obtain
\beq\label{GG1}
\begin{aligned}
\phi_{2^j,2^jt,\mu(x)}^{(5)}=&\ \int_{2^j\le |y|\le 2^jt}e(\mu(x)|y|^{2d})K(y)dy
=\ \sum_{l=1}^\infty \frac{(2\pi i)^l}{l!}(\mu(x)2^{2dj})^l I_{j,t}^l
\end{aligned}
\eeq
with $I_{j,t}^l$ defined by   $I_{j,t}^l:=\int_{2^j\le |y|\le 2^jt} (2^{-j}|y|)^{2dl}K(y)dy$, which  satisfies
$$%\beq\label{GG1}
 \|\big(I_{j,t}^l\big)_{t\in [1,2)}\|_{V^2}\les \int_{2^j\le |y|\le 2^{j+1}} (2^{-j}|y|)^{2dl}|K(y)|dy\les C^l.
$$
This with (\ref{GG1}) gives that  ${\ind {\mathcal{S}_j^0}}(x)  \|\big(
\phi_{2^j,2^j t,\mu(x)}^{(5)}\big)_{t\in [1,2)}\|_{\V^2}$ is
\beq\label{MK1}
\begin{aligned}
%{\ind {\mathcal{S}_j^0}}(x)  \|\big(
%\phi_{2^j,2^j t,\mu(x)}^{(5)}\big)_{t\in [1,2]}\|_{\V^2}
\les\  {\ind {\mathcal{S}_j^0}}(x) \sum_{l=1}^\infty \frac{(2\pi )^l}{l!}(|\mu(x)|2^{2dj})^l
 \|(I_{j,t}^l)_{t\in [1,2)}\|_{V^2}
 \les&\ {\ind {\mathcal{S}_j^0}}(x) \sum_{l=1}^\infty \frac{(2\pi )^l}{l!}C^l  (|\mu(x)|2^{2dj})^l.
 \end{aligned}
\eeq
Inserting  (\ref{MK1}) into   (\ref{7L90}), %and using the resulting inequality
we can bound  the left-hand side of (\ref{Y1})  by  a constant times
$$
\begin{aligned}
&\ \sum_{l=1}^\infty \frac{(2\pi )^l}{l!}C^l ~\| \sum_{j\ge v_s}{\ind {\mathcal{S}_j^0}}(x)~ (|\mu(x)|2^{2dj})^l
\sup_{\A\in\mathcal{A}_s}|\mathscr{L}_{s,\A}[\chi(2^j\cdot)
](D)f|\|_{\ell^p(\Z^n)}\\
\les&\ \sum_{l=1}^\infty \frac{(2\pi )^l}{l!} C^l~\| \sup_{j\ge v_s}
\sup_{\A\in\mathcal{A}_s}|\mathscr{L}_{s,\A}[\chi(2^j\cdot)
](D)f|\|_{\ell^p(\Z^n)},
 \end{aligned}
$$
which with (\ref{5d3}) gives the desired (\ref{Y1}).

Next,  we prove (\ref{Y2}).
Let $\mathfrak{H}(y)=|y|^{2d}$.
 Integration by parts gives that  $\phi_{2^j,2^jt,\mu(x)}^{(5)}$ equals
$$
\begin{aligned}
&\ \frac{-i}{2\pi \mu(x) 2^{2dj}}\int_{1\le |y|\le t}\frac{\p}{\p y}\big[e\big(\mu(x)\mathfrak{H}(2^j y) \big)\big] \frac{K(y)}{\mathfrak{H}'(y)} dy\\
= &\ \frac{i}{2\pi \mu(x) 2^{2dj}}
\int_{1\le |y|\le t}e\big(\mu(x)\mathfrak{H}(2^j y) \big)\frac{\p}{\p y}\big(\frac{K(y)}{\mathfrak{H}'(y)}\big) dy
+H_{t,j,\mu(x)}^{(1)}+H_{j,\mu(x)}^{(2)},
\end{aligned}
$$
where $H_{j,\mu(x)}^{(2)}$ is independent of $t$ (so this term does not affect the $V^r$ seminorm), and
$H_{t,j,\mu(x)}^{(1)}$ satisfies
$$
\|\frac{\p}{\p t} H_{t,j,\mu(x)}^{(1)}\|_{L^1_t( [1,2])}
%+2^{-m}\| H_{N,j,\mu(x)}^{(1)}\|_{L^2(N\in [1,2])}
\les (|\mu(x)| 2^{2dj})^{-1}.
$$
This together with  $x\in {\mathcal{S}_j^m}$ and  a routine  computation  gives
that ${\ind {\mathcal{S}_j^m}} (x) \|\big(
\phi_{2^j,2^j t,\mu(x)}^{(5)}\big)_{t\in [1,2)}\|_{V^2}$ is
\beq\label{LM1}
\begin{aligned}
%\les&\  {\ind {\mathcal{S}_j^m}}  \|
%\phi_{2^j,2^j N,\mu(x)}^{(5)}\|_{B_{2,1}^{1/2}}\\
\les\ \frac{1}{|\mu(x)| 2^{2dj}}
\int_{1\le |y|\le 2}\big|\frac{\p}{\p y}\big(\frac{K(y)}{\mathfrak{H}'(y)}\big)\big| dy
+\|\frac{\p}{\p t} H_{t,j,\mu(x)}^{(1)}\|_{L^1(t\in [1,2])}
\les\  2^{-m},
\end{aligned}
\eeq
By (\ref{LM1}), (\ref{7L90}) and the inequality in (\ref{le1}),   the left hand side of (\ref{Y2}) is
$$
\les 2^{-m}\| \sup_{j\ge v_s}
\sup_{\A\in\mathcal{A}_s}|\mathscr{L}_{s,\A}[\chi(2^j\cdot)
](D)f|\|_{\ell^p(\Z^n)}.
$$
This combined  with (\ref{5d3}) gives the desired (\ref{Y2}).
%\les 2^{-m-c_ps}\|f\|_{\ell^p(\Z^n)},
\subsection{Proof of Lemma \ref{l762}}
\label{lll762}
It suffices to show that for every $s\in\N$ and each $k\le 0$, the inequality
\beq\label{I0}
\|\big( \sum_{j\ge v_s}\|\big(\mathcal{G}_{j,N,\la(x),k}^{(4),s}(D)f(x) \big)_{N\in [2^j,2^{j+1})}\|_{V^2}^2\big)^{1/2}\|_{\ell^p(x\in \Z^n)}
\les 2^{c_p (k-s)} \|f\|_{\ell^p(\Z^n)}
\eeq
holds for some  $c_p>0$.
By Taylor expansion, we write  $\phi_{2^j, N,\mu(x)}^{(4)}(\xi)\psi_{j,k}(\xi)$ as%$\phi_{2^j, N,\mu(x),M}^{(4)}\psi(2^{j-k}\xi)$ as
\beq\label{I2}
\sum_{l=1}^\infty
\frac{(2\pi i)^l }{l!}2^{kl}~ \psi_{j,k}(\xi) \int_{2^j\le |y|\le N} e(\mu(x)|y|^{2d})(y\cdot 2^{-k}\xi)^l K(y)
dy.
\eeq
Expanding  $(y\cdot 2^{-k}\xi)^l$, we can express  the product of $\psi_{j,k}(\xi)$ and  the integral on the right-hand side of (\ref{I2})   as the sum of  $\mathcal{O}(n^l)$ terms  similar to
$\bar{\psi}^l_{j,k}(\xi) \bar{I}_{j,N,k,\mu(x)}^{l}$, where
$$\bar{\psi}^l_{j,k}(\xi):=\bar{\psi}^l(2^{j-k}\xi),\ \ \bar{I}_{j,N,k,\mu(x)}^{l}:=\int_{2^j\le |y|\le N} e(\mu(x)|y|^{2d})2^{-jn}\bar{K}_l(2^{-j}y)dy.
$$
Here  $\bar{\psi}^l$ and $\bar{K}_l$  are   variants of $K$ and $\psi$, respectively; and there exists a  constant $\mathcal C$ independent of $l,s,k$ such that 
%they satisfy   the following:
\beq\label{BV1}
\begin{aligned}
|\bar{K}_l(y)|\sim&\  \mathcal C^l,\\
 |\bar{\psi}^l(2^{j}\xi)|\les&\  \mathcal C^l\min\{2^j|\xi|,(2^j|\xi|)^{-1}\}\ \ {\rm and}\\
\|\big(\sum_{j\in\Z}|\bar{\psi}^l(2^{j}D){f}|^2\big)^{1/2}\|_{L^p(\R^n)}\le&\  \mathcal C^l \|f\|_{L^p(\R^n)},
\end{aligned}
\eeq
%$$
%\int_{2^j\le |y|\le N} 2^{-jn}|\bar{K}_l(2^{-j}y)|dy\le C^l\ \ \ {\rm and}\ \ \|(\sum_{j\in\Z}|\bar{\psi}^l(2^{j-k}D){f}|^2)^{1/2}\|_{L^p(\R^n)}\le C^l \|f\|_{L^p(\R^n)},
%$$
%since $N\in [2^j,2^{j+1}]$ at this moment.
%Note that  $\bar{I}_{j,2^jt,k,\mu(x)}^{l}$  is a constant in the frequency variable $\xi$.
whenever $|y|\sim 1$ and $\xi\in\R^n$. Thus, to prove (\ref{I0}),  it suffices to show %that for $k\le 0$ and $s\ge 0$,
\beq\label{I91}
\begin{aligned}
&\ 2^k \|\big( \sum_{j\ge v_s}\sup_{\A\in\mathcal{A}_s}\big|\|\big(\bar{I}_{j,2^jt,k,\mu(x)}^{l}\big)_{t\in [1,2)}\|_{V^2}^2 |\mathscr{L}_{s,\A}[\bar{\psi}^l_{j,k}](D)f|^2\big|\big)^{1/2}\|_{\ell^p(x\in\Z^n)}\\
\les&\  \mathcal C^l 2^{c_p (k-s)} \|f\|_{\ell^p(\Z^n)}.
\end{aligned}
\eeq
By the change of  variables $y\to 2^jy$, we have $$\bar{I}_{j,2^jt,k,\mu(x)}^{l}=\int_{1\le |y|\le t} e(\mu(x)2^{2dj}|y|^{2d})\bar{K}_l(y)dy,$$
%which satisfies a trivial bound  $|\p_\tau (I_{j,2^j\tau}^{k})(x)|\les C^l$ (since (\ref{BV1}))
%whenever $\tau\sim1$. Thus
which with (\ref{BV1})$_1$ gives
\beq\label{I3}
\|\big(\bar{I}_{j,2^jt,k,\mu(x)}^{l}\big)_{t\in [1,2)}\|_{V^2}\les
\int_{1\le |y|\le 2} |\bar{K}_l(y)| dy
\les  \mathcal C^l.
\eeq
%with some bounded function  $|F_l|\les C^l$.
On the other hand,  invoking $k\le 0$, we   infer  by  Lemma \ref{ccz} along with  (\ref{BV1})$_2$ and (\ref{BV1})$_3$  that
\beq\label{I92}
\begin{aligned}
\|\big( \sum_{j\ge v_s}|\sup_{\A\in\mathcal{A}_s}|\mathscr{L}_{s,\A}[\bar{\psi}^l_{j,k}](D)f|^2\big)^{1/2}\|_{\ell^p(\Z^n)}
\le &\  \|\big( \sum_{j\in \N^B}|\sup_{\A\in\mathcal{A}_s}|\mathscr{L}_{s,\A}[\bar{\psi}^l(2^{j}\cdot)](D)f|^2\big)^{1/2}\|_{\ell^p(\Z^n)}\\
\les&\  \mathcal C^l 2^{-\gamma_p s} \|f\|_{\ell^p(\Z^n)}.
\end{aligned}
\eeq
 Finally, (\ref{I91}) follows from  (\ref{I3}) and (\ref{I92}).
This ends the proof of Lemma \ref{l762}.
 %Note that  for any $\e\in (0,1)$,
% \beq\label{Con1}
% |\bar{\psi}^l(2^{j}\xi)|\les\ C^l
% \les C^l \min\{2^{k\e}(2^j|\xi|)^{-\e}, 2^{-k\e}(2^j|\xi|)^{\e} \}
% \les C^l 2^{-\e k}\min\{(2^j|\xi|)^{-\e}, (2^j|\xi|)^{\e} \}
% \eeq
% since $2^j|\xi|\sim 2^k$ at this moment. Moreover, it satisfies the square function estimate with $B=1$.
 % Using Lemma \ref{ccz} and choosing $\e$ small enough in (\ref{Con1}), we have
 % \beq\label{I8}
%\|(\sum_{j\in\Z^n}\sup_{\A\in\mathcal{A}_s}| \mathscr{L}_{s,\A}[\bar{\psi}^l(2^{j}\cdot)](D)f(x)|^2)^{1/2}\|_{\ell^p}\les C^l2^{-c_p s}  \|f\|_{\ell^p},
 %\eeq
% which yields (\ref{I5}) immediately.
 %This yields
%Since
% $h=\min\{2^{\sqrt{|k|}},2^{ j}\}$, by dividing  the sum over $j\ge a_s$ in (\ref{I5})  into two parts:
% $j\ge \max\{|k|^2,a_s\}$ and $a_s\le j\le |k|^2$,   we may bound the LHS of (\ref{I5}) by   $2^{2\sqrt{|k|}}$ multiplied by the LHS of (\ref{I8}), up to a uniform constant. Thus,
%  LHS of (\ref{I5})  is
 % $\les C^l 2^{2\sqrt{|k|}} 2^{-c_p s}  \|f\|_{\ell^p}$, which yields (\ref{I5}) immediately.
\subsection{Proof of Lemma \ref{l763}}
\label{lll763}
By $\chi^\cc(2^{j}\xi)=\sum_{k\ge 1}\psi_{j,k}(\xi)$,
it suffices to show  that for each $p\in (1,\infty)$,  there is a constant $c_p>0$ such that
\beq\label{HH1}
\|\big( \sum_{j\ge v_ s}\|\big({\mathcal{G}}_{j,2^j\tau,\la(x),k}^{(3),s}(D)f(x) \big)_{\tau\in [1,2)}\|_{V^2}^2\big)^{1/2}\|_{\ell^p(x\in \Z^n)}
\les 2^{-c_p(k+s)}\|f\|_{\ell^p(\Z^n)},
\eeq
where  $\big(\mathcal{G}_{j,2^j\tau,\la(x),k}^{(3),s}f\big)(x)
=\big(\mathscr{L}_{s,\A}[\phi_{2^j, 2^j\tau,\mu(x)}^{(3),*}\psi_{j,k}
](D)f\big)(x)$. By (\ref{rou1}) and the Plancherel-P\'{o}lya inequality, that is, 
$$\|(f_t)_{t\in [1,2]}\|_{V^2}\les \|\p_t f_t\|_{L^1(t\in [1,2])}\ \ {\rm and}\ \  \|(f_t)_{t\in [1,2]}\|_{V^2}\les
\|f_t ~\rho(t)\|_{B_{2,1}^{1/2}(t\in\R)},$$
respectively, 
where the function $\rho$
 is smooth, compactly supported, and equals 1 on the interval $[1,2]$,    (\ref{HH1}) is a direct consequence of the following inequalities:
\beq\label{HH2}
\sup_{1\le \tau\le2}\|\big( \sum_{j\ge v_s}|\p_\tau\big({\mathcal{G}}_{j,2^j\tau,\la(x),k}^{(3),s}(D)f \big) (x)|^2\big)^{1/2}\|_{\ell^p(x\in \Z^n)}
\les k^2 2^{-c_ps} \|f\|_{\ell^p(\Z^n)},
\eeq
for some $c_p>0$,
and
\beq\label{HH3}
\|\big( \sum_{j\ge v_s}\|\big({\mathcal{G}}_{j,2^j\tau,\la(x),k}^{(3),s}(D)f\big)(x) \rho(\tau)\|_{B_{2,1}^{1/2}(\tau\in\R)}^2\big)^{1/2}\|_{\ell^2(x\in\Z^n)}
\les 2^{-c(k+s)}\|f\|_{\ell^2(\Z^n)}
\eeq
 for some $c>0$. Next, we prove (\ref{HH2}) and (\ref{HH3}) in order.
\begin{proof}[Proof of (\ref{HH2})]
Changing  variables $y\to 2^j y$, we write $\phi_{2^j, 2^j\tau,\mu(x)}^{(3)}$ as
$$
\begin{aligned}
\phi_{2^j, 2^j\tau,\mu(x)}^{(3)}(\xi)%=&\ \int_{2^j\le |y|\le 2^j \tau} e(\mu(x)|y|^{2d}+y\cdot\xi) K(y)dy\\
=&\ \int_{1\le |y|\le  \tau} e(\mu(x)2^{2dj}|y|^{2d}+y\cdot2^j\xi) K(y)dy.
\end{aligned}
$$
To compute $\p_\tau (\phi_{2^j, 2^j\tau,\mu(x)}^{(3)})$,
it is necessary to bifurcate the analysis into two scenarios:  $n=1$ and $n\geq 2$.
For the case $n=1$,
we have
$$
\begin{aligned}
\p_\tau (\phi_{2^j, 2^j\tau,\mu(x)}^{(3)})(\xi)=
\big(e(\mu(x)2^{2dj}|\tau|^{2d}+\tau 2^j\xi) +e(\mu(x)2^{2dj}|\tau|^{2d}-\tau 2^j\xi) \big)K(\tau).
\end{aligned}
$$
Thus, to prove (\ref{HH2}),  it suffices to show that  for each $k\ge 1$,
\beq\label{HH21}
\sup_{1\le |\tau|\le2}\|\big( \sum_{j\ge v_s} \sup_{\A\in\mathcal{A}_s}|\mathscr{L}_{s,\A}[\mathfrak{M}_{\tau,j-k}^k
](D)f|^2\big)^{1/2}\|_{\ell^p(\Z^n)}
\les k^2 2^{-c_p s}\|f\|_{\ell^p(\Z^n)},
\eeq
 where  $\{\mathfrak{M}_{\tau,l}^k\}_{l\in\Z}$ are defined by
 $\mathfrak{M}_{\tau,l}^k(\xi):=e(\tau 2^{l+k}\xi) \psi(2^{l}\xi).$
 By a routine computation, we have
\beq\label{sqi2}
|\mathfrak{M}_{\tau,j}^k(\xi)|\les \min\{2^{j}|\xi|,(2^{j}|\xi|)^{-1}\} \ \ \ (\xi\in\R^n).
\eeq
Moreover, by Lemma \ref{Ap1} (for the case $n=1$) and $1\le |\tau|\le2$,
\begin{align}
   \|\big(\sum_{j\in\Z}  |\F^{-1}_{\R^n}(\mathfrak{M}_{\tau,j}^k)*_{\R^n}{f_j})|^2\big)^{1/2}
\|_{L^p(\R^n)}\les&\  \| \big(\sum_{j\in\Z}    |f_j*_{\R^n}\tilde{h}_j(x+\tau 2^{j+k})|^2\big)^{1/2}
\|_{L^p(\R^n)}\\
%\les&\  \big(\sum_{v=0}^2|M^{[2^{k+v}]}f_j|^2\big)^{1/2}\\
%\les&\  \log^2(1+ 2^k)\|\big(\sum_{j\in\Z}   |f_j|^2\big)^{1/2}\|_{L^p(\R^n)}\\
\les&\  k^2 \|\big(\sum_{j\in\Z}    |f_j|^2\big)^{1/2}\|_{L^p(\R^n)},
\label{sqi1}
\end{align}
where ${\tilde{h}_j}$ is given by  $\tilde{h}_j(y)=2^{-jn}\check{\psi}(2^{-j}y)$.
Applying (\ref{sqi1}),   (\ref{sqi2}) and Lemma \ref{ccz} (with $\mathfrak{M}_{j}=\mathfrak{M}_{\tau,j}^k$, $A=1$ and $B=k^2$), we infer
\beq\label{HH211}
\|\big( \sum_{j\in\Z} \sup_{\A\in\mathcal{A}_s}|\mathscr{L}_{s,\A}[\mathfrak{M}_{\tau,j}^k
](D)f|^2\big)^{1/2}\|_{\ell^p(\Z^n)}
\les k^2 2^{-c_p s}\|f\|_{\ell^p(\Z^n)},
\eeq
which yields (\ref{HH21}) by
changing variables $j\to j-k$ on the left-hand side of (\ref{HH211}). Thus we complete the proof of  (\ref{HH2}) for the case  $n=1$.
As for   the case $n\ge2$, we shall use    similar arguments. Rewrite $\phi_{2^j, 2^j\tau,\mu(x)}^{(3)}$ by the polar coordinates  as
$$
\begin{aligned}
\phi_{2^j, 2^j\tau,\mu(x)}^{(3)}(\xi)
=\int_1^{ \tau} \int_{\mathbb{S}^{n-1}}
 e(\mu(x)2^{2dj}r^{2d}+r\theta\cdot 2^j\xi) \Omega(\theta) r^{-1}drd\theta,
\end{aligned}
$$
which yields by a direct computation that
$$
\begin{aligned}
\p_\tau [\phi_{2^j, 2^j\tau,\mu(x)}^{(3)}](\xi)=
\int_{\mathbb{S}^{n-1}}
 e(\mu(x)2^{2dj}\tau^{2d}+2^j\tau\theta\cdot \xi) \Omega(\theta) \tau^{-1}d\theta.
\end{aligned}
$$
By   repeating  the  arguments yielding (\ref{HH211}) and using  Lemma \ref{Ap1} (for the case  $n\ge 2$),  we obtain
\beq\label{HK21}
\sup_{\theta\in \mathbb{S}^{n-1}}\sup_{1\le r\le2} \|\big( \sum_{j\ge v_s} \sup_{\A\in\mathcal{A}_s}|\mathscr{L}_{s,\A}[\mathfrak{N}_{r\theta,j-k}^k
](D)f|^2\big)^{1/2}\|_{\ell^p(\Z^n)}
\les k^2 2^{-c_p s}\|f\|_{\ell^p(\Z^n)},
\eeq
%for all $\tau\in [1,2]\cup[-2,-1]$,
where $\mathfrak{N}_{r\theta,l}^k(\xi):=e(r\theta\cdot 2^{l+k}\xi) \psi(2^{l}\xi)$ whenever $l\in\Z$.
This yields (\ref{HH2}) for the case $n\ge 2$.
\end{proof}

\begin{proof}[Proof of (\ref{HH3})]
By a basic inequality
$$\|g\|_{B_{2,1}^{1/2}(\R)}\les \|g\|_{L^2(\R)}+\|g\|_{L^2(\R)}^{{1}/{2}}
\|g'\|_{L^2(\R)}^{{1}/{2}},$$
we reduce the matter to proving
\beq\label{HH30}
\|\big( \sum_{j\ge v_s}\|\big({\mathcal{G}}_{j,2^j\tau,\la(x),k}^{(3),s}(D)f\big)(x) \Psi(\tau)\|_{L^2(\tau\in\R)}^2\big)^{1/2}\|_{\ell^2(\Z^n)}
\les 2^{-c(k+s)}\|f\|_{\ell^2(\Z^n)}
\eeq
for   $\Psi\in \{\rho,\rho'\}$, and
\beq\label{HH31}
\|\big( \sum_{j\ge v_s}\|\p_\tau\big({\mathcal{G}}_{j,2^j\tau,\la(x),k}^{(3),s}(D)f\big)(x)~ \rho(\tau)\|_{L^2(\tau\in \R)}^2\big)^{1/2}\|_{\ell^2(\Z^n)}
\les k^2 2^{-cs}\|f\|_{\ell^2(\Z^n)}.
\eeq
Note that (\ref{HH31}) can be obtained by arguments yielding (\ref{HH21}) and (\ref{HK21}). As a consequence,  it remains to show (\ref{HH30}), which follows from
\begin{align}
\|\big( \sum_{j\ge v_s}\|{\ind {\mathcal{S}_j^0}}(x)~ \big({\mathcal{G}}_{j,2^j\tau,\la(x),k}^{(3),s}(D)f\big)(x) \Psi(\tau)\|_{L^2}^2\big)^{1/2}\|_{\ell^2(\Z^n)}
\les&\  2^{-c(k+s)}\|f\|_{\ell^2(\Z^n)}\ \ \ {\rm and}\label{HH33}\\
\|\big( \sum_{j\ge v_s}\|{\ind {\mathcal{S}_j^m}}(x)~ \big({\mathcal{G}}_{j,2^j\tau,\la(x),k}^{(3),s}(D)f\big)(x) \Psi(\tau)\|_{L^2}^2\big)^{1/2}\|_{\ell^2(\Z^n)}
\les&\  2^{-c(k+s+m)}\|f\|_{\ell^2(\Z^n)}.\label{HH34}
\end{align}
In the rest of this section, we prove (\ref{HH33}) and (\ref{HH34}).
For (\ref{HH33}), Taylor's expansion gives
$$
\begin{aligned}
\phi_{2^j, 2^j\tau,\mu(x)}^{(3)}(\xi)=&\  \sum_{l=0}^\infty
\frac{(2\pi i)^l }{l!}~ (\mu(x)2^{2dj})^l~\mathcal{I}_{\tau}^{j,l}(\xi),
\end{aligned}
$$
where $\mathcal{I}_{\tau}^{j,l}(\xi):=\int_{1\le |y|\le  \tau} e(y\cdot2^j\xi) |y|^{2dl} K(y)dy.$
Thus it suffices to show
\beq\label{HH35}
\|\big( \sum_{j\ge v_s} \sup_{\A\in\mathcal{A}_s}|\mathscr{L}_{s,\A}[\mathfrak{M}_{1,\tau,j-k}^{l,k}
](D)f(x)|^2\big)^{1/2}\|_{\ell^2(\Z^n)}
\les C^l 2^{-c(k+s)}\|f\|_{\ell^2(\Z^n)},
\eeq
where  $\{\mathfrak{M}_{1,\tau,j}^{l,k}\}_{j\in\Z}$ are  a sequence of functions  defined by
$$\mathfrak{M}_{1,\tau,j}^{l,k}(\xi):=\mathcal{I}_{\tau}^{j+k,l}(\xi)\psi(2^{j}\xi).$$
We first deduce  by integration by parts 
\beq\label{sqi5}
|\mathfrak{M}_{1,\tau,j}^{l,k}(\xi)|\les C^l 2^{-k} \min\{2^{j}|\xi|,(2^{j}|\xi|)^{-1}\}\ \ \ (\xi\in\R^n);
\eeq
furthermore,
by the  Fefferman-Stein inequality and $1\le |\tau|\le2$, we have
\beq\label{sqi9}
\begin{aligned}
   \|\big(\sum_{j\in\Z}  |\F^{-1}_{\R^n}(\mathfrak{M}_{1,\tau,j}^{l,k})*_{\R^n}{f}_j)|^2\big)^{1/2}
\|_{L^p(\R^n)}\les&\  C^l\| \big(\sum_{j\in\Z}    |M_{HL}f_j|^2\big)^{1/2}
\|_{L^p(\R^n)}\\
\les&\  C^l\| \big(\sum_{j\in\Z}    |f_j|^2\big)^{1/2}
\|_{L^p(\R^n)}.
\end{aligned}
\eeq
Thus, by (\ref{sqi5}), (\ref{sqi9}) and  Lemma \ref{ccz} (with $\mathfrak{M}_{j}=\mathfrak{M}_{1,\tau,j}^{l,k}$), %, $A=C^l$ and $B=C^l$),
we can infer
\beq\label{Ww1}
\|\big( \sum_{j\in\Z} \sup_{\A\in\mathcal{A}_s}|\mathscr{L}_{s,\A}[\mathfrak{M}_{1,\tau,j}^{l,k}
](D)f(x)|^2\big)^{1/2}\|_{\ell^2(\Z^n)}
\les C^l 2^{-c(k+s)}\|f\|_{\ell^2(\Z^n)},
\eeq
which yields (\ref{HH35}) by changing variables $j\to j-k$ on the left-hand side of (\ref{Ww1}).

Next, we prove (\ref{HH34}).
By Fubini's Theorem and the inequality in (\ref{le1}),  it suffices to show that there is a constant $c>0$ such that   for all $1\le |\tau|\le 2$, %the inequality
$$
\begin{aligned}
\|\sup_{j\ge v_s}\sup_{\A\in\mathcal{A}_s}\sup_{\mu\in I_{j,m}}
|\mathscr{L}_{s,\A}[\phi_{2^j, 2^j\tau,\mu}^{(3)}\psi_{j,k}
](D)f|\|_{\ell^2(\Z^n)}
\les 2^{-c(k+m+s)}\|f\|_{\ell^2(\Z^n)}.
\end{aligned}
$$
 Performing the arguments yielding (\ref{ge2}), we also obtain  that for any $\e\in (0,1)$,
\beq\label{BC1}
\|\sup_{j\ge v_s}\sup_{\A\in\mathcal{A}_s}\sup_{\mu\in I_{j,m}}
|\mathscr{L}_{s,\A}[\phi_{2^j, 2^j\tau,\mu}^{(3)}\psi_{j,k}
](D)f|\|_{\ell^2(\Z^n)}
\les 2^{\e m} 2^{-c(k+s)}\|f\|_{\ell^2(\Z^n)}.
\eeq
On the other hand,  by the Stein-Wainger-type theorem, we have
%\beq\label{ge123}
$$
\|\sup_{j\in\Z}\sup_{\mu\in I_{j,m}}|\F^{-1}_{\R^n}(\phi_{2^j, 2^j\tau,\mu}^{(3)}\chi_{s,\kappa}\psi_{j,k})
*_{\R^n} f|\|_{L^2(\R^n)}
\les 2^{-c m}\|f\|_{L^2(\R^n)},
$$
which with  transference principle  gives
%\beq\label{ge124}
$$\|\sup_{j\in\Z}\sup_{\mu\in I_{j,m}}|\F^{-1}_{\R^n}(\phi_{2^j, 2^j\tau,\mu}^{(3)}\chi_{s,\kappa}\psi_{j,k})
*_{\Z^n} f|\|_{\ell^2(\Z^n)}
\les 2^{-c m}\|f\|_{\ell^2(\Z^n)}.
$$
This with the arguments leading to (\ref{he1}) gives
that for some $C>0$, 
\beq\label{ge18}
\|\sup_{j\ge v_s}\sup_{\A\in\mathcal{A}_s}\sup_{\mu\in I_{j,m}}
|\mathscr{L}_{s,\A}[\phi_{2^j, 2^j\tau,\mu}^{(3)}\psi_{j,k}
](D)f|\|_{\ell^2(\Z^n)}
\les\  2^{Cs-cm}\|f\|_{\ell^2(\Z^n)}.
\eeq
Thus, by taking $\e$ in (\ref{BC1}) small enough,  (\ref{HH34}) follows from the combination of
(\ref{ge18}) and (\ref{BC1}).
\end{proof}
\section{Proof of Theorem  \ref{co1}}
\label{slong5}
In this section, we prove Theorem  \ref{co1}. We will 
use  the  Gauss sum bounds to  establish an inequality (\ref{L149}) below, which is 
the second trick mentioned in subsection \ref{diff}.

By following the proof of Theorem \ref{t1} line by line,  it is easy to check that  Theorem \ref{co1} follows if we can remove the $R^\e$-loss in  (\ref{dou12}).  In other words, to achieve
   Theorem  \ref{co1},
% to achieve the goal,
it suffices to prove   that  for  every $(r,p)\in (2,\infty)\times [1+2/n,\infty)$, %with sufficiently small $\eta_0>0$,
there exists a constant $c_p>0$ such that
  \beq\label{DDou12}
I_{s,p,r}:=\big\| \sup_{\A\in\mathcal{A}_s}\big\|\big(\mathscr{L}_{s,\A}[\mathscr{V}_j~ \mathscr{B}](D)f \big)_{j> 2^{C_1s}}\big\|_{V^r}
\big\|_{\ell^p(\Z^n)}
\les\ 2^{-c_p s}\|f\|_{\ell^p(\Z^n)},
\eeq
where  $C_1$ is defined as in (\ref{L1}),
$\mathscr{V}_j$ and $\mathscr{B}$ are gievn as in Lemma \ref{ccz2}.  Modifying the arguments yielding Lemma \ref{ccz2} (or the arguments yielding \cite[Lemma 7.2]{KR22}),
we can obtain  that for every $p\in(1,\infty)$, there exists a constant  $c_p>0$ such that  \beq\label{AZZZ1}
I_{s,p,\infty}\les 2^{-c_p s}\|f\|_{\ell^p(\Z^n)} \quad
{\rm with}\quad I_{s,p,\infty}:=I_{s,p,r}|_{r=\infty}.
\eeq  
By interpolation, to show (\ref{DDou12}), it suffices to prove  that for  every $(r,p)\in (2,\infty)\times [1+2/n,\infty)$,
  \beq\label{DDD12}
I_{s,p,r}\les \|f\|_{\ell^p(\Z^n)}.
\eeq
%for any  $\e>0$. 
In fact, as we shall see later, for  the case $p\in (1+2/n,\infty)$,  the right-hand side of   (\ref{DDD12})
can be improved to  $2^{-c_ps} \|f\|_{\ell^p(\Z^n)}$ with some $c_p>0$. So 
we do not need  (\ref{AZZZ1}) as a black box.
Before we go ahead, we need first     the following lemma,
which can be seen as an improvement of (\ref{de11}).% by Krause-Roos.
\begin{lemma}\label{L147}
Let $n\ge 2$, $s\in\N$ and $d=1$. Then for every $p\in[1,\infty]$, we have
\beq\label{L149}
\|\big(\sum_{\A\in\mathcal{A}_s}\big|\mathscr{L}_{s,\A}[1](D)f\big|^p\big)^{1/p}\|_{\ell^p(\Z^n)}
\les \mathcal{W}_{p,s} \|f\|_{\ell^p(\Z^n)}
\eeq
with $\mathcal{W}_{p,s}:= 2^{\frac{2s}{p}-\frac{ns}{2}\min\{\frac{2}{p},\frac{2}{p'}\}}$.
\end{lemma}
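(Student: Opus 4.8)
\textbf{Proof proposal for Lemma \ref{L147}.}
The plan is to exploit the factorization \eqref{CC1}, namely $\mathscr{L}_{s,\A}[1](\xi)=\mathscr{L}_{s,\A}[1](\xi)$ (with trivial inner factor since $m\equiv 1$), which expresses $\mathscr{L}_{s,\A}[1](D)f$ via \eqref{MI11} as a modulation-twisted sum over the frequencies $\bb\in\frac1q[q]^n$ weighted by the Gauss sums $S(\A,\bb)$. The point is that for $d=1$ the exponential sum $S(a/q,b/q)=q^{-n}\sum_{r\in[q]^n}e(\tfrac{a}{q}|r|^2+\tfrac{b}{q}\cdot r)$ is a product of $n$ one-dimensional classical quadratic Gauss sums, each of which has the sharp pointwise bound $|S|\lesssim q^{-n/2}$ (up to the coprimality normalization, and up to $\mathcal O_\e(q^\e)$ factors coming from the $\gcd$-dependent constants). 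I would first record this Gauss-sum bound together with the cardinality bound $\#\mathcal A_s\lesssim_\e 2^{(1+\e)s}$ from \eqref{new1} and the fact that for fixed $q$ the frequencies $\bb\in\tfrac1q[q]^n$ number $q^n\le 2^{ns}$.

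The two extreme cases $p=1$ and $p=\infty$ (equivalently $p=2$ by self-duality of the bound $\mathcal W_{p,s}$, which is symmetric under $p\leftrightarrow p'$) should be handled directly, and then interpolation — more precisely the vector-valued Riesz–Thorin / Stein interpolation applied to the $\ell^p(\mathcal A_s;\cdot)$-valued operator $f\mapsto(\mathscr{L}_{s,\A}[1](D)f)_{\A}$ — gives the intermediate range; one checks $\mathcal W_{p,s}=2^{s/p-\frac{ns}{2}\min\{2/p,2/p'\}}$ is exactly the geometric interpolant of the endpoint weights. For $p=2$: by orthogonality (Plancherel on $\Z^n$, noting that the supports $\{\,\xi:\ \chi_{s,\kappa}(\xi-\bb)\neq0\,\}$ for distinct $\bb\in\tfrac1q\Z^n\cap[0,1)^n$ are disjoint for fixed $q$, and there are $\lesssim_\e 2^{\e s}$ choices of $q$), $\|\mathscr L_{s,\A}[1](D)f\|_{\ell^2}^2\lesssim_\e 2^{\e s}\sum_{\bb}|S(\A,\bb)|^2\|\widehat f\,\chi_{s,\kappa}(\cdot-\bb)\|_{L^2}^2$; summing $|S(\A,\bb)|^2\lesssim q^{-n}$ over the $q^n$ frequencies $\bb$ gives a factor $\lesssim 1$, and then summing over $\A\in\mathcal A_s$ costs $\#\mathcal A_s\lesssim_\e 2^{(1+\e)s}$, yielding $\mathcal W_{2,s}=2^{s/2-ns/2}$ up to $2^{\e s}$ — wait, here one must be careful: for $p=2$, $\mathcal W_{2,s}=2^{s/2-ns/2}$, and this matches the computation $2^{(1+\e)s/2}\cdot\max_\bb|S|\cdot 1=2^{(1+\e)s/2}2^{-ns/2}$ after taking square roots on the $\ell^2(\mathcal A_s)$ side; so the $\ell^2(\mathcal A_s;\ell^2(\Z^n))$ bound is $2^{(1+\e)s/2-ns/2}$ as required. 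For $p=1$: one uses \eqref{MI11} together with the $\ell^1(\Z^n)$-normalization $\sup_z\|\mathcal F^{-1}_{\R^n}(\chi_{s,\kappa})(\cdot-z)\|_{\ell^1(\Z^n)}\lesssim 1$ (as in \eqref{B3}) and $\mathfrak N_{-\bb}$ being an $\ell^1$ isometry, so that $\|\mathscr L_{s,\A}[1](D)f\|_{\ell^1}\lesssim\big(\sum_{\bb}|S(\A,\bb)|\big)\|f\|_{\ell^1}\lesssim q^{n/2}\|f\|_{\ell^1}\le 2^{ns/2}\|f\|_{\ell^1}$; then summing over $\A\in\mathcal A_s$ (the $\ell^1(\mathcal A_s)$ norm) costs another $\#\mathcal A_s\lesssim_\e 2^{(1+\e)s}$, and dividing appropriately one recovers $\mathcal W_{1,s}=2^{s}\cdot 2^{\,-ns/2\cdot 0}$; actually $\mathcal W_{1,s}=2^{1\cdot s-\frac{ns}{2}\cdot 0}=2^{s}$ since $\min\{2/1,2/\infty\}=0$, matching $2^{(1+\e)s}\cdot 2^{0}$ after the $\ell^1(\mathcal A_s)$ summation absorbs the Gauss-sum gain — here the Gauss-sum decay $2^{ns/2}$ in the $\|f\|_{\ell^1}$ estimate is an upper bound (growth), not a gain, so one keeps it crude; more carefully, $\sum_\bb |S(\A,\bb)|$ with $q^n$ terms each $\lesssim q^{-n/2}$ is $\lesssim q^{n/2}\le 2^{ns/2}$, and combined with $\#\mathcal A_s$ this overshoots $2^s$, so in fact at $p=1$ one should instead bound $\|(\cdot)_\A\|_{\ell^1(\mathcal A_s)}$ by $\#\mathcal A_s\cdot\sup_\A$ which is $\lesssim 2^{(1+\e)s}\cdot 2^{ns/2}$; to match $\mathcal W_{1,s}=2^s$ one must therefore interpolate against the $\ell^\infty(\mathcal A_s)$ endpoint rather than use $\ell^1(\mathcal A_s)$ directly at $p=1$ — i.e.\ the correct endpoints for Stein interpolation of the bilinear-in-$p$ object are $(p,\ell^p(\mathcal A_s))$ at $p\in\{1,\infty\}$ with the understanding that the $\ell^\infty$-endpoint uses Lemma \ref{de1}\,(i).

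Consequently the cleanest route is: interpolate between the $p=\infty$ bound coming from \eqref{de11} (which gives $\|\sup_\A|\mathscr L_{s,\A}[1](D)f|\|_{\ell^\infty}\lesssim 2^{-\gamma_\infty s}\|f\|_{\ell^\infty}$, so $\mathcal W_{\infty,s}$ is even better than claimed), and the $p=2$ bound above coming from Gauss-sum orthogonality; this yields \eqref{L149} for $p\in[2,\infty]$, and the range $p\in[1,2]$ follows by the $p\leftrightarrow p'$ symmetry of $\mathcal W_{p,s}$ together with a duality argument for the $\ell^p(\mathcal A_s;\ell^p(\Z^n))$ norm (the adjoint of $f\mapsto(\mathscr L_{s,\A}[1](D)f)_\A$ is $(g_\A)_\A\mapsto\sum_\A\mathscr L_{s,\A}[1]^*(D)g_\A$, and $\mathscr L_{s,\A}[1]^*=\mathscr L_{s,-\A}[\overline{(\cdot)}]$-type has the same Gauss-sum structure with $|S(-\A,\bb)|=|S(\A,\bb)|$). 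I expect the main obstacle to be bookkeeping the $\gcd$-dependent constants in the Gauss-sum bound so that they only contribute the harmless $2^{\e s}$ (this is where the $\lfloor 1/\e\rfloor$-type divisor bounds enter), and verifying that the disjointness of frequency supports used for the $\ell^2$ estimate is not spoiled when one ranges over \emph{all} $q\in[2^{s-1},2^s)$ simultaneously — resolved by paying the crude $\#\{q\}\le 2^s$ factor, which is absorbed into $2^{\e s}$ after one notes it can instead be organized dyadically in $q$ with almost-orthogonality, or simply accepted since an extra $2^{\e s}$ is permitted in the statement.
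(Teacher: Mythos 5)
Your overall idea — squeeze Gauss-sum cancellation at $p=2$ via Plancherel, pay $\#\mathcal A_s\lesssim_\e 2^{(1+\e)s}$ for the $\ell^p(\mathcal A_s)$ norm, and interpolate — is the right one and close in spirit to the paper. But the paper does it in a simpler order, and your concrete route has a genuine gap at $p<2$.

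The paper first fixes $\A$ and proves the \emph{single-$\A$} bound $\|\mathscr L_{s,\A}[1](D)f\|_{\ell^p(\Z^n)}\lesssim 2^{-\frac{ns}{2}\min\{2/p,2/p'\}}\|f\|_{\ell^p(\Z^n)}$, by interpolating the $p=2$ estimate (Plancherel plus the pointwise bound $|S(\A,\bb)|\lesssim 2^{-ns/2}$ and disjointness of the translated $\chi_{s,\kappa}$-supports) against the \emph{uniform-in-$\A$} trivial bound at $p\in\{1,\infty\}$. That trivial bound is what you should have used and did not: from (\ref{zhu2}) the kernel is $e(\A|y|^{2})\F^{-1}_{\R^n}(\chi_{s,\kappa})(y)$, whose $\ell^1(\Z^n)$-norm is $O(1)$ independently of $\A$, so $\|\mathscr L_{s,\A}[1](D)f\|_{\ell^p}\lesssim\|f\|_{\ell^p}$ for all $p\in[1,\infty]$. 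Instead you expanded via (\ref{MI11}) and bounded $\sum_{\bb}|S(\A,\bb)|\lesssim q^{n/2}$, which is genuinely lossy at $p=1$; this is exactly why you found yourself ``overshooting'' and reaching for another endpoint. Once the single-$\A$ $\ell^p$-bound is in hand, Fubini's theorem (raise to the $p$-th power, sum over $\A$, take $p$-th roots) costs $(\#\mathcal A_s)^{1/p}\lesssim_\e 2^{(1+\e)s/p}$, and $2^{s/p}\cdot 2^{-\frac{ns}{2}\min\{2/p,2/p'\}}=\mathcal W_{p,s}$ gives the claim.

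The genuine gap in your write-up is the final sentence: the range $p\in[1,2]$ does \emph{not} follow from $p\in[2,\infty]$ by ``$p\leftrightarrow p'$ symmetry of $\mathcal W_{p,s}$ plus duality.'' First, $\mathcal W_{p,s}$ is not symmetric in $p\leftrightarrow p'$: $\mathcal W_{1,s}=2^{s}$ while $\mathcal W_{\infty,s}=1$ (and more generally $\log_2\mathcal W_{p,s}=\tfrac sp-\tfrac{ns}{2}\min\{2/p,2/p'\}$ is not invariant under $p\mapsto p'$ because of the $s/p$ term that records the $\ell^p(\mathcal A_s)$-summation cost). Second, duality of $\ell^p(\Z^n)\to\ell^p(\mathcal A_s;\ell^p(\Z^n))$ maps you to the boundedness of the \emph{adjoint} $\ell^{p'}(\mathcal A_s;\ell^{p'})\to\ell^{p'}(\Z^n)$, which is a different operator, not the Lemma's statement at exponent $p'$. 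If you insist on the ``vector-valued operator, then interpolate'' order, the correct remedy is to establish three honest endpoints: $p=1$ (triangle inequality over $\mathcal A_s$ and the $O(1)$ kernel bound, giving $\lesssim_\e 2^{(1+\e)s}$), $p=2$ (Gauss sums, $\lesssim_\e 2^{(1+\e)s/2-ns/2}$), and $p=\infty$ (uniform kernel bound gives $\lesssim 1$; note Lemma~\ref{de1} is only stated for $p\in(1,\infty)$, so do not quote it at $p=\infty$), and then interpolate separately on $[1,2]$ and $[2,\infty]$, checking $\log_2\mathcal W_{p,s}$ is affine in $1/p$ on each of those two intervals (it is). Either fix — the paper's single-$\A$-then-Fubini, or the corrected three-endpoint vector-valued interpolation — closes the argument.
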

 While the lemma remains valid for $n=1$, the lack of control over the growth of the upper bound in that setting explains why our analysis is restricted to $n\ge2$.
\begin{proof}[Proof of Lemma \ref{L147}]
Let  $\A=a/q\in \mathcal{A}_s$, and $\bb=b/q=(b_1,\dots,b_n)/q\in \frac{1}{q}\Z^n$.  We have
$$S(\A,\bb)=\frac{1}{q^n}\sum_{r=(r_1,\dots,r_n)\in [q]^n}e(\frac{a}{q}|r|^{2}+\frac{b}{q}\cdot r)=\prod_{k=1}^n\Big\{\frac{1}{q}\sum_{r_k\in [q]}e(\frac{a}{q}r_k^{2}+\frac{b_k}{q}r_k)\Big\}.$$
%Particularly,  we may assume $(a,q)=1$ since $S(\A,\bb)=0$ otherwise.
Since $q\sim 2^s$ and $(a,b,q)=1$, by applying  the Gauss sum bounds (see, e.g., \cite[Lemma 2.4 with $d=2$]{BCS24}),  we obtain that for all $1\le k\le n$,
$\big|{q}^{-1}\sum_{r_k\in [q]}e\big({a}~r_k^{2}/q+{b_k}~r_k/{q}\big)\big|\les\   2^{-{s}/{2}},$
which  yields
$$|S(\A,\bb)|\les\   2^{-ns/2}\ \ {\rm whenever}\ \ \A\in \mathcal{A}_s,\ \bb\in {q}^{-1}\Z^n.$$
This  with
 Plancherel's identity gives that for each $\A=a/q\in \mathcal{A}_s$,
\beq\label{END1}
\|\mathscr{L}_{s,\A}[1](D)f\|_{\ell^2(\Z^n)}^2\les \sum_{\bb\in \frac{1}{q}\Z^n}\|S(\A,\bb) \chi_{s,\kappa}(\xi-\bb)\F_{\Z^n} f\|_{L^2_\xi(\T^n)}^2\les\   2^{-n s}\|f\|_{\ell^2(\Z^n)}^2.
\eeq
On the other hand, for every $p\in[1,\infty]$,  we have by (\ref{zhu2}) that
\beq\label{END2}
\|\mathscr{L}_{s,\A}[1](D)f\|_{\ell^p(\Z^n)}
\les \||\F_{\R^n}^{-1}\big(\chi_{s,\kappa}\big)|* |f|\|_{\ell^p(\Z^n)}
\les \|f\|_{\ell^p(\Z^n)}.
\eeq
By taking the square root of  (\ref{END1}) and subsequently interpolating the resultant inequality with (\ref{END2}),
$$
\|\mathscr{L}_{s,\A}[1](D)f\|_{\ell^p(\Z^n)}
\les 2^{-\frac{ns}{2}\min\{2/p,2/p'\}}\|f\|_{\ell^p(\Z^n)},
$$
which with Fubini's theorem and (\ref{new1})  completes  the proof of Lemma \ref{L147}.
\end{proof}
\begin{proof}[Proof of (\ref{DDD12})]
Let $V_{s,1}$ be a constant depending only on $s$ given as in (\ref{KL2}).  Following the proof of (\ref{c93}) line by line and using   linearization, we can also get that  for each $p\in (1,\infty)$,
$$
\big\| \sup_{\A\in\mathcal{A}_s}\big\|\big(\mathscr{L}_{s,\A}[(\mathscr{V}_j-\NE_u\VV_j)~ \mathscr{B}](D)f \big)_{j> 2^{C_1s}}\big\|_{V^1}
\big\|_{\ell^p(\Z^n)}
\les\ 2^{- s}\|f\|_{\ell^p(\Z^n)}
$$
 for all $u\in [V_{s,1}]^n$.
Thus,
to prove (\ref{DDD12}), it suffices to show that for  every $(r,p)\in (2,\infty)\times [1+2/n,\infty)$,
  \beq\label{DDou133}
V_{s,1}^{-n}\sum_{u\in [V_{s,1}]^n}\sum_{\A\in\mathcal{A}_s} \big\| \big\|\big(\mathscr{L}_{s,\A}[(\NE_u\VV_j)~ \mathscr{B}](D)f \big)_{j> 2^{C_1s}}\big\|_{V^r}
\big\|_{\ell^p(\Z^n)}^p
\les\ \|f\|_{\ell^p(\Z^n)}^p.
\eeq
%for any $\e>0$.
%We first show  the case $p\in (1+1/n,\infty)$.
By  Lemma \ref{L147} and  Proposition \ref{PIW} with $m=\NE_u\mathscr{B}$, we have  for any $u\in [V_{s,1}]^n$
$$\|\big(\sum_{\A\in\mathcal{A}_s}|\mathscr{L}_{s,\A}[\NE_u \mathscr{B}](D)f |^p\big)^{1/p}\|_{\ell^p(\Z^n)}\les \mathcal{W}_{p,s}\|\mathscr{L}_{s}^\#[\NE_u \mathscr{B}](D)f \|_{\ell^p(\Z^n)}
\les \mathcal{W}_{p,s} \|f\|_{\ell^p(\Z^n)}.$$
Note that the restrictions  $p\in [1+2/n,\infty)$ and $n\ge 2$ can lead to  ${2s}/{p}-{ns}\min\{{1}/{p},{1}/{p'}\}\le 0$, which  yields
\beq\label{DC1}
\|\big(\sum_{\A\in\mathcal{A}_s}|\mathscr{L}_{s,\A}[\NE_u \mathscr{B}](D)f |^p\big)^{1/p}\|_{\ell^p(\Z^n)}\les \|f\|_{\ell^p(\Z^n)}.
\eeq
Using similar  arguments as reducing the proof of  (\ref{c96}) to proving  (\ref{ll99}),  we can also achieve (\ref{DDou133})   from (\ref{DC1}).   This ends the proof of (\ref{DDD12}).
\end{proof}
\begin{remark}\label{r100}
 From the preceding proof, it's evident that we can broaden the scope from $p\in [1+2/n,\infty)$ to $p\in (1+2/n-c,\infty)$, where $c$ is a small positive value.
\end{remark}
\section*{Acknowledgements}
 The authors would like to thank  Dashan Fan for his  discussions on topics related to this paper. We  thank  the anonymous referees for careful reading of the manuscript and constructive comments. 
 This work was supported by the National Key Research and Development Program of China (No. 2022YFA1005700), and NSC of China  (No. 11901301, 12571109).
 
\appendix
\section{Shifted square function estimate}
\label{app}
In this section,  we introduce a shifted square function estimate which plays an important role in proving major arcs estimate III.  Suppose $\sigma\ge 0$, we define   the shifted maximal operator $M^{[\sigma]}$  by
\beq\label{dsp1}
M^{[\sigma]}g(z):=\sup_{z\in I\subset \R}\frac{1}{|I|}
\int_{I^{(\sigma)}}|g(z')|dz',
\eeq
where $I^{(\sigma)}$ denotes a shift of the bounded interval $I=[a,b]$ given by
$$I^{(\sigma)}:=\big[a-\sigma|I|,b-\sigma|I|\big]\cup \big[a+\sigma|I|,b+\sigma|I|\big].$$
By using  Theorem 3.1 in \cite{GHLR17} (see \cite{M14,L18} for the scalar version),  we obtain that  for every $k\in\Z_+$ and each $p\in (1,\infty)$,
\begin{equation}\label{sme1}
\Big\|\big(\sum_{j\in\Z}|M^{[2^k]}f_j|^2\big)^{1/2}\Big\|_{L^p(\R)}
\lesssim\ k^2 \Big\|\big(\sum_{j\in\Z}|f_j|^2\big)^{{1}/{2}}\Big\|_{L^p(\R)}.
\end{equation}
\begin{lemma}\label{Ap1}
Let $n$ be a positive integer.
Let $h$ be a Schwartz function on $\R^n$ with $h_j(y)=2^{-jn}h(2^{-j}y)$, and let $1\le |\tau|\le 2$. Then for every $k\in \Z_+$ and $p\in (1,\infty)$, we have
\beq\label{APP1}
\|\big(\sum_{j\in\Z}|f_j*_{\R^n}h_j(\cdot-\tau\theta 2^{j+k})|^2\big)^{1/2}\|_{L^p(\R^n)}
\les k^{2} \|\big(\sum_{j\in\Z}|f_j|^2\big)^{1/2}\|_{L^p(\R^n)}
\eeq
with the implicit constant independent of $k$,
where $\theta=1$ when $n=1$, and $\theta\in \mathbb{S}^{n-1}$ when $n\ge 2$.
\end{lemma}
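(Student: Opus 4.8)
Looking at Lemma \ref{Ap1}, the goal is a vector-valued shifted square function estimate on $\R^n$, where the shifts $\tau\theta 2^{j+k}$ scale with the Littlewood--Paley frequency. The plan is to reduce to the scalar-direction situation and then invoke the one-dimensional shifted maximal inequality \eqref{sme1} from \cite{GHLR17}.

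\textbf{Reduction to a single coordinate direction.} First I would treat the case $n=1$ directly: here $\theta=1$, so the shift is $\tau 2^{j+k}$ along the real line, and since $h$ is Schwartz, the pointwise bound $|f_j*_{\R}h_j(\cdot-\tau 2^{j+k})|\lesssim M^{[2^k]}f_j$ holds (the shift is $\tau 2^{j+k}\sim 2^k$ times the natural scale $2^j$ of $h_j$, and the rapid decay of $h$ lets one dominate the convolution by an average over a shifted interval of length $\sim 2^j$, up to the standard tail sum which only costs an absolute constant). Then \eqref{sme1} applied to $(f_j)_{j\in\Z}$ gives exactly \eqref{APP1} with the $k^2$ loss. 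For $n\ge 2$, I would fix the unit vector $\theta\in\mathbb{S}^{n-1}$ and observe that the shift is purely in the direction $\theta$; after a rotation (which preserves all $L^p$ norms and the isotropic dilation structure of $h_j$) I may assume $\theta=e_1$. Now the shift acts only in the first variable.

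\textbf{Fubini in the shifted variable plus the $(n-1)$-dimensional square function.} With $\theta=e_1$, write $y=(y_1,y')\in\R\times\R^{n-1}$. The convolution kernel $h_j(\cdot-\tau e_1 2^{j+k})$ factors in a controllable way: using the Schwartz decay I would bound $|f_j*_{\R^n}h_j(\cdot-\tau e_1 2^{j+k})|$ pointwise by a superposition $\sum_{m}2^{-100|m|}\,(M^{[2^k]}_{1}\circ A^{(m)}_{j})f_j$, where $M^{[2^k]}_1$ is the one-dimensional shifted maximal operator acting in $y_1$ and $A^{(m)}_j$ is an averaging operator in $y'$ at scale $2^{j+m}$ (so $|A^{(m)}_jf_j|\lesssim 2^{Cn|m|}M_{HL,y'}f_j$, absorbed into the rapidly decaying weight). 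Applying the vector-valued Fefferman--Stein inequality in the $y'$-variables and then \eqref{sme1} slicewise in $y_1$ (integrating the resulting inequality over $y'$ via Fubini), and summing the geometric series in $m$, yields \eqref{APP1}. The constant $\tau\in[1,2]$ only changes $2^k$ by a bounded factor, so it is harmless, and $k^2$ is the only loss.

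\textbf{Main obstacle.} The delicate point is the pointwise domination step in dimension $n\ge 2$: one must separate the "tangential" directions $y'$, where the kernel behaves like an ordinary approximate identity, from the "shift" direction $y_1$, where one genuinely needs the shifted maximal function at shift parameter $\sim 2^k$, and do so while keeping the constants uniform in $k$ (all the $k$-dependence must be funneled into the single application of \eqref{sme1}). Concretely, one has to verify that the error from the mismatch between the isotropic kernel $h_j$ and the product structure I am imposing is summable with a constant independent of $k$; this is where the Schwartz decay of $h$ is essential and where I would be most careful, though it is ultimately routine. Everything else — the rotation, the Fefferman--Stein step, Fubini — is standard.
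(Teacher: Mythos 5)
Your overall strategy matches the paper's proof: reduce to a single shifted coordinate direction (trivially for $n=1$, by rotation to $\theta=e_1$ for $n\geq 2$), dominate the convolution pointwise by one-dimensional shifted maximal operators in that coordinate composed with Hardy--Littlewood averages in the tangential coordinates, and combine \eqref{sme1} with the vector-valued Fefferman--Stein inequality. However, the pointwise domination step as you state it is false, and this is precisely where the content of the lemma lies.

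The claimed bound $|f_j*_{\R}h_j(\cdot-\tau 2^{j+k})|\lesssim M^{[2^k]}f_j$ does not hold. The Schwartz tail of $h_j$ at dyadic scale $2^{j+l}$ ($1\leq l\lesssim k$) contributes an average of $|f_j|$ over an interval of length $\sim 2^{j+l}$ centered at distance $\sim \tau 2^{j+k}$ from $x$. Because the shift $\sigma|I|$ in \eqref{dsp1} scales with the interval length $|I|$, covering a target at fixed absolute distance $\sim 2^{j+k}$ by an interval of length $\sim 2^{j+l}$ forces the shift parameter to be $\sigma\sim 2^{k-l}$, not $\sigma\sim 2^{k}$: there is no $I\ni x$ for which $I^{(2^k)}$ is simultaneously wide enough (length $\gtrsim 2^{j+l}$) and at the correct distance when $l\geq 1$. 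Concretely, if $f_j$ is the indicator of an interval of length $2^{j}$ whose nearest endpoint lies at distance $\sim 2^{j+7}$ from $x-\tau 2^{j+k}$ (with $\tau=1$, $k$ large), then $M^{[2^k]}f_j(x)=0$ while $f_j*_{\R}h_j(x-\tau 2^{j+k})>0$, so no constant works. The correct pointwise bound is the weighted superposition over shift parameters used in the paper, see \eqref{APP2}:
\[
|f_j*_{\R}h_j(x-\tau 2^{j+k})|\lesssim M_{HL}f_j(x)+\sum_{l\geq 0}2^{-l}\sum_{|v|\leq l+2}M^{[2^{k+v}]}f_j(x),
\]
and the $k^{2}$ loss in \eqref{APP1} only emerges after applying \eqref{sme1} to each $M^{[2^{k+v}]}$ and summing $\sum_{l\geq 0}2^{-l}(l+1)(k+l)^{2}\lesssim k^{2}$ (with the $l>k-2$ range handled by $M_{HL}$ alone). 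The same flaw carries over into your $n\geq 2$ step, where the single operator $M^{[2^k]}_{1}$ in your proposed composition must likewise be replaced by the superposition. Once this repair is made, the remainder of your outline --- the rotation, the coordinate-wise composition with Hardy--Littlewood in the tangential variables, the Fefferman--Stein step, and the Fubini argument --- is sound and coincides with the paper's proof.
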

\begin{proof}
We can assume that $k$ is significantly large; otherwise, the outcome directly follows from the Fefferman-Stein inequality. Initially, we demonstrate the scenario for $n=1$.  Since $h$ is  a Schwartz function, we have
\begin{align*}
|f_j*_{\R}h_j(x-\tau 2^{j+k})|
%\les&\  2^{-jn}\int \big|f_j(y)\big|\big|h\big(2^{-j}(x-y-\tau 2^{k+j})\big)\big|dy\\
\les&\ 2^{-j}\int_\R \big|f_j(y)|\big\langle  \big|2^{-j}(x-y-\tau 2^{k+j})\big|\big\rangle^{-2}dy\\
\les&\ 2^{-j}\int_{|x-y-\tau 2^{k+j}|\le 2^j} \big|f_j(y)\big| dy
+ \sum_{l\ge 0} 2^{-j-2l}  \int_{|x-y-\tau 2^{k+j}|\le 2^{j+l+1}} \big|f_j(y)\big| dy.
\end{align*}
Using the definition  (\ref{dsp1}), we have
$$2^{-j}\int_{|x-y-\tau 2^{k+j}|\le 2^j} \big|f_j(y)\big| dy\les \sum_{|v|\le2} M^{[2^{k+v}]}f_j(x).$$
For  the second term,  we need to split the sum $\sum_{l\ge 0}$ into $\sum_{l> k-2}$ and $\sum_{0\le l\le k-2}$. For $l>k-2$, 
$$\int_{|x-y-\tau 2^{k+j}|\le 2^{j+l+1}} \big|f_j(y)\big| dy
\les \int_{|x-y|\le 2^{j+l+1}} \big|f_j(y)\big| dy\les  2^{j+l} M_{HL}f_j(x), $$
where $M_{HL}$ is the continuous    Hardy-Littlewood maximal function.
For ${0\le l\le k-2}$, we have
$$\int_{|x-y-\tau 2^{k+j}|\le 2^{j+l+1}} \big|f_j(y)\big| dy
\les  2^{j+l} \sum_{|v|\le l+2} M^{[2^{k+v}]}f_j(x).
$$
Thus we have
\beq\label{APP2}
|f_j*_{\R}h_j(x-\tau 2^{j+k})|
\les M_{HL}f_j(x)%+\sum_{v=0, \pm1} M^{[2^{k+v}]}f_j(x)
+\sum_{l\ge 0}2^{-l}\sum_{|v|\le l+2} M^{[2^{k+v}]}f_j(x),
\eeq
which with (\ref{sme1}) and the Fefferman-Stein inequality gives
 (\ref{APP1}) for the case $n=1$.

The case $n\ge 2$ can be achieved by  similar arguments. Let $\{e_j\}_{j=1}^n$ be the usual unit vectors in $\mathbb{S}^{n-1}$. By the method of rotation, we may reduce the matter to the case $\theta=e_1$, that is, it suffices to show
 \beq\label{APP10}
\|\big(\sum_{j\in\Z}|f_j*_{\R^n}h_j(\cdot-\tau e_1 2^{j+k})|^2\big)^{1/2}\|_{L^p(\R^n)}
\les k^{2} \|\big(\sum_{j\in\Z}|f_j|^2\big)^{1/2}\|_{L^p(\R^n)},
\eeq
 Let  $M_{HL,i}$ and $M^{[\cdot]}_i$ ($i=1,\dots,n$)  denote
 the  continuous Hardy-Littlewood maximal operator and  the   shifted maximal operator  applied in the
 $i$-th  variable,  respectively,  and define $\bar{M}^{[\cdot]}_1:=M_{HL,1}+M^{[\cdot]}_1$.
By following the arguments yielding  (\ref{APP2}), we have
\beq\label{APP11}
|f_j*_{\R^n}h_j(x-\tau e_1 2^{j+k})|
\les \sum_{l\ge 0}2^{-l}\sum_{|v|\le l+2} \bar{M}^{[2^{k+v}]}_1\circ M_{HL,2}\circ\cdots\circ{M_{HL,n}}f_j(x).
\eeq
We can see that
 $\bar{M}^{[\cdot]}_1$ satisfies a square function estimate like (\ref{sme1}), which with
 (\ref{APP11}) and the Fefferman-Stein inequality gives
  (\ref{APP10}) for the case $n\ge 2$.

\end{proof}

\section{Proof of \eqref{azq40}}
\label{appendixB}
In this section, we  prove \eqref{azq40}.  It suffices to prove 
\beq\label{details-azq40}
\|\sup_{\lambda\in X_{j_\circ,\e_\circ}}| E_{\Pi,N_\circ,\la,\e_\circ,\ka}(D)f|\|_{\ell^p(\Z^n)}
%\les \|\sup_{u\in X_{j_\circ,\e_\circ}}|E_{j_\circ,u,\e_\circ}(D)f|\|_{\ell^2(x\in\Z^n)}
\les 2^{-\gamma_{1,p} j_\circ}\|f\|_{\ell^p(\Z^n)}.
\eeq
Claim that there exists a constant  $c_0>0$ such that 
\beq\label{d-upper}
|E_{\Pi,N_\circ,\la,\e_\circ,\ka}(\xi)|\les 2^{-c_0 j_\circ}.
\eeq
We now proceed with the proof, assuming the validity of \eqref{d-upper}. Using \eqref{azq2}, 
we can obtain by a routine  computation that  
$|\partial_\la E_{\Pi,N_\circ,\la,\e_\circ,\ka}(\xi)|
\les 2^{2dj_\circ}.$
Applying  \cite[Lemma 5.1]{KR22} with 
$$(A,B,N,\delta)=(2^{-c_0 j_\circ},2^{2dj_\circ},j_\circ^{2\lfloor 1/\e_\circ \rfloor},2^{-2dj_\circ+1}j_\circ^{\lfloor 1/\e_\circ \rfloor})$$
(up to a multiplicative constant), 
we achieve 
\beq\label{details-azq40-2ind}
\|\sup_{\lambda\in X_{j_\circ,\e_\circ}}| E_{\Pi,N_\circ,\la,\e_\circ,\ka}(D)f|\|_{\ell^2(\Z^n)}
%\les \|\sup_{u\in X_{j_\circ,\e_\circ}}|E_{j_\circ,u,\e_\circ}(D)f|\|_{\ell^2(x\in\Z^n)}
\les j_\circ^{\frac{3}{2}\lfloor 1/\e_\circ \rfloor} 2^{-c_0 j_\circ/2}\|f\|_{\ell^2(\Z^n)}.
\eeq
From \eqref{MI10}, \eqref{exp1} and \eqref{azq2}, we infer that 
 there exists $C_n>0$ (independent of $j_\circ$) such that  
\beq\label{details-azq40-pind}
\|\sup_{\lambda\in X_{j_\circ,\e_\circ}}| E_{\Pi,N_\circ,\la,\e_\circ,\ka}(D)f|\|_{\ell^q(\Z^n)}
\les j_\circ^{C_n\lfloor 1/\e_\circ \rfloor} \|f\|_{\ell^q(\Z^n)}
\eeq
 for every $q\in [1,\infty]$. \eqref{details-azq40}  is obtained   by interpolation between  \eqref{details-azq40-2ind} with \eqref{details-azq40-pind}.

We finally prove \eqref{d-upper}, which combines Proposition \ref{p12} of this paper with exponential sum estimates from Stein-Wainger \cite{SW99}. In fact, the proof follows the arguments in \cite[Section 5]{KR22} (see also \cite[Section 6]{KR23}) with only minor modifications. More precisely, the only changes are that  $j$ is replaced by $j_\circ$ throughout, and 
  every occurrence of $2^{j\varepsilon_1}$ is replaced by $j_\circ^{\lfloor 1/\e_\circ \rfloor}$ (In particular, $2^{j \varepsilon_2}$ is replaced by $2^{j_\circ \varepsilon_2}$; the case distinction $1\le s_0\le \varepsilon_1 j$ and $ \varepsilon_1 j<s_0\le \varepsilon_2j $
 is  replaced by $1\le {s_0}\le  \e_\circ(j_\circ)$ and  $\e_\circ(j_\circ)<{s_0}\le {\varepsilon_2 j_\circ}$ respectively).

\end{document}